\documentclass[11pt]{article}

\usepackage{amsmath,amssymb,amsthm,amscd,eucal}
\usepackage{times,tikz}
\usetikzlibrary{backgrounds}

\newtheorem{thm}[equation]{Theorem}

\newtheorem{lemma}[equation]{Lemma}
\newtheorem{prop}[equation]{Proposition}

\newtheorem*{conj*}{Conjecture}
\theoremstyle{definition}
\newtheorem{definition}[equation]{Definition}

\newtheorem{remark}[equation]{Remark}
\newtheorem{example}[equation]{Example} 
\newtheorem{examples}[equation]{Examples}

\numberwithin{equation}{section}
 

\usepackage[letterpaper,margin=1.25in]{geometry}

\def\End{ \mathsf{End}}

\def\min{\mathsf {min}}

\def\ot{\otimes}

\newcommand{\proj}{\varepsilon}

\newcommand{\e}{\mathsf{e}}

\newcommand{\ve}{\varepsilon}
\newcommand{\bs}{\mathsf{b}}
\newcommand{\sff}{\mathsf{s}}
\newcommand{\EE}{\mathsf{E}}  
\newcommand{\GG}{\mathsf{G}}

\newcommand{\qs}{\mathsf{q}}   
\newcommand{\rs}{\mathsf{r}}
    
\newcommand{\tf}{\mathsf{t}}  
\newcommand{\us}{\mathsf{u}}  
\newcommand{\vr}{\varrho}

\renewcommand{\S}{\mathsf{S}}

\renewcommand{\P}{\mathsf{P}}

\newcommand{\FF}{\mathbb{F}}  
  
\newcommand{\ZZ}{\mathbb{Z}}  
\newcommand{\CC}{\mathbb{F}}  
   
\newcommand{\MM}{\mathsf{N}}

\newcommand{\para}{\xi} 
\newcommand{\ef}{\mathsf{e}}
\newcommand{\pn}{\mathsf{pn}}

\newcommand{\M}{\mathsf{M}}

\newcommand{\Xie}{\Xi}
\newcommand{\modu}{\mathsf{M}_n}
\newcommand{\VV}{\mathsf{V}}
   
\newcommand{\wtp}{\widetilde{\mathfrak p_1}}

\newcommand{\half}{\frac{1}{2}}
 
\newcommand{\dimm}{\mathsf{dim}\,}
\newcommand{\spann}{\mathsf{span}} 
\newcommand{\im}{\mathsf{im}\,}
\renewcommand{\ker}{\mathsf{ker}\,}
\newcommand{\Proj}{\Xi}

\newcommand{\vertedge}{ \!\!
\begin{array}{c}
\begin{tikzpicture}[xscale=.3,yscale=.3,line width=1.0pt] 
\foreach \i in {1}  { \path (\i,1.25) coordinate (T\i); \path (\i,.25) coordinate (B\i); } 
\draw[blue] (B1) -- (T1);
\foreach \i in {1}  { \filldraw[fill=white,draw=black,line width = 1pt] (T\i) circle (5pt); \filldraw[fill=white,draw=black,line width = 1pt]  (B\i) circle (5pt); } 
\end{tikzpicture}
\end{array}\!\!}
\newcommand{\blvertedge}{ \!\!
\begin{array}{c}
\begin{tikzpicture}[xscale=.3,yscale=.3,line width=1.0pt] 
\foreach \i in {1}  { \path (\i,1.0) coordinate (T\i); \path (\i,0) coordinate (B\i); } 
\draw[blue] (B1) -- (T1);
\foreach \i in {1}  { \filldraw[fill=black,draw=black,line width = 1pt] (T\i) circle (5pt); \filldraw[fill=black,draw=black,line width = 1pt]  (B\i) circle (5pt); } 
\end{tikzpicture}
\end{array}\!\!}
\newcommand{\novertedge}{ \!\!
\begin{array}{c}
\begin{tikzpicture}[xscale=.3,yscale=.3,line width=1.0pt] 
\foreach \i in {1}  { \path (\i,1.25) coordinate (T\i); \path (\i,.25) coordinate (B\i); } 
\foreach \i in {1}  { \filldraw[fill=white,draw=black,line width = 1pt] (T\i) circle (5pt); \filldraw[fill=white,draw=black,line width = 1pt]  (B\i) circle (5pt); } 
\end{tikzpicture}
\end{array}\!}

\def\abcd{{
\begin{tikzpicture}[xscale=.5,yscale=.5,line width=1.25pt] 
\foreach \i in {1,2} 
{ \path (\i,1.25) coordinate (T\i); \path (\i,.25) coordinate (B\i); } 
\filldraw[fill= black!12,draw=black!12,line width=4pt]  (T1) -- (T2) -- (B2) -- (B1) -- (T1);
\draw[blue] (T2) -- (B1);\draw[blue] (T1) -- (B2);\draw[blue] (T1) -- (B1);\draw[blue] (T2) -- (B2);
\draw[blue] (T1) -- (T2);\draw[blue] (B1) -- (B2);
\foreach \i in {1,2} 
{ \fill (T\i) circle (4pt); \fill (B\i) circle (4pt); } 
\end{tikzpicture}}}

\def\abcld{{
\begin{tikzpicture}[xscale=.5,yscale=.5,line width=1.25pt] 
\foreach \i in {1,2} 
{ \path (\i,1.25) coordinate (T\i); \path (\i,.25) coordinate (B\i); } 
\filldraw[fill= black!12,draw=black!12,line width=4pt]  (T1) -- (T2) -- (B2) -- (B1) -- (T1);
\draw[blue] (T1) -- (B2);\draw[blue] (T1) -- (B1);
\draw[blue] (B1) -- (B2);
\foreach \i in {1,2} 
{ \fill (T\i) circle (4pt); \fill (B\i) circle (4pt); } 
\end{tikzpicture}}}

\def\abdlc{{
\begin{tikzpicture}[xscale=.5,yscale=.5,line width=1.25pt] 
\foreach \i in {1,2} 
{ \path (\i,1.25) coordinate (T\i); \path (\i,.25) coordinate (B\i); } 
\filldraw[fill= black!12,draw=black!12,line width=4pt]  (T1) -- (T2) -- (B2) -- (B1) -- (T1);
\draw[blue] (B1) -- (B2);\draw[blue] (T2) -- (B2);
\draw[blue] (B1) -- (T2);
\foreach \i in {1,2} 
{ \fill (T\i) circle (4pt); \fill (B\i) circle (4pt); } 
\end{tikzpicture}}}
 
\def\ablcld{{
\begin{tikzpicture}[xscale=.5,yscale=.5,line width=1.25pt] 
\foreach \i in {1,2} 
{ \path (\i,1.25) coordinate (T\i); \path (\i,.25) coordinate (B\i); } 
\filldraw[fill= black!12,draw=black!12,line width=4pt]  (T1) -- (T2) -- (B2) -- (B1) -- (T1);
\draw[blue] (B1) -- (B2);
\foreach \i in {1,2} 
{ \fill (T\i) circle (4pt); \fill (B\i) circle (4pt); } 
\end{tikzpicture}}}

\def\aclbld{{
\begin{tikzpicture}[xscale=.5,yscale=.5,line width=1.25pt] 
\foreach \i in {1,2} 
{ \path (\i,1.25) coordinate (T\i); \path (\i,.25) coordinate (B\i); } 
\filldraw[fill= black!12,draw=black!12,line width=4pt]  (T1) -- (T2) -- (B2) -- (B1) -- (T1);
\draw[blue] (B1) -- (T1);
\foreach \i in {1,2} 
{ \fill (T\i) circle (4pt); \fill (B\i) circle (4pt); } 
\end{tikzpicture}}}

\def\albcld{{
\begin{tikzpicture}[xscale=.5,yscale=.5,line width=1.25pt] 
\foreach \i in {1,2} 
{ \path (\i,1.25) coordinate (T\i); \path (\i,.25) coordinate (B\i); } 
\filldraw[fill= black!12,draw=black!12,line width=4pt]  (T1) -- (T2) -- (B2) -- (B1) -- (T1);
\draw[blue] (B2) -- (T1);
\foreach \i in {1,2} 
{ \fill (T\i) circle (4pt); \fill (B\i) circle (4pt); } 
\end{tikzpicture}}}

\def\alblcld{{
\begin{tikzpicture}[xscale=.5,yscale=.5,line width=1.25pt] 
\foreach \i in {1,2} 
{ \path (\i,1.25) coordinate (T\i); \path (\i,.25) coordinate (B\i); } 
\filldraw[fill= black!12,draw=black!12,line width=4pt]  (T1) -- (T2) -- (B2) -- (B1) -- (T1);
\foreach \i in {1,2} 
{ \fill (T\i) circle (4pt); \fill (B\i) circle (4pt); } 
\end{tikzpicture}}}

\def\adlblc{{
\begin{tikzpicture}[xscale=.5,yscale=.5,line width=1.25pt] 
\foreach \i in {1,2} 
{ \path (\i,1.25) coordinate (T\i); \path (\i,.25) coordinate (B\i); } 
\filldraw[fill= black!12,draw=black!12,line width=4pt]  (T1) -- (T2) -- (B2) -- (B1) -- (T1);
\draw[blue] (B1) -- (T2);
\foreach \i in {1,2} 
{ \fill (T\i) circle (4pt); \fill (B\i) circle (4pt); } 
\end{tikzpicture}}}

\def\orbitadlblc{{
\begin{tikzpicture}[xscale=.5,yscale=.5,line width=1.25pt] 
\foreach \i in {1,2} 
{ \path (\i,1.25) coordinate (T\i); \path (\i,.25) coordinate (B\i); } 
\filldraw[fill= black!12,draw=black!12,line width=4pt]  (T1) -- (T2) -- (B2) -- (B1) -- (T1);
\draw[blue] (B1) -- (T2);
\foreach \i in {1,2} 
{ \filldraw[fill=white,draw=black,line width = 1pt] (T\i) circle (4pt); \filldraw[fill=white,draw=black,line width = 1pt]  (B\i) circle (4pt); } 
\end{tikzpicture}}}

\def\albdlc{{
\begin{tikzpicture}[xscale=.5,yscale=.5,line width=1.25pt] 
\foreach \i in {1,2} 
{ \path (\i,1.25) coordinate (T\i); \path (\i,.25) coordinate (B\i); } 
\filldraw[fill= black!12,draw=black!12,line width=4pt]  (T1) -- (T2) -- (B2) -- (B1) -- (T1);
\draw[blue] (B2) -- (T2);
\foreach \i in {1,2} 
{ \fill (T\i) circle (4pt); \fill (B\i) circle (4pt); } 
\end{tikzpicture}}}

\def\orbitalblcld{{
\begin{tikzpicture}[xscale=.5,yscale=.5,line width=1.25pt] 
\foreach \i in {1,2} 
{ \path (\i,1.25) coordinate (T\i); \path (\i,.25) coordinate (B\i); } 
\filldraw[fill= black!12,draw=black!12,line width=4pt]  (T1) -- (T2) -- (B2) -- (B1) -- (T1);
\foreach \i in {1,2} 
{ 
\filldraw[fill=white,draw=black,line width = 1pt] (T\i) circle (4pt); \filldraw[fill=white,draw=black,line width = 1pt]  (B\i) circle (4pt); 
} 
\end{tikzpicture}}}

\def\ablcd{{
\begin{tikzpicture}[xscale=.5,yscale=.5,line width=1.25pt] 
\foreach \i in {1,2} 
{ \path (\i,1.25) coordinate (T\i); \path (\i,.25) coordinate (B\i); } 
\filldraw[fill= black!12,draw=black!12,line width=4pt]  (T1) -- (T2) -- (B2) -- (B1) -- (T1);
\draw[blue] (B2) -- (B1);\draw[blue] (T2) -- (T1);
\foreach \i in {1,2} 
{ \fill (T\i) circle (4pt); \fill (B\i) circle (4pt); } 
\end{tikzpicture}}}

\def\aclbd{{
\begin{tikzpicture}[xscale=.5,yscale=.5,line width=1.25pt] 
\foreach \i in {1,2} 
{ \path (\i,1.25) coordinate (T\i); \path (\i,.25) coordinate (B\i); } 
\filldraw[fill= black!12,draw=black!12,line width=4pt]  (T1) -- (T2) -- (B2) -- (B1) -- (T1);
\draw[blue] (T1) -- (B1); 
\draw[blue] (T2) -- (B2); 
\foreach \i in {1,2} 
{ \fill (T\i) circle (4pt); \fill (B\i) circle (4pt); } 
\end{tikzpicture}}}

\def\acdlb{{
\begin{tikzpicture}[xscale=.5,yscale=.5,line width=1.25pt] 
\foreach \i in {1,2} 
{ \path (\i,1.25) coordinate (T\i); \path (\i,.25) coordinate (B\i); } 
\filldraw[fill= black!12,draw=black!12,line width=4pt]  (T1) -- (T2) -- (B2) -- (B1) -- (T1);
\draw[blue] (T2) -- (B1) -- (T1) -- (T2);
\foreach \i in {1,2} 
{ \fill (T\i) circle (4pt); \fill (B\i) circle (4pt); } 
\end{tikzpicture}}}

\def\adlbc{{
\begin{tikzpicture}[xscale=.5,yscale=.5,line width=1.25pt] 
\foreach \i in {1,2} 
{ \path (\i,1.25) coordinate (T\i); \path (\i,.25) coordinate (B\i); } 
\filldraw[fill= black!12,draw=black!12,line width=4pt]  (T1) -- (T2) -- (B2) -- (B1) -- (T1);
\draw[blue] (T2) -- (B1);
\draw[blue] (T1) -- (B2);
\foreach \i in {1,2} 
{ \fill (T\i) circle (4pt); \fill (B\i) circle (4pt); } 
\end{tikzpicture}}}

\def\albcd{{
\begin{tikzpicture}[xscale=.5,yscale=.5,line width=1.25pt] 
\foreach \i in {1,2} 
{ \path (\i,1.25) coordinate (T\i); \path (\i,.25) coordinate (B\i); } 
\filldraw[fill= black!12,draw=black!12,line width=4pt]  (T1) -- (T2) -- (B2) -- (B1) -- (T1);
\draw[blue] (T2) -- (B2)--(T1)--(T2);
\foreach \i in {1,2} 
{ \fill (T\i) circle (4pt); \fill (B\i) circle (4pt); } 
\end{tikzpicture}}}

\def\alblcd{{
\begin{tikzpicture}[xscale=.5,yscale=.5,line width=1.25pt] 
\foreach \i in {1,2} 
{ \path (\i,1.25) coordinate (T\i); \path (\i,.25) coordinate (B\i); } 
\filldraw[fill= black!12,draw=black!12,line width=4pt]  (T1) -- (T2) -- (B2) -- (B1) -- (T1);
\draw[blue] (T2) -- (T1);
\foreach \i in {1,2} 
{ \fill (T\i) circle (4pt); \fill (B\i) circle (4pt); } 
\end{tikzpicture}}}

\tikzstyle{rep}=[circle,
                                    thick,
                                    minimum size=.6cm,
                                    inner sep=0mm,
                                    draw= black,  
                                     fill=black!10] 
                                    
\tikzstyle{norep}=[circle,
                                    thick,
                                    minimum size=.2cm,
                                    draw= white,  
                                    fill=white]      
                                  

\title{Partition algebras $\P_k(n)$ with $2k>n$ \\
and the fundamental theorems of invariant theory \\
for the symmetric group $\S_n$}

\author{Georgia Benkart \\
 {\small University of Wisconsin-Madison}
 \\{\small Madison, WI 53706, USA}\\
\texttt{\small benkart@math.wisc.edu}
\and Tom Halverson\footnote{
The second author gratefully acknowledges partial support from Simons Foundation grant 283311.}\\
{\small Macalester College} \\{\small Saint Paul, MN 55105, USA}\\
\texttt{\small halverson@macalester.edu}
} 
 
\begin{document}

\date{}

\maketitle   

\begin{abstract}
Assume $\modu$ is the $n$-dimensional permutation module for the symmetric group $\S_n$,  and let $\modu^{\otimes k}$ be its $k$-fold tensor power.   The partition algebra $\P_k(n)$ maps surjectively onto the centralizer algebra $\End_{\S_n}(\modu^{\otimes k})$ for all $k, n \in \ZZ_{\ge 1}$ and  isomorphically when $n \ge 2k$.  We describe the image of the surjection $\Phi_{k,n}:\P_k(n) \to \End_{\S_n}(\modu^{\otimes k})$  explicitly in terms of the orbit basis of $\P_k(n)$ and  show that when $2k > n$ the kernel of $\Phi_{k,n}$ is generated by a single  essential idempotent  $\ef_{k,n}$, which is an orbit basis element.   We obtain a presentation for $\End_{\S_n}(\modu^{\otimes k})$ by imposing one additional relation, $\ef_{k,n} = 0$,  to the standard presentation of the partition algebra $\P_k(n)$ when $2k > n$.   
As a consequence, we obtain the fundamental theorems of invariant theory for the symmetric group $\S_n$. 
We show under the natural embedding of the partition algebra $\P_n(n)$ into $\P_k(n)$ for $k \ge n$ that  the essential idempotent $\ef_{n,n}$ generates the kernel of  $\Phi_{k,n}$.  Therefore, the relation $\ef_{n,n} = 0$ can replace  $\ef_{k,n} = 0$ when $k \ge n$. 
\end{abstract}

2010 {\it Mathematics Subject Classification} 05E10 (primary), 20C30  (secondary). 

{\it Keywords}: partition algebra, symmetric group, Schur-Weyl duality
 \medskip  
\section{Introduction}   \emph{We assume throughout that $\FF$ is a field of characteristic 0.}   
The partition algebras $\P_k(\para)$, $\para \in \CC \setminus\{0\}$,  were introduced by Martin (\cite{M1},\cite{M2},\cite{M3})   
 to study the  Potts lattice model of interacting spins in statistical mechanics.    The set partitions
of the set $\{1,2,\ldots,2k\}$ index a $\CC$-basis for the partition algebra $\P_k(\para)$, and thus,  $\P_k(\para)$ has dimension equal to the Bell number $\mathsf{B}(2k)$.
We let $\Pi_{2k}$ be the set of set partitions of $\{1,2, \ldots, 2k\}$.   For example,  
$\big\{1,8,9,10 \,|\,  2,3 \,|\,  4,7\,|\,  5,6,11,12,14\,|\, 13\big\}$ is a set partition in
$ \Pi_{14}$ with 5 blocks (subsets).      The algebra $\P_k(\para)$ has 
two distinguished bases --  the diagram basis $\{ d_\pi \mid \pi \in \Pi_{2k}\}$ and the orbit basis $\{ x_\pi \mid \pi \in \Pi_{2k}\}$.
The diagram basis elements $d_\pi$ in $\P_{k+1}(\para)$ corresponding to set partitions $\pi$ having  $k+1$ and $2(k+1)$ in the same block form a subalgebra  
$\P_{k+\half}(\para)$ of $\P_{k+1}(\para)$  under diagram multiplication.
Identifying $\P_k(\para)$ with the span of the diagrams in $\P_{k+\half}(\para)$ which have a block consisting solely
of the two elements $k+1,2(k+1)$ gives a tower of algebras,

\begin{equation}\label{eq:tower} \CC  = \P_0(\para) \cong  \P_\half (\para) \subset  \P_1(\para) \subset \cdots \subset \P_k(\para) \subset \P_{k+\half}(\para) \subset \P_{k+1}(\para) \subset  \cdots \  . \end{equation}

As shown by  Jones \cite{J}, when $\xi = n \in \ZZ_{\ge 1}$,  there is a Schur-Weyl duality between the 
partition algebra $\P_k(n)$ and the symmetric group $\S_n$ acting as centralizers
of one another on the $k$-fold tensor power $\modu^{\ot k}$  of the $n$-dimensional permutation module
$\modu$ for $\S_n$ over  $\CC$.   The surjective algebra homomorphism given in \cite{J} (see also \cite[Thm.\,3.6]{HR} and Section \ref{sec:SWduality} below), 
\begin{equation}\label{eq:phirep}
\Phi_{k,n}: \P_k(n) \rightarrow  \End_{\S_n}(\modu^{\ot k}) = \{T \in \End(\modu^{\ot k}) \mid
T \sigma = \sigma T\ \  \forall  \sigma \in \S_n\},
\end{equation}
is an isomorphism when $n \geq 2k$.   
The partition algebra $\P_k(n)$ acts naturally on $\modu^{\ot k}$ via the representation $\Phi_{k,n}$ in
\eqref{eq:phirep}, and  
 if we regard  $\modu$ as a module for the symmetric group $\S_{n-1}$ by restriction,  there is a surjective algebra homomorphism $\Phi_{k+\half}(n):\P_{k+\half}(n) \rightarrow \End_{\S_{n-1}}(\modu^{\ot {k}})$,  which is an isomorphism if $n \geq 2k+1$ (the details can be found in Remark \ref{SWhalf}).    The intermediate algebras $\P_{k+\half}(n)$  have 
 played an important role in \cite{MR,HR} in 
understanding the structure and representation theory  of partition algebras.

In studying the partition algebras $\P_k(n)$, for example, in determining their representation theory \cite{M2,M3,M4,HR} and character theory \cite{H}, it is helpful to fix $k$ and assume $n \ge 2k$,  so that $\P_k(n)$ is isomorphic to $\End_{\S_n}(\modu^{\ot k})$.  Under that assumption, $\P_k(n)$ is a semisimple associative algebra, and the full power of Schur-Weyl duality can be applied to use the representation theory of $\S_n$ to derive results about $\P_k(n)$.   However, if
proving results about the symmetric group $\S_n$ and its representations  is the goal, as it is  in studies of Kronecker coefficients and symmetric functions in \cite{BDO1,BDO2,  OZ}, then fixing $n$ and letting $k$ grow arbitrarily large is the more natural setting.   This  amounts to considering increasingly large tensor powers $\M_n^{\otimes k}$ of $\M_n$.  When $2k > n$,  it is critical to determine the kernel of the homomorphism $\Phi_{k,n}$  and the dependence relations that are imposed in the image of $\Phi_{k,n}$.   The orbit basis,
its relation to the diagram basis, and the rule for multiplying elements in the orbit basis
are essential ingredients for describing the kernel and image of $\Phi_{k,n}$.  

In this paper,  we
\begin{itemize} 
\item [{\rm(i)}]\, describe the change of basis matrix between the diagram basis and the orbit basis in terms of the M\"obius function  of the set partition lattice \ \ (\emph {Section \ref{S:change}});
\item [{\rm(ii)}]\, prove a rule for multiplication in the orbit basis,  originally stated by Halverson and Ram in unpublished notes \ \ (\emph{Theorem \ref{C:mult},  see also 
Lemma \ref{T:mult} and Examples \ref{exs:mult}});
\item [{\rm(iii)}]\,  show for $k \in \half  \ZZ_{\ge 1}$ and $2k >n$  that  the kernel of the surjection $\Phi_{k,n}$
 is generated as a two-sided ideal of $\P_k(n)$ by  a single essential idempotent element $\ef_{k,n}$, which is the orbit 
 basis element
 defined in \eqref{eq:ef} for $k \in \ZZ_{\ge 1}$,  and in \eqref{eq:efhalf} for $k \in  \half\ZZ_{\ge 1}\setminus \ZZ_{\ge 1}$   \ \  (\emph{Theorem \ref{thm:generator}, see also Theorem \ref{T:secfund})};
 \item[{\rm(v)}]\,  establish the Second Fundamental Theorem of Invariant Theory for the symmetric group $\S_n$ \ \ (\emph{Theorem \ref{T:2ndfund}});
\item [{\rm(vi)}]\, construct  for $k \in \ZZ_{\ge 1}$ a certain rational linear combination $\Xie_{k,n}$  of orbit basis elements
in $\P_k(n)$ (see \eqref{eq:Up})  and prove that  $\Xie_{k,n} = \Psi_{k,n}(\ve_{[n-k,k]})$ when $n \geq 2k$, 
where $\Psi_{k,n}(\ve_{[n-k,k]})$ is the
 image of  the primitive central idempotent  $\ve_{[n-k,k]}$ in the group algebra $\CC\S_n$ corresponding to the  two-part integer partition $[n-k,k]$ of $n$ under the representation $\Psi_{k,n}: \CC\S_n \rightarrow \End(\modu^{\ot k})$\ \ (\emph{Theorem \ref{IdempotentOrbitBasis}}).  The primitive central idempotent $\Xie_{k,n}$ of $\P_k(n)$ corresponds to the one-dimensional
 irreducible $\P_k(n)$-module $\P_{k,n}^{[n-k,k]}$;   
\item [{\rm(vii)}]\, construct for $k \in \ZZ_{\ge 1}$ a certain rational linear combination $\Xie_{k+\half,n}$  of orbit basis elements in $\P_{k+\half}(n)$ (see \eqref{eq:Xihalffirst} and also \eqref{eq:Xihalf})
and prove that  $\Xi_{k+\half,n} = \Psi_{k+\half,n}(\ve_{[n-1-k,k]})$ when $n \geq 2k+1$, 
where $\Psi_{k+\half,n}(\ve_{[n-1-k,k]})$ is the
 image of  the primitive central idempotent  $\ve_{[n-1-k,k]}$ in $\CC\S_{n-1}$ corresponding to the integer partition $[n-1-k,k]$ of $n-1$ under the representation $\Psi_{k+\half,n}: \CC\S_{n-1} \rightarrow \End(\modu^{\ot k})$ \ \ (\emph{Theorem \ref{T:IdOB})};   
\item [{\rm(viii)}]\, prove for $k \in \ZZ_{\ge 1}$  that the kernels of the surjections  
\begin{align*} &\Phi_{k,2k-1}:  \P_k(2k-1) \rightarrow \End_{\S_{2k-1}}(\mathsf{M}_{2k-1}^{\ot k}) \\
&\Phi_{k+\half,2k}: \P_{k+\half}(2k) \rightarrow \End_{\S_{2k}}(\M_{2k}^{\ot k}) \end{align*} 
are one-dimensional; more precisely,   $\ker \Phi_{k,2k-1}$ is spanned by $\Xie_{k,2k-1}$ which equals $\ef_{k,2k-1}$ up to a scalar multiple  (\emph{Theorem \ref{T:Xief}\,(a)}),  and $\ker \Phi_{k+\half, 2k}$ is spanned by $\Xi_{k+\half,2k}$ which equals $\ef_{k+\half,2k}$ up to a scalar multiple  (\emph{Theorem \ref{T:Xief}\,(b))}.  The largest value of $n$ for which the
kernel is nontrivial is $n=2k-1$ for $k \in \ZZ_{\ge 1}$ and $n = 2k$ for $k+\half$.   Moreover, the essential idempotents  $\ef_{k,2k-1}$ and $\ef_{k+\half, 2k}$ 
are the only central ones among the kernel generators $\ef_{k,n}$  (\emph{Remark} \ref{R:central}), so these elements are noteworthy for their  exceptional behavior.  \end{itemize}

In the classical invariant theory of a group $\mathsf{G}$ via endomorphism algebras, there is a surjection $\Psi:\mathsf{A} \rightarrow \End_{\GG}(\VV^{\ot k})$
from a finite-dimensional associative algebra $\mathsf{A}$ onto the centralizer algebra $\End_{\GG}(\VV^{\ot k})$
of endomorphisms that commute with the action of $\GG$ on tensor powers of its natural module $\VV$.    The generators and relations of $\mathsf{A}$ afford the First Fundamental
Theorem of Invariant Theory for $\GG$.   The Second Fundamental Theorem of Invariant Theory for $\GG$ describes generators for the kernel of  $\Psi$.
As a $\GG$-module,  $\End_\GG(\VV^{\ot k})$ is isomorphic to the space of $\GG$-invariants in $(\VV \ot \VV^*)^{\ot k}$,
which is isomorphic to the $\GG$-invariants in $\VV^{\ot 2k}$ when $\VV$ is isomorphic to its dual $\VV^*$ as a $\GG$-module.

For the general linear group $\GG = \mathsf{GL}_n$ and the tensor
power $\VV^{\ot k}$ of its defining module $\VV = \CC^n$,   the algebra $\mathsf{A}$ is the group algebra $\CC \S_k$ of the symmetric group $\S_k$, where the endomorphisms in  $\Psi(\S_k)$ act by place permutation of the tensor factors of $\VV^{\ot k}$.  The standard generators and 
relations for $\S_k$ provide the  first fundamental theorem in this setting, and the second fundamental theorem states
that a  generator of the kernel is the essential idempotent $e = \sum_{\sigma \in \S_{n+1}} (-1)^{\mathsf{sgn}(\sigma)} \sigma$
for all $k \ge n+1$.   

 In \cite{Br},  Brauer  introduced
diagrammatic algebras, now known as \emph{Brauer algebras}, that centralize the action of the orthogonal group $\mathsf{O}_n$ and symplectic group $\mathsf{Sp}_{2n}$ on
tensor powers of their defining modules.  The surjective algebra homomorphisms
$\mathsf{B}_k(n) \to \End_{\mathsf{O}_n}(\VV^{\ot k})$ ($\dimm \VV=n$) and  $\mathsf{B}_k(-2n) \to
\End_{\mathsf{Sp}_{2n}}(\VV^{\ot k})$ ($\dimm \VV = 2n$) defined in \cite{Br} provide 
the First Fundamental Theorem of 
Invariant Theory for these groups.  (See, for example, \cite[Sec.~4.3.2]{GW} for an exposition 
of these results.)  Generators for the kernels of these surjections give the Second Fundamental
Theorem of Invariant Theory for the orthogonal and symplectic groups.  
As shown in the work of Hu and Xiao \cite{HX}, Lehrer and Zhang \cite{LZ1, LZ2}, and Rubey and Westbury \cite{RW1}, the kernels of these surjections are  principally generated by a single idempotent when $k \ge n+1$.  The recent work of Bowman, Enyang, and Goodman \cite{BEG} adopts a cellular 
basis approach to describing the kernels in the orthogonal and symplectic cases, as well as in the case
of the general linear group $\mathsf{GL}_n$ acting on mixed tensor powers $\VV^{\ot k} \ot (\VV^*)^{\ot \ell}$
of its natural $n$-dimensional module $\VV$ and its dual $\VV^*$.   A surjection of the walled Brauer
algebra $\mathsf{B}_{k,\ell}(n) \to \End_{\mathsf{GL}_n}(\VV^{\ot k} \ot (\VV^*)^{\ot \ell})$  is used for this purpose. (The algebra $\mathsf{B}_{k,\ell}(n)$ and some of its representation-theoretic properties including its action on $\VV^{\ot k} \ot (\VV^*)^{\ot \ell}$ can be found, for example,  in \cite{BCHLLS}.)

In \cite[Sec.~7.4]{RW1} (see also \cite{RW2}),  Rubey and Westbury consider the Brauer algebra $\mathsf{B}_k(-2n)$, the related Brauer
diagram category, and the commuting actions of $\mathsf{B}_k(-2n)$ and the symplectic group
$\mathsf{Sp}_{2n}$ afforded by the above surjection.    They show that the central idempotent $E = \frac{1}{(n+1)!} \sum_{d \in \mathcal{Br}_{n+1}} d$, obtained by summing all the Brauer diagrams $d$
with $2n+2$ vertices, corresponds  to the
one-dimensional trivial $\mathsf{B}_{n+1}(-2n)$-module and generates the kernel of
the surjection $\mathsf{B}_k(-2n) \to \End_{\mathsf{Sp}_{2n}} (\VV^{\ot k})$ for all $k \ge n+1$.   As a result,
they obtain the fundamental theorems of invariant theory for the symplectic groups from  
Brauer algebra considerations. The Brauer diagram category also is a key ingredient in the papers  of Hu and Xiao \cite{HX}  and of  Lehrer and Zhang
 \cite{LZ1, LZ2}  in proving  that the kernel of the surjection is principally
generated and in establishing analogous results for the quantum version of the Brauer algebra, the Birman-Murakami-Wenzl algebra.   
  
Theorem \ref{T:present} in Section 2 below
gives a presentation for $\P_k(n)$ by generators and relations, and Theorem \ref{T:Phi} (due originally to Jones \cite{J}, see also \cite[Thm.~3.6]{HR}) describes
a basis for the image and kernel of $\Phi_{k,n}: \P_k(n) \to \End_{\S_n}(\M_n^{\ot k})$.  These results  combine to provide the First Fundamental Theorem of Invariant Theory for the symmetric group $\S_n$.  The following theorem, which is Theorem 
\ref{T:2ndfund},  
 gives  the  second fundamental theorem.  
  
\begin{thm}\label{T:SecondFund} (Second Fundamental Theorem of Invariant Theory for $\S_n$) \, For all $k,n\in \ZZ_{\ge 1}$,   $\im \Phi_{k,n}
= \End_{\S_n}(\modu^{\ot k})$ is generated by the partition algebra generators and relations in Theorem \ref{T:present} (a)-(c) together with the one additional relation $\ef_{k,n} = 0$ in the case that $2k > n$.   When $k \ge n$,  the relation $\ef_{k,n} = 0$ can be replaced with  $\ef_{n,n} = 0$.
\end{thm}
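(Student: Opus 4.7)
The plan is to assemble Theorem \ref{T:SecondFund} almost entirely from the machinery already established earlier in the paper, treating it as the natural synthesis of the presentation of $\P_k(n)$ (Theorem \ref{T:present}), the Schur--Weyl surjection (Theorem \ref{T:Phi}), and the identification of the kernel of $\Phi_{k,n}$ with a principal two-sided ideal (Theorem \ref{thm:generator}). Since $\Phi_{k,n}\colon \P_k(n)\to \End_{\S_n}(\modu^{\ot k})$ is surjective, the First Isomorphism Theorem yields $\End_{\S_n}(\modu^{\ot k})\cong \P_k(n)/\ker\Phi_{k,n}$, so a presentation of the image is obtained by adjoining to the presentation of $\P_k(n)$ relations that describe a generating set of $\ker\Phi_{k,n}$.

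I would then split into cases on $n$ versus $2k$. In the ``stable range'' $n\ge 2k$, the map $\Phi_{k,n}$ is an isomorphism, so $\ker\Phi_{k,n}=0$ and no relation beyond those of Theorem \ref{T:present}(a)--(c) is needed; this case is essentially a restatement of \cite{J} and already records the First Fundamental Theorem in Theorem \ref{T:Phi}. In the ``unstable range'' $2k>n$, Theorem \ref{thm:generator} provides precisely the missing ingredient: $\ker\Phi_{k,n}$ is the two-sided ideal generated by the single orbit basis element $\ef_{k,n}$. Thus quotienting by the relation $\ef_{k,n}=0$ converts the partition algebra presentation into a presentation of $\End_{\S_n}(\modu^{\ot k})$, proving the first assertion.

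For the second assertion, I would invoke the natural embedding $\P_n(n)\hookrightarrow \P_k(n)$ of the tower \eqref{eq:tower}, extended $k-n$ times, and the statement (already asserted in the abstract, and which will be established in the body of the paper) that under this embedding the image of $\ef_{n,n}$ in $\P_k(n)$ also generates $\ker\Phi_{k,n}$ as a two-sided ideal for every $k\ge n$. Granting that fact, the ideals $\langle \ef_{k,n}\rangle$ and $\langle \ef_{n,n}\rangle$ in $\P_k(n)$ coincide, so replacing the relation $\ef_{k,n}=0$ by $\ef_{n,n}=0$ leaves the quotient unchanged, yielding the same presentation of $\End_{\S_n}(\modu^{\ot k})$.

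The step I expect to carry the real content is therefore not the synthesis itself but the earlier Theorem \ref{thm:generator} (principal generation of $\ker\Phi_{k,n}$ by $\ef_{k,n}$) together with its companion statement about $\ef_{n,n}$. The delicate point in the latter is that one must verify that conjugating and multiplying the embedded $\ef_{n,n}$ by elements of $\P_k(n)$ can recover $\ef_{k,n}$ (up to a nonzero scalar), which is controlled by the orbit basis multiplication rule of Theorem \ref{C:mult}; all other ingredients of Theorem \ref{T:SecondFund} then follow formally from the quotient description of the image of $\Phi_{k,n}$.
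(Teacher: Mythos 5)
Your proposal is correct and follows essentially the same route as the paper: the theorem is obtained by combining the presentation of $\P_k(n)$ (Theorem \ref{T:present}), the surjectivity of $\Phi_{k,n}$ with the principal generation of its kernel by $\ef_{k,n}$ when $2k>n$ (Theorem \ref{thm:generator}), and the replacement of $\ef_{k,n}$ by the embedded $\ef_{n,n}$ via Theorem \ref{T:gensk>n}, exactly as the paper's two-sentence justification after the theorem does. The only minor imprecision is your phrase ``conjugating and multiplying the embedded $\ef_{n,n}$ by elements of $\P_k(n)$ can recover $\ef_{k,n}$''; the actual mechanism in the paper is first sandwiching $\ef_{n,n}\ot(\blvertedge)^{\ot k-n}$ by orbit identity elements to obtain $\ef_{n,n}\ot(\vertedge)^{\ot k-n}$ (Proposition \ref{P:one}) and then \emph{squaring} that element (equation \eqref{eq:square}) to exhibit $\ef_{k,n}$ as a combination, but since you explicitly defer this to the companion theorem, the overall synthesis is sound.
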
 
 
The last sentence in Theorem \ref{T:SecondFund}  is the counterpart of the results for the classical groups, and it  comes from identifying 
 $\ef_{n,n}$ with its embedded image in $\P_k(n)$ for $k \ge n$ (see Theorem \ref{T:gensk>n}).

The  essential idempotent  $\e_{k,n}$ is the orbit basis element corresponding to the set partition $\pi_{k,n}$
in \eqref{eq:ef}.    In terms of the diagram basis of $\P_k(n)$, it has the expression
$\ef_{k,n} = \sum_{\vr}  \mu_{2k}(\pi_{k,n},\varrho) d_{\varrho},$  where $\mu_{2k}(\pi_{k,n},\varrho)$ 
is the M\"obius function of the set-partition lattice, and the sum is over the $\vr \in \Pi_{2k}$ with $\pi_{k,n} \preceq \varrho$.    
In the special case that $n=2k-1$,  the corresponding set partition  $\pi = \pi_{k,2k-1}$ has $2k$ singleton blocks.   All the diagram basis elements of $\P_k(2k-1)$ occur in the expression
$\ef_{k,2k-1} = \sum_{\varrho \in \Pi_{2k}}  \mu_{2k}(\pi,\varrho) d_{\varrho}$ in that case,  and 
$\big(\ef_{k,2k-1}\big)^2 \allowbreak = \frac{(-1)^k}{k!}\,\,\ef_{k,2k-1}$ (further details can be found in Section \ref{S:change}).

In \cite[Sec.~8]{RW1}, Rubey  and Westbury discuss the representation theory of the symmetric
groups from the viewpoint of the partition category and the combinatorics of set partitions.
For $n \ge 2k$, they consider a certain one-dimensional representation  for the 
algebra $\End_{\S_n}(\mathsf{M}_n^{\ot k})$ and let $E(k)$ denote the associated central idempotent. 
When  $n \ge 2k$,   $\End_{\S_n}(\mathsf{M}_n^{\ot k}) \cong \P_k(n)$,  and the representation is given by the action on the one-dimensional  $\P_k(n)$-module  $\P_{k,n}^{[n-k,k]}$ indexed by the partition $[n-k,k]$ (in our notation).
When $n \ge 2k+1$  and the one-dimensional  representation for the 
$\End_{\S_{n}}(\mathsf{M}_n^{\ot k+1}) \cong \P_{k+1}(n)$ is restricted 
to the diagrams having $k+1$ and $2(k+1)$ in the same block, the result  is
the one-dimensional module $\P_{k+\half,n}^{[n-1-k,k]}$ of $\P_{k+\half}(n)$,  with corresponding central 
idempotent, which they denote $E'(k+1)$.  They conjecture (see \cite[Conjecture~8.4.9]{RW1}) that the idempotents $E(k)$ and $E'(k+1)$ satisfy certain recurrence relations.  We do not prove 
their recurrence relation conjecture,  but rather  in \eqref{eq:Ek} and \eqref{eq:Ek+1},  we give exact expressions for $E(k)$ and $E'(k+1)$ by 
showing that  $E(k) = \Xi_{k,n}$  when $n \ge 2k$ and $E'(k+1)=\Xi_{k+\half,n}$  when $n \ge 2k+1$.  In the final section of this paper, we  
explain these connections more fully (see especially  Remark \ref{R:nottrue}).

\section{Two Bases for $\P_k(\para)$} \label{sec:bases}

\subsection{Set partition notation}
For $k \in \ZZ_{> 0}$, we consider the set partitions of  $[1,2k]:=\{1,2, \ldots, 2k\}$ into disjoint nonempty subsets, referred to as \emph{blocks} in this context,  and define
\begin{equation}
\begin{array}{rcl}
\Pi_{2k} &=& \left\{ \text{\,set partitions of $[1,2k]$\,} \right\}, \\
\Pi_{2k-1} &=& \left\{ \pi \in \Pi_{2k} \mid k \text{ and } 2k \text{\,are in the same block of\,} \pi \right\}.
\end{array}
\end{equation}
For $\pi \in \Pi_{2k}$, we let $|\pi|$ equal the number of blocks of $\pi$. 
For example, if
\begin{equation}\label{ex:setparts}
\begin{array}{rcl}
\pi &=& \big\{1,8,9,10 \,|\,  2,3 \,|\, 4,7\,|\, 5,6,11,12,14\,|\, 13\big\} \in \Pi_{14}, \\
\varrho &=& \big\{1,8,9,10\,|\,  2,3 \,|\,4 \,|\,  5,6,11,12 \,|\, 7,13, 14\big\} \in \Pi_{13} \subseteq \Pi_{14},
\end{array}
\end{equation}
then $\pi \not \in \Pi_{13}$ and  $|\pi| = |\varrho|=5$.
 
\subsection{The diagram basis}

For  $k \in \ZZ_{\ge 1}$  and  $\pi \in \Pi_{2k}$,  the diagram $d_\pi$  of $\pi$ has two rows of $k$ vertices each,
with the bottom vertices indexed by $1,2,\dots, k$ and the top vertices indexed by $k+1,k+2, \dots, 2k$
from left to right.    Vertices are connected by an edge if they lie in the same block of $\pi$.   Thus, to the set partition $\pi$ in \eqref{ex:setparts}, we associate the diagram
$$
d_{\pi} = \begin{array}{c} 
{\begin{tikzpicture}[scale=.7,line width=1.25pt] 
\foreach \i in {1,...,7} 
{ \path (\i,1) coordinate (T\i); \path (\i,0) coordinate (B\i); } 
\filldraw[fill= gray!40,draw=gray!40,line width=3.2pt]  (T1) -- (T7) -- (B7) -- (B1) -- (T1);
\draw[blue] (T1) .. controls +(.1,-.30) and +(-.1,-.30) .. (T2);
\draw[blue] (T2) .. controls +(.1,-.30) and +(-.1,-.30) .. (T3);
\draw[blue] (T1) .. controls +(0,-.30) and  +(0,.30) .. (B1);
\draw[blue] (T4) .. controls +(.1,-.30) and +(-.1,-.30) .. (T5) ;
\draw[blue] (T4) .. controls +(0,-.30) and +(0,.30) .. (B5) ;
\draw[blue] (T5) .. controls +(.1,-.38) and +(-.1,-.38) .. (T7) ;
\draw[blue] (B2) .. controls +(.1,.30) and +(-.1,.30) .. (B3) ;
\draw[blue] (B4) .. controls +(.1,.62) and +(-.1,.62) .. (B7) ;
\draw[blue] (B5) .. controls +(.1,.30) and +(-.1,.30) .. (B6) ;
\foreach \i in {1,...,7} 
{ \fill (T\i) circle (3pt); \fill (B\i) circle (3pt); } 
\draw (1,1.5) node  {{ 8}}; 
\draw (2,1.5) node {9}; 
\draw (3,1.5) node   {{10}};  
\draw (4,1.5) node  {{11}}; 
\draw (5,1.5) node  {{12}}; 
\draw (6,1.5) node  {{13}}; 
\draw (7,1.5) node  {{14}}; 
\draw (1.1,-.5) node  {{1}}; 
\draw (2.1,-.5) node  {2}; 
\draw (3.1,-.5) node  {{3}}; 
\draw (4.1,-.5) node  {{4}}; 
\draw (5.1,-.5) node  {{5}}; 
\draw (6.1,-.5) node  {{6}}; 
\draw (7.1,-.5) node {{7}}; 
\end{tikzpicture}}
\end{array}.
$$ 
The way the edges are drawn is immaterial,  what matters is that the connected components of the diagram $d_\pi$ 
correspond to the blocks of the set partition $\pi$. Thus,  $d_\pi$  represents the equivalence class of all diagrams with connected components equal to the blocks of   $\pi$.

Multiplication of two diagrams $d_{\pi_1}$, $d_{\pi_2}$ is accomplished by placing $d_{\pi_1}$ above $d_{\pi_2}$;
identifying the vertices in the bottom row of $d_{\pi_1}$ with those in the top row of $d_{\pi_2}$; 
concatenating  the edges; deleting all connected components that lie entirely in the middle row of the joined diagrams; and
multiplying by a factor of $\para$ for each such middle-row component.  
For example,  if
\begin{eqnarray*} d_{\pi_1} &=& {\begin{array}{c}
{\begin{tikzpicture}[scale=.7,line width=1.25pt] 
\foreach \i in {1,...,7} 
{ \path (\i,1) coordinate (T\i); \path (\i,0) coordinate (B\i); } 
\filldraw[fill= gray!40,draw=gray!40,line width=3.2pt]  (T1) -- (T7) -- (B7) -- (B1) -- (T1);
\draw[blue] (T1) .. controls +(.1,-.30) and +(-.1,-.30) .. (T2);
\draw[blue] (T2) .. controls +(.1,-.30) and +(-.1,-.30) .. (T3);
\draw[blue] (T1) .. controls +(0,-.30) and  +(0,.30) .. (B1);
\draw[blue] (T4) .. controls +(.1,-.30) and +(-.1,-.30) .. (T5) ;
\draw[blue] (T4) .. controls +(0,-.30) and +(0,.30) .. (B5) ;
\draw[blue] (T5) .. controls +(.1,-.38) and +(-.1,-.38) .. (T7) ;
\draw[blue] (B2) .. controls +(.1,.30) and +(-.1,.30) .. (B3) ;
\draw[blue] (B4) .. controls +(.1,.62) and +(-.1,.62) .. (B7) ;
\draw[blue] (B5) .. controls +(.1,.30) and +(-.1,.30) .. (B6) ;
\foreach \i in {1,...,7}  { \fill (T\i) circle (3pt); \fill (B\i) circle (3pt); } 
\end{tikzpicture}}\end{array}} 
\\
d_{\pi_2} &=& 
{\begin{array}{c}{\begin{tikzpicture}[scale=.7,line width=1.25pt] 
\foreach \i in {1,...,7} { \path (\i,1) coordinate (T\i); \path (\i,0) coordinate (B\i); } 
\filldraw[fill=gray!40,draw=gray!40,line width=3.2pt]  (T1) -- (T7) -- (B7) -- (B1) -- (T1);
\draw[blue] (T2) .. controls +(.1,-.30) and +(-.1,-.30) .. (T3);
\draw[blue] (T1) .. controls +(0,-.30) and  +(0,.30) .. (B2);
\draw[blue] (T4) .. controls +(.1,-.55) and +(.1,-.55) .. (T7) ;
\draw[blue] (T6) .. controls +(.1,-.30) and +(-.1,+.30) .. (B7) ;
\draw[blue] (B2) .. controls +(.1,.30) and +(-.1,.30) .. (B3) ;
\draw[blue] (B3) .. controls +(.1,.40) and +(-.1,.40) .. (B5) ;
\draw[blue] (B5) .. controls +(.1,.30) and +(-.1,.30) .. (B6) ;
\foreach \i in {1,...,7}  { \fill (T\i) circle (3pt); \fill (B\i) circle (3pt); } 
\end{tikzpicture}} \end{array}} 
\end{eqnarray*}  
then
\begin{eqnarray}\label{DiagramMultiplication} \hspace{1.5cm} d_{\pi_1} d_{\pi_2} &=&\para^2 
{\begin{array}{c}  
{\begin{tikzpicture}[scale=.7,line width=1.25pt] 
\foreach \i in {1,...,7} 
{ \path (\i,1) coordinate (T\i); \path (\i,0) coordinate (B\i); } 
\filldraw[fill= gray!40,draw=gray!40,line width=3.2pt]  (T1) -- (T7) -- (B7) -- (B1) -- (T1);
\draw[blue] (T1) .. controls +(0,-.30) and +(0,-.30) .. (T2);
\draw[blue] (T1) .. controls +(.1,-.30) and +(-.1,+.30) .. (B2);
\draw[blue] (T2) .. controls +(.1,-.30) and +(-.1,-.30) .. (T3);
\draw[blue] (T4) .. controls +(.1,-.30) and +(-.1,-.30) .. (T5) ;
\draw[blue] (T5) .. controls +(.1,-.3) and +(.1,-.3) .. (T7) ;
\draw[blue] (T7) .. controls +(0,-.30) and +(0,+.30) .. (B7) ;
\draw[blue] (B2) .. controls +(.1,.30) and +(-.1,.30) .. (B3) ;
\draw[blue] (B3) .. controls +(.1,.40) and +(-.1,.40) .. (B5) ;
\draw[blue] (B5) .. controls +(.1,.30) and +(-.1,.30) .. (B6) ;
\foreach \i in {1,...,7} { \fill (T\i) circle (3pt); \fill (B\i) circle (3pt); } 
\end{tikzpicture}}
 \end{array} =  \para^2 d_{\pi_1 \ast \pi_2},} \end{eqnarray} 
where $\pi_1 \ast \pi_2$ is the set partition obtained by the concatenation of $\pi_1$ and $\pi_2$ in this process.   It is easy to confirm that the product depends only on the underlying set partition and is independent of the diagram chosen to represent $\pi$.
For any two set partitions $\pi_1,\pi_2 \in \Pi_{2k}$, we let $[\pi_1 \ast \pi_2]$ denote the number of blocks removed from the middle of the product $d_{\pi_1}d_{\pi_2}$, so that the product is given by 
\begin{equation}\label{eq:diagmult} d_{\pi_1} d_{\pi_2} = \para^{[\pi_1 \ast \pi_2]} d_{\pi_1 \ast \pi_2}. \end{equation} 

For  $\para \in \CC\setminus\{0\}$ and $k \in\ZZ_{\ge 1}$,  define the partition algebra $\P_k(\para)$ to be the $\CC$-span of $\{d_{\pi} \mid \pi \in \Pi_{2k}\}$ under the diagram multiplication in \eqref{eq:diagmult}.
We refer to  $\{d_{\pi} \mid \pi \in \Pi_{2k}\}$ as the \emph{diagram basis}.  Diagram multiplication is easily seen to be associative with identity 
element  $\mathsf{I}_k$ corresponding to the set partition,  $\big\{ 1,k+1 \mid 2, k+2 \mid \cdots \mid k,2k\big\}$,
where
\begin{equation}\label{id-def}
\mathsf{I}_k   =  
\begin{array}{c}\begin{tikzpicture}[scale=.7,line width=1.25pt] 
\foreach \i in {1,...,8} 
{ \path (\i,1) coordinate (T\i); \path (\i,0) coordinate (B\i); } 
\filldraw[fill= gray!40,draw=gray!40,line width=3.2pt]  (T1) -- (T8) -- (B8) -- (B1) -- (T1);
\draw[blue] (T1) -- (B1);\draw[blue] (T2) -- (B2);\draw[blue] (T3) -- (B3);\draw[blue] (T4) -- (B4);
\draw[blue] (T6) -- (B6);
\draw[blue] (T7) -- (B7);
\draw[blue] (T8) -- (B8);
\foreach \i in {1,2,3,4,6,7,8} { \fill (T\i) circle (3pt); \fill (B\i) circle (3pt); } 
\draw (T5) node  {$\qquad \cdots \qquad $ }; \draw (B5) node  { $\qquad \cdots\qquad $ }; 
\end{tikzpicture}\end{array}.
\hskip1.15in
\end{equation}   

{For $k \in \ZZ_{\ge 1}$,  the partition algebra $\P_k(\xi)$  has a presentation by the generators} 
\begin{align}
\label{s-gen}
\mathfrak{s}_i &=  
\begin{array}{c}\begin{tikzpicture}[scale=.7,line width=1.25pt] 
\foreach \i in {1,...,8} 
{ \path (\i,1) coordinate (T\i); \path (\i,0) coordinate (B\i); } 
\filldraw[fill= gray!40,draw=gray!40,line width=3.2pt]  (T1) -- (T8) -- (B8) -- (B1) -- (T1);
\draw[blue] (T1) -- (B1);
\draw[blue] (T3) -- (B3);
\draw[blue] (T4) -- (B5);
\draw[blue] (T5) -- (B4);
\draw[blue] (T6) -- (B6);
\draw[blue] (T8) -- (B8);
\foreach \i in {1,3,4,5,6,8} { \fill (T\i) circle (3pt); \fill (B\i) circle (3pt); } 
\draw (T2) node  {$\cdots$}; \draw (B2) node  {$\cdots$}; \draw (T7) node  {$\cdots$}; \draw (B7) node  {$\cdots$}; 
\draw  (T4)  node[black,above=0.05cm]{$\scriptstyle{i}$};
\draw  (T5)  node[black,above=0.0cm]{$\scriptstyle{i+1}$};
\end{tikzpicture}\end{array} \qquad 1 \le i \le k-1,
\\
\label{p-gen}
\mathfrak{p}_i &=  \frac{1}{\para}
\begin{array}{c}\begin{tikzpicture}[scale=.7,line width=1.25pt] 
\foreach \i in {1,...,8} 
{ \path (\i,1) coordinate (T\i); \path (\i,0) coordinate (B\i); } 
\filldraw[fill= gray!40,draw=gray!40,line width=3.2pt]  (T1) -- (T8) -- (B8) -- (B1) -- (T1);
\draw[blue] (T1) -- (B1);
\draw[blue] (T3) -- (B3);
\draw[blue] (T5) -- (B5);
\draw[blue] (T6) -- (B6);
\draw[blue] (T8) -- (B8);
\foreach \i in {1,3,4,5,6,8} { \fill (T\i) circle (3pt); \fill (B\i) circle (3pt); } 
\draw (T2) node  {$\cdots$}; \draw (B2) node  {$\cdots$}; \draw (T7) node  {$\cdots$}; \draw (B7) node  {$\cdots$}; 
\draw  (T4)  node[black,above=0.05cm]{$\scriptstyle{i}$};
\end{tikzpicture}\end{array} \qquad 1 \le i \le k, \\
\label{b-gen}
\mathfrak{b}_i &=  
\begin{array}{c}\begin{tikzpicture}[scale=.7,line width=1.25pt] 
\foreach \i in {1,...,8} 
{ \path (\i,1) coordinate (T\i); \path (\i,0) coordinate (B\i); } 
\filldraw[fill= gray!40,draw=gray!40,line width=3.2pt]  (T1) -- (T8) -- (B8) -- (B1) -- (T1);
\draw[blue] (T1) -- (B1);
\draw[blue] (T3) -- (B3);
\draw[blue] (T4) -- (B4) -- (B5) -- (T5) -- (T4);
\draw[blue] (T6) -- (B6);
\draw[blue] (T8) -- (B8);
\foreach \i in {1,3,4,5,6,8} { \fill (T\i) circle (3pt); \fill (B\i) circle (3pt); } 
\draw (T2) node  {$\cdots$}; \draw (B2) node  {$\cdots$}; \draw (T7) node  {$\cdots$}; \draw (B7) node  {$\cdots$}; 
\draw  (T4)  node[black,above=0.05cm]{$\scriptstyle{i}$};
\draw  (T5)  node[black,above=0.0cm]{$\scriptstyle{i+1}$};
\end{tikzpicture}\end{array} \qquad 1 \le i \le k-1,
\end{align}
and the relations in the next result.   
\begin{thm}{\rm \cite[Thm.~1.11]{HR}}\label{T:present}  Assume $k \in \ZZ_{\ge 1}$,  and set $\mathfrak{p}_{i+\half} = \mathfrak{b}_i \ \  (1 \le i \le k-1)$.     Then $\P_k(\xi)$ has a presentation as a unital associative algebra by  
generators $\mathfrak{s}_i \ \ (1 \leq i \le k-1)$,  $\mathfrak{p}_\ell \ \  (\ell \in \half \ZZ_{\ge 1},  \  1 \le \ell \le k)$,  and the following relations:
{\rm
\begin{itemize}
\item[{\rm(a)}]  $ \mathfrak{s}_i^2 = \mathsf{I}_k, \quad   \mathfrak{s}_i \mathfrak{s}_j = \mathfrak{s}_j\mathfrak{s}_i \quad  (|i-j| > 1)$, \quad    $\mathfrak{s}_i \mathfrak{s}_{i+1} \mathfrak{s}_i =  \mathfrak{s}_{i+1}
 \mathfrak{s}_i \mathfrak{s}_{i+1} \quad (1 \leq i \leq k-2)$;
\item[{\rm(b)}]  $\mathfrak{p}_\ell^2 = \mathfrak{p}_\ell,  \quad   \mathfrak{p}_\ell \mathfrak{p}_m = 
 \mathfrak{p}_m \mathfrak{p}_\ell \quad (m \ne \ell\pm \half), \quad   \mathfrak{p}_\ell \mathfrak{p}_{\ell\pm \half}  \mathfrak{p}_\ell =\mathfrak{p}_\ell  \quad   ( \mathfrak{p}_\half := \mathsf{I}_k =: \mathfrak{p}_{k+\half})$;
 \item[{\rm(c)}]  $\mathfrak{s}_i \mathfrak{p}_i \mathfrak{p}_{i +1} =  \mathfrak{p}_i \mathfrak{p}_{i +1},  \   \quad  
  \mathfrak{s}_i \mathfrak{p}_i \mathfrak{s}_i =  \mathfrak{p}_{i+1},   \ \quad   \mathfrak{s}_i \mathfrak{p}_{i+\half}= 
 \mathfrak{p}_{i+\half} \mathfrak{s}_i = \mathfrak{p}_{i+\half}  \ \ \ \, (1 \leq i \leq k-1), \newline 
 \mathfrak{s}_i  \mathfrak{s}_{i+1}  \mathfrak{p}_{i+\half} \mathfrak{s}_{i+1}\mathfrak{s}_i = \mathfrak{p}_{i+\frac{3}{2}} \ \ \ (1 \le i \le k-2), \quad  \mathfrak{s}_i \mathfrak{p}_\ell = \mathfrak{p}_\ell \mathfrak{s}_i \quad  (\ell \neq i-\half, i,i+\half, i+1, i+\frac{3}{2})$.
 \end{itemize}
 }

 \end{thm}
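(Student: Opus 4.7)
The plan is to prove the presentation in three stages: first verify that the designated diagrams $\mathfrak{s}_i$, $\mathfrak{p}_i$, $\mathfrak{b}_i = \mathfrak{p}_{i+\half}$ in $\P_k(\xi)$ satisfy the relations (a)--(c); then show they generate $\P_k(\xi)$ as an algebra; and finally show that any associative algebra generated by abstract symbols subject to these relations has dimension at most the Bell number $\mathsf{B}(2k) = \dim \P_k(\xi)$, so that the surjection from the abstract algebra is forced to be an isomorphism.

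Step~1 (verification) is a diagrammatic exercise. Each relation becomes a comparison of two diagram products under the rule (\ref{eq:diagmult}). For example, $\mathfrak{p}_i^2 = \mathfrak{p}_i$ follows because concatenating two copies of the $\mathfrak{p}_i$ diagram produces one middle-row loop contributing a factor $\xi$, which is cancelled by the $1/\xi$ normalization in (\ref{p-gen}); the triangle relation $\mathfrak{p}_\ell \mathfrak{p}_{\ell\pm\half}\mathfrak{p}_\ell=\mathfrak{p}_\ell$ similarly comes from a single loop removal; the braid and conjugation identities in (a) and (c) reduce to standard symmetric-group diagram moves together with the observation that $\mathfrak{s}_i$ merely relabels the endpoints on which $\mathfrak{p}_\ell$ sits.

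Step~2 (generation). I would argue by induction on $k$ that every diagram $d_\pi$ lies in the subalgebra $\widetilde{\P}_k$ generated by the $\mathfrak{s}_i$ and $\mathfrak{p}_\ell$. The symmetric-group generators $\mathfrak{s}_i$ produce all permutation diagrams in $\CC\S_k \subset \P_k(\xi)$; the $\mathfrak{p}_i$ introduce singleton blocks at position $i$; and the $\mathfrak{b}_i = \mathfrak{p}_{i+\half}$ merge adjacent strands into a single block. A general $\pi \in \Pi_{2k}$ is then built by sandwiching an appropriate combination of $\mathfrak{p}_i$'s and $\mathfrak{b}_i$'s between two permutations chosen so that conjugation by $\mathfrak{s}_i$'s moves each merge and each singleton to the correct pair of vertices. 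This is the standard ``top permutation $\cdot$ middle pattern $\cdot$ bottom permutation'' factorization of a set-partition diagram.

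Step~3 (dimension bound) is the real content, and is where I expect the main obstacle. Let $\widetilde{\P}_k$ denote the abstract algebra defined by generators and relations (a)--(c); Steps~1 and 2 yield a surjection $\widetilde{\P}_k \twoheadrightarrow \P_k(\xi)$, so it suffices to exhibit a spanning set of $\widetilde{\P}_k$ of cardinality at most $\mathsf{B}(2k)$, indexed by $\Pi_{2k}$. Using (a) to Coxeter-reduce the symmetric-group parts, the commutation and idempotency relations in (b) to collect the $\mathfrak{p}_\ell$'s into a normal ordering, and the mixed relations in (c) (especially $\mathfrak{s}_i\mathfrak{p}_i\mathfrak{s}_i = \mathfrak{p}_{i+1}$ and $\mathfrak{s}_i\mathfrak{s}_{i+1}\mathfrak{p}_{i+\half}\mathfrak{s}_{i+1}\mathfrak{s}_i = \mathfrak{p}_{i+\frac{3}{2}}$) to move generators past one another, every monomial in $\widetilde{\P}_k$ can be rewritten as a word of the form (permutation)$\cdot$(ordered product of $\mathfrak{p}_\ell$'s)$\cdot$(permutation) matching the factorization in Step~2. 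The hard part is the combinatorial bookkeeping that shows this rewriting terminates in a canonical form parametrized by $\Pi_{2k}$: one chooses a well-founded complexity measure (e.g., the pair (length, inversion number) with lexicographic order), verifies that each relation strictly decreases it or preserves normal form, and checks that distinct set partitions yield distinct normal-form words. The bijection with $\Pi_{2k}$ then gives $\dim \widetilde{\P}_k \le \mathsf{B}(2k)$, which combined with the surjection of Step~2 completes the proof.
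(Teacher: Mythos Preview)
The paper does not prove this theorem; it is quoted from \cite[Thm.~1.11]{HR} and stated without proof, with only a remark afterward noting that one relation is redundant (East's observation) and that $\mathfrak{s}_i$, $\mathfrak{p}_1$, $\mathfrak{b}_1$ already suffice as generators. So there is no in-paper argument to compare your proposal against.

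That said, your three-stage strategy (verify relations, prove generation, bound the dimension of the abstract algebra) is the standard architecture for presentation theorems and is the right shape. Steps~1 and~2 are routine and your descriptions are adequate. The genuine content, as you correctly flag, is Step~3, and here your proposal is only a sketch: asserting that ``every monomial can be rewritten'' into a normal form indexed by $\Pi_{2k}$ via a terminating rewriting system is exactly the thing that needs proof, and your description does not supply the invariant or the termination argument in enough detail to be checkable. In the original Halverson--Ram proof the dimension bound is not obtained by a direct confluent-rewriting argument but rather by exploiting the tower structure $\P_{k-1}(\xi) \subset \P_{k-\frac{1}{2}}(\xi) \subset \P_k(\xi)$: one shows inductively that the abstract algebra at level $k$ is spanned by elements of the form $a\,\mathfrak{p}_\ell\, b$ with $a,b$ in the level below, and counts. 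If you want to carry out your version, you would need to specify the normal form precisely (which $\mathfrak{p}_\ell$'s appear, in what order, and how the flanking permutations are constrained so that the count comes out to $\mathsf{B}(2k)$ rather than something larger) and give an actual complexity measure that every rewrite rule decreases.
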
   
 
\begin{remark} It is easily seen from the relations that $\P_k(\para)$ is generated by $\mathfrak{s}_i\ (1 \le i \le k-1), \mathfrak{p}_1$,  and $\mathfrak{b}_1 = \mathfrak{p}_{1+\half}.$    It was observed by East \cite[Remark 37]{E} that the relation
$ \mathfrak{p}_{i}\mathfrak{p}_{i+1}\mathfrak{s_i} =  \mathfrak{p}_i \mathfrak{p}_{i +1}$, which
was included in \cite[Thm.~1.11]{HR}, can be derived from the others.    Several different presentations for $\P_k(n)$ can be found in \cite{E}.
\end{remark}

If $\pi_1, \pi_2 \in \Pi_{2k-1}$, so that $k$ and $2k$ are in the same block in both $\pi_1$ and $\pi_2$, then $k$ and $2k$ are also in the same block of $\pi_1 \ast \pi_2$. Thus, for $k \in \ZZ_{\ge 1}$, we define $\P_{k-\frac{1}{2}}(\para) \subset \P_{k}(\para)$ to be the $\CC$-span of $\{d_\pi \mid \pi \in \Pi_{2k-1} \subset \Pi_{2k}\}$. There is also an embedding  $\P_{k}(\para) \subset  \P_{k+\frac{1}{2}}(\para)$ given by adding a top and a bottom node to the right of any diagram in $\P_k(\para)$ and a vertical edge connecting them. Setting $\P_0(\para) = \CC$, we have a tower of embeddings
\begin{equation}
\P_0(\para)  \cong  \P_{\frac{1}{2}}(\para) \subset  \P_{1}(\para) \subset   \P_{1\frac{1}{2}}(\para) \subset  \P_{2}(\para)\subset  \P_{2\frac{1}{2}}(\para) \subset  \cdots
\end{equation}
with $\dimm \P_k(\para) = |\Pi_{2k}| =  \mathsf{B}(2k)$ (the $2k$-th Bell number) for each $k \in \frac{1}{2}\ZZ_{\ge 1}$.

For $\pi \in \Pi_{2k}$, the  {\it propagating number} $\pn(\pi)$ is  the number of blocks of $\pi$ which intersect both the bottom row $\{1,2, \ldots, k\}$ and the top row $\{k+1, \ldots, 2k\}$. It is straightforward to verify that
\begin{equation}\label{eq:pn}
\pn(\pi_1 \ast \pi_2) \le \min(\pn(\pi_1) , \pn(\pi_2)),
\end{equation}
and thus, for each $0 \le \ell \le k$, $J_\ell :=  \mathsf{span}_\CC\{ d_\pi \mid \pi \in \Pi_{2k}, \pn(\pi) \le \ell\}$ is a two-sided ideal of $\P_k(\para)$. These ideals are important for the Jones basic construction of $\P_k(\para)$ (see, for example, \cite{HR}). 
 
\subsection{The orbit basis}

For $k \in \ZZ_{\ge 1}$, the set partitions  $\Pi_{2k}$ of $[1,2k]$ form a  lattice (a partially ordered set (poset) for which each pair has a least upper bound and a greatest lower bound) under the partial order given by
\begin{equation}
\pi \preceq \vr \quad \hbox{ if every block of $\pi$ is contained in a block of $\vr$.}
\end{equation}
In this case we say that $\pi$ is a {\it refinement} of $\vr$, and that $\vr$ is a {\it coarsening} of $\pi$, so that $\Pi_{2k}$ is partially ordered by refinement.  

 For $k \in \half \ZZ_{\ge 1}$, there is  a second basis $\left\{x_\pi  \mid \pi \in \Pi_{2k} \right\}$ of $\P_k(\para)$, called the \emph{orbit basis}, that is 
defined by the following coarsening relation with respect to the diagram basis:
\begin{equation}\label{refinement-relation}
d_\pi = \sum_{\pi \preceq \vr} x_\vr.
\end{equation}
Thus, the diagram basis element $d_\pi$ is the sum of all orbit basis elements $x_\vr$ for which $\vr$ is  coarser than $\pi$. 
For the remainder of the paper,  we adopt the following convention: 
\begin{equation}
\begin{array}{l} \hbox{\emph{Diagrams with white vertices indicate orbit basis elements, and}} \\
\hbox{\emph{those with black vertices indicate diagram basis elements.}} 
\end{array}
\end{equation}
For example, the expression below writes the diagram $d_{14|2|3}$ in $\P_2(\xi)$ in terms of the orbit basis,
\begin{equation*}
\begin{array}{c}
\begin{tikzpicture}[xscale=.5,yscale=.5,line width=1.25pt] 
\foreach \i in {1,2}  { \path (\i,1.25) coordinate (T\i); \path (\i,.25) coordinate (B\i); } 
\filldraw[fill= black!12,draw=black!12,line width=4pt]  (T1) -- (T2) -- (B2) -- (B1) -- (T1);
\draw[blue] (B1) -- (T2);
\foreach \i in {1,2}  { \fill (T\i) circle (4pt); \fill  (B\i) circle (4pt); } 
\end{tikzpicture}
\end{array}
=
\begin{array}{c}
\begin{tikzpicture}[xscale=.5,yscale=.5,line width=1.25pt] 
\foreach \i in {1,2}  { \path (\i,1.25) coordinate (T\i); \path (\i,.25) coordinate (B\i); } 
\filldraw[fill= black!12,draw=black!12,line width=3pt]  (T1) -- (T2) -- (B2) -- (B1) -- (T1);
\draw[blue] (B1) -- (T2);
\foreach \i in {1,2}  { \filldraw[fill=white,draw=black,line width = 1pt] (T\i) circle (4pt); \filldraw[fill=white,draw=black,line width = 1pt]  (B\i) circle (4pt); } 
\end{tikzpicture}
\end{array}
+\begin{array}{c}
\begin{tikzpicture}[xscale=.5,yscale=.5,line width=1.25pt] 
\foreach \i in {1,2}  { \path (\i,1.25) coordinate (T\i); \path (\i,.25) coordinate (B\i); } 
\filldraw[fill= black!12,draw=black!12,line width=3pt]  (T1) -- (T2) -- (B2) -- (B1) -- (T1);
\draw[blue] (B1) -- (T2);
\draw[blue] (B1) -- (B2);
\draw[blue] (T2) -- (B2);
\foreach \i in {1,2}  { \filldraw[fill=white,draw=black,line width = 1pt] (T\i) circle (4pt); \filldraw[fill=white,draw=black,line width = 1pt]  (B\i) circle (4pt); } 
\end{tikzpicture}
\end{array}
+\begin{array}{c}
\begin{tikzpicture}[xscale=.5,yscale=.5,line width=1.25pt] 
\foreach \i in {1,2}  { \path (\i,1.25) coordinate (T\i); \path (\i,.25) coordinate (B\i); } 
\filldraw[fill= black!12,draw=black!12,line width=3pt]  (T1) -- (T2) -- (B2) -- (B1) -- (T1);
\draw[blue] (B1) -- (T2);
\draw[blue] (T1) -- (T2);
\draw[blue] (T1) -- (B1);
\foreach \i in {1,2}  { \filldraw[fill=white,draw=black,line width = 1pt] (T\i) circle (4pt); \filldraw[fill=white,draw=black,line width = 1pt]  (B\i) circle (4pt); } 
\end{tikzpicture}
\end{array}
+\begin{array}{c}
\begin{tikzpicture}[xscale=.5,yscale=.5,line width=1.25pt] 
\foreach \i in {1,2}  { \path (\i,1.25) coordinate (T\i); \path (\i,.25) coordinate (B\i); } 
\filldraw[fill= black!12,draw=black!12,line width=3pt]  (T1) -- (T2) -- (B2) -- (B1) -- (T1);
\draw[blue] (B1) -- (T2);
\draw[blue] (T1) -- (B2);
\foreach \i in {1,2}  { \filldraw[fill=white,draw=black,line width = 1pt] (T\i) circle (4pt); \filldraw[fill=white,draw=black,line width = 1pt]  (B\i) circle (4pt); } 
\end{tikzpicture}
\end{array}
+\begin{array}{c}
\begin{tikzpicture}[xscale=.5,yscale=.5,line width=1.25pt] 
\foreach \i in {1,2}  { \path (\i,1.25) coordinate (T\i); \path (\i,.25) coordinate (B\i); } 
\filldraw[fill= black!12,draw=black!12,line width=3pt]  (T1) -- (T2) -- (B2) -- (B1) -- (T1);
\draw[blue] (B1) -- (T2);
\draw[blue] (T1) -- (T2);\draw[blue] (B1) -- (B2);
\draw[blue] (T1) -- (B1);
\draw[blue] (T2) -- (B2);
\draw[blue] (T1) -- (B2);
\foreach \i in {1,2}  { \filldraw[fill=white,draw=black,line width = 1pt] (T\i) circle (4pt); \filldraw[fill=white,draw=black,line width = 1pt]  (B\i) circle (4pt); } 
\end{tikzpicture}
\end{array}.
\end{equation*}

We define the propagating number of the diagrams  $d_\pi$ and $x_{\pi}$ to be 
the propagating number of the set partition $\pi$,  so  $\pn(d_\pi)= \pn(\pi) = \pn(x_\pi)$. 

\begin{remark}
We refer to the basis $\{x_\pi  \mid \pi \in \Pi_{2k}\}$ as the orbit basis,  because when $\para = n$, the elements in this basis act on the tensor space $\modu^{\ot k}$ in a natural way that corresponds to $\S_n$-orbits on simple tensors (see \eqref{eq:Phi-coeffs-orbit}). In fact, in Jones' original definition of the partition algebra \cite{J}, the orbit basis appears first, and the diagram basis is defined later using the refinement relation \eqref{refinement-relation}.   However, as we see in Section \ref{sec:Mult}, multiplication is simpler and more natural in the diagram basis, and for this reason the diagram basis is most commonly used when working with $\P_k(n)$.
\end{remark}

\subsection{Change of basis}\label{S:change}   
  
The transition matrix between the diagram basis  and the orbit basis determined by \eqref{refinement-relation}  is the matrix $\zeta_{2k}$, called the \emph{zeta matrix} of the poset $\Pi_{2k}$. It is unitriangular with respect to any extension to a linear order, and thus it is invertible, confirming that indeed the elements $x_{\pi}, \pi \in \Pi_{2k}$,  form a basis of $\P_k(\para)$. 
The inverse of $\zeta_{2k}$ is the matrix $\mu_{2k}$ of the M\"obius function of the set partition lattice, and it satisfies
\begin{equation}\label{eq:mobiusa}
x_\pi = \sum_{\pi \preceq \vr} \mu_{2k}(\pi,\vr) d_\vr,
\end{equation}
where $\mu_{2k}(\pi,\vr)$ is the $(\pi,\vr)$ entry of $\mu_{2k}$. The M\"obius function of the set partition lattice has an easily computed formula.  If  $\pi \preceq \vr$,  and $\vr$ consists of $\ell$ blocks such that the $i$th block of $\vr$ is the union of $b_i$ blocks  of $\pi$, then (see, for example, \cite[p.~30]{St}), 
\begin{equation}\label{eq:mobiusb}
 \mu_{2k}(\pi,\vr) = \prod_{i=1}^\ell (-1)^{b_i-1}(b_i-1)!.
\end{equation}
 
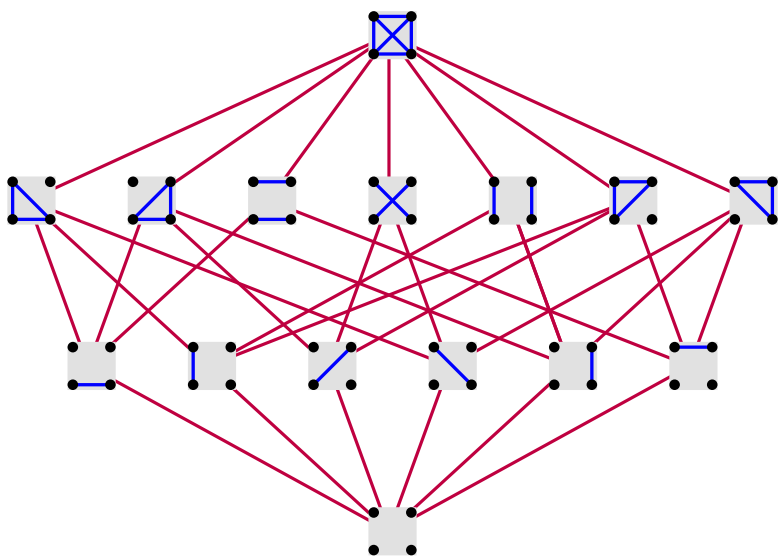
\begin{figure}[h!]
$$
  \begin{tikzpicture}[xscale=.08,yscale=.22,line width=1.25pt] 
  nodes={draw=white, 
      minimum width=.2cm, minimum height=.5cm
    }

\path (70,30) coordinate (m-1234); 

\path (10,20) coordinate (m-123-4); 
\path (30,20) coordinate (m-124-3); 
\path (50,20) coordinate (m-12-34); 
\path (70,20) coordinate (m-14-23); 
\path (90,20) coordinate (m-13-24); 
\path (110,20) coordinate (m-134-2); 
\path (130,20) coordinate (m-1-234); 

\path (20,10) coordinate (m-12-3-4); 
\path (40,10) coordinate (m-13-2-4); 
\path (60,10) coordinate (m-14-2-3); 
\path (80,10) coordinate (m-23-1-4); 
\path (100,10) coordinate (m-1-3-24); 
\path (120,10) coordinate (m-1-2-34); 

\path (70,0) coordinate (m-1-2-3-4);

\draw (m-1234) node  {\abcd}; 

\draw (m-123-4) node  {\abcld}; 
\draw (m-124-3) node  {\abdlc}; 
\draw (m-12-34) node  {\ablcd}; 
\draw (m-134-2) node  {\acdlb}; 
\draw (m-13-24) node  {\aclbd}; 
\draw (m-14-23) node  {\adlbc}; 
\draw (m-1-234) node  {\albcd}; 

\draw (m-12-3-4) node  {\ablcld}; 
\draw (m-13-2-4) node  {\aclbld}; 
\draw (m-14-2-3) node  {\adlblc}; 
\draw (m-23-1-4) node  {\albcld}; 
\draw (m-1-3-24) node  {\albdlc}; 
\draw (m-1-2-34) node  {\alblcd};

\draw (m-1-2-3-4) node  {\alblcld}; 

\begin{scope}[on background layer]
 
\path  (m-1234) edge[purple,line width=1.2pt] (m-123-4);
\path  (m-1234) edge[purple,line width=1.2pt] (m-124-3);
\path  (m-1234) edge[purple,line width=1.2pt] (m-12-34);
\path  (m-1234) edge[purple,line width=1.2pt] (m-134-2);
\path  (m-1234) edge[purple,line width=1.2pt] (m-13-24);
\path  (m-1234) edge[purple,line width=1.2pt] (m-14-23);
\path  (m-1234) edge[purple,line width=1.2pt] (m-1-234);

\path  (m-12-3-4) edge[purple,line width=1.2pt] (m-123-4);
\path  (m-13-2-4) edge[purple,line width=1.2pt] (m-123-4);
\path  (m-23-1-4) edge[purple,line width=1.2pt] (m-123-4);
\path  (m-12-3-4) edge[purple,line width=1.2pt] (m-124-3);
\path  (m-14-2-3) edge[purple,line width=1.2pt] (m-124-3);
\path  (m-1-3-24) edge[purple,line width=1.2pt] (m-124-3);
\path  (m-12-3-4) edge[purple,line width=1.2pt] (m-12-34);
\path  (m-1-2-34) edge[purple,line width=1.2pt] (m-12-34);
\path  (m-13-2-4) edge[purple,line width=1.2pt] (m-134-2);
\path  (m-14-2-3) edge[purple,line width=1.2pt] (m-134-2);
\path  (m-1-2-34) edge[purple,line width=1.2pt] (m-134-2);
\path  (m-13-2-4) edge[purple,line width=1.2pt] (m-13-24);
\path  (m-1-3-24) edge[purple,line width=1.2pt] (m-13-24);
\path  (m-1-3-24) edge[purple,line width=1.2pt] (m-13-24);
\path  (m-14-2-3) edge[purple,line width=1.2pt] (m-14-23);
\path  (m-23-1-4) edge[purple,line width=1.2pt] (m-14-23);
\path  (m-1-2-34) edge[purple,line width=1.2pt] (m-1-234);
\path  (m-1-3-24) edge[purple,line width=1.2pt] (m-1-234);
\path  (m-23-1-4) edge[purple,line width=1.2pt] (m-1-234);

\path  (m-12-3-4) edge[purple,line width=1.2pt] (m-1-2-3-4);
\path  (m-13-2-4) edge[purple,line width=1.2pt] (m-1-2-3-4);
\path  (m-14-2-3) edge[purple,line width=1.2pt] (m-1-2-3-4);
\path  (m-23-1-4) edge[purple,line width=1.2pt] (m-1-2-3-4);
\path  (m-1-3-24) edge[purple,line width=1.2pt] (m-1-2-3-4);
\path  (m-1-2-34) edge[purple,line width=1.2pt] (m-1-2-3-4);
    \end{scope}
  \end{tikzpicture}
$$  
\caption{Hasse diagram of the partition lattice $\Pi_4$ in the refinement ordering.\label{fig:Hasse}}
\end{figure}

The Hasse diagram of the partition lattice of $\Pi_4$ is shown in Figure \ref{fig:Hasse}. In the change of basis between the orbit basis and the diagram basis (in either direction), each basis element is an integer linear combination of the basis elements above or equal to it in the Hasse diagram. 
If we apply  formula \eqref{eq:mobiusa} to express $x_{1 \,|\,  2\,|\,  3\,|\,  4}$ and $x_{14\,|\,  2\,|\, 3}$ in the orbit basis in terms of the diagram basis in $\P_2(\para)$, we get

\begin{align}\begin{split}\label{eq:orbit2diag}
\begin{array}{c}\orbitalblcld\end{array} =&  \begin{array}{c}\alblcld\end{array}  - \begin{array}{c}\ablcld\end{array} - \begin{array}{c}\aclbld\end{array} - 
 \begin{array}{c}\adlblc\end{array} - \begin{array}{c}\albcld\end{array} - \begin{array}{c}\albdlc\end{array} - \begin{array}{c}\alblcd\end{array}
 +2 \begin{array}{c}\abcld\end{array} \\ &+2 \begin{array}{c}\abdlc\end{array} + \begin{array}{c}\ablcd\end{array}+ \begin{array}{c}\adlbc\end{array}
+ \begin{array}{c}\aclbd\end{array} + 2 \begin{array}{c}\acdlb\end{array} +2 \begin{array}{c}\albcd\end{array}  - 6  \begin{array}{c}\abcd\end{array}, \\
\begin{array}{c}\orbitadlblc\end{array} = & \begin{array}{c}\adlblc\end{array} - \begin{array}{c}\adlbc\end{array} - \begin{array}{c}\acdlb\end{array} - \begin{array}{c}\abdlc\end{array}  + 2  \begin{array}{c}\abcd \end{array}. 
\end{split} \end{align}  

\begin{remark}\label{R:Hasse}
If $\pi \in \Pi_{2k-1} \subset \Pi_{2k}$, then $k$ and $2k$ are in the same block of $\pi$. When expressing $x_\pi$ in terms of the diagram basis or $d_\pi$ in terms of the orbit basis, the sum is over coarsenings of $\pi$, so the expression will always involve only set partitions also in $\Pi_{2k-1}$. Thus, the expressions in \eqref{refinement-relation} and \eqref{eq:mobiusa} apply equally well to the algebras $\P_{k-\frac{1}{2}}(\para)$. The corresponding Hasse diagram is given by the sublattice  of partitions greater than or equal to $\{1\,|\,2\,| \cdots |\, k-1\,|\, k+1\,|\, k+2\,| \cdots |\, 2k-1\,|\, k, 2k \}$.  For example, the Hasse diagram for $\Pi_3$ is found inside that of $\Pi_4$ in Figure  \ref{fig:Hasse} as those partitions greater than or equal to $\begin{array}{c}\begin{tikzpicture}[scale=.4,line width=1.25pt] 
\foreach \i in {1,2}  { \path (\i,1) coordinate (T\i); \path (\i,0) coordinate (B\i); } 
\filldraw[fill= gray!40,draw=gray!40,line width=3.2pt]  (T1) -- (T2) -- (B2) -- (B1) -- (T1);
\draw[blue] (T2) -- (B2);
\foreach \i in {1,2} { \fill (T\i) circle (4pt); \fill (B\i) circle (4pt); } 
\end{tikzpicture}\end{array}$.  
\end{remark} 

\begin{remark}\label{R:orbid} The orbit diagram 
$\mathsf{I}_k^{\boldsymbol \circ} :=\begin{array}{c} 
\begin{tikzpicture}[xscale=.35,yscale=.35,line width=1.0pt] 
\foreach \i in {1,2,3,4}  { \path (\i,1.25) coordinate (T\i); \path (\i,.25) coordinate (B\i); } 
\filldraw[fill= black!12,draw=black!12,line width=4pt]  (T1) -- (T4) -- (B4) -- (B1) -- (T1);
\draw[blue] (T1) -- (B1);
\draw[blue] (T2) -- (B2);
\draw[blue] (T4) -- (B4);
\draw (T3) node {$\cdots$};\draw (B3) node {$\cdots$};
\foreach \i in {1,2,4}  { \filldraw[fill=white,draw=black,line width = 1pt] (T\i) circle (4pt); \filldraw[fill=white,draw=black,line width = 1pt]  (B\i) circle (4pt); } 
\end{tikzpicture} 
\end{array}$ is \emph{not} the identity element  in $\P_k(n)$. To get the identity element $\mathsf{I}_k$,  we must add all coarsenings to $\mathsf{I}_k^{\boldsymbol \circ}$,
as in the first line below:
\begin{align*}
\mathsf{I}_3 &= \begin{array}{c}
\begin{tikzpicture}[xscale=.45,yscale=.45,line width=1.0pt] 
\foreach \i in {1,2,3}  { \path (\i,1.25) coordinate (T\i); \path (\i,.25) coordinate (B\i); } 
\filldraw[fill= black!12,draw=black!12,line width=4pt]  (T1) -- (T3) -- (B3) -- (B1) -- (T1);
\draw[blue] (T1) -- (B1);
\draw[blue] (T2) -- (B2);
\draw[blue] (T3) -- (B3);
\foreach \i in {1,2,3}  { \filldraw[fill=black,draw=black,line width = 1pt] (T\i) circle (4pt); \filldraw[fill=black,draw=black,line width = 1pt]  (B\i) circle (4pt); } 
\end{tikzpicture}\end{array}
=
\begin{array}{c}\begin{tikzpicture}[xscale=.45,yscale=.45,line width=1.0pt] 
\foreach \i in {1,2,3}  { \path (\i,1.25) coordinate (T\i); \path (\i,.25) coordinate (B\i); } 
\filldraw[fill= black!12,draw=black!12,line width=4pt]  (T1) -- (T3) -- (B3) -- (B1) -- (T1);
\draw[blue] (T1) -- (B1);
\draw[blue] (T2) -- (B2);
\draw[blue] (T3) -- (B3);
\foreach \i in {1,2,3}  { \filldraw[fill=white,draw=black,line width = 1pt] (T\i) circle (4pt); \filldraw[fill=white,draw=black,line width = 1pt]  (B\i) circle (4pt); } 
\end{tikzpicture}\end{array}
+
\begin{array}{c}\begin{tikzpicture}[xscale=.45,yscale=.45,line width=1.0pt] 
\foreach \i in {1,2,3}  { \path (\i,1.25) coordinate (T\i); \path (\i,.25) coordinate (B\i); } 
\filldraw[fill= black!12,draw=black!12,line width=4pt]  (T1) -- (T3) -- (B3) -- (B1) -- (T1);
\draw[blue] (T1) -- (B1)--(B2);
\draw[blue] (T1)--(T2) -- (B2);
\draw[blue] (T3) -- (B3);
\foreach \i in {1,2,3}  { \filldraw[fill=white,draw=black,line width = 1pt] (T\i) circle (4pt); \filldraw[fill=white,draw=black,line width = 1pt]  (B\i) circle (4pt); } 
\end{tikzpicture}\end{array}
+
\begin{array}{c}\begin{tikzpicture}[xscale=.45,yscale=.45,line width=1.0pt] 
\foreach \i in {1,2,3}  { \path (\i,1.25) coordinate (T\i); \path (\i,.25) coordinate (B\i); } 
\filldraw[fill= black!12,draw=black!12,line width=4pt]  (T1) -- (T3) -- (B3) -- (B1) -- (T1);
\draw[blue] (T1) -- (B1);
\draw[blue] (T2) -- (B2);
\draw[blue] (T3) -- (B3);
\draw[blue] (T1) .. controls +(0,-.50) and +(0,-.50) .. (T3);
\draw[blue] (B1) .. controls +(0,.50) and +(0,.50) .. (B3);
\foreach \i in {1,2,3}  { \filldraw[fill=white,draw=black,line width = 1pt] (T\i) circle (4pt); \filldraw[fill=white,draw=black,line width = 1pt]  (B\i) circle (4pt); } 
\end{tikzpicture}\end{array}
+
\begin{array}{c}\begin{tikzpicture}[xscale=.45,yscale=.45,line width=1.0pt] 
\foreach \i in {1,2,3}  { \path (\i,1.25) coordinate (T\i); \path (\i,.25) coordinate (B\i); } 
\filldraw[fill= black!12,draw=black!12,line width=4pt]  (T1) -- (T3) -- (B3) -- (B1) -- (T1);
\draw[blue] (T1) -- (B1);
\draw[blue] (T2) -- (B2)--(B3);
\draw[blue] (T2)--(T3) -- (B3);
\foreach \i in {1,2,3}  { \filldraw[fill=white,draw=black,line width = 1pt] (T\i) circle (4pt); \filldraw[fill=white,draw=black,line width = 1pt]  (B\i) circle (4pt); } 
\end{tikzpicture}\end{array}
+
\begin{array}{c}\begin{tikzpicture}[xscale=.45,yscale=.45,line width=1.0pt] 
\foreach \i in {1,2,3}  { \path (\i,1.25) coordinate (T\i); \path (\i,.25) coordinate (B\i); } 
\filldraw[fill= black!12,draw=black!12,line width=4pt]  (T1) -- (T3) -- (B3) -- (B1) -- (T1);
\draw[blue] (T1) -- (B1) -- (B2);
\draw[blue] (T1)--(T2) -- (B2) -- (B3);
\draw[blue] (T2)--(T3) -- (B3);
\foreach \i in {1,2,3}  { \filldraw[fill=white,draw=black,line width = 1pt] (T\i) circle (4pt); \filldraw[fill=white,draw=black,line width = 1pt]  (B\i) circle (4pt); } 
\end{tikzpicture}\end{array}  \qquad \hbox{and} \\
\mathsf{I}_3^{\boldsymbol \circ}&=
\begin{array}{c}\begin{tikzpicture}[xscale=.45,yscale=.45,line width=1.0pt] 
\foreach \i in {1,2,3}  { \path (\i,1.25) coordinate (T\i); \path (\i,.25) coordinate (B\i); } 
\filldraw[fill= black!12,draw=black!12,line width=4pt]  (T1) -- (T3) -- (B3) -- (B1) -- (T1);
\draw[blue] (T1) -- (B1);
\draw[blue] (T2) -- (B2);
\draw[blue] (T3) -- (B3);
\foreach \i in {1,2,3}  { \filldraw[fill=white,draw=black,line width = 1pt] (T\i) circle (4pt); \filldraw[fill=white,draw=black,line width = 1pt]  (B\i) circle (4pt); } 
\end{tikzpicture}\end{array}
=
\begin{array}{c}\begin{tikzpicture}[xscale=.45,yscale=.45,line width=1.0pt] 
\foreach \i in {1,2,3}  { \path (\i,1.25) coordinate (T\i); \path (\i,.25) coordinate (B\i); } 
\filldraw[fill= black!12,draw=black!12,line width=4pt]  (T1) -- (T3) -- (B3) -- (B1) -- (T1);
\draw[blue] (T1) -- (B1);
\draw[blue] (T2) -- (B2);
\draw[blue] (T3) -- (B3);
\foreach \i in {1,2,3}  { \filldraw[fill=black,draw=black,line width = 1pt] (T\i) circle (4pt); \filldraw[fill=black,draw=black,line width = 1pt]  (B\i) circle (4pt); } 
\end{tikzpicture}\end{array}
-
\begin{array}{c}\begin{tikzpicture}[xscale=.45,yscale=.45,line width=1.0pt] 
\foreach \i in {1,2,3}  { \path (\i,1.25) coordinate (T\i); \path (\i,.25) coordinate (B\i); } 
\filldraw[fill= black!12,draw=black!12,line width=4pt]  (T1) -- (T3) -- (B3) -- (B1) -- (T1);
\draw[blue] (T1) -- (B1)--(B2);
\draw[blue] (T1)--(T2) -- (B2);
\draw[blue] (T3) -- (B3);
\foreach \i in {1,2,3}  { \filldraw[fill=black,draw=black,line width = 1pt] (T\i) circle (4pt); \filldraw[fill=black,draw=black,line width = 1pt]  (B\i) circle (4pt); } 
\end{tikzpicture}\end{array}
-
\begin{array}{c}\begin{tikzpicture}[xscale=.45,yscale=.45,line width=1.0pt] 
\foreach \i in {1,2,3}  { \path (\i,1.25) coordinate (T\i); \path (\i,.25) coordinate (B\i); } 
\filldraw[fill= black!12,draw=black!12,line width=4pt]  (T1) -- (T3) -- (B3) -- (B1) -- (T1);
\draw[blue] (T1) -- (B1);
\draw[blue] (T2) -- (B2);
\draw[blue] (T3) -- (B3);
\draw[blue] (T1) .. controls +(0,-.50) and +(0,-.50) .. (T3);
\draw[blue] (B1) .. controls +(0,.50) and +(0,.50) .. (B3);
\foreach \i in {1,2,3}  { \filldraw[fill=black,draw=black,line width = 1pt] (T\i) circle (4pt); \filldraw[fill=black,draw=black,line width = 1pt]  (B\i) circle (4pt); } 
\end{tikzpicture}\end{array}
-
\begin{array}{c}\begin{tikzpicture}[xscale=.45,yscale=.45,line width=1.0pt] 
\foreach \i in {1,2,3}  { \path (\i,1.25) coordinate (T\i); \path (\i,.25) coordinate (B\i); } 
\filldraw[fill= black!12,draw=black!12,line width=4pt]  (T1) -- (T3) -- (B3) -- (B1) -- (T1);
\draw[blue] (T1) -- (B1);
\draw[blue] (T2) -- (B2)--(B3);
\draw[blue] (T2)--(T3) -- (B3);
\foreach \i in {1,2,3}  { \filldraw[fill=black,draw=black,line width = 1pt] (T\i) circle (4pt); \filldraw[fill=black,draw=black,line width = 1pt]  (B\i) circle (4pt); } 
\end{tikzpicture}\end{array}
+2
\begin{array}{c}\begin{tikzpicture}[xscale=.45,yscale=.45,line width=1.0pt] 
\foreach \i in {1,2,3}  { \path (\i,1.25) coordinate (T\i); \path (\i,.25) coordinate (B\i); } 
\filldraw[fill= black!12,draw=black!12,line width=4pt]  (T1) -- (T3) -- (B3) -- (B1) -- (T1);
\draw[blue] (T1) -- (B1) -- (B2);
\draw[blue] (T1)--(T2) -- (B2) -- (B3);
\draw[blue] (T2)--(T3) -- (B3);
\foreach \i in {1,2,3}  { \filldraw[fill=black,draw=black,line width = 1pt] (T\i) circle (4pt); \filldraw[fill=black,draw=black,line width = 1pt]  (B\i) circle (4pt); } 
\end{tikzpicture}\end{array}.
\end{align*}
\end{remark}  
\section{Representation of $\P_k(n)$ on the Tensor Space $\modu^{\ot k}$}    

\subsection{Schur-Weyl duality}\label{sec:SWduality}

Assume $k,n \in \ZZ_{\ge 1}$.  Let $\{\us_j \mid  1\le j \le n\}$ be a basis for the permutation module $\modu$ of $\S_n$ so that
$\sigma.\us_j  = \us_{\sigma(j)}$ for all $\sigma \in \S_n$ and all $1 \le j \le n$.   For  $\rs = (r_1,\dots, r_k) \in [1,n]^k = \{1,2,\dots, n\}^k$, define $\us_\rs = \us_{r_1} \ot \cdots \ot \us_{r_k}$.  The elements $\us_\rs$ form a basis for the $\S_n$-module $\modu^{\ot k}$, where the $\S_n$-action is
given by the diagonal action  $\sigma. \us_\rs = \us_{\sigma(\rs)} := \us_{\sigma(r_1)} \ot \cdots \ot \us_{\sigma(r_n)}.$

Suppose $A \in \End(\modu^{\ot k})$  and $A  = \sum_{\rs, \sff \in [1,n]^k}  A_{\rs}^{\sff}\, \EE_{\rs}^\sff$, 
where $\{\EE_{\rs}^\sff\}$ is a basis for  $\End(\modu^{\ot k})$ of matrix units so that $\EE_{\rs}^\sff \us_{\tf} = \delta_{\rs,\tf} \us_{\sff}$,   $\delta_{\rs,\tf}$ being the Kronecker delta.   Then for any subgroup $\GG \subseteq \S_n$ and for 
$\End_{\GG}(\modu^{\ot k}) = \{ T \in \End(\modu^{\ot k}) \mid T \sigma = \sigma T$ for all $\sigma \in \GG$\}, we have
\begin{align}\label{eq:centcond}
 A\in \End_{\GG}(\modu^{\ot k}) & \iff   \sigma A  = A \sigma  \ \ \hbox{\rm for all} \ \ \sigma \in \GG \nonumber  \\
& \iff  \sum_{\sff \in [1,n]^k}  A_{\rs}^{\sff} \us_{\sigma(\sff)} = \sum_{\sff \in [1,n]^k}  A_{\sigma(\rs)}^{\sff} \us_{\sff}  \quad\hbox{\rm for all} \ \ \rs \in [1,n]^k \nonumber  \\
& \iff   A_{\rs}^{\sff}  = A_{\sigma(\rs)}^{\sigma(\sff)} \quad \hbox{\rm for all} \ \ \rs, \sff  \in [1,n]^k,  \sigma \in \GG. 
\end{align} 
 
 We adopt the shorthand notation  $(\rs|\rs') = (r_1,\dots, r_{2k}) \in [1,n]^{2k}$ when $\rs = (r_1,\dots,r_k) \in [1,n]^k$ and $\rs' = (r_{k+1},\dots, r_{2k}) \in [1,n]^k$.     
 Then for $\pi \in \Pi_{2k}$ and for all  $(\rs|\rs') \in [1,n]^{2k}$, we define 
 \begin{equation}\label{eq:Phi-coeffs-orbit}
  \Phi_{k,n}(x_\pi)_{\rs}^{\rs'} = 
  \begin{cases}  1  & \quad \text{if $r_a = r_b$ \emph{if and only if} $a$ and $b$ are in the same block of $\pi$,}  \\
 0  & \quad \text{otherwise},
  \end{cases} 
  \end{equation} 
 and set  $\Phi_{k,n}(x_\pi) = \sum_{(\rs|\rs')}  \Phi_{k,n}(x_{\pi})_\rs^{\rs'} \,  \EE_{\rs}^{\rs'}$.
 As $\{x_\pi \mid \pi \in \Pi_{2k}\}$ is a basis for $\P_k(n)$, we can extend $\Phi_{k,n}$ linearly to get a transformation
 $\Phi_{k,n}: \P_k(n) \rightarrow \End(\modu^{\ot k})$.   
 Observe that
 \begin{equation}
 \Phi_{k,n}(x_\pi) = 0  \quad \hbox{if $\pi$ has more than $n$ blocks,}
 \end{equation}
 for in that case,  there are not enough distinct values $r_a \in [1,n]$ to assign to each of the blocks.
 Furthermore, for $\sigma \in \S_n$, 
\begin{equation}\label{ref:orbitaction}
 \Phi_{k,n}(x_\pi)_{\sigma(\rs)}^{\sigma(\rs')}  =  \Phi_{k,n}(x_\pi)_{\rs}^{\rs'} \quad \text {for all \ $(\rs|\rs') \in \Pi_{2k}$.}
 \end{equation}
 which, together with  \eqref{eq:centcond}, implies that
 the image of $\Phi_{k,n}$ commutes with $\S_n$ and thus lies in $\End_{\S_n}(\modu^{\ot k})$.

Since the diagram basis  $\{d_\pi \mid \pi \in \Pi_{2k}\}$ is related to the orbit basis $\{x_\pi \mid \pi \in \Pi_{2k}\}$ by 
 the refinement relation \eqref{refinement-relation}, we have as an immediate consequence, 
 \begin{equation}\label{eq:Phi-coeffs-diagram}
  \Phi_{k,n}(d_\pi)_{\rs}^{\rs'} = 
  \begin{cases}  1  & \quad \text{if $r_a = r_b$ \emph{when} $a$ and $b$ are in the same block of $\pi$,}  \\
 0  & \quad \text{otherwise}.
  \end{cases} 
  \end{equation} 
  Note that we could, equivalently, define the map $\Phi_{k,n}$ on the diagram basis using \eqref{eq:Phi-coeffs-diagram} and argue that the action on the orbit basis \eqref{eq:Phi-coeffs-orbit} is forced by (the inverse of) the refinement relation \eqref{refinement-relation}, but it is more obvious in direction presented above.
 
 \begin{prop}\label{P:Phirepn} For $k,n  \in \ZZ_{\ge 1}$, the mapping $\Phi_{k,n}: \P_k(n) \rightarrow \End(\modu^{\ot k})$ affords a representation
 of $\P_k(n)$. \end{prop}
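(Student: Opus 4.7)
The plan is to work with the diagram basis $\{d_\pi \mid \pi \in \Pi_{2k}\}$, since the coefficient formula \eqref{eq:Phi-coeffs-diagram} is more convenient for the concatenation product than the orbit-basis formula \eqref{eq:Phi-coeffs-orbit}. Since $\Phi_{k,n}$ is linear and the diagram basis is related to the orbit basis by the invertible refinement relation \eqref{refinement-relation}, it suffices to verify the two conditions $\Phi_{k,n}(\mathsf{I}_k)=\id_{\modu^{\ot k}}$ and $\Phi_{k,n}(d_{\pi_1}d_{\pi_2})=\Phi_{k,n}(d_{\pi_1})\Phi_{k,n}(d_{\pi_2})$ for all $\pi_1,\pi_2\in\Pi_{2k}$. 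The first is immediate: \eqref{eq:Phi-coeffs-diagram} gives $\Phi_{k,n}(\mathsf{I}_k)_{\rs}^{\rs'} = 1$ exactly when $r_i = r_{i+k}'$ for all $i$, so $\Phi_{k,n}(\mathsf{I}_k) = \sum_{\rs} \EE_{\rs}^{\rs} = \id$.

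For the multiplicative condition, I reinterpret \eqref{eq:Phi-coeffs-diagram} as follows. View a tuple $(\rs|\rs') \in [1,n]^{2k}$ as a function $\phi\colon [1,2k]\to[1,n]$. Then $\Phi_{k,n}(d_\pi)_{\rs}^{\rs'}=1$ iff $\phi$ is constant on each block of $\pi$, and $0$ otherwise. Using the matrix-unit product $\EE_{\tf}^{\rs'}\EE_{\rs}^{\tf'}=\delta_{\tf,\tf'}\EE_{\rs}^{\rs'}$ and the convention $A(\us_\rs)=\sum_{\rs'}A_{\rs}^{\rs'}\us_{\rs'}$, I compute
\begin{equation*}
\bigl(\Phi_{k,n}(d_{\pi_1})\Phi_{k,n}(d_{\pi_2})\bigr)_{\rs}^{\rs'}
\;=\;\sum_{\tf\in[1,n]^k}\Phi_{k,n}(d_{\pi_2})_{\rs}^{\tf}\,\Phi_{k,n}(d_{\pi_1})_{\tf}^{\rs'}.
\end{equation*}
Each summand equals $1$ precisely when the function $\psi\colon[1,3k]\to[1,n]$ determined by $(\rs,\tf,\rs')$ (bottom/middle/top rows of the stacked three-row diagram whose edges are those of $\pi_2$ below and $\pi_1$ above) is constant on every connected component of the stacked diagram.

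The heart of the argument is the following counting step. Classify connected components of the stacked diagram into two types: those that meet the top or bottom row (call them external) and those contained entirely in the middle row (internal). By the very definition of concatenation recalled in \eqref{DiagramMultiplication} and \eqref{eq:diagmult}, the external components restricted to $\{1,\dots,k\}\cup\{2k+1,\dots,3k\}$ are exactly the blocks of $\pi_1\ast\pi_2$ (after relabeling the top row), and the number of internal components is $[\pi_1\ast\pi_2]$. Thus, if $(\rs|\rs')$ fails to be constant on some block of $\pi_1\ast\pi_2$, the sum is $0$; otherwise, the values of $\psi$ on external components are forced by $(\rs|\rs')$, while each internal component may be assigned any of $n$ values of $\tf$ independently, giving exactly $n^{[\pi_1\ast\pi_2]}$ valid choices of $\tf$. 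Hence
\begin{equation*}
\bigl(\Phi_{k,n}(d_{\pi_1})\Phi_{k,n}(d_{\pi_2})\bigr)_{\rs}^{\rs'}
\;=\;n^{[\pi_1\ast\pi_2]}\,\Phi_{k,n}(d_{\pi_1\ast\pi_2})_{\rs}^{\rs'}
\;=\;\Phi_{k,n}(d_{\pi_1}d_{\pi_2})_{\rs}^{\rs'},
\end{equation*}
which establishes multiplicativity and completes the proof.

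The only real obstacle is keeping the bookkeeping straight between: the three-row ``raw'' diagram versus the two-row concatenated diagram $\pi_1\ast\pi_2$; the convention on which index is ``input'' vs.\ ``output'' in the matrix entry $\Phi_{k,n}(d_\pi)_{\rs}^{\rs'}$; and the identification of internal components with the power of $n$ appearing in \eqref{eq:diagmult}. Once these conventions are pinned down, the argument is purely combinatorial.
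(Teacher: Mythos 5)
Your proposal is correct and follows essentially the same route as the paper's proof: both work in the diagram basis, write $\Phi_{k,n}(d_\pi)$ as a sum of matrix units over tuples constant on the blocks of $\pi$, compute the matrix product entrywise, and identify the scalar $n^{[\pi_1\ast\pi_2]}$ as the number of free labels on the internal (middle-row) components. The paper packages this slightly more compactly via the set $\MM_k(\pi)$ of admissible tuples, whereas you spell out the three-row stacked diagram and the external/internal component dichotomy explicitly, but the combinatorial content is identical.
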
 
 
 \begin{proof}  This result will hold, once we establish that $\Phi_{k,n}$ is an algebra homomorphism.  
 For $\pi \in \Pi_{2k}$, we set 
\begin{equation} \MM_k(\pi) = \{(\rs|\rs') \in [1,n]^{2k} \mid    \text{$r_a = r_b$ when $a$ and $b$ are in the same block of $\pi$}\}.
\end{equation} 
Then $\Phi_{k,n}(d_\pi) = \sum_{(\rs \mid \rs') \in \MM_k(\pi)}  \EE_{\rs}^{\rs'}$ by \eqref{eq:Phi-coeffs-diagram}, and for $\pi_1, \pi_2 \in \Pi_{2k}$,  we have
 \begin{align*}  \Phi_{k,n}(d_{\pi_1})  \Phi_{k,n}(d_{\pi_2}) & =  \left ( \sum_{(\rs \mid \rs') \in \MM_k(\pi_1)}  \EE_{\rs}^{\rs'}\right)
  \left ( \sum_{(\sff \mid \sff') \in \MM_k(\pi_2)}  \EE_{\sff}^{\sff'}\right)  = n^{[\pi_1 \ast \pi_2]}  \sum_{(\sff| \rs') \in \MM_k(\pi_1 \ast \pi_2)}  \EE_{\sff}^{\rs'} \\
  & =  n^{[\pi_1 \ast \pi_2]} \Phi_{k,n}( d_{\pi_1 \ast \pi_2})  = \Phi_{k,n}(d_{\pi_1} d_{\pi_2}), \end{align*}
where $[\pi_1 \ast \pi_2]$ is as in \eqref{eq:diagmult}.  As a result,  $\Phi_{k,n}$ is an algebra homomorphism.   \end{proof}  
 
 \newpage
\begin{thm}\label{T:Phi} {\rm\cite{J}, \cite[Thm.~3.6]{HR}}  Assume  $n \in \mathbb Z_{\ge 1}$  and   $\{x_\pi \mid \pi \in \Pi_{2k}\}$ is the orbit basis
for $\P_k(n)$.   
\begin{itemize}
\item[{\rm (a)}]  For $k \in \ZZ_{\ge 1}$, the representation  $\Phi_{k,n}: \P_k(n) \rightarrow \End(\modu^{\ot k})$ has
\begin{align*} \im \Phi_{k,n} &= \End_{\S_n}(\modu^{\ot k}) = \spann_{\CC}\{\Phi_{k,n}(x_\pi) \mid \pi \in \Pi_{2k} \ \text{has $\le n$ blocks}\}\ \ \hbox{\rm and}  \\ 
\ker \Phi_{k,n} &= \spann_{\CC}\{x_\pi \mid \pi \in \Pi_{2k} \ \text{has more than $n$ blocks}\}.\end{align*} 
Consequently, $\End_{\S_n}(\modu^{\ot k})$ is isomorphic to $\P_k(n)$ for $n \geq 2k$.  
\item[{\rm (b)}] For $k \in \ZZ_{\ge 0}$,  the representation   $\Phi_{k+\half}: \P_{k+\half}(n) \rightarrow \End(\modu^{\ot k})$ has
\begin{align*} \im \Phi_{k+\half,n} &= \End_{\S_{n-1}}(\modu^{\ot k}) = \spann_{\CC}\{\Phi_{k+\half,n}(x_\pi) \mid \pi \in \Pi_{2k+1} \ \text{has $\le n$ blocks}\} \ \ \hbox{\rm and} \\
\ker \Phi_{k+\half,n} &= \spann_{\CC}\{x_\pi \mid \pi \in \Pi_{2k+1} \ \text{has more than $n$ blocks}\}.\end{align*}
Consequently, $\End_{\S_{n-1}}(\modu^{\ot k})$ is isomorphic to $ \P_{k+\half}(n)$ for $n \geq 2k+1$.   \end{itemize}
\end{thm}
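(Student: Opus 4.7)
The plan is to exploit the explicit formula \eqref{eq:Phi-coeffs-orbit} for $\Phi_{k,n}$ on the orbit basis, which presents $\Phi_{k,n}(x_\pi)$ as the sum of matrix units over a single $\S_n$-orbit in $[1,n]^{2k}$. Once this orbit-sum description is in hand, part (a) reduces to a direct comparison with the natural orbit basis for the centralizer algebra $\End_{\S_n}(\modu^{\ot k})$.

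First I would describe the $\S_n$-orbits on $[1,n]^{2k}$ under the diagonal action. To each tuple $(\rs|\rs')$ I associate the set partition $\pi(\rs|\rs') \in \Pi_{2k}$ whose blocks are the fibers of the map $a \mapsto r_a$. Two tuples lie in the same $\S_n$-orbit if and only if they determine the same set partition, and the partitions that actually arise are exactly those having at most $n$ blocks, since one distinct value from $[1,n]$ must be assigned per block. This gives a bijection between $\{\pi \in \Pi_{2k} : |\pi| \le n\}$ and the set of $\S_n$-orbits on $[1,n]^{2k}$, and the formula \eqref{eq:Phi-coeffs-orbit} reads $\Phi_{k,n}(x_\pi) = \sum_{(\rs|\rs') : \pi(\rs|\rs') = \pi} \EE_{\rs}^{\rs'}$, which is a full orbit sum when $|\pi|\le n$ and the empty sum when $|\pi|>n$.

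Next I would combine this with the centralizer condition from \eqref{eq:centcond}: an element $A = \sum A_{\rs}^{\rs'} \EE_{\rs}^{\rs'}$ lies in $\End_{\S_n}(\modu^{\ot k})$ iff its coefficient function is constant on $\S_n$-orbits in $[1,n]^{2k}$. Hence the orbit sums $\{\Phi_{k,n}(x_\pi) : |\pi| \le n\}$ span $\End_{\S_n}(\modu^{\ot k})$, and they are linearly independent because distinct orbits have disjoint matrix-unit supports. This proves $\im \Phi_{k,n} = \End_{\S_n}(\modu^{\ot k})$ and, since $\{x_\pi\}$ is a basis of $\P_k(n)$, also forces $\ker \Phi_{k,n} = \spann_{\CC}\{x_\pi : |\pi| > n\}$ by a dimension count. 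When $n \ge 2k$ every $\pi \in \Pi_{2k}$ has $|\pi| \le 2k \le n$, so the kernel vanishes and $\Phi_{k,n}$ is an algebra isomorphism.

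Part (b) would follow by a parallel argument, now with the stabilizer $\S_{n-1} \subset \S_n$ of the index $n$ acting diagonally on $[1,n]^{2k}$. Under the embedding $\P_{k+\half}(n) \subset \P_{k+1}(n)$, which places a distinguished block at $\{k+1,2k+2\}$, the $\S_{n-1}$-orbits on $[1,n]^{2k}$ correspond bijectively to set partitions $\pi \in \Pi_{2k+1}$ with at most $n$ blocks, where the distinguished block records the fact that the $(k+1)$st and $(2k+2)$nd coordinates are pinned to the value $n$ while the remaining coordinates are free in $[1,n]$. With this bijection, the argument from part (a) transcribes verbatim. The main obstacle throughout is precisely setting up this orbit/partition correspondence so that \eqref{eq:Phi-coeffs-orbit} manifestly becomes an orbit sum; once that is in place, the rest is a basis identification and a dimension count.
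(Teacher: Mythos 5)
Your proposal is correct and follows exactly the route that the paper's Section~3 sets up but leaves to the cited references \cite{J} and \cite[Thm.~3.6]{HR}: the key observations are that \eqref{eq:Phi-coeffs-orbit} exhibits $\Phi_{k,n}(x_\pi)$ as the indicator (matrix-unit sum) of a single $\S_n$-orbit on $[1,n]^{2k}$ (nonzero precisely when $|\pi|\le n$), while \eqref{eq:centcond} identifies the centralizer $\End_{\S_n}(\M_n^{\ot k})$ with the span of such orbit indicators; linear independence is automatic from disjoint supports, and the kernel/isomorphism statements then drop out. For part~(b) your orbit correspondence with $\S_{n-1}$-orbits via the pinned value $n$ matches the identification in Remark~\ref{SWhalf}, and the argument transcribes as you say. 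The only minor stylistic point is that the kernel description follows directly from linear independence of the nonzero images (no separate dimension count needed), but that is a cosmetic difference, not a gap.
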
  
 
\begin{remark} The assertion that the maps are isomorphisms 
for $k \in \half \ZZ_{\ge 0}\setminus \{0\}$ when $n \ge 2k$  
holds because no set partition $\pi \in \Pi_{2k}$ has more
than $n$ blocks in that case. 
\end{remark}       
 
\begin{remark} \label{SWhalf} In part (b) of Theorem \ref{T:Phi},  we are identifying $\S_{n-1}$ with the subgroup of $\S_n$ of permutations
that fix $n$ and making the identification $\modu^{\ot k} \cong \modu^{\ot k} \ot \us_n  \subseteq \modu^{\ot k+1}$, so that $\modu^{\ot k}$ is a submodule for both $\S_{n-1}$ and $\P_{k+\frac{1}{2}}(n) \subset \P_{k+1}(n)$. 
Then for tuples $\tilde \rs, \tilde \sff \in [1,n]^{k+1}$  having  $r_{k+1} = n = s_{k+1}$,  condition \eqref{eq:centcond} for $\GG= \S_{n-1}$ becomes
$$
A_{\tilde \rs}^{\tilde \sff} = A_{\sigma(\tilde \rs)}^{\sigma(\tilde \sff)} \quad 
\hbox{\rm for all} \ \ \tilde \rs, \tilde \sff  \in [1,n]^{k+1},  \sigma \in \S_{n-1}.
$$
Thus, the matrix units for $\GG=\S_{n-1}$ in \eqref{eq:Phi-coeffs-orbit} correspond to set partitions in  $\Pi_{2k+1}$; that is, set partitions of $\{1,2,\ldots,2(k+1)\}$ having $k+1$ and $2(k+1)$ in the same block.  The proof that $\Phi_{k+\half,n}: \P_{k+\half}(n) \rightarrow \End(\modu^{\ot k} \ot \mathsf{u}_n)$ is 
a representation is completely analogous to the proof of Proposition \ref{P:Phirepn}.
\end{remark}

\subsection{Labeled diagrams}\label{sec:LabeledDiagrams}

In computing $\Phi_{k,n}(x_\pi)_{\rs}^{\rs'}$ and $\Phi_{k,n}(d_\pi)_{\rs}^{\rs'}$ for $(\rs|\rs') \in \{1,n\}^k$,  it is helpful to think of the values of $\rs$ and $\rs'$ as labeling the vertices on the bottom row and top row, respectively, of the corresponding diagram of $\pi$. For example, when $\pi = \{1,4,10 \,|\,  2,6, 8, 9 \,|\,  3 \,|\,  5, 7 \}$, we have
\begin{align}\label{ex:LabeledOrbitDiagram}
 \Phi_{k,n}(x_\pi)_\rs^{\rs'} & = 
\!\!
\begin{array}{c}{\begin{tikzpicture}[scale=.7,line width=1.25pt] 
\foreach \i in {1,...,5} 
{ \path (\i,1) coordinate (T\i); \path (\i,0) coordinate (B\i); } 
\filldraw[fill= gray!50,draw=gray!50,line width=4pt]  (T1) -- (T5) -- (B5) -- (B1) -- (T1);
\draw[blue] (T1) .. controls +(.1,-.40) and +(-.1,-.40) .. (T3);
\draw[blue] (T3) .. controls +(.1,-.30) and +(-.1,-.30) .. (T4);
\draw[blue] (T1) .. controls +(0,-.30) and  +(0,.30) .. (B2); 
\draw[blue] (T5) .. controls +(.1,-.30) and +(0,.30) .. (B4) ; 
\draw[blue] (T2) .. controls +(0,-.30) and +(0,.30) .. (B5) ; 
\draw[blue] (B1) .. controls +(.1,.45) and +(-.1,.45) .. (B4) ;
\draw  (B1)  node[black,below=0.1cm]{${r_1}$};
\draw  (B2)  node[black,below=0.1cm]{${r_2}$};
\draw  (B3)  node[black,below=0.1cm]{${r_3}$};
\draw  (B4)  node[black,below=0.1cm]{${r_4}$};
\draw  (B5)  node[black,below=0.1cm]{${r_5}$};
\draw  (T1)  node[black,above=0.1cm]{${r_6}$};
\draw  (T2)  node[black,above=0.1cm]{${r_7}$};
\draw  (T3)  node[black,above=0.1cm]{${r_8}$};
\draw  (T4)  node[black,above=0.1cm]{${r_9}$};
\draw  (T5)  node[black,above=0.1cm]{${r_{10}}$}; 
\foreach \i in {1,...,5}
{ \filldraw[fill=white,draw=black,line width = 1pt] (T\i) circle (3pt);  \filldraw[fill=white,draw=black,line width = 1pt]  (B\i) circle (3pt); } 
\end{tikzpicture}}
\end{array}\!\!
= \begin{cases} 
1 & \quad  \hbox{if  for distinct $a,b,c,d \in [1,n]$,} \\  
& \quad \ \  r_1 = r_4 = r_{10} =a; \ \ r_2 = r_6 = r_8 = r_9 = b;   \\  
& \quad \ \ r_3 = c; \ \ r_5 = r_7=d;  \\
0 & \quad \hbox{\rm otherwise}.\end{cases}  \\
 \Phi_{k,n}(d_\pi)_\rs^{\rs'} & = \label{ex:LabeledDiagram}
\!\!
\begin{array}{c}{\begin{tikzpicture}[scale=.7,line width=1.25pt] 
\foreach \i in {1,...,5} 
{ \path (\i,1) coordinate (T\i); \path (\i,0) coordinate (B\i); } 
\filldraw[fill= gray!50,draw=gray!50,line width=4pt]  (T1) -- (T5) -- (B5) -- (B1) -- (T1);
\draw[blue] (T1) .. controls +(.1,-.40) and +(-.1,-.40) .. (T3);
\draw[blue] (T3) .. controls +(.1,-.30) and +(-.1,-.30) .. (T4);
\draw[blue] (T1) .. controls +(0,-.30) and  +(0,.30) .. (B2); 
\draw[blue] (T5) .. controls +(.1,-.30) and +(0,.30) .. (B4) ; 
\draw[blue] (T2) .. controls +(0,-.30) and +(0,.30) .. (B5) ; 
\draw[blue] (B1) .. controls +(.1,.45) and +(-.1,.45) .. (B4) ;
\draw  (B1)  node[black,below=0.1cm]{${r_1}$};
\draw  (B2)  node[black,below=0.1cm]{${r_2}$};
\draw  (B3)  node[black,below=0.1cm]{${r_3}$};
\draw  (B4)  node[black,below=0.1cm]{${r_4}$};
\draw  (B5)  node[black,below=0.1cm]{${r_5}$};
\draw  (T1)  node[black,above=0.1cm]{${r_6}$};
\draw  (T2)  node[black,above=0.1cm]{${r_7}$};
\draw  (T3)  node[black,above=0.1cm]{${r_8}$};
\draw  (T4)  node[black,above=0.1cm]{${r_9}$};
\draw  (T5)  node[black,above=0.1cm]{${r_{10}}$}; 
\foreach \i in {1,...,5}
{ \filldraw[fill=black,draw=black,line width = 1pt] (T\i) circle (3pt);  \filldraw[fill=black,draw=black,line width = 1pt]  (B\i) circle (3pt); } 
\end{tikzpicture}}
\end{array}\!\!
= \begin{cases} 
1  & \quad  \hbox{if} \ \ r_1 = r_4 = r_{10}; \\  
& \qquad  r_2 = r_6 = r_8 = r_9 ; \  \ r_5 = r_7;   \\  
0 & \quad  \hbox{otherwise}.\end{cases}  
\end{align}
Thus, 
$$
\begin{array}{lllll}
\begin{array}{c}\begin{tikzpicture}[scale=.5,line width=1.25pt] 
\foreach \i in {1,...,5} { \path (\i,1) coordinate (T\i); \path (\i,0) coordinate (B\i); } 
\filldraw[fill= gray!50,draw=gray!50,line width=4pt]  (T1) -- (T5) -- (B5) -- (B1) -- (T1);
\draw[blue] (T1) .. controls +(.1,-.40) and +(-.1,-.40) .. (T3);
\draw[blue] (T3) .. controls +(.1,-.30) and +(-.1,-.30) .. (T4);
\draw[blue] (T1) .. controls +(0,-.30) and  +(0,.30) .. (B2); 
\draw[blue] (T5) .. controls +(.1,-.30) and +(0,.30) .. (B4) ; 
\draw[blue] (T2) .. controls +(0,-.30) and +(0,.30) .. (B5) ; 
\draw[blue] (B1) .. controls +(.1,.45) and +(-.1,.45) .. (B4) ;
\draw  (B1)  node[black,below=0.1cm]{\small 3};
\draw  (B2)  node[black,below=0.1cm]{\small 2};
\draw  (B3)  node[black,below=0.1cm]{\small 5};
\draw  (B4)  node[black,below=0.1cm]{\small 3};
\draw  (B5)  node[black,below=0.1cm]{\small 1};
\draw  (T1)  node[black,above=0.1cm]{\small 2};
\draw  (T2)  node[black,above=0.1cm]{\small 1};
\draw  (T3)  node[black,above=0.1cm]{\small 2};
\draw  (T4)  node[black,above=0.1cm]{\small 2};
\draw  (T5)  node[black,above=0.1cm]{\small 3}; 
\foreach \i in {1,...,5}
{ \filldraw[fill=white,draw=black,line width = 1pt] (T\i) circle (3pt);  \filldraw[fill=white,draw=black,line width = 1pt]  (B\i) circle (3pt); } 
\end{tikzpicture} \end{array} \!\!\!= 1,
&
\begin{array}{c}\begin{tikzpicture}[scale=.5,line width=1.25pt] 
\foreach \i in {1,...,5} { \path (\i,1) coordinate (T\i); \path (\i,0) coordinate (B\i); } 
\filldraw[fill= gray!50,draw=gray!50,line width=4pt]  (T1) -- (T5) -- (B5) -- (B1) -- (T1);
\draw[blue] (T1) .. controls +(.1,-.40) and +(-.1,-.40) .. (T3);
\draw[blue] (T3) .. controls +(.1,-.30) and +(-.1,-.30) .. (T4);
\draw[blue] (T1) .. controls +(0,-.30) and  +(0,.30) .. (B2); 
\draw[blue] (T5) .. controls +(.1,-.30) and +(0,.30) .. (B4) ; 
\draw[blue] (T2) .. controls +(0,-.30) and +(0,.30) .. (B5) ; 
\draw[blue] (B1) .. controls +(.1,.45) and +(-.1,.45) .. (B4) ;
\draw  (B1)  node[black,below=0.1cm]{\small 3};
\draw  (B2)  node[black,below=0.1cm]{\small 2};
\draw  (B3)  node[black,below=0.1cm]{\small 5};
\draw  (B4)  node[black,below=0.1cm]{\small 3};
\draw  (B5)  node[black,below=0.1cm]{\small 1};
\draw  (T1)  node[black,above=0.1cm]{\small 2};
\draw  (T2)  node[black,above=0.1cm]{\small 1};
\draw  (T3)  node[black,above=0.1cm]{\small 2};
\draw  (T4)  node[black,above=0.1cm]{\small 4};
\draw  (T5)  node[black,above=0.1cm]{\small 3}; 
\foreach \i in {1,...,5}
{ \filldraw[fill=white,draw=black,line width = 1pt] (T\i) circle (3pt);  \filldraw[fill=white,draw=black,line width = 1pt]  (B\i) circle (3pt); } 
\end{tikzpicture} \end{array} \!\!\!= 0,
&
\begin{array}{c}\begin{tikzpicture}[scale=.5,line width=1.25pt] 
\foreach \i in {1,...,5} { \path (\i,1) coordinate (T\i); \path (\i,0) coordinate (B\i); } 
\filldraw[fill= gray!50,draw=gray!50,line width=4pt]  (T1) -- (T5) -- (B5) -- (B1) -- (T1);
\draw[blue] (T1) .. controls +(.1,-.40) and +(-.1,-.40) .. (T3);
\draw[blue] (T3) .. controls +(.1,-.30) and +(-.1,-.30) .. (T4);
\draw[blue] (T1) .. controls +(0,-.30) and  +(0,.30) .. (B2); 
\draw[blue] (T5) .. controls +(.1,-.30) and +(0,.30) .. (B4) ; 
\draw[blue] (T2) .. controls +(0,-.30) and +(0,.30) .. (B5) ; 
\draw[blue] (B1) .. controls +(.1,.45) and +(-.1,.45) .. (B4) ;
\draw  (B1)  node[black,below=0.1cm]{\small 3};
\draw  (B2)  node[black,below=0.1cm]{\small 2};
\draw  (B3)  node[black,below=0.1cm]{\small 5};
\draw  (B4)  node[black,below=0.1cm]{\small 3};
\draw  (B5)  node[black,below=0.1cm]{\small 5};
\draw  (T1)  node[black,above=0.1cm]{\small 2};
\draw  (T2)  node[black,above=0.1cm]{\small 5};
\draw  (T3)  node[black,above=0.1cm]{\small 2};
\draw  (T4)  node[black,above=0.1cm]{\small 2};
\draw  (T5)  node[black,above=0.1cm]{\small 3}; 
\foreach \i in {1,...,5}
{ \filldraw[fill=white,draw=black,line width = 1pt] (T\i) circle (3pt);  \filldraw[fill=white,draw=black,line width = 1pt]  (B\i) circle (3pt); } 
\end{tikzpicture} \end{array}\!\!\! = 0,
&
\begin{array}{c}\begin{tikzpicture}[scale=.5,line width=1.25pt] 
\foreach \i in {1,...,5} { \path (\i,1) coordinate (T\i); \path (\i,0) coordinate (B\i); } 
\filldraw[fill= gray!50,draw=gray!50,line width=4pt]  (T1) -- (T5) -- (B5) -- (B1) -- (T1);
\draw[blue] (T1) .. controls +(.1,-.40) and +(-.1,-.40) .. (T3);
\draw[blue] (T3) .. controls +(.1,-.30) and +(-.1,-.30) .. (T4);
\draw[blue] (T1) .. controls +(0,-.30) and  +(0,.30) .. (B2); 
\draw[blue] (T5) .. controls +(.1,-.30) and +(0,.30) .. (B4) ; 
\draw[blue] (T2) .. controls +(0,-.30) and +(0,.30) .. (B5) ; 
\draw[blue] (B1) .. controls +(.1,.45) and +(-.1,.45) .. (B4) ;
\draw  (B1)  node[black,below=0.1cm]{\small 3};
\draw  (B2)  node[black,below=0.1cm]{\small 2};
\draw  (B3)  node[black,below=0.1cm]{\small 5};
\draw  (B4)  node[black,below=0.1cm]{\small 3};
\draw  (B5)  node[black,below=0.1cm]{\small 5};
\draw  (T1)  node[black,above=0.1cm]{\small 2};
\draw  (T2)  node[black,above=0.1cm]{\small 5};
\draw  (T3)  node[black,above=0.1cm]{\small 2};
\draw  (T4)  node[black,above=0.1cm]{\small 2};
\draw  (T5)  node[black,above=0.1cm]{\small 3}; 
\foreach \i in {1,...,5}
{ \filldraw[fill=black,draw=black,line width = 1pt] (T\i) circle (3pt);  \filldraw[fill=black,draw=black,line width = 1pt]  (B\i) circle (3pt); } 
\end{tikzpicture} \end{array}\!\!\! = 1.
\end{array}
$$  

For $\pi \in \Pi_{2k}$, we designate a special labeling associated to $\pi$ as follows. 
\begin{definition}\label{D:std}   Let $\mathsf{B}_1$ be the block of $\pi$ containing 1, and for $1< j \le |\pi|$, let $\mathsf{B}_j$ be the block of $\pi$ containing the smallest number not in $\mathsf{B}_1 \cup \mathsf{B}_2 \cup \cdots \cup \mathsf{B}_{j-1}$.   
The \emph{standard labeling of $\pi$}  is  $(\bs_\pi | \bs_{\pi}')$,  where 
$\bs_\pi = (\textrm{b}_1,\dots, \textrm{b}_k)$ and $\bs_\pi' = (\textrm{b}_{k+1}, \dots,\textrm{b}_{2k})$ in $[1,n]^k$,  and 
\begin{align}\begin{split}\label{eq:beta}  
&\textrm{b}_\ell = j \ \ \hbox{\rm if} \ \  \ell \in \mathsf{B}_j\ \ \hbox{\rm for} \ \  \ell \in [1,2k].  \end{split} \end{align}  
\end{definition}

For example, when $\pi = \{1,4,10\,|\, 2,6, 8, 9\,|\, 3\,|\, 5, 7 \}$,  then  $\mathsf{B}_1 = \{1,4,10\}, \ \mathsf{B_2} = \{2,6,8,9\}, \newline \mathsf{B_3} = \{3\}$,  $\mathsf{B}_4 =
\{5,7\}$,  $\bs_\pi = (1,2,3,1,4)$, and $\bs_\pi' = (2,4,2,2,1)$.  We label vertex $\ell$ with $\textrm{b}_\ell$ so that
$$\Phi_{k,n}(x_\pi)_{\bs_\pi}^{\bs_\pi'} = \!\!\begin{array}{c}{\begin{tikzpicture}[scale=.6,line width=1.2pt] 
\foreach \i in {1,...,5} 
{ \path (\i,1) coordinate (T\i); \path (\i,0) coordinate (B\i); } 
\filldraw[fill= gray!50,draw=gray!50,line width=4pt]  (T1) -- (T5) -- (B5) -- (B1) -- (T1);
\draw[blue] (T1) .. controls +(.1,-.30) and +(-.1,-.30) .. (T3);
\draw[blue] (T3) .. controls +(.1,-.30) and +(-.1,-.30) .. (T4);
\draw[blue] (T1) .. controls +(0,-.30) and  +(0,.30) .. (B2);
\draw[blue] (T5) .. controls +(.1,-.30) and +(0,.30) .. (B4) ;
\draw[blue] (T2) .. controls +(0,-.30) and +(0,.30) .. (B5) ; 
\draw[blue] (B1) .. controls +(.1,.45) and +(-.1,.45) .. (B4) ;
\draw  (B1)  node[black,below=0.1cm]{\small ${1}$};
\draw  (B2)  node[black,below=0.1cm]{\small ${2}$};
\draw  (B3)  node[black,below=0.1cm]{\small ${3}$};
\draw  (B4)  node[black,below=0.1cm]{\small ${1}$};
\draw  (B5)  node[black,below=0.1cm]{\small ${4}$};
\draw  (T1)  node[black,above=0.1cm]{\small ${2}$};
\draw  (T2)  node[black,above=0.1cm]{\small ${4}$};
\draw  (T3)  node[black,above=0.1cm]{\small ${2}$};
\draw  (T4)  node[black,above=0.1cm]{\small ${2}$};
\draw  (T5)  node[black,above=0.1cm]{\small ${1}$}; 
\foreach \i in {1,...,5}
{ \filldraw[fill=white,draw=black,line width = 1pt] (T\i) circle (3pt);  \filldraw[fill=white,draw=black,line width = 1pt]  (B\i) circle (3pt); } 
\end{tikzpicture}}
\end{array}\!\!\!=1.
$$ 

The condition in \eqref{eq:Phi-coeffs-orbit} holds exactly when there is a $\sigma \in \S_n$ such that $\rs = \sigma(\bs_\pi)$ and $\rs' = \sigma(\bs_\pi')$. For $(\rs| \rs')$ and $(\sff|\sff')$, write $(\rs|\rs') \sim_{\S_n} (\sff|\sff')$ if  $(\rs | \rs') = (\sigma(\sff)|\sigma(\sff'))$  for
some $\sigma \in \S_n$.    Then the image of the orbit basis element $x_\pi$ under the representation $\Phi_{k,n}$ is given by 
\begin{equation}
\Phi_{k,n}(x_\pi) = \sum_{(\rs|\rs') \sim_{\S_n}  (\bs_\pi| \bs_\pi')} \EE_\rs^{\rs'},
 \end{equation}
which is the sum of  matrix units over distinct elements in the $\S_n$-orbit of the standard labeling of $\pi$. In this way, the endomorphisms $\Phi_{k,n}(x_\pi)$, for $\pi \in \Pi_{2k}$, are the indicator functions for the $\S_n$-orbits on $[1,n]^{2k}$, and this is why the term ``orbit basis" is used.

As described in Remark \ref{SWhalf},  the partition algebra $\P_{k+\frac{1}{2}}(n)$ acts on $\modu^{\ot k}$ by identifying  
$\modu^{\ot k}$ with $\modu^{\ot k}\ot \us_n \subseteq \modu^{\ot k+1}$. The basis then consists of the simple tensors  $\us_{r_1} \ot \cdots \ot \us_{r_k} \ot \us_n$ with $(r_1, \dots, r_k) \in [1,n]^k$. If $\pi \in \Pi_{2k+1}$, then $\pi$ has  $k+1$ and $2(k+1)$ in the same block, and the matrix of $\Phi_{k + \frac{1}{2},n}(x_\pi)$ is the same as $\Phi_{k + 1,n}(x_\pi)$ but restricted to indices of the form $\tilde \rs = (r_1, \ldots, r_k, n)$ and
$\tilde \rs' = (r_1', \ldots, r_k', n)$. Thus, for example, if $k+\half= 5\frac{1}{2}$ and $n \geq |\pi| +1 = 5$ for $\pi = \{1,4,11,12 \,|\,  2,7,9,10 \,|\,  3 \,|\,  5,8\}$, we have
$$
\Phi_{k+\frac{1}{2},n}(x_\pi)_{\tilde \rs}^{\tilde \rs'} = 
\begin{array}{c}{\begin{tikzpicture}[scale=.7,line width=1.25pt] 
\foreach \i in {1,...,6} 
{ \path (\i,1) coordinate (T\i); \path (\i,0) coordinate (B\i); } 
\filldraw[fill= gray!50,draw=gray!50,line width=4pt]  (T1) -- (T6) -- (B6) -- (B1) -- (T1);
\draw[blue] (T1) .. controls +(.1,-.40) and +(-.1,-.40) .. (T3);
\draw[blue] (T3) .. controls +(.1,-.30) and +(-.1,-.30) .. (T4);
\draw[blue] (T1) .. controls +(0,-.30) and  +(0,.30) .. (B2); 
\draw[blue] (T5) .. controls +(.1,-.30) and +(0,.30) .. (B4) ; 
\draw[blue] (T2) .. controls +(0,-.30) and +(0,.30) .. (B5) ; 
\draw[blue] (B1) .. controls +(.1,.45) and +(-.1,.45) .. (B4) ;
\draw[blue] (T5) .. controls +(.1,-.40) and +(-.1,-.40) .. (T6);
\draw[blue] (T6) -- (B6);
\draw  (B1)  node[black,below=0.1cm]{${r_1}$};
\draw  (B2)  node[black,below=0.1cm]{${r_2}$};
\draw  (B3)  node[black,below=0.1cm]{${r_3}$};
\draw  (B4)  node[black,below=0.1cm]{${r_4}$};
\draw  (B5)  node[black,below=0.1cm]{${r_5}$};
\draw  (T1)  node[black,above=0.1cm]{${r_6}$};
\draw  (T2)  node[black,above=0.1cm]{${r_7}$};
\draw  (T3)  node[black,above=0.1cm]{${r_8}$};
\draw  (T4)  node[black,above=0.1cm]{${r_9}$};
\draw  (T5)  node[black,above=0.1cm]{${r_{10}}$}; 
\draw  (T6)  node[black,above=0.1cm]{$n$}; 
\draw  (B6)  node[black,below=0.1cm]{$n$}; 
\foreach \i in {1,...,6}
{ \filldraw[fill=white,draw=black,line width = 1pt] (T\i) circle (3pt);  \filldraw[fill=white,draw=black,line width = 1pt]  (B\i) circle (3pt); } 
\end{tikzpicture}}
\end{array}.
$$
 In the example above, the labels on the bottom row then would be
$n,2,3,n,4,n$.    
The standard labeling of diagrams on the half-integer levels is the same as the integer levels except that the block containing $k+1$ and
$2(k+1)$ is always labeled by $n$.

\section{Multiplication in the Orbit Basis}\label{sec:Mult}

For $k,n \in \ZZ_{\ge 1}$, let $\Pi_{2k}(n)$ be the subset of $\Pi_{2k}$ of set partitions having at most $n$ blocks. Then $\Pi_{2k}(n) = \Pi_{2k}$ if and only if $n \ge 2k$.
For $\Phi_{k,n}:  \P_k(n) \rightarrow \End_{\S_n}(\modu^{\ot k})$,  define
\begin{equation}
X_{\pi} = \Phi_{k,n}(x_\pi), \qquad \pi \in \Pi_{2k},
\end{equation}
and observe that $X_\pi = 0$ if $\pi$ has more than $n$ blocks. Theorem \ref{T:Phi}\,(a) tells us that $\{ X_{\pi} \mid \pi \in \Pi_{2k}\}$ spans $\im \Phi_{k,n}$,  and $\{ X_{\pi} \mid \pi \in \Pi_{2k}(n)\}$ is a basis for $\im \Phi_{k,n}$. 
In this section, we first prove the formula in Lemma \ref{T:mult} for the product of two transformations $X_\pi$ in $\End_{\S_n}(\M_n^{\ot k})$, which is isomorphic to $\P_k(n)$ when $n\ge 2k$.   Theorem \ref{C:mult} extends this
result to the orbit basis of any partition algebra $\P_k(\xi)$. 

In stating these results,  we apply  the following conventions:
For $\xi \in \CC$ and $\ell \in \ZZ_{\ge 0}$,
\begin{equation}
(\xi)_\ell = \xi(\xi-1) \cdots (\xi-\ell+1).
\end{equation} Thus,  when $m$ is a nonnegative integer, $(m)_\ell = 0$ if $\ell >m $;   $(m)_\ell =  m!/(m-\ell)!$ if $m \ge \ell$; and $(m)_0 = 1$.  
If $\pi \in \Pi_{2k}$,  then by restriction, $\pi$ induces a set partition on the bottom row $\{1, 2, \ldots, k\}$ and a set partition on the top row $\{k+1, k+2, \ldots, 2k\}$. If 
$\pi_1, \pi_2 \in \Pi_{2k}$, then we say $\pi_1 \ast \pi_2$ \emph{exactly matches in the middle} if the set partition that $\pi_1$ induces on its bottom row equals the set partition that $\pi_2$ induces on the top row modulo $k$.  When that happens,  $\pi_1 \ast \pi_2$ is the
concatenation of the two diagrams.   For example, if $k = 4$, then $\pi_1 = \{ 1, 4, 5\,|\, 2, 8 \,|\, 3, 6, 7\}$ induces the set partition $\{1,4 \,|\, 2 \,|\, 3\}$ on the bottom row of $\pi_1$, and $\pi_2 = \{ 1,5,8 \,|\, 2,6\,|\, 3\,|\, 4,7 \}$ induces the set partition $\{5,8\,|\, 6\,|\, 7\} \equiv \{1,4 \,|\, 2 \,|\, 3\}\, \mathsf{mod}\, 4$ on the top row of $\pi_2$. Thus, $\pi_1 \ast \pi_2$ exactly matches in the middle. This definition is easy to see in terms of the diagrams. 

  In the product expression
below,  $X_{\varrho} = 0$ whenever $\varrho$ has more than $n$ blocks.   Recall that $[\pi_1 \ast \pi_2]$ is the number of
blocks in the middle row of $\pi_1 \ast \pi_2$.  

\begin{lemma}\label{T:mult}  Multiplication in the basis $\{ X_\pi\}_{\pi \in \Pi_{2k}(n)}$ of $\End_{\S_n} (\modu^{\otimes k})$ is given by
$$X_{\pi_1}X_{\pi_2} = 
\begin{cases}
\displaystyle{\sum_{\vr} (n-|\vr|)_{[\pi_1 \ast \pi_2]} \, X_\varrho,} &\quad \hbox{if $\pi_1 \ast \pi_2$ exactly matches in the middle,}\\
0 &\quad \hbox{otherwise,} 
\end{cases}
$$
where  the sum is over all coarsenings $\vr$ of $\pi_1 \ast \pi_2$ obtained by connecting  blocks that lie entirely in the top row of $\pi_1$
 to blocks that lie entirely in the bottom row of $\pi_2$.     \end{lemma}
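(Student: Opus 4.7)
The plan is to compute the product entry-by-entry via the orbit-basis matrix-unit expansion. By \eqref{eq:Phi-coeffs-orbit},
\[
X_\pi \;=\; \sum_{(\rs|\rs') \,\sim_{\S_n}\, (\bs_\pi|\bs_\pi')} \EE_\rs^{\rs'},
\]
and the indexing tuples are exactly those whose \emph{strict} label pattern realizes $\pi$, meaning that two label entries coincide if and only if the corresponding vertices lie in the same block of $\pi$. Applying the matrix-unit rule $\EE_\rs^{\rs'}\EE_\tf^{\tf'} = \delta_{\rs,\tf'}\EE_\tf^{\rs'}$, the $(\bs|\bs')$-entry of $X_{\pi_1}X_{\pi_2}$ equals the number of middle labelings $\rs \in [1,n]^k$ for which $(\bs|\rs)$ strictly realizes $\pi_2$ and $(\rs|\bs')$ strictly realizes $\pi_1$. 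The stated formula will follow from a careful combinatorial analysis of these middle labelings.

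The first step is to note that the strict patterns on $\pi_1$ and $\pi_2$ force the middle labeling $\rs$ to simultaneously realize the set partition that $\pi_2$ induces on its top row and the set partition that $\pi_1$ induces on its bottom row. If these induced set partitions disagree (modulo the index shift by $k$), no such $\rs$ exists and the entry vanishes; this is the non-matching case. Otherwise $\pi_1 \ast \pi_2$ exactly matches in the middle, and I proceed to the matching case.

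Next I would classify the connected components of the joined diagram into three kinds: (i) middle-only components with no outer vertex, numbering $[\pi_1 \ast \pi_2]$; (ii) \emph{anchored} outer blocks of $\pi_1 \ast \pi_2$ that contain at least one middle vertex; and (iii) \emph{free} outer blocks, namely blocks lying entirely in the top row of $\pi_1$ or entirely in the bottom row of $\pi_2$. The key structural claim is that strict-pattern distinctness propagates through the middle: two distinct anchored blocks have middle vertices in distinct $\pi_1$-blocks and in distinct $\pi_2$-blocks, hence receive distinct labels; anchored-block labels likewise differ from middle-only labels and from any free-block label sitting in the same $\pi_1$-row or $\pi_2$-row. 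The only label coincidences that are not forbidden are those between a top-only block of $\pi_1$ and a bottom-only block of $\pi_2$, since such a pair is entirely absent from one of the two strict-pattern conditions. These permissible coincidences produce exactly the coarsenings $\vr$ of $\pi_1 \ast \pi_2$ described in the statement.

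Finally I would count: for a fixed admissible coarsening $\vr$ with $|\vr|$ blocks, any $(\bs|\bs')$ whose strict pattern is $\vr$ uses $|\vr|$ distinct labels; the labels on anchored middle vertices are then forced by the outer labeling; and each of the $[\pi_1 \ast \pi_2]$ middle-only blocks must be assigned a label distinct from the $|\vr|$ outer labels and distinct from the other middle-only labels, giving $(n-|\vr|)_{[\pi_1 \ast \pi_2]}$ choices of $\rs$. Summing over matrix units yields the claimed coefficient of $X_\vr$. The main obstacle is the rigidity claim in the third paragraph: verifying that strict-pattern distinctness really does propagate correctly through all anchored components across both $\pi_1$ and $\pi_2$, so that the admissible coarsenings are precisely those obtained by joining top-only blocks of $\pi_1$ with bottom-only blocks of $\pi_2$ and no others.
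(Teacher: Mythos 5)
Your proof takes essentially the same approach as the paper: expand $X_{\pi_1}$ and $X_{\pi_2}$ in matrix units, compute the product entrywise, derive the exact-match condition from the shared middle labeling, and count admissible middle labelings for each coarsening $\vr$ to get the factor $(n-|\vr|)_{[\pi_1\ast\pi_2]}$. The rigidity step you flag as the main remaining obstacle is the same point the paper handles (somewhat informally), and it does go through via the propagation argument you outline: any two distinct blocks of $\pi_1\ast\pi_2$ whose vertices are visible in a common row of $\pi_1$ or of $\pi_2$, or which meet the middle, are forced to distinct labels, so the only permissible identifications are between top-only blocks of $\pi_1$ and bottom-only blocks of $\pi_2$.
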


 \begin{remark} By \eqref{eq:pn},  $\pn(\pi_1 \ast \pi_2) \le \min(\pn(\pi_1) , \pn(\pi_2))$. 
However, the  set partitions $\vr$ occurring in the product in
Lemma \ref{T:mult},  satisfy $\pn(\varrho) \ge \pn(\pi_1 \ast \pi_2)$, and so it is possible that
 $\pn(\vr) > \pn(\pi_1)$ or $\pn(\pi_2)$ (or both)  for some $\vr$.  This happens  in several of the examples below.   \end{remark}    

\begin{examples}\label{exs:mult} Before proving Lemma  \ref{T:mult}, we give some examples to illustrate multiplication
in the basis $\{X_\pi \mid \pi \in \Pi_{2k}(n)\}$ of $\End_{\S_n}(\M_n^{\ot k})$. In these examples, the edges added for the coarsenings are displayed in red. In each case, we assume $n$ is at least equal to the number of parts in the diagrams being multiplied.
\bigskip

\noindent (1) \  Suppose $k=3$,  $n \ge 2$, and $\pi = \{1,2,3 \,|\,  4,5,6\}\in \Pi_6$. Then,
in terms of matrix units,   $X_\pi = \sum_{i\neq j \in [1,n]}  \EE_{iii}^{jjj}$ so that  
$$X_\pi^2  =  (n-2)\sum_{i \neq j \in [1,n]}  \EE_{iii}^{jjj} +
  (n-1)\sum_{i \in [1,n]}  \EE_{iii}^{iii}.$$  Writing the corresponding orbit diagrams, we have
$$
\begin{array}{c}
\begin{tikzpicture}[scale=.6,line width=1.25pt] 
\foreach \i in {1,...,3} 
{ \path (\i,1) coordinate (T\i); \path (\i,0) coordinate (B\i); } 
\filldraw[fill= gray!50,draw=gray!50,line width=4pt]  (T1) -- (T3) -- (B3) -- (B1) -- (T1);
\draw[blue] (B1) .. controls +(0,+.3) and +(0,.3) .. (B2).. controls +(0,+.3) and +(0,.3) .. (B3);
\draw[blue] (T1) .. controls +(0,-.3) and +(0,-.3) .. (T2) .. controls +(0,-.3) and +(0,-.3) .. (T3);
\foreach \i in {1,...,3} 
{  \filldraw[fill=white,draw=black,line width = 1pt]  (T\i) circle (3pt);  \filldraw[fill=white,draw=black,line width = 1pt] (B\i) circle (3pt); } 
\end{tikzpicture} \\
\begin{tikzpicture}[scale=.6,line width=1.25pt] 
\foreach \i in {1,...,3} 
{ \path (\i,1) coordinate (T\i); \path (\i,0) coordinate (B\i); } 
\filldraw[fill= gray!50,draw=gray!50,line width=4pt]  (T1) -- (T3) -- (B3) -- (B1) -- (T1);
\draw[blue] (B1) .. controls +(0,+.3) and +(0,.3) .. (B2).. controls +(0,+.3) and +(0,.3) .. (B3);
\draw[blue] (T1) .. controls +(0,-.3) and +(0,-.3) .. (T2) .. controls +(0,-.3) and +(0,-.3) .. (T3);
\foreach \i in {1,...,3} 
{ \filldraw[fill=white,draw=black,line width = 1pt] (T\i) circle (3pt); \filldraw[fill=white,draw=black,line width = 1pt] (B\i) circle (3pt); } 
\end{tikzpicture}
\end{array} = (n-2)
\begin{array}{c}
\begin{tikzpicture}[scale=.6,line width=1.25pt] 
\foreach \i in {1,...,3} 
{ \path (\i,1) coordinate (T\i); \path (\i,0) coordinate (B\i); } 
\filldraw[fill= gray!50,draw=gray!50,line width=4pt]  (T1) -- (T3) -- (B3) -- (B1) -- (T1);
\draw[blue] (B1) .. controls +(0,+.3) and +(0,.3) .. (B2).. controls +(0,+.3) and +(0,.3) .. (B3);
\draw[blue] (T1) .. controls +(0,-.3) and +(0,-.3) .. (T2) .. controls +(0,-.3) and +(0,-.3) .. (T3);
\foreach \i in {1,...,3} 
{ \filldraw[fill=white,draw=black,line width = 1pt] (T\i) circle (3pt);  \filldraw[fill=white,draw=black,line width = 1pt]  (B\i) circle (3pt); } 
\end{tikzpicture}\end{array} + (n-1)
\begin{array}{c}
\begin{tikzpicture}[scale=.6,line width=1.25pt] 
\foreach \i in {1,...,3} 
{ \path (\i,1) coordinate (T\i); \path (\i,0) coordinate (B\i); } 
\filldraw[fill= gray!50,draw=gray!50,line width=4pt]  (T1) -- (T3) -- (B3) -- (B1) -- (T1);
\draw[blue] (B1) .. controls +(0,+.30) and +(0,.30) .. (B2).. controls +(0,+.30) and +(0,.30) .. (B3);
\draw[blue] (T1) .. controls +(0,-.30) and +(0,-.30) .. (T2) .. controls +(0,-.30) and +(0,-.30) .. (T3);
\draw[purple] (T1) -- (B1);
\foreach \i in {1,...,3} 
{ \filldraw[fill=white,draw=black,line width = 1pt]  (T\i) circle (3pt); \filldraw[fill=white,draw=black,line width = 1pt]  (B\i) circle (3pt); } 
\end{tikzpicture}\end{array},
\qquad\hbox{$n \ge 2$}.
$$
Thus, $X_\pi^2 = (n-2) X_\pi + (n-1) X_\varrho$,  where $\varrho = \{1,2,3,4,5,6\}$, as predicted by 
Lemma \ref{T:mult}. \bigskip
  
\noindent (2)  \   Here $k = 4$, $n \ge 4$, and $[\pi_1 \ast \pi_2] = 2$ (two blocks are removed upon concatenation
of  ${\pi_1}$ and ${\pi_2}$). 
$$
\begin{array}{l}
\begin{array}{c}
\begin{tikzpicture}[scale=.6,line width=1.25pt] 
\foreach \i in {1,...,4} 
{ \path (\i,1) coordinate (T\i); \path (\i,0) coordinate (B\i); } 
\filldraw[fill= gray!50,draw=gray!50,line width=4pt]  (T1) -- (T4) -- (B4) -- (B1) -- (T1);
\draw[blue] (T2) .. controls +(0,-.30) and +(0,-.30) .. (T3);
\draw[blue] (B3) .. controls +(0,+.30) and +(0,+.30) .. (B4);
\draw[blue] (T4) -- (B2);
\foreach \i in {1,...,4} 
{\filldraw[fill=white,draw=black,line width = 1pt] (T\i) circle (3pt); \filldraw[fill=white,draw=black,line width = 1pt](B\i) circle (3pt); } 
\end{tikzpicture} \\
\begin{tikzpicture}[scale=.6,line width=1.25pt] 
\foreach \i in {1,...,4} 
{ \path (\i,1) coordinate (T\i); \path (\i,0) coordinate (B\i); } 
\filldraw[fill= gray!50,draw=gray!50,line width=4pt]  (T1) -- (T4) -- (B4) -- (B1) -- (T1);
\draw[blue] (T2) .. controls +(0,-.30) and +(0,-.30) .. (B2);
\draw[blue] (T3) .. controls +(0,-.30) and +(0,-.30) .. (T4);
\draw[blue] (B1) .. controls +(0,+.40) and +(0,+.40) .. (B3);
\foreach \i in {1,...,4} 
{\filldraw[fill=white,draw=black,line width = 1pt](T\i) circle (3pt);\filldraw[fill=white,draw=black,line width = 1pt](B\i) circle (3pt); } 
\end{tikzpicture}
\end{array} =  (n-5)(n-6)
\begin{array}{c}
\begin{tikzpicture}[scale=.6,line width=1.25pt] 
\foreach \i in {1,...,4} 
{ \path (\i,1) coordinate (T\i); \path (\i,0) coordinate (B\i); } 
\filldraw[fill= gray!50,draw=gray!50,line width=4pt]  (T1) -- (T4) -- (B4) -- (B1) -- (T1);
\draw[blue] (T2) .. controls +(0,-.30) and +(0,-.30) .. (T3);
\draw[blue] (B1) .. controls +(0,+.40) and +(0,+.40) .. (B3);
\draw[blue] (T4) -- (B2);
\foreach \i in {1,...,4} 
{\filldraw[fill=white,draw=black,line width = 1pt] (T\i) circle (3pt); \filldraw[fill=white,draw=black,line width = 1pt] (B\i) circle (3pt); } 
\end{tikzpicture}
\end{array}  \\
\hskip.25in+ (n-4)(n-5) \Big(
\begin{array}{c}
\begin{tikzpicture}[scale=.6,line width=1.25pt] 
\foreach \i in {1,...,4} 
{ \path (\i,1) coordinate (T\i); \path (\i,0) coordinate (B\i); } 
\filldraw[fill= gray!50,draw=gray!50,line width=4pt]  (T1) -- (T4) -- (B4) -- (B1) -- (T1);
\draw[blue] (T2) .. controls +(0,-.30) and +(0,-.30) .. (T3);
\draw[blue] (B1) .. controls +(0,+.40) and +(0,+.40) .. (B3);
\draw[purple] (T1)--(B1);
\draw[blue] (T4) -- (B2);
\foreach \i in {1,...,4} 
{\filldraw[fill=white,draw=black,line width = 1pt] (T\i) circle (3pt); \filldraw[fill=white,draw=black,line width = 1pt] (B\i) circle (3pt); } 
\end{tikzpicture}
\end{array}+
\begin{array}{c}
\begin{tikzpicture}[scale=.6,line width=1.25pt] 
\foreach \i in {1,...,4} 
{ \path (\i,1) coordinate (T\i); \path (\i,0) coordinate (B\i); } 
\filldraw[fill= gray!50,draw=gray!50,line width=4pt]  (T1) -- (T4) -- (B4) -- (B1) -- (T1);
\draw[blue] (T2) .. controls +(0,-.30) and +(0,-.30) .. (T3);
\draw[blue] (B1) .. controls +(0,+.40) and +(0,+.40) .. (B3);
\draw[purple] (T1)--(B4);
\draw[blue] (T4) -- (B2);
\foreach \i in {1,...,4} 
{\filldraw[fill=white,draw=black,line width = 1pt](T\i) circle (3pt); \filldraw[fill=white,draw=black,line width = 1pt](B\i) circle (3pt); } 
\end{tikzpicture}
\end{array}
+
\begin{array}{c}
\begin{tikzpicture}[scale=.6,line width=1.25pt] 
\foreach \i in {1,...,4} 
{ \path (\i,1) coordinate (T\i); \path (\i,0) coordinate (B\i); } 
\filldraw[fill= gray!50,draw=gray!50,line width=4pt]  (T1) -- (T4) -- (B4) -- (B1) -- (T1);
\draw[blue] (T2) .. controls +(0,-.30) and +(0,-.30) .. (T3);
\draw[blue] (B1) .. controls +(0,+.40) and +(0,+.40) .. (B3);
\draw[purple] (T2).. controls +(0,-.30) and +(0,+.30) ..(B1);
\draw[blue] (T4) -- (B2);
\foreach \i in {1,...,4} 
{ \filldraw[fill=white,draw=black,line width = 1pt] (T\i) circle (3pt); \filldraw[fill=white,draw=black,line width = 1pt](B\i) circle (3pt); } 
\end{tikzpicture}
\end{array}+
\begin{array}{c}
\begin{tikzpicture}[scale=.6,line width=1.25pt] 
\foreach \i in {1,...,4} 
{ \path (\i,1) coordinate (T\i); \path (\i,0) coordinate (B\i); } 
\filldraw[fill= gray!50,draw=gray!50,line width=4pt]  (T1) -- (T4) -- (B4) -- (B1) -- (T1);
\draw[blue] (T2) .. controls +(0,-.30) and +(0,-.30) .. (T3);
\draw[blue] (B1) .. controls +(0,+.40) and +(0,+.40) .. (B3);
\draw[blue] (T4) -- (B2);
\draw[purple] (T3).. controls +(0,-.30) and +(0,+.30) ..(B4);
\foreach \i in {1,...,4} 
{\filldraw[fill=white,draw=black,line width = 1pt](T\i) circle (3pt); \filldraw[fill=white,draw=black,line width = 1pt](B\i) circle (3pt); } 
\end{tikzpicture}
\end{array}\Big) \\
\hskip.25in+(n-3)(n-4) \Big(
\begin{array}{c}
\begin{tikzpicture}[scale=.6,line width=1.25pt] 
\foreach \i in {1,...,4} 
{ \path (\i,1) coordinate (T\i); \path (\i,0) coordinate (B\i); } 
\filldraw[fill= gray!50,draw=gray!50,line width=4pt]  (T1) -- (T4) -- (B4) -- (B1) -- (T1);
\draw[blue] (T2) .. controls +(0,-.30) and +(0,-.30) .. (T3);
\draw[blue] (B1) .. controls +(0,+.40) and +(0,+.40) .. (B3);
\draw[purple] (T1)--(B1);
\draw[purple] (T3).. controls +(0,-.30) and +(0,+.30) ..(B4);;
\draw[blue] (T4) -- (B2);
\foreach \i in {1,...,4} 
{\filldraw[fill=white,draw=black,line width = 1pt](T\i) circle (3pt); \filldraw[fill=white,draw=black,line width = 1pt] (B\i) circle (3pt); } 
\end{tikzpicture}
\end{array}+
\begin{array}{c}
\begin{tikzpicture}[scale=.6,line width=1.25pt] 
\foreach \i in {1,...,4} 
{ \path (\i,1) coordinate (T\i); \path (\i,0) coordinate (B\i); } 
\filldraw[fill= gray!50,draw=gray!50,line width=4pt]  (T1) -- (T4) -- (B4) -- (B1) -- (T1);
\draw[blue] (T2) .. controls +(0,-.30) and +(0,-.30) .. (T3);
\draw[blue] (B1) .. controls +(0,+.40) and +(0,+.40) .. (B3);
\draw[purple] (T1)--(B4);
\draw[purple] (T2).. controls +(0,-.30) and +(0,+.30) ..(B1);
\draw[blue] (T4) -- (B2);
\foreach \i in {1,...,4} 
{\filldraw[fill=white,draw=black,line width = 1pt] (T\i) circle (3pt); \filldraw[fill=white,draw=black,line width = 1pt] (B\i) circle (3pt); } 
\end{tikzpicture}
\end{array}\Big).
\end{array}
$$ \medskip

\noindent (3) \   Here $k = 4$, $n \ge 6$, and $[\pi_1 \ast \pi_2] = 1$. In this case, the second vertex in the top row of $\pi_1$ is in a block that is not  entirely in the top row,  so it is not allowed to be connected to a block in the bottom row of $\pi_2$.
$$
\begin{array}{c}
\begin{tikzpicture}[scale=.6,line width=1.25pt] 
\foreach \i in {1,...,4} 
{ \path (\i,1) coordinate (T\i); \path (\i,0) coordinate (B\i); } 
\filldraw[fill= gray!50,draw=gray!50,line width=4pt]  (T1) -- (T4) -- (B4) -- (B1) -- (T1);
\draw[blue] (T1) -- (B1);
\draw[blue] (T2) -- (B2);
\draw[blue] (T4) -- (B3);
\foreach \i in {1,...,4} 
{\filldraw[fill=white,draw=black,line width = 1pt] (T\i) circle (3pt); \filldraw[fill=white,draw=black,line width = 1pt](B\i) circle (3pt); } 
\end{tikzpicture} \\
\begin{tikzpicture}[scale=.6,line width=1.25pt] 
\foreach \i in {1,...,3} 
{ \path (\i,1) coordinate (T\i); \path (\i,0) coordinate (B\i); } 
\filldraw[fill= gray!50,draw=gray!50,line width=4pt]  (T1) -- (T4) -- (B4) -- (B1) -- (T1);
\draw[blue] (T1) -- (B2);
\draw[blue] (T3) -- (B1);
\foreach \i in {1,...,4} 
{\filldraw[fill=white,draw=black,line width = 1pt](T\i) circle (3pt);\filldraw[fill=white,draw=black,line width = 1pt](B\i) circle (3pt); } 
\end{tikzpicture}
\end{array} 
=  (n-6)
\begin{array}{c}
\begin{tikzpicture}[scale=.6,line width=1.25pt] 
\foreach \i in {1,...,4} 
{ \path (\i,1) coordinate (T\i); \path (\i,0) coordinate (B\i); } 
\filldraw[fill= gray!50,draw=gray!50,line width=4pt]  (T1) -- (T4) -- (B4) -- (B1) -- (T1);
\draw[blue] (T1) -- (B2);
\draw[blue] (T4) -- (B1);
\foreach \i in {1,...,4} 
{\filldraw[fill=white,draw=black,line width = 1pt](T\i) circle (3pt);\filldraw[fill=white,draw=black,line width = 1pt](B\i) circle (3pt); } 
\end{tikzpicture}
\end{array}
+  (n-5) \left(
\begin{array}{c}
\begin{tikzpicture}[scale=.6,line width=1.25pt] 
\foreach \i in {1,...,4} 
{ \path (\i,1) coordinate (T\i); \path (\i,0) coordinate (B\i); } 
\filldraw[fill= gray!50,draw=gray!50,line width=4pt]  (T1) -- (T4) -- (B4) -- (B1) -- (T1);
\draw[blue] (T1) -- (B2);
\draw[blue] (T4) -- (B1);
\draw[purple] (T3) -- (B3);
\foreach \i in {1,...,4} 
{\filldraw[fill=white,draw=black,line width = 1pt](T\i) circle (3pt);\filldraw[fill=white,draw=black,line width = 1pt](B\i) circle (3pt); } 
\end{tikzpicture}
\end{array}+
\begin{array}{c}
\begin{tikzpicture}[scale=.6,line width=1.25pt] 
\foreach \i in {1,...,4} 
{ \path (\i,1) coordinate (T\i); \path (\i,0) coordinate (B\i); } 
\filldraw[fill= gray!50,draw=gray!50,line width=4pt]  (T1) -- (T4) -- (B4) -- (B1) -- (T1);
\draw[blue] (T1) -- (B2);
\draw[blue] (T4) -- (B1);
\draw[purple] (T3) -- (B4);
\foreach \i in {1,...,4} 
{\filldraw[fill=white,draw=black,line width = 1pt](T\i) circle (3pt);\filldraw[fill=white,draw=black,line width = 1pt](B\i) circle (3pt); } 
\end{tikzpicture}
\end{array}\right).$$ \medskip

{\noindent (4)} \   The product on the left is 0, since the diagrams do  not exactly match in the middle. In the product on the right, $[\pi_1 \ast \pi_2] = 0$, and 
the bottom diagram does not have any blocks entirely in the bottom row, so no coarsenings are possible.

$$
\begin{array}{cl}
\begin{array}{c}
\begin{array}{c}\begin{tikzpicture}[scale=.6,line width=1.25pt] 
\foreach \i in {1,...,6} 
{ \path (\i,1) coordinate (T\i); \path (\i,0) coordinate (B\i); } 
\filldraw[fill= gray!50,draw=gray!50,line width=4pt]  (T1) -- (T6) -- (B6) -- (B1) -- (T1);
\draw[blue] (T2) .. controls +(0,-.50) and +(0,-.50) .. (T4);
\draw[blue] (T3) .. controls +(0,-.40) and +(0,-.40) .. (T5);
\draw[blue] (B2) .. controls +(0,+.30) and +(0,+.30) .. (B3);
\draw[blue] (B4) .. controls +(0,+.30) and +(0,+.30) .. (B5);
\draw[blue] (T1) -- (B2);\draw[blue] (T5) -- (B6);
\foreach \i in {1,...,6} 
{\filldraw[fill=white,draw=black,line width = 1pt] (T\i) circle (3pt); \filldraw[fill=white,draw=black,line width = 1pt](B\i) circle (3pt); } 
\end{tikzpicture}\end{array} \\
\begin{array}{c}\begin{tikzpicture}[scale=.6,line width=1.25pt] 
\foreach \i in {1,...,6} 
{ \path (\i,1) coordinate (T\i); \path (\i,0) coordinate (B\i); } 
\filldraw[fill= gray!50,draw=gray!50,line width=4pt]  (T1) -- (T6) -- (B6) -- (B1) -- (T1);
\draw[blue] (T2) .. controls +(0,-.30) and +(0,-.30) .. (T3);
\draw[blue] (B1) .. controls +(0,+.40) and +(0,+.40) .. (B3);
\draw[blue] (B4) .. controls +(0,+.30) and +(0,+.30) .. (B5) .. controls +(0,+.30) and +(0,+.30) .. (B6);
\draw[blue] (T1) -- (B2);\draw[blue] (T6) -- (B6);
\foreach \i in {1,...,6} 
{\filldraw[fill=white,draw=black,line width = 1pt](T\i) circle (3pt);\filldraw[fill=white,draw=black,line width = 1pt](B\i) circle (3pt); } 
\end{tikzpicture}
\end{array}\end{array}= & 0,
\end{array}
\qquad
\begin{array}{cl}
\begin{array}{c}
\begin{array}{c}\begin{tikzpicture}[scale=.6,line width=1.25pt] 
\foreach \i in {1,...,6} 
{ \path (\i,1) coordinate (T\i); \path (\i,0) coordinate (B\i); } 
\filldraw[fill= gray!50,draw=gray!50,line width=4pt]  (T1) -- (T6) -- (B6) -- (B1) -- (T1);
\draw[blue] (T2) -- (B3);
\draw[blue] (T4) -- (B5);
\draw[blue] (T6) -- (B4);
\draw[blue] (T3) .. controls +(0,-.50) and +(0,-.50) .. (T5);
\draw[blue] (T1) -- (B2);
\foreach \i in {1,...,6} 
{\filldraw[fill=white,draw=black,line width = 1pt] (T\i) circle (3pt); \filldraw[fill=white,draw=black,line width = 1pt](B\i) circle (3pt); } 
\end{tikzpicture}\end{array} \\
\begin{array}{c}\begin{tikzpicture}[scale=.6,line width=1.25pt] 
\foreach \i in {1,...,6} 
{ \path (\i,1) coordinate (T\i); \path (\i,0) coordinate (B\i); } 
\filldraw[fill= gray!50,draw=gray!50,line width=4pt]  (T1) -- (T6) -- (B6) -- (B1) -- (T1);
\draw[blue] (T1) -- (B2);
\draw[blue] (T2) -- (B1);
\draw[blue] (T3) -- (B4);;
\draw[blue] (T5) -- (B3);
\draw[blue] (T6) -- (B5);
\draw[blue] (T4) -- (B6);
\foreach \i in {1,...,6} 
{\filldraw[fill=white,draw=black,line width = 1pt](T\i) circle (3pt);\filldraw[fill=white,draw=black,line width = 1pt](B\i) circle (3pt); } 
\end{tikzpicture}
\end{array}
\end{array}= & \begin{array}{c}\begin{tikzpicture}[scale=.6,line width=1.25pt] 
\foreach \i in {1,...,6} 
{ \path (\i,1) coordinate (T\i); \path (\i,0) coordinate (B\i); } 
\filldraw[fill= gray!50,draw=gray!50,line width=4pt]  (T1) -- (T6) -- (B6) -- (B1) -- (T1);
\draw[blue] (T1) -- (B1);
\draw[blue] (T2) -- (B4);
\draw[blue] (T4) -- (B3);
\draw[blue] (T3) .. controls +(0,-.50) and +(0,-.50) .. (T5);
\draw[blue] (T6) -- (B6);
\foreach \i in {1,...,6} 
{\filldraw[fill=white,draw=black,line width = 1pt](T\i) circle (3pt);\filldraw[fill=white,draw=black,line width = 1pt](B\i) circle (3pt); } 
\end{tikzpicture}
\end{array}.
\end{array}
$$
\end{examples}

\noindent   \emph{Proof of Lemma \ref{T:mult}.} \   
Assume $\pi_1, \pi_2 \in \Pi_{2k}(n)$.  Then    
$$X_{\pi_1} = \sum_{(\rs|\rs') \in [1,n]^{2k}} \left(X_{\pi_1}\right)_\rs^{\rs'} \EE_{\rs}^{\rs'} \quad \text{and} \quad
X_{\pi_2} = \sum_{(\rs|\rs')\in [1,n]^{2k}} \left(X_{\pi_2}\right)_\rs^{\rs'}   \EE_{\rs}^{\rs'},$$
where the coefficients are 0 or 1  as in \eqref{eq:Phi-coeffs-orbit}.  The coefficient of
$\EE_{\rs}^{\rs'}$ in the product  $X_{\pi_1}X_{\pi_2}$  
is given by
\begin{equation}\label{eq:p1p2} \left(X_{\pi_1}X_{\pi_2}\right)_{\rs}^{\rs'}  = \sum_{\qs \in [1,n]^k}  
 \left(X_{\pi_1}\right)_\qs^{\rs'} \left(X_{\pi_2}\right)_\rs^{\qs}.
 \end{equation}
In this expression, $\qs$ simultaneously labels the bottom row of $\pi_1$ and the top row of $\pi_2$, so in order for 
$X_{\pi_1}X_{\pi_2}$ to be nonzero,  it must be that $\pi_1 \ast \pi_2$ exactly matches in the middle.

We consider for which $(\rs|\rs')$ the expression for
$ \left(X_{\pi_1}X_{\pi_2}\right)_{\rs}^{\rs'}$ in \eqref{eq:p1p2} is nonzero. 
The tuple $\rs' \in [1,n]^k$ is some permutation of the standard labeling $\bs_{\pi_1}'$ of the 
top row of $\pi_1$,  and $\rs$ is some permutation of the standard labeling  
$\bs_{\pi_2}$ of the bottom row of $\pi_2$, where the standard labelings are as in Definition \ref{D:std}.  If a block in the top row of $\pi_1$ is connected to a block in the bottom row of $\pi_2$ in
$\pi_1 \ast \pi_2$,  then those blocks must carry the same label for all $(\rs| \rs')$  with 
$ \left(X_{\pi_1}X_{\pi_2}\right)_{\rs}^{\rs'}$ nonzero. 
However, blocks that lie entirely in the top row of $\pi_1$ and blocks that lie entirely in the bottom row of $\pi_2$ may or may not have the same label in some $(\rs| \rs')$.   
We account for those possibilities by the coarsenings $\vr$ of $\pi_1 \ast \pi_2$  that are
obtained by connecting blocks entirely 
in the top row of $\pi_1$ with blocks living entirely in the bottom row of $\pi_2$ in the concatenation $\pi_1 \ast \pi_2$.  (Example \ref{Example:LabeledDiagramProof} illustrates labeled diagrams for the product $ \left(X_{\pi_1}\right)_\qs^{\rs'}\left(X_{\pi_2}\right)_\rs^{\qs}$ in \eqref{eq:p1p2}.)

Suppose $(\rs|\rs')$ is a labeling of such a coarsening $\vr$.  In summing over the $\qs \in [1,n]^k$, 
corresponding to the middle row blocks of $\pi_1 \ast \pi_2$, we have $n - |\vr|$ choices for the entry of $\qs$
that is assigned to the first block;  $n-|\vr|-1$ for the next one,  as it must be different from the first;  and so forth, since
the entries of $\qs$ must be distinct from those  of $(\rs|\rs')$.   
Hence,
$$
(X_{\pi_1} X_{\pi_2})_{\rs}^{\rs'}  
= \sum_{\vr} (n-|\vr|)_{[\pi_1 \ast \pi_2]} \, (X_\varrho)_{\rs}^{\rs'} 
$$
where the sum ranges over all coarsenings of $\pi_1 \ast \pi_2$ obtained by connecting a block lying entirely in the top
row of $\pi_1$ to a block lying entirely in the bottom row of $\pi_2$.    Since this equality is true entrywise for all $(\rs|\rs') \in [1,n]^{2k}$,  the desired result follows.  \qed

\begin{example}\label{Example:LabeledDiagramProof}  We illustrate the  labeled diagrams of $\left(X_{\pi_1}X_{\pi_2}\right)_{\rs}^{\rs'}$ in the proof of Lemma \ref{T:mult} for the specific case of Example \ref{exs:mult} (2). 
$$
\begin{array}{l}
\begin{array}{c}
\begin{tikzpicture}[scale=.6,line width=1.25pt] 
\foreach \i in {1,...,4} 
{\path (\i,1) coordinate (T\i); \path (\i,0) coordinate (B\i); } 
\filldraw[fill= gray!50,draw=gray!50,line width=4pt]  (T1) -- (T4) -- (B4) -- (B1) -- (T1);
\draw[blue] (T2) .. controls +(0,-.30) and +(0,-.30) .. (T3);
\draw[blue] (B3) .. controls +(0,+.30) and +(0,+.30) .. (B4);
\draw[blue] (B2) -- (T4);; 
\draw  (T1)  node[black,above=0.01cm]{${r_1'}$};
\draw  (T2)  node[black,above=0.01cm]{${r_2'}$};
\draw  (T3)  node[black,above=0.01cm]{${r_2'}$};
\draw  (T4)  node[black,above=0.01cm]{${r_2}$};
\draw  (B1)  node[black,below=0.01cm]{${q_1}$};
\draw  (B2)  node[black,below=0.01cm]{${r_2}$};
\draw  (B3)  node[black,below=0.01cm]{${q_2}$};
\draw  (B4)  node[black,below=0.01cm]{${q_2}$};
\foreach \i in {1,...,4} 
{\filldraw[fill=white,draw=black,line width = 1pt] (T\i) circle (3pt); \filldraw[fill=white,draw=black,line width = 1pt] (B\i) circle (3pt); } 
\foreach \i in {1,...,4} {\path (\i,-1) coordinate (TT\i); \path (\i,-2) coordinate (BB\i); } 
\filldraw[fill= gray!50,draw=gray!50,line width=4pt]  (TT1) -- (TT4) -- (BB4) -- (BB1) -- (TT1);
\draw[blue] (TT3) .. controls +(0,-.30) and +(0,-.30) .. (TT4);
\draw[blue] (BB1) .. controls +(0,+.40) and +(0,+.40) .. (BB3);
\draw[blue] (TT2) .. controls +(0,+.40) and +(0,+.40) .. (BB2);
\draw  (BB1)  node[black,below=0.01cm]{${r_1}$};
\draw  (BB2)  node[black,below=0.01cm]{${r_2}$};
\draw  (BB3)  node[black,below=0.01cm]{${r_1}$};
\draw  (BB4)  node[black,below=0.01cm]{${r_3}$};
\foreach \i in {1,...,4} 
{\filldraw[fill=white,draw=black,line width = 1pt] (TT\i) circle (3pt); \filldraw[fill=white,draw=black,line width = 1pt] (BB\i) circle (3pt); } 
\end{tikzpicture} 
\end{array}
=  (n-5)(n-6)
\begin{array}{c}
\begin{tikzpicture}[scale=.6,line width=1.25pt] 
\foreach \i in {1,...,4} 
{ \path (\i,1) coordinate (T\i); \path (\i,0) coordinate (B\i); } 
\filldraw[fill= gray!50,draw=gray!50,line width=4pt]  (T1) -- (T4) -- (B4) -- (B1) -- (T1);
\draw[blue] (T2) .. controls +(0,-.30) and +(0,-.30) .. (T3);
\draw[blue] (B1) .. controls +(0,+.40) and +(0,+.40) .. (B3);
\draw[blue] (T4) -- (B2);
\draw  (T1)  node[black,above=0.01cm]{${r_1'}$};
\draw  (T2)  node[black,above=0.01cm]{${r_2'}$};
\draw  (T3)  node[black,above=0.01cm]{${r_2'}$};
\draw  (T4)  node[black,above=0.01cm]{${r_2}$};
\draw  (B1)  node[black,below=0.01cm]{${r_1}$};
\draw  (B2)  node[black,below=0.01cm]{${r_2}$};
\draw  (B3)  node[black,below=0.01cm]{${r_1}$};
\draw  (B4)  node[black,below=0.01cm]{${r_3}$};
\foreach \i in {1,...,4} 
{\filldraw[fill=white,draw=black,line width = 1pt] (T\i) circle (3pt); \filldraw[fill=white,draw=black,line width = 1pt] (B\i) circle (3pt); } 
\end{tikzpicture}
\end{array}  \\
\hskip0.0in
+ (n-4)(n-5) 
\Big(
\begin{array}{c}
\begin{tikzpicture}[scale=.6,line width=1.25pt] 
\foreach \i in {1,...,4} 
{ \path (\i,1) coordinate (T\i); \path (\i,0) coordinate (B\i); } 
\filldraw[fill= gray!50,draw=gray!50,line width=4pt]  (T1) -- (T4) -- (B4) -- (B1) -- (T1);
\draw[blue] (T2) .. controls +(0,-.30) and +(0,-.30) .. (T3);
\draw[blue] (B1) .. controls +(0,+.40) and +(0,+.40) .. (B3);
\draw[purple] (T1)--(B1);
\draw[blue] (T4) -- (B2);
\draw  (T1)  node[black,above=0.01cm]{${r_1}$};
\draw  (T2)  node[black,above=0.01cm]{${r_2'}$};
\draw  (T3)  node[black,above=0.01cm]{${r_2'}$};
\draw  (T4)  node[black,above=0.01cm]{${r_2}$};
\draw  (B1)  node[black,below=0.01cm]{${r_1}$};
\draw  (B2)  node[black,below=0.01cm]{${r_2}$};
\draw  (B3)  node[black,below=0.01cm]{${r_1}$};
\draw  (B4)  node[black,below=0.01cm]{${r_3}$};
\foreach \i in {1,...,4} 
{\filldraw[fill=white,draw=black,line width = 1pt] (T\i) circle (3pt); \filldraw[fill=white,draw=black,line width = 1pt] (B\i) circle (3pt); } 
\end{tikzpicture}
\end{array}
\!\!+\!\!
\begin{array}{c}
\begin{tikzpicture}[scale=.6,line width=1.25pt] 
\foreach \i in {1,...,4} 
{ \path (\i,1) coordinate (T\i); \path (\i,0) coordinate (B\i); } 
\filldraw[fill= gray!50,draw=gray!50,line width=4pt]  (T1) -- (T4) -- (B4) -- (B1) -- (T1);
\draw[blue] (T2) .. controls +(0,-.30) and +(0,-.30) .. (T3);
\draw[blue] (B1) .. controls +(0,+.40) and +(0,+.40) .. (B3);
\draw[purple] (T1)--(B4);
\draw[blue] (T4) -- (B2);
\draw  (T1)  node[black,above=0.01cm]{${r_3}$};
\draw  (T2)  node[black,above=0.01cm]{${r_2'}$};
\draw  (T3)  node[black,above=0.01cm]{${r_2'}$};
\draw  (T4)  node[black,above=0.01cm]{${r_2}$};
\draw  (B1)  node[black,below=0.01cm]{${r_1}$};
\draw  (B2)  node[black,below=0.01cm]{${r_2}$};
\draw  (B3)  node[black,below=0.01cm]{${r_1}$};
\draw  (B4)  node[black,below=0.01cm]{${r_3}$};
\foreach \i in {1,...,4} 
{\filldraw[fill=white,draw=black,line width = 1pt](T\i) circle (3pt); \filldraw[fill=white,draw=black,line width = 1pt](B\i) circle (3pt); } 
\end{tikzpicture}
\end{array}
\!\!+\!\!
\begin{array}{c}
\begin{tikzpicture}[scale=.6,line width=1.25pt] 
\foreach \i in {1,...,4} 
{ \path (\i,1) coordinate (T\i); \path (\i,0) coordinate (B\i); } 
\filldraw[fill= gray!50,draw=gray!50,line width=4pt]  (T1) -- (T4) -- (B4) -- (B1) -- (T1);
\draw[blue] (T2) .. controls +(0,-.30) and +(0,-.30) .. (T3);
\draw[blue] (B1) .. controls +(0,+.40) and +(0,+.40) .. (B3);
\draw[purple] (T2).. controls +(0,-.30) and +(0,+.30) ..(B1);
\draw[blue] (T4) -- (B2);
\draw  (T1)  node[black,above=0.01cm]{${r_1'}$};
\draw  (T2)  node[black,above=0.01cm]{${r_1}$};
\draw  (T3)  node[black,above=0.01cm]{${r_1}$};
\draw  (T4)  node[black,above=0.01cm]{${r_2}$};
\draw  (B1)  node[black,below=0.01cm]{${r_1}$};
\draw  (B2)  node[black,below=0.01cm]{${r_2}$};
\draw  (B3)  node[black,below=0.01cm]{${r_1}$};
\draw  (B4)  node[black,below=0.01cm]{${r_3}$};
\foreach \i in {1,...,4} 
{ \filldraw[fill=white,draw=black,line width = 1pt] (T\i) circle (3pt); \filldraw[fill=white,draw=black,line width = 1pt](B\i) circle (3pt); } 
\end{tikzpicture}
\end{array}\!\!+\!\!
\begin{array}{c}
\begin{tikzpicture}[scale=.6,line width=1.25pt] 
\foreach \i in {1,...,4} 
{ \path (\i,1) coordinate (T\i); \path (\i,0) coordinate (B\i); } 
\filldraw[fill= gray!50,draw=gray!50,line width=4pt]  (T1) -- (T4) -- (B4) -- (B1) -- (T1);
\draw[blue] (T2) .. controls +(0,-.30) and +(0,-.30) .. (T3);
\draw[blue] (B1) .. controls +(0,+.40) and +(0,+.40) .. (B3);
\draw[blue] (T4) -- (B2);
\draw[purple] (T3).. controls +(0,-.30) and +(0,+.30) ..(B4);
\draw  (T1)  node[black,above=0.01cm]{${r_1'}$};
\draw  (T2)  node[black,above=0.01cm]{${r_3}$};
\draw  (T3)  node[black,above=0.01cm]{${r_3}$};
\draw  (T4)  node[black,above=0.01cm]{${r_2}$};
\draw  (B1)  node[black,below=0.01cm]{${r_1}$};
\draw  (B2)  node[black,below=0.01cm]{${r_2}$};
\draw  (B3)  node[black,below=0.01cm]{${r_1}$};
\draw  (B4)  node[black,below=0.01cm]{${r_3}$};
\foreach \i in {1,...,4} 
{\filldraw[fill=white,draw=black,line width = 1pt](T\i) circle (3pt); \filldraw[fill=white,draw=black,line width = 1pt](B\i) circle (3pt); } 
\end{tikzpicture}
\end{array}\!\!\Big) \\
\hskip0.0in+(n-3)(n-4) \Big(
\begin{array}{c}
\begin{tikzpicture}[scale=.6,line width=1.25pt] 
\foreach \i in {1,...,4} 
{ \path (\i,1) coordinate (T\i); \path (\i,0) coordinate (B\i); } 
\filldraw[fill= gray!50,draw=gray!50,line width=4pt]  (T1) -- (T4) -- (B4) -- (B1) -- (T1);
\draw[blue] (T2) .. controls +(0,-.30) and +(0,-.30) .. (T3);
\draw[blue] (B1) .. controls +(0,+.40) and +(0,+.40) .. (B3);
\draw[purple] (T1)--(B1);
\draw[purple] (T3).. controls +(0,-.30) and +(0,+.30) ..(B4);;
\draw[blue] (T4) -- (B2);
\draw  (T1)  node[black,above=0.01cm]{${r_1}$};
\draw  (T2)  node[black,above=0.01cm]{${r_3}$};
\draw  (T3)  node[black,above=0.01cm]{${r_3}$};
\draw  (T4)  node[black,above=0.01cm]{${r_2}$};
\draw  (B1)  node[black,below=0.01cm]{${r_1}$};
\draw  (B2)  node[black,below=0.01cm]{${r_2}$};
\draw  (B3)  node[black,below=0.01cm]{${r_1}$};
\draw  (B4)  node[black,below=0.01cm]{${r_3}$};
\foreach \i in {1,...,4} 
{\filldraw[fill=white,draw=black,line width = 1pt](T\i) circle (3pt); \filldraw[fill=white,draw=black,line width = 1pt] (B\i) circle (3pt); } 
\end{tikzpicture}
\end{array}+
\begin{array}{c}
\begin{tikzpicture}[scale=.6,line width=1.25pt] 
\foreach \i in {1,...,4} 
{ \path (\i,1) coordinate (T\i); \path (\i,0) coordinate (B\i); } 
\filldraw[fill= gray!50,draw=gray!50,line width=4pt]  (T1) -- (T4) -- (B4) -- (B1) -- (T1);
\draw[blue] (T2) .. controls +(0,-.30) and +(0,-.30) .. (T3);
\draw[blue] (B1) .. controls +(0,+.40) and +(0,+.40) .. (B3);
\draw[purple] (T1)--(B4);
\draw[purple] (T2).. controls +(0,-.30) and +(0,+.30) ..(B1);
\draw[blue] (T4) -- (B2);
\draw  (T1)  node[black,above=0.01cm]{${r_3}$};
\draw  (T2)  node[black,above=0.01cm]{${r_1}$};
\draw  (T3)  node[black,above=0.01cm]{${r_1}$};
\draw  (T4)  node[black,above=0.01cm]{${r_2}$};
\draw  (B1)  node[black,below=0.01cm]{${r_1}$};
\draw  (B2)  node[black,below=0.01cm]{${r_2}$};
\draw  (B3)  node[black,below=0.01cm]{${r_1}$};
\draw  (B4)  node[black,below=0.01cm]{${r_3}$};
\foreach \i in {1,...,4} 
{\filldraw[fill=white,draw=black,line width = 1pt] (T\i) circle (3pt); \filldraw[fill=white,draw=black,line width = 1pt] (B\i) circle (3pt); } 
\end{tikzpicture}
\end{array}\Big).
\end{array}
$$
\end{example} 
 
\begin{thm}\label{C:mult} 
Assume  $\pi_1, \pi_2 \in \Pi_{2k}$, and let $x_{\pi_1}, x_{\pi_2}$ be the
corresponding orbit basis elements in $\P_k(\para)$ for $\para \in \CC\setminus \{0\}$.   Then
\begin{equation}\label{eq:orbitmult}  x_{\pi_1} x_{\pi_2} = \begin{cases}
\displaystyle{\sum_{\vr}   (\para-|\varrho|)_{[\pi_1\ast \pi_2]} \  x_\varrho} &  \quad \hbox{if $\pi_1 \ast \pi_2$ exactly matches in the middle,}\\  
0 &\quad \hbox{otherwise,} 
\end{cases}\end{equation}
where  the sum is over all coarsenings $\vr$ of $\pi_1 \ast \pi_2$ obtained by connecting blocks that lie entirely in the top row of $\pi_1$
to blocks that lie entirely in the bottom row of $\pi_2$.  
\end{thm}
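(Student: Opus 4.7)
The plan is to bootstrap from Lemma \ref{T:mult} by a polynomial identity argument in the parameter $\xi$. The point is that, although Lemma \ref{T:mult} was proved by an explicit matrix-unit computation in $\End_{\S_n}(\modu^{\otimes k})$ (so only for integer values $\xi = n$), both sides of \eqref{eq:orbitmult} depend polynomially on $\xi$ when expanded in the orbit basis. Since they agree for infinitely many integers, they must agree for all $\xi$.

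First, I would show that the structure constants of $\P_k(\xi)$ in the orbit basis are polynomials in $\xi$ with integer coefficients. This follows by combining three ingredients that are all independent of $\xi$ as set-partition data: the expansion $x_\pi = \sum_{\pi \preceq \vr} \mu_{2k}(\pi,\vr)\, d_\vr$ from \eqref{eq:mobiusa}, the inverse expansion $d_\rho = \sum_{\rho \preceq \sigma} x_\sigma$ from \eqref{refinement-relation}, and the diagram product $d_{\vr_1} d_{\vr_2} = \xi^{[\vr_1 \ast \vr_2]} d_{\vr_1 \ast \vr_2}$. Composing these, for any fixed $\pi_1,\pi_2,\rho \in \Pi_{2k}$, there is a polynomial $c_{\pi_1,\pi_2}^{\rho}(\xi) \in \ZZ[\xi]$ such that
\begin{equation*}
x_{\pi_1} x_{\pi_2} \;=\; \sum_{\rho \in \Pi_{2k}} c_{\pi_1,\pi_2}^{\rho}(\xi)\, x_\rho
\end{equation*}
holds in $\P_k(\xi)$ for every $\xi \in \CC\setminus\{0\}$.

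Next I would specialize to $\xi = n$ with $n \ge 2k$. In that range Theorem \ref{T:Phi}\,(a) says $\Phi_{k,n}\colon \P_k(n) \to \End_{\S_n}(\modu^{\otimes k})$ is an isomorphism that sends the orbit basis $\{x_\rho\}$ to the linearly independent family $\{X_\rho : \rho \in \Pi_{2k}\}$ (every $\rho \in \Pi_{2k}$ satisfies $|\rho|\le 2k \le n$, so no $X_\rho$ vanishes). Applying $\Phi_{k,n}$ to the identity above and invoking Lemma \ref{T:mult} gives
\begin{equation*}
\sum_{\rho} c_{\pi_1,\pi_2}^{\rho}(n)\, X_\rho \;=\; X_{\pi_1} X_{\pi_2} \;=\; \begin{cases} \sum_{\vr} (n-|\vr|)_{[\pi_1\ast\pi_2]}\, X_\vr & \text{if $\pi_1 \ast \pi_2$ matches in the middle,}\\ 0 & \text{otherwise,}\end{cases}
\end{equation*}
where the inner sum runs over the coarsenings $\vr$ of $\pi_1 \ast \pi_2$ described in Lemma \ref{T:mult}. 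Matching coefficients of the basis $\{X_\rho\}$ identifies $c_{\pi_1,\pi_2}^{\rho}(n)$ with $(n-|\rho|)_{[\pi_1\ast\pi_2]}$ precisely when $\rho$ is such a coarsening (and with $0$ otherwise, including all cases when $\pi_1\ast\pi_2$ fails to match in the middle).

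Finally, the polynomial $c_{\pi_1,\pi_2}^{\rho}(\xi) - (\xi-|\rho|)_{[\pi_1\ast\pi_2]}$ (respectively $c_{\pi_1,\pi_2}^{\rho}(\xi)$ in the non-contributing cases) vanishes at every integer $n \ge 2k$, hence is identically zero. Substituting back into the expansion of $x_{\pi_1} x_{\pi_2}$ yields the claimed formula \eqref{eq:orbitmult} for every $\xi \in \CC\setminus\{0\}$. The only genuine subtlety is bookkeeping: verifying that $\{X_\rho\}_{\rho \in \Pi_{2k}}$ really is a basis for the image when $n \ge 2k$ (so that coefficients can be compared) and that the coarsenings enumerated by Lemma \ref{T:mult} are exactly those appearing in the statement of Theorem \ref{C:mult}. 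Both are immediate, so the polynomial-specialization argument is essentially a formality.
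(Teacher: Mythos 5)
Your proposal is correct and takes essentially the same approach as the paper: first establish the formula for $n\geq 2k$ by applying the isomorphism $\Phi_{k,n}$ and invoking Lemma \ref{T:mult}, then observe that the structure constants of $\P_k(\xi)$ in the orbit basis are polynomials in $\xi$ that agree with $(\xi-|\varrho|)_{[\pi_1\ast\pi_2]}$ (or $0$) at infinitely many integer values, hence identically. The paper's version is more terse but relies on exactly the same two ingredients; your extra care in exhibiting the polynomial $c_{\pi_1,\pi_2}^{\rho}(\xi)$ via the chain diagram basis $\to$ product $\to$ orbit basis is a reasonable elaboration of what the paper leaves implicit.
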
 
 
\begin{proof}  Consider first $\P_k(n)$ with $n \geq 2k$.  Then   
$x_{\pi_1}x_{\pi_2}- \sum_{\vr} (n-|\vr|)_{[\pi_1 \ast \pi_2]} \, x_\vr$ (where the sum is over the coarsenings $\vr$ of $\pi_1 \ast \pi_2$ 
as in the statement of the corollary)  lies in the
kernel of the representation $\Phi_{k,n}$, which equals $(0)$ by Theorem \ref{T:Phi}.  Thus,
\eqref{eq:orbitmult} holds for the partition algebras $\P_k(n)$ for all $n \geq 2k$.  
More generally, when $\xi \in \CC\setminus \{0\}$ and we multiply two orbit
basis elements in $\P_k(\xi)$,  we get a linear combination of orbit basis diagrams  $x_\varrho$
whose coefficients are integer combinations of powers of  $\xi$.  
Assuming for the moment that $\xi$ is an indeterminate, we have
that the coefficient of $x_\varrho$ is a polynomial in $\ZZ[\xi]$.   
Moreover,  when  $\xi = n \ge 2k$,  that polynomial is $(n-|\varrho|)_{[\pi_1*\pi_2]}$ if
$\vr$ is an appropriate kind of coarsening of $\pi_1 \ast \pi_2$  or it is 0.
These two polynomials agree on infinitely many values, and so they
must be equal since the field has characteristic 0. Therefore the result must 
hold for all nonzero values $\xi$. \end{proof}  

\subsection{Rook diagrams and permutation diagrams}\label{subsec:rook} 
  
A set partition $\pi \in \Pi_{2k}$ is a \emph{rook partition} if $\pi$ consists of blocks of size one and two such that the blocks of size two in $\pi$ contain one element from the bottom row $\{1, 2, \ldots, k\}$ and one element from the top row $\{k+1,k+2, \ldots, 2k\}$.  Let $\mathcal{R}_{2k} \subseteq \Pi_{2k}$ denote the subset of rook partitions.  It is easy to check that rook partitions are characterized by the property  $\pn(\pi) + |\pi| = 2k.$

A set partition $\pi\in\Pi_{2k}$ is a \emph{permutation} if it consists of $k$ blocks, each of size two, with exactly one element from the bottom row $\{1,2,\ldots, k\}$ and one element from the top row $\{k+1,k+2, \ldots, 2k\}$ in each block.  Let $\mathcal{S}_{2k} \subseteq \Pi_{2k}$ denote the subset of permutations. The corresponding set of permutation diagrams 
$\{d_\pi \mid \pi \in \mathcal{S}_{2k}\}$ is isomorphic to the symmetric group $\S_k$  under diagram multiplication. The permutation $\sigma \in \S_k$ corresponds to the set partition $\pi_{\sigma} = \{1, \sigma(1) \,|\,  2, \sigma(2) \,|\  \cdots \ |\,  k, \sigma(k)\}$, and we identify $\sigma$ with its diagram $\sigma = d_{\pi_\sigma}$.   By
 \eqref{eq:Phi-coeffs-diagram},  $\sigma$ acts on  $\us_{\mathsf{r}} = \us_{r_1} \ot \us_{r_2} \ot \cdots \ot \us_{r_k} \in \modu^{\ot k}$  as follows:\,
$\sigma \us_\mathsf{r} = \sum_{\mathsf{s} \in [1,n]^k} \mathsf{E}_{\mathsf{s}}^{\mathsf{\sigma(s)}}\us_{\mathsf{r}} = 
 \us_{\sigma(r_1)} \ot \us_{\sigma(r_2)} \ot \cdots \ot \us_{\sigma(r_k)}.$  
Permutations $\pi$ are characterized by the property that $\pn(\pi) = |\pi| = k$.

If $\pi \in \Pi_{2k}$ and $\sigma, \sigma' \in \S_k$,  then with the above identifications,  $\sigma' d_\pi \sigma = d_{\sigma'\ast\pi\ast\sigma}$
under diagram multiplication,  where $\sigma'\ast\pi\ast\sigma \in \Pi_{2k}$ is the set partition obtained from $\pi$ by applying $\sigma$ to $\{1,2, \ldots, k\}$ in the bottom row of $\pi$ and by applying $\sigma'$ to $\{k+1, k+2,\ldots, 2k\}$ in the top row of $\pi$ by sending $k + i$ to $k + \sigma'(i)$. In other words, $\sigma'$ permutes the vertices in the top row of $d_\pi$ and $\sigma$ permutes the vertices in the bottom row while maintaining the edge connections.  
This multiplication extends to orbit diagrams,  since 
$$
\sigma' x_\pi \sigma =  \sum_{\pi \preceq \varrho} \mu_{2k}(\pi,\vr) \sigma' d_\varrho \sigma = \sum_{\sigma'\ast \pi \ast\sigma \preceq \sigma'\ast \vr\ast \sigma } \mu_{2k}(\sigma'\ast \pi \ast\sigma, \sigma'\ast \vr\ast \sigma) d_{\sigma' \ast \vr \ast \sigma} = x_{\sigma' \ast \pi \ast \sigma}.$$   For example,
\begin{equation}\label{DiagramPermutation}
\begin{array}{r}
\sigma' = \begin{array}{c}\begin{tikzpicture}[scale=.6,line width=1.25pt] 
\foreach \i in {1,...,6} 
{ \path (\i,1) coordinate (T\i); \path (\i,0) coordinate (B\i); } 
\filldraw[fill= gray!50,draw=gray!50,line width=4pt]  (T1) -- (T6) -- (B6) -- (B1) -- (T1);
\draw[blue] (T1) -- (B2);
\draw[blue] (T2) -- (B4);
\draw[blue] (T3) -- (B1);
\draw[blue] (T4) -- (B6);
\draw[blue] (T5) -- (B5);
\draw[blue] (T6) -- (B3);
\foreach \i in {1,...,6} 
{\filldraw[fill=black,draw=black,line width = 1pt] (T\i) circle (3pt); \filldraw[fill=black,draw=black,line width = 1pt](B\i) circle (3pt); } 
\end{tikzpicture}\end{array}  \\
x_{\pi}=\begin{array}{c}\begin{tikzpicture}[scale=.6,line width=1.25pt] 
\foreach \i in {1,...,6} 
{ \path (\i,1) coordinate (T\i); \path (\i,0) coordinate (B\i); } 
\filldraw[fill= gray!50,draw=gray!50,line width=4pt]  (T1) -- (T6) -- (B6) -- (B1) -- (T1);
\draw[blue] (T2) .. controls +(0,-.50) and +(0,-.50) .. (T4);
\draw[blue] (T3) .. controls +(0,-.40) and +(0,-.40) .. (T5);
\draw[blue] (B2) .. controls +(0,+.30) and +(0,+.30) .. (B3);
\draw[blue] (B4) .. controls +(0,+.30) and +(0,+.30) .. (B5);
\draw[blue] (T1) -- (B2);\draw[blue] (T5) -- (B6);
\foreach \i in {1,...,6} 
{\filldraw[fill=white,draw=black,line width = 1pt] (T\i) circle (3pt); \filldraw[fill=white,draw=black,line width = 1pt](B\i) circle (3pt); } 
\end{tikzpicture}\end{array}  \\
\sigma=\begin{array}{c}\begin{tikzpicture}[scale=.6,line width=1.25pt] 
\foreach \i in {1,...,6} 
{ \path (\i,1) coordinate (T\i); \path (\i,0) coordinate (B\i); } 
\filldraw[fill= gray!50,draw=gray!50,line width=4pt]  (T1) -- (T6) -- (B6) -- (B1) -- (T1);
\draw[blue] (T1) -- (B2);
\draw[blue] (T2) -- (B3);
\draw[blue] (T3) -- (B1);
\draw[blue] (T4) -- (B5);
\draw[blue] (T5) -- (B4);
\draw[blue] (T6) -- (B6);
\foreach \i in {1,...,6} 
{\filldraw[fill=black,draw=black,line width = 1pt] (T\i) circle (3pt); \filldraw[fill=black,draw=black,line width = 1pt](B\i) circle (3pt); } 
\end{tikzpicture}\end{array}  \\
\end{array} 
= 
\begin{array}{c}\begin{tikzpicture}[scale=.6,line width=1.25pt] 
\foreach \i in {1,...,6} 
{ \path (\i,1) coordinate (T\i); \path (\i,0) coordinate (B\i); } 
\filldraw[fill= gray!50,draw=gray!50,line width=4pt]  (T1) -- (T6) -- (B6) -- (B1) -- (T1);
\draw[blue] (T1) .. controls +(0,-.40) and +(0,-.40) .. (T2);
\draw[blue] (T5) .. controls +(0,-.40) and +(0,-.40) .. (T6) -- (B6);
\draw[blue]  (B1) .. controls +(0,+.40) and +(0,+.40) .. (B3) -- (T3);
\draw[blue] (B4) .. controls +(0,+.40) and +(0,+.40) .. (B5);
\foreach \i in {1,...,6} 
{\filldraw[fill=white,draw=black,line width = 1pt] (T\i) circle (3pt); \filldraw[fill=white,draw=black,line width = 1pt](B\i) circle (3pt); } 
\end{tikzpicture}\end{array} 
= x_{\sigma'\ast\pi\ast\sigma}.
\end{equation}
Observe that we are multiplying $\sigma,\sigma'$ in the diagram basis with $x_\pi$ in the orbit basis.
 
\section{The Kernel of the Representation  $\Phi_{k,n}: \P_k(n) \to \End_{\S_n}(\modu^{\ot k})$}

The surjection $\Phi_{k,n}: \P_k(n) \rightarrow \End_{\S_n}(\modu^{\otimes k}) $ from Theorem \ref{T:Phi}\,(a)   
 is an isomorphism when $n \ge 2k$. This section is  dedicated to showing that  $\ker \Phi_{k,n}$
 is generated  as a two-sided ideal by a single  (essential) idempotent when $2k > n$.    
\subsection{The kernel is principally generated,  \, $\ker \Phi_{k,n} = \langle \ef_{k,n} \rangle$ when $2k > n$}

For  $n,k \in \ZZ_{\ge 1}$ with $2k > n$, define the following orbit basis elements,
\vspace{-.1cm}
\begin{align}\label{eq:ef}
\ef_{k,n}  &= \begin{cases}
\!\!\begin{array}{c} 
\begin{tikzpicture}[xscale=.5,yscale=.5,line width=1.25pt] 
\foreach \i in {1,2,3,4,5,6,7}  { \path (\i,1.25) coordinate (T\i); \path (\i,.25) coordinate (B\i); } 
\filldraw[fill= black!12,draw=black!12,line width=4pt]  (T1) -- (T7) -- (B7) -- (B1) -- (T1);
\draw[blue] (T4) -- (B4);
\draw[blue] (T5) -- (B5);
\draw[blue] (T7) -- (B7);
\draw (T2) node {$\cdots$};\draw (B2) node {$\cdots$};
\draw (T6) node {$\cdots$};\draw (B6) node {$\cdots$};
\draw (2,-.5) node {$\underbrace{\phantom{iiiiiiiii}}_{n+1-k}$};
\draw (5.5,-.5) node {$\underbrace{\phantom{iiiiiiiiiiiii}}_{2k-n-1}$};
\foreach \i in {1,3,4,5,7}  { \filldraw[fill=white,draw=black,line width = 1pt] (T\i) circle (4pt); \filldraw[fill=white,draw=black,line width = 1pt]  (B\i) circle (4pt); } 
\end{tikzpicture}
\end{array} & \quad \text{if \ $n \ge k > n/2$,}\\ 
\begin{array}{c} 
\begin{tikzpicture}[xscale=.5,yscale=.5,line width=1.25pt] 
\foreach \i in {1,2,3,4,5,6,7}  { \path (\i,1.25) coordinate (T\i); \path (\i,.25) coordinate (B\i); } 
\filldraw[fill= black!12,draw=black!12,line width=4pt]  (T1) -- (T7) -- (B7) -- (B1) -- (T1);
\draw[blue] (T1) -- (B1);
\draw[blue] (T2) -- (B2);
\draw[blue] (T3) -- (B3);
\draw[blue] (T4) -- (B4);
\draw (T5) node {$\cdots$};\draw (B5) node {$\cdots$};
\draw (T6) node {$\cdots$};\draw (B6) node {$\cdots$};
\draw[blue] (T7) -- (B7);
\draw (4,-.5) node {$\underbrace{\phantom{iiiiiiiiiiiiiiiiiiiiiiii}}_{k}$};
\foreach \i in {1,2,3,4,7}  { \filldraw[fill=white,draw=black,line width = 1pt] (T\i) circle (4pt); \filldraw[fill=white,draw=black,line width = 1pt]  (B\i) circle (4pt); } 
\end{tikzpicture} 
\end{array}&\quad  \text{if \ $k >n$.}
\end{cases}\\
\ef_{k-\half,n}  &= \ef_{k,n},  \hskip.2in  \text{if \  $ 2k - 1 >  n$}. \label{eq:efhalf} 
\end{align}
Observe that if $k>n$, then $\pn(\ef_{k,n}) = k$,  and  $\ef_{k,n}$  has $k$ blocks, which
we signify by writing $|\ef_{k,n}| = k$.      If $n \ge k > n/2$, then  
$\pn(\ef_{k,n}) = 2k-n-1$, \ $\ef_{k,n}$ has two rows each with $n+1-k$ isolated vertices, and the number of
blocks in $\ef_{k,n}$ is  $|\ef_{k,n}| =  2(n+1-k) + 2k-n-1 = n+1$.
For example, 
\vspace{-.17cm} 
\begin{center}{$
\ef_{4\half,6} = \ef_{5,6} =   \begin{array}{c} 
\begin{tikzpicture}[xscale=.5,yscale=.5,line width=1.25pt] 
\foreach \i in {1,2,3,4,5}  { \path (\i,1.25) coordinate (T\i); \path (\i,.25) coordinate (B\i); } 
\filldraw[fill= black!12,draw=black!12,line width=4pt]  (T1) -- (T5) -- (B5) -- (B1) -- (T1);
\draw[blue] (T3) -- (B3);
\draw[blue] (T4) -- (B4);
\draw[blue] (T5) -- (B5);
\foreach \i in {1,2,3,4,5}  { \filldraw[fill=white,draw=black,line width = 1pt] (T\i) circle (4pt); \filldraw[fill=white,draw=black,line width = 1pt]  (B\i) circle (4pt); } 
\end{tikzpicture} 
\end{array}
$} \end{center}
 has  $\pn(\ef_{5,6}) = 2\cdot 5 - 6 - 1 = 3$ and $|\ef_{5,6}| = 6 + 1 = 7$ blocks.
The elements $\ef_{k,n}$  for $k \le 5$ and $n \le 9$ are displayed
in Figure \ref{figure:generatortable}. 
\begin{figure}
\noindent $$\begin{array}{c|ccccccccc}
& k=1&k = 1 \frac{1}{2}&k=2&k = 2 \frac{1}{2}&k = 3&k = 3\frac{1}{2}&k = 4& k = 4\frac{1}{2}& k = 5\\
\hline
n = 1 & 
\vphantom{\bigg\vert}
\begin{array}{c} 
\begin{tikzpicture}[xscale=.28,yscale=.28,line width=1.25pt] 
\foreach \i in {1}  { \path (\i,1.25) coordinate (T\i); \path (\i,.25) coordinate (B\i); } 
\foreach \i in {1}  { \filldraw[fill=white,draw=black,line width = 1pt] (T\i) circle (4pt); \filldraw[fill=white,draw=black,line width = 1pt]  (B\i) circle (4pt); } 
\end{tikzpicture} 
\end{array}
&
\begin{array}{c} 
\begin{tikzpicture}[xscale=.28,yscale=.28,line width=1.25pt] 
\foreach \i in {1,2}  { \path (\i,1.25) coordinate (T\i); \path (\i,.25) coordinate (B\i); } 
\filldraw[fill= black!12,draw=black!12,line width=4pt]  (T1) -- (T2) -- (B2) -- (B1) -- (T1);
\draw[blue] (T1) -- (B1);
\draw[blue] (T2) -- (B2);
\foreach \i in {1,2}  { \filldraw[fill=white,draw=black,line width = 1pt] (T\i) circle (4pt); \filldraw[fill=white,draw=black,line width = 1pt]  (B\i) circle (4pt); } 
\end{tikzpicture} 
\end{array}
&
\begin{array}{c} 
\begin{tikzpicture}[xscale=.28,yscale=.28,line width=1.25pt] 
\foreach \i in {1,2}  { \path (\i,1.25) coordinate (T\i); \path (\i,.25) coordinate (B\i); } 
\filldraw[fill= black!12,draw=black!12,line width=4pt]  (T1) -- (T2) -- (B2) -- (B1) -- (T1);
\draw[blue] (T1) -- (B1);
\draw[blue] (T2) -- (B2);
\foreach \i in {1,2}  { \filldraw[fill=white,draw=black,line width = 1pt] (T\i) circle (4pt); \filldraw[fill=white,draw=black,line width = 1pt]  (B\i) circle (4pt); } 
\end{tikzpicture} 
\end{array}
&
\begin{array}{c} 
\begin{tikzpicture}[xscale=.28,yscale=.28,line width=1.25pt] 
\foreach \i in {1,2,3}  { \path (\i,1.25) coordinate (T\i); \path (\i,.25) coordinate (B\i); } 
\filldraw[fill= black!12,draw=black!12,line width=4pt]  (T1) -- (T3) -- (B3) -- (B1) -- (T1);
\draw[blue] (T1) -- (B1);
\draw[blue] (T2) -- (B2);
\draw[blue] (T3) -- (B3);
\foreach \i in {1,2,3}  { \filldraw[fill=white,draw=black,line width = 1pt] (T\i) circle (4pt); \filldraw[fill=white,draw=black,line width = 1pt]  (B\i) circle (4pt); } 
\end{tikzpicture} 
\end{array}
&
\begin{array}{c} 
\begin{tikzpicture}[xscale=.28,yscale=.28,line width=1.25pt] 
\foreach \i in {1,2,3}  { \path (\i,1.25) coordinate (T\i); \path (\i,.25) coordinate (B\i); } 
\filldraw[fill= black!12,draw=black!12,line width=4pt]  (T1) -- (T3) -- (B3) -- (B1) -- (T1);
\draw[blue] (T1) -- (B1);
\draw[blue] (T2) -- (B2);
\draw[blue] (T3) -- (B3);
\foreach \i in {1,2,3}  { \filldraw[fill=white,draw=black,line width = 1pt] (T\i) circle (4pt); \filldraw[fill=white,draw=black,line width = 1pt]  (B\i) circle (4pt); } 
\end{tikzpicture} 
\end{array}
&
\begin{array}{c} 
\begin{tikzpicture}[xscale=.28,yscale=.28,line width=1.25pt] 
\foreach \i in {1,2,3,4}  { \path (\i,1.25) coordinate (T\i); \path (\i,.25) coordinate (B\i); } 
\filldraw[fill= black!12,draw=black!12,line width=4pt]  (T1) -- (T4) -- (B4) -- (B1) -- (T1);
\draw[blue] (T1) -- (B1);
\draw[blue] (T2) -- (B2);
\draw[blue] (T3) -- (B3);
\draw[blue] (T4) -- (B4);
\foreach \i in {1,2,3,4}  { \filldraw[fill=white,draw=black,line width = 1pt] (T\i) circle (4pt); \filldraw[fill=white,draw=black,line width = 1pt]  (B\i) circle (4pt); } 
\end{tikzpicture} 
\end{array}
&
\begin{array}{c} 
\begin{tikzpicture}[xscale=.28,yscale=.28,line width=1.25pt] 
\foreach \i in {1,2,3,4}  { \path (\i,1.25) coordinate (T\i); \path (\i,.25) coordinate (B\i); } 
\filldraw[fill= black!12,draw=black!12,line width=4pt]  (T1) -- (T4) -- (B4) -- (B1) -- (T1);
\draw[blue] (T1) -- (B1);
\draw[blue] (T2) -- (B2);
\draw[blue] (T3) -- (B3);
\draw[blue] (T4) -- (B4);
\foreach \i in {1,2,3,4}  { \filldraw[fill=white,draw=black,line width = 1pt] (T\i) circle (4pt); \filldraw[fill=white,draw=black,line width = 1pt]  (B\i) circle (4pt); } 
\end{tikzpicture} 
\end{array}
&
\begin{array}{c} 
\begin{tikzpicture}[xscale=.28,yscale=.28,line width=1.25pt] 
\foreach \i in {1,2,3,4,5}  { \path (\i,1.25) coordinate (T\i); \path (\i,.25) coordinate (B\i); } 
\filldraw[fill= black!12,draw=black!12,line width=4pt]  (T1) -- (T5) -- (B5) -- (B1) -- (T1);
\draw[blue] (T1) -- (B1);
\draw[blue] (T2) -- (B2);
\draw[blue] (T3) -- (B3);
\draw[blue] (T4) -- (B4);
\draw[blue] (T5) -- (B5);
\foreach \i in {1,2,3,4,5}  { \filldraw[fill=white,draw=black,line width = 1pt] (T\i) circle (4pt); \filldraw[fill=white,draw=black,line width = 1pt]  (B\i) circle (4pt); } 
\end{tikzpicture} 
\end{array}
&
\begin{array}{c} 
\begin{tikzpicture}[xscale=.28,yscale=.28,line width=1.25pt] 
\foreach \i in {1,2,3,4,5}  { \path (\i,1.25) coordinate (T\i); \path (\i,.25) coordinate (B\i); } 
\filldraw[fill= black!12,draw=black!12,line width=4pt]  (T1) -- (T5) -- (B5) -- (B1) -- (T1);
\draw[blue] (T1) -- (B1);
\draw[blue] (T2) -- (B2);
\draw[blue] (T3) -- (B3);
\draw[blue] (T4) -- (B4);
\draw[blue] (T5) -- (B5);
\foreach \i in {1,2,3,4,5}  { \filldraw[fill=white,draw=black,line width = 1pt] (T\i) circle (4pt); \filldraw[fill=white,draw=black,line width = 1pt]  (B\i) circle (4pt); } 
\end{tikzpicture} 
\end{array}
\\
n = 2 
& 
\vphantom{\bigg\vert}
&
\begin{array}{c} 
\begin{tikzpicture}[xscale=.28,yscale=.28,line width=1.25pt] 
\foreach \i in {1,2}  { \path (\i,1.25) coordinate (T\i); \path (\i,.25) coordinate (B\i); } 
\filldraw[fill= black!12,draw=black!12,line width=4pt]  (T1) -- (T2) -- (B2) -- (B1) -- (T1);
\draw[blue] (T2) -- (B2);
\foreach \i in {1,2}  { \filldraw[fill=white,draw=black,line width = 1pt] (T\i) circle (4pt); \filldraw[fill=white,draw=black,line width = 1pt]  (B\i) circle (4pt); } 
\end{tikzpicture} 
\end{array}
&
\begin{array}{c} 
\begin{tikzpicture}[xscale=.28,yscale=.28,line width=1.25pt] 
\foreach \i in {1,2}  { \path (\i,1.25) coordinate (T\i); \path (\i,.25) coordinate (B\i); } 
\filldraw[fill= black!12,draw=black!12,line width=4pt]  (T1) -- (T2) -- (B2) -- (B1) -- (T1);
\draw[blue] (T2) -- (B2);
\foreach \i in {1,2}  { \filldraw[fill=white,draw=black,line width = 1pt] (T\i) circle (4pt); \filldraw[fill=white,draw=black,line width = 1pt]  (B\i) circle (4pt); } 
\end{tikzpicture} 
\end{array}
&
\begin{array}{c} 
\begin{tikzpicture}[xscale=.28,yscale=.28,line width=1.25pt] 
\foreach \i in {1,2,3}  { \path (\i,1.25) coordinate (T\i); \path (\i,.25) coordinate (B\i); } 
\filldraw[fill= black!12,draw=black!12,line width=4pt]  (T1) -- (T3) -- (B3) -- (B1) -- (T1);
\draw[blue] (T1) -- (B1);
\draw[blue] (T2) -- (B2);
\draw[blue] (T3) -- (B3);
\foreach \i in {1,2,3}  { \filldraw[fill=white,draw=black,line width = 1pt] (T\i) circle (4pt); \filldraw[fill=white,draw=black,line width = 1pt]  (B\i) circle (4pt); } 
\end{tikzpicture} 
\end{array}
&
\begin{array}{c} 
\begin{tikzpicture}[xscale=.28,yscale=.28,line width=1.25pt] 
\foreach \i in {1,2,3}  { \path (\i,1.25) coordinate (T\i); \path (\i,.25) coordinate (B\i); } 
\filldraw[fill= black!12,draw=black!12,line width=4pt]  (T1) -- (T3) -- (B3) -- (B1) -- (T1);
\draw[blue] (T1) -- (B1);
\draw[blue] (T2) -- (B2);
\draw[blue] (T3) -- (B3);
\foreach \i in {1,2,3}  { \filldraw[fill=white,draw=black,line width = 1pt] (T\i) circle (4pt); \filldraw[fill=white,draw=black,line width = 1pt]  (B\i) circle (4pt); } 
\end{tikzpicture} 
\end{array}
&
\begin{array}{c} 
\begin{tikzpicture}[xscale=.28,yscale=.28,line width=1.25pt] 
\foreach \i in {1,2,3,4}  { \path (\i,1.25) coordinate (T\i); \path (\i,.25) coordinate (B\i); } 
\filldraw[fill= black!12,draw=black!12,line width=4pt]  (T1) -- (T4) -- (B4) -- (B1) -- (T1);
\draw[blue] (T1) -- (B1);
\draw[blue] (T2) -- (B2);
\draw[blue] (T3) -- (B3);
\draw[blue] (T4) -- (B4);
\foreach \i in {1,2,3,4}  { \filldraw[fill=white,draw=black,line width = 1pt] (T\i) circle (4pt); \filldraw[fill=white,draw=black,line width = 1pt]  (B\i) circle (4pt); } 
\end{tikzpicture} 
\end{array}
&
\begin{array}{c} 
\begin{tikzpicture}[xscale=.28,yscale=.28,line width=1.25pt] 
\foreach \i in {1,2,3,4}  { \path (\i,1.25) coordinate (T\i); \path (\i,.25) coordinate (B\i); } 
\filldraw[fill= black!12,draw=black!12,line width=4pt]  (T1) -- (T4) -- (B4) -- (B1) -- (T1);
\draw[blue] (T1) -- (B1);
\draw[blue] (T2) -- (B2);
\draw[blue] (T3) -- (B3);
\draw[blue] (T4) -- (B4);
\foreach \i in {1,2,3,4}  { \filldraw[fill=white,draw=black,line width = 1pt] (T\i) circle (4pt); \filldraw[fill=white,draw=black,line width = 1pt]  (B\i) circle (4pt); } 
\end{tikzpicture} 
\end{array}
&
\begin{array}{c} 
\begin{tikzpicture}[xscale=.28,yscale=.28,line width=1.25pt] 
\foreach \i in {1,2,3,4,5}  { \path (\i,1.25) coordinate (T\i); \path (\i,.25) coordinate (B\i); } 
\filldraw[fill= black!12,draw=black!12,line width=4pt]  (T1) -- (T5) -- (B5) -- (B1) -- (T1);
\draw[blue] (T1) -- (B1);
\draw[blue] (T2) -- (B2);
\draw[blue] (T3) -- (B3);
\draw[blue] (T4) -- (B4);
\draw[blue] (T5) -- (B5);
\foreach \i in {1,2,3,4,5}  { \filldraw[fill=white,draw=black,line width = 1pt] (T\i) circle (4pt); \filldraw[fill=white,draw=black,line width = 1pt]  (B\i) circle (4pt); } 
\end{tikzpicture} 
\end{array}
&
\begin{array}{c} 
\begin{tikzpicture}[xscale=.28,yscale=.28,line width=1.25pt] 
\foreach \i in {1,2,3,4,5}  { \path (\i,1.25) coordinate (T\i); \path (\i,.25) coordinate (B\i); } 
\filldraw[fill= black!12,draw=black!12,line width=4pt]  (T1) -- (T5) -- (B5) -- (B1) -- (T1);
\draw[blue] (T1) -- (B1);
\draw[blue] (T2) -- (B2);
\draw[blue] (T3) -- (B3);
\draw[blue] (T4) -- (B4);
\draw[blue] (T5) -- (B5);
\foreach \i in {1,2,3,4,5}  { \filldraw[fill=white,draw=black,line width = 1pt] (T\i) circle (4pt); \filldraw[fill=white,draw=black,line width = 1pt]  (B\i) circle (4pt); } 
\end{tikzpicture} 
\end{array}
\\
n = 3
& 
\vphantom{\bigg\vert}
&
&
\begin{array}{c} 
\begin{tikzpicture}[xscale=.28,yscale=.28,line width=1.25pt] 
\foreach \i in {1,2}  { \path (\i,1.25) coordinate (T\i); \path (\i,.25) coordinate (B\i); } 
\filldraw[fill= black!12,draw=black!12,line width=4pt]  (T1) -- (T2) -- (B2) -- (B1) -- (T1);
\foreach \i in {1,2}  { \filldraw[fill=white,draw=black,line width = 1pt] (T\i) circle (4pt); \filldraw[fill=white,draw=black,line width = 1pt]  (B\i) circle (4pt); } 
\end{tikzpicture} 
\end{array}
&
\begin{array}{c} 
\begin{tikzpicture}[xscale=.28,yscale=.28,line width=1.25pt] 
\foreach \i in {1,2,3}  { \path (\i,1.25) coordinate (T\i); \path (\i,.25) coordinate (B\i); } 
\filldraw[fill= black!12,draw=black!12,line width=4pt]  (T1) -- (T3) -- (B3) -- (B1) -- (T1);
\draw[blue] (T2) -- (B2);
\draw[blue] (T3) -- (B3);
\foreach \i in {1,2,3}  { \filldraw[fill=white,draw=black,line width = 1pt] (T\i) circle (4pt); \filldraw[fill=white,draw=black,line width = 1pt]  (B\i) circle (4pt); } 
\end{tikzpicture} 
\end{array}
&
\begin{array}{c} 
\begin{tikzpicture}[xscale=.28,yscale=.28,line width=1.25pt] 
\foreach \i in {1,2,3}  { \path (\i,1.25) coordinate (T\i); \path (\i,.25) coordinate (B\i); } 
\filldraw[fill= black!12,draw=black!12,line width=4pt]  (T1) -- (T3) -- (B3) -- (B1) -- (T1);
\draw[blue] (T2) -- (B2);
\draw[blue] (T3) -- (B3);
\foreach \i in {1,2,3}  { \filldraw[fill=white,draw=black,line width = 1pt] (T\i) circle (4pt); \filldraw[fill=white,draw=black,line width = 1pt]  (B\i) circle (4pt); } 
\end{tikzpicture} 
\end{array}
&
\begin{array}{c} 
\begin{tikzpicture}[xscale=.28,yscale=.28,line width=1.25pt] 
\foreach \i in {1,2,3,4}  { \path (\i,1.25) coordinate (T\i); \path (\i,.25) coordinate (B\i); } 
\filldraw[fill= black!12,draw=black!12,line width=4pt]  (T1) -- (T4) -- (B4) -- (B1) -- (T1);
\draw[blue] (T1) -- (B1);
\draw[blue] (T2) -- (B2);
\draw[blue] (T3) -- (B3);
\draw[blue] (T4) -- (B4);
\foreach \i in {1,2,3,4}  { \filldraw[fill=white,draw=black,line width = 1pt] (T\i) circle (4pt); \filldraw[fill=white,draw=black,line width = 1pt]  (B\i) circle (4pt); } 
\end{tikzpicture} 
\end{array}
&
\begin{array}{c} 
\begin{tikzpicture}[xscale=.28,yscale=.28,line width=1.25pt] 
\foreach \i in {1,2,3,4}  { \path (\i,1.25) coordinate (T\i); \path (\i,.25) coordinate (B\i); } 
\filldraw[fill= black!12,draw=black!12,line width=4pt]  (T1) -- (T4) -- (B4) -- (B1) -- (T1);
\draw[blue] (T1) -- (B1);
\draw[blue] (T2) -- (B2);
\draw[blue] (T3) -- (B3);
\draw[blue] (T4) -- (B4);
\foreach \i in {1,2,3,4}  { \filldraw[fill=white,draw=black,line width = 1pt] (T\i) circle (4pt); \filldraw[fill=white,draw=black,line width = 1pt]  (B\i) circle (4pt); } 
\end{tikzpicture} 
\end{array}
&
\begin{array}{c} 
\begin{tikzpicture}[xscale=.28,yscale=.28,line width=1.25pt] 
\foreach \i in {1,2,3,4,5}  { \path (\i,1.25) coordinate (T\i); \path (\i,.25) coordinate (B\i); } 
\filldraw[fill= black!12,draw=black!12,line width=4pt]  (T1) -- (T5) -- (B5) -- (B1) -- (T1);
\draw[blue] (T1) -- (B1);
\draw[blue] (T2) -- (B2);
\draw[blue] (T3) -- (B3);
\draw[blue] (T4) -- (B4);
\draw[blue] (T5) -- (B5);
\foreach \i in {1,2,3,4,5}  { \filldraw[fill=white,draw=black,line width = 1pt] (T\i) circle (4pt); \filldraw[fill=white,draw=black,line width = 1pt]  (B\i) circle (4pt); } 
\end{tikzpicture} 
\end{array}
&
\begin{array}{c} 
\begin{tikzpicture}[xscale=.28,yscale=.28,line width=1.25pt] 
\foreach \i in {1,2,3,4,5}  { \path (\i,1.25) coordinate (T\i); \path (\i,.25) coordinate (B\i); } 
\filldraw[fill= black!12,draw=black!12,line width=4pt]  (T1) -- (T5) -- (B5) -- (B1) -- (T1);
\draw[blue] (T1) -- (B1);
\draw[blue] (T2) -- (B2);
\draw[blue] (T3) -- (B3);
\draw[blue] (T4) -- (B4);
\draw[blue] (T5) -- (B5);
\foreach \i in {1,2,3,4,5}  { \filldraw[fill=white,draw=black,line width = 1pt] (T\i) circle (4pt); \filldraw[fill=white,draw=black,line width = 1pt]  (B\i) circle (4pt); } 
\end{tikzpicture} 
\end{array}
\\
n = 4
& 
\vphantom{\bigg\vert}
&
&
&
\begin{array}{c} 
\begin{tikzpicture}[xscale=.28,yscale=.28,line width=1.25pt] 
\foreach \i in {1,2,3}  { \path (\i,1.25) coordinate (T\i); \path (\i,.25) coordinate (B\i); } 
\filldraw[fill= black!12,draw=black!12,line width=4pt]  (T1) -- (T3) -- (B3) -- (B1) -- (T1);
\draw[blue] (T3) -- (B3);
\foreach \i in {1,2,3}  { \filldraw[fill=white,draw=black,line width = 1pt] (T\i) circle (4pt); \filldraw[fill=white,draw=black,line width = 1pt]  (B\i) circle (4pt); } 
\end{tikzpicture} 
\end{array}
&
\begin{array}{c} 
\begin{tikzpicture}[xscale=.28,yscale=.28,line width=1.25pt] 
\foreach \i in {1,2,3}  { \path (\i,1.25) coordinate (T\i); \path (\i,.25) coordinate (B\i); } 
\filldraw[fill= black!12,draw=black!12,line width=4pt]  (T1) -- (T3) -- (B3) -- (B1) -- (T1);
\draw[blue] (T3) -- (B3);
\foreach \i in {1,2,3}  { \filldraw[fill=white,draw=black,line width = 1pt] (T\i) circle (4pt); \filldraw[fill=white,draw=black,line width = 1pt]  (B\i) circle (4pt); } 
\end{tikzpicture} 
\end{array}
&
\begin{array}{c} 
\begin{tikzpicture}[xscale=.28,yscale=.28,line width=1.25pt] 
\foreach \i in {1,2,3,4}  { \path (\i,1.25) coordinate (T\i); \path (\i,.25) coordinate (B\i); } 
\filldraw[fill= black!12,draw=black!12,line width=4pt]  (T1) -- (T4) -- (B4) -- (B1) -- (T1);
\draw[blue] (T2) -- (B2);
\draw[blue] (T3) -- (B3);
\draw[blue] (T4) -- (B4);
\foreach \i in {1,2,3,4}  { \filldraw[fill=white,draw=black,line width = 1pt] (T\i) circle (4pt); \filldraw[fill=white,draw=black,line width = 1pt]  (B\i) circle (4pt); } 
\end{tikzpicture} 
\end{array}
&
\begin{array}{c} 
\begin{tikzpicture}[xscale=.28,yscale=.28,line width=1.25pt] 
\foreach \i in {1,2,3,4}  { \path (\i,1.25) coordinate (T\i); \path (\i,.25) coordinate (B\i); } 
\filldraw[fill= black!12,draw=black!12,line width=4pt]  (T1) -- (T4) -- (B4) -- (B1) -- (T1);
\draw[blue] (T2) -- (B2);
\draw[blue] (T3) -- (B3);
\draw[blue] (T4) -- (B4);
\foreach \i in {1,2,3,4}  { \filldraw[fill=white,draw=black,line width = 1pt] (T\i) circle (4pt); \filldraw[fill=white,draw=black,line width = 1pt]  (B\i) circle (4pt); } 
\end{tikzpicture} 
\end{array}
&
\begin{array}{c} 
\begin{tikzpicture}[xscale=.28,yscale=.28,line width=1.25pt] 
\foreach \i in {1,2,3,4,5}  { \path (\i,1.25) coordinate (T\i); \path (\i,.25) coordinate (B\i); } 
\filldraw[fill= black!12,draw=black!12,line width=4pt]  (T1) -- (T5) -- (B5) -- (B1) -- (T1);
\draw[blue] (T1) -- (B1);
\draw[blue] (T2) -- (B2);
\draw[blue] (T3) -- (B3);
\draw[blue] (T4) -- (B4);
\draw[blue] (T5) -- (B5);
\foreach \i in {1,2,3,4,5}  { \filldraw[fill=white,draw=black,line width = 1pt] (T\i) circle (4pt); \filldraw[fill=white,draw=black,line width = 1pt]  (B\i) circle (4pt); } 
\end{tikzpicture} 
\end{array}
&
\begin{array}{c} 
\begin{tikzpicture}[xscale=.28,yscale=.28,line width=1.25pt] 
\foreach \i in {1,2,3,4,5}  { \path (\i,1.25) coordinate (T\i); \path (\i,.25) coordinate (B\i); } 
\filldraw[fill= black!12,draw=black!12,line width=4pt]  (T1) -- (T5) -- (B5) -- (B1) -- (T1);
\draw[blue] (T1) -- (B1);
\draw[blue] (T2) -- (B2);
\draw[blue] (T3) -- (B3);
\draw[blue] (T4) -- (B4);
\draw[blue] (T5) -- (B5);
\foreach \i in {1,2,3,4,5}  { \filldraw[fill=white,draw=black,line width = 1pt] (T\i) circle (4pt); \filldraw[fill=white,draw=black,line width = 1pt]  (B\i) circle (4pt); } 
\end{tikzpicture} 
\end{array}
\\
n = 5
& 
\vphantom{\bigg\vert}
&
&
&
&
\begin{array}{c} 
\begin{tikzpicture}[xscale=.28,yscale=.28,line width=1.25pt] 
\foreach \i in {1,2,3}  { \path (\i,1.25) coordinate (T\i); \path (\i,.25) coordinate (B\i); } 
\filldraw[fill= black!12,draw=black!12,line width=4pt]  (T1) -- (T3) -- (B3) -- (B1) -- (T1);
\foreach \i in {1,2,3}  { \filldraw[fill=white,draw=black,line width = 1pt] (T\i) circle (4pt); \filldraw[fill=white,draw=black,line width = 1pt]  (B\i) circle (4pt); } 
\end{tikzpicture} 
\end{array}
&
\begin{array}{c} 
\begin{tikzpicture}[xscale=.28,yscale=.28,line width=1.25pt] 
\foreach \i in {1,2,3,4}  { \path (\i,1.25) coordinate (T\i); \path (\i,.25) coordinate (B\i); } 
\filldraw[fill= black!12,draw=black!12,line width=4pt]  (T1) -- (T4) -- (B4) -- (B1) -- (T1);
\draw[blue] (T3) -- (B3);
\draw[blue] (T4) -- (B4);
\foreach \i in {1,2,3,4}  { \filldraw[fill=white,draw=black,line width = 1pt] (T\i) circle (4pt); \filldraw[fill=white,draw=black,line width = 1pt]  (B\i) circle (4pt); } 
\end{tikzpicture} 
\end{array}
&
\begin{array}{c} 
\begin{tikzpicture}[xscale=.28,yscale=.28,line width=1.25pt] 
\foreach \i in {1,2,3,4}  { \path (\i,1.25) coordinate (T\i); \path (\i,.25) coordinate (B\i); } 
\filldraw[fill= black!12,draw=black!12,line width=4pt]  (T1) -- (T4) -- (B4) -- (B1) -- (T1);
\draw[blue] (T3) -- (B3);
\draw[blue] (T4) -- (B4);
\foreach \i in {1,2,3,4}  { \filldraw[fill=white,draw=black,line width = 1pt] (T\i) circle (4pt); \filldraw[fill=white,draw=black,line width = 1pt]  (B\i) circle (4pt); } 
\end{tikzpicture} 
\end{array}
&
\begin{array}{c} 
\begin{tikzpicture}[xscale=.28,yscale=.28,line width=1.25pt] 
\foreach \i in {1,2,3,4,5}  { \path (\i,1.25) coordinate (T\i); \path (\i,.25) coordinate (B\i); } 
\filldraw[fill= black!12,draw=black!12,line width=4pt]  (T1) -- (T5) -- (B5) -- (B1) -- (T1);
\draw[blue] (T2) -- (B2);
\draw[blue] (T3) -- (B3);
\draw[blue] (T4) -- (B4);
\draw[blue] (T5) -- (B5);
\foreach \i in {1,2,3,4,5}  { \filldraw[fill=white,draw=black,line width = 1pt] (T\i) circle (4pt); \filldraw[fill=white,draw=black,line width = 1pt]  (B\i) circle (4pt); } 
\end{tikzpicture} 
\end{array}
&
\begin{array}{c} 
\begin{tikzpicture}[xscale=.28,yscale=.28,line width=1.25pt] 
\foreach \i in {1,2,3,4,5}  { \path (\i,1.25) coordinate (T\i); \path (\i,.25) coordinate (B\i); } 
\filldraw[fill= black!12,draw=black!12,line width=4pt]  (T1) -- (T5) -- (B5) -- (B1) -- (T1);
\draw[blue] (T2) -- (B2);
\draw[blue] (T3) -- (B3);
\draw[blue] (T4) -- (B4);
\draw[blue] (T5) -- (B5);
\foreach \i in {1,2,3,4,5}  { \filldraw[fill=white,draw=black,line width = 1pt] (T\i) circle (4pt); \filldraw[fill=white,draw=black,line width = 1pt]  (B\i) circle (4pt); } 
\end{tikzpicture} 
\end{array}
\\
n = 6
& 
\vphantom{\bigg\vert}
&
&
&
&
&
\begin{array}{c} 
\begin{tikzpicture}[xscale=.28,yscale=.28,line width=1.25pt] 
\foreach \i in {1,2,3,4}  { \path (\i,1.25) coordinate (T\i); \path (\i,.25) coordinate (B\i); } 
\filldraw[fill= black!12,draw=black!12,line width=4pt]  (T1) -- (T4) -- (B4) -- (B1) -- (T1);
\draw[blue] (T4) -- (B4);
\foreach \i in {1,2,3,4}  { \filldraw[fill=white,draw=black,line width = 1pt] (T\i) circle (4pt); \filldraw[fill=white,draw=black,line width = 1pt]  (B\i) circle (4pt); } 
\end{tikzpicture} 
\end{array}
&
\begin{array}{c} 
\begin{tikzpicture}[xscale=.28,yscale=.28,line width=1.25pt] 
\foreach \i in {1,2,3,4}  { \path (\i,1.25) coordinate (T\i); \path (\i,.25) coordinate (B\i); } 
\filldraw[fill= black!12,draw=black!12,line width=4pt]  (T1) -- (T4) -- (B4) -- (B1) -- (T1);
\draw[blue] (T4) -- (B4);
\foreach \i in {1,2,3,4}  { \filldraw[fill=white,draw=black,line width = 1pt] (T\i) circle (4pt); \filldraw[fill=white,draw=black,line width = 1pt]  (B\i) circle (4pt); } 
\end{tikzpicture} 
\end{array}
&
\begin{array}{c} 
\begin{tikzpicture}[xscale=.28,yscale=.28,line width=1.25pt] 
\foreach \i in {1,2,3,4,5}  { \path (\i,1.25) coordinate (T\i); \path (\i,.25) coordinate (B\i); } 
\filldraw[fill= black!12,draw=black!12,line width=4pt]  (T1) -- (T5) -- (B5) -- (B1) -- (T1);
\draw[blue] (T3) -- (B3);
\draw[blue] (T4) -- (B4);
\draw[blue] (T5) -- (B5);
\foreach \i in {1,2,3,4,5}  { \filldraw[fill=white,draw=black,line width = 1pt] (T\i) circle (4pt); \filldraw[fill=white,draw=black,line width = 1pt]  (B\i) circle (4pt); } 
\end{tikzpicture} 
\end{array}
&
\begin{array}{c} 
\begin{tikzpicture}[xscale=.28,yscale=.28,line width=1.25pt] 
\foreach \i in {1,2,3,4,5}  { \path (\i,1.25) coordinate (T\i); \path (\i,.25) coordinate (B\i); } 
\filldraw[fill= black!12,draw=black!12,line width=4pt]  (T1) -- (T5) -- (B5) -- (B1) -- (T1);
\draw[blue] (T3) -- (B3);
\draw[blue] (T4) -- (B4);
\draw[blue] (T5) -- (B5);
\foreach \i in {1,2,3,4,5}  { \filldraw[fill=white,draw=black,line width = 1pt] (T\i) circle (4pt); \filldraw[fill=white,draw=black,line width = 1pt]  (B\i) circle (4pt); } 
\end{tikzpicture} 
\end{array}
\\
n = 7
& 
\vphantom{\bigg\vert}
&
&
&
&
&
&
\begin{array}{c} 
\begin{tikzpicture}[xscale=.28,yscale=.28,line width=1.25pt] 
\foreach \i in {1,2,3,4}  { \path (\i,1.25) coordinate (T\i); \path (\i,.25) coordinate (B\i); } 
\filldraw[fill= black!12,draw=black!12,line width=4pt]  (T1) -- (T4) -- (B4) -- (B1) -- (T1);
\foreach \i in {1,2,3,4}  { \filldraw[fill=white,draw=black,line width = 1pt] (T\i) circle (4pt); \filldraw[fill=white,draw=black,line width = 1pt]  (B\i) circle (4pt); } 
\end{tikzpicture} 
\end{array}
&
\begin{array}{c} 
\begin{tikzpicture}[xscale=.28,yscale=.28,line width=1.25pt] 
\foreach \i in {1,2,3,4,5}  { \path (\i,1.25) coordinate (T\i); \path (\i,.25) coordinate (B\i); } 
\filldraw[fill= black!12,draw=black!12,line width=4pt]  (T1) -- (T5) -- (B5) -- (B1) -- (T1);
\draw[blue] (T4) -- (B4);
\draw[blue] (T5) -- (B5);
\foreach \i in {1,2,3,4,5}  { \filldraw[fill=white,draw=black,line width = 1pt] (T\i) circle (4pt); \filldraw[fill=white,draw=black,line width = 1pt]  (B\i) circle (4pt); } 
\end{tikzpicture} 
\end{array}
&
\begin{array}{c} 
\begin{tikzpicture}[xscale=.28,yscale=.28,line width=1.25pt] 
\foreach \i in {1,2,3,4,5}  { \path (\i,1.25) coordinate (T\i); \path (\i,.25) coordinate (B\i); } 
\filldraw[fill= black!12,draw=black!12,line width=4pt]  (T1) -- (T5) -- (B5) -- (B1) -- (T1);
\draw[blue] (T4) -- (B4);
\draw[blue] (T5) -- (B5);
\foreach \i in {1,2,3,4,5}  { \filldraw[fill=white,draw=black,line width = 1pt] (T\i) circle (4pt); \filldraw[fill=white,draw=black,line width = 1pt]  (B\i) circle (4pt); } 
\end{tikzpicture} 
\end{array}
\\
n = 8
& 
\vphantom{\bigg\vert}
&
&
&
&
&
&
&
\begin{array}{c} 
\begin{tikzpicture}[xscale=.28,yscale=.28,line width=1.25pt] 
\foreach \i in {1,2,3,4,5}  { \path (\i,1.25) coordinate (T\i); \path (\i,.25) coordinate (B\i); } 
\filldraw[fill= black!12,draw=black!12,line width=4pt]  (T1) -- (T5) -- (B5) -- (B1) -- (T1);
\draw[blue] (T5) -- (B5);
\foreach \i in {1,2,3,4,5}  { \filldraw[fill=white,draw=black,line width = 1pt] (T\i) circle (4pt); \filldraw[fill=white,draw=black,line width = 1pt]  (B\i) circle (4pt); } 
\end{tikzpicture} 
\end{array}
&
\begin{array}{c} 
\begin{tikzpicture}[xscale=.28,yscale=.28,line width=1.25pt] 
\foreach \i in {1,2,3,4,5}  { \path (\i,1.25) coordinate (T\i); \path (\i,.25) coordinate (B\i); } 
\filldraw[fill= black!12,draw=black!12,line width=4pt]  (T1) -- (T5) -- (B5) -- (B1) -- (T1);
\draw[blue] (T5) -- (B5);
\foreach \i in {1,2,3,4,5}  { \filldraw[fill=white,draw=black,line width = 1pt] (T\i) circle (4pt); \filldraw[fill=white,draw=black,line width = 1pt]  (B\i) circle (4pt); } 
\end{tikzpicture} 
\end{array}
\\
n = 9
& 
\vphantom{\bigg\vert}
&
&
&
&
&
&
&
&
\begin{array}{c} 
\begin{tikzpicture}[xscale=.28,yscale=.28,line width=1.25pt] 
\foreach \i in {1,2,3,4,5}  { \path (\i,1.25) coordinate (T\i); \path (\i,.25) coordinate (B\i); } 
\filldraw[fill= black!12,draw=black!12,line width=4pt]  (T1) -- (T5) -- (B5) -- (B1) -- (T1);
\foreach \i in {1,2,3,4,5}  { \filldraw[fill=white,draw=black,line width = 1pt] (T\i) circle (4pt); \filldraw[fill=white,draw=black,line width = 1pt]  (B\i) circle (4pt); } 
\end{tikzpicture} 
\end{array}
\end{array}
$$ 
 \caption {The elements  $\ef_{k,n}$  for $k \le 5$ and $n \le 9$.
 \label{figure:generatortable}}  
\end{figure}

\begin{remark} Theorem  \ref{thm:generator} below shows for  
$k \in \half \ZZ_{\ge 1}$ and $n \in \ZZ_{\ge 1}$ such that $2k > n$  that $\ker \Phi_{k,n} = \langle \ef_{k,n} \rangle$ as a two-sided ideal, and Theorem \ref{T:secfund} shows that $\ef_{k,n}$ is an essential idempotent.
For a fixed value of $n$,  the first time the kernel is nonzero is when $k = \half(n+1)$ (i.e., when $n=2k-1$).  This is the first entry in each row of the table  in Figure  \ref{figure:generatortable}.
\end{remark} 

The expression for $\ef_{k,n}$ in the diagram basis is given by 
\begin{equation}\label{eq:ekn}  \ef_{k,n}  =  \sum_{\pi_{k,n} \preceq \vr}  \mu_{2k}(\pi_{k,n},\vr)  d_{\vr}\end{equation}
where $\pi_{k,n}$ is the set partition of $[1,2k]$ corresponding to $\ef_{k,n}$.   In particular, when
$k = \half(n+1)$,  all $\vr$ in  $\Pi_{2k-1}$ occur in the expression for $\ef_{k,n}$ and have integer coefficients
that can be computed using \eqref{eq:mobiusb}.

\begin{example} 
Here is the diagram basis expansion for  $\ef_{3,3}$ using the M\"obius formulas in \eqref{eq:mobiusa} and \eqref{eq:mobiusb}.  The diagram basis expansion for  
$\ef_{2,3} =  \begin{array}{c}\begin{tikzpicture}[xscale=.30,yscale=.30,line width=1.0pt] 
\foreach \i in {1,2}  { \path (\i,1.25) coordinate (T\i); \path (\i,.25) coordinate (B\i); } 
\filldraw[fill= black!12,draw=black!12,line width=4pt]  (T1) -- (T2) -- (B2) -- (B1) -- (T1);
\foreach \i in {1,2}  { \filldraw[fill=white,draw=black,line width = 1pt] (T\i) circle (5pt); \filldraw[fill=white,draw=black,line width = 1pt]  (B\i) circle (5pt); } 
\end{tikzpicture}\end{array}$ and 
$\ef_{3,2}= \mathsf{I}_k^{\boldsymbol \circ} = \begin{array}{c}\begin{tikzpicture}[xscale=.30,yscale=.30,line width=1.0pt] 
\foreach \i in {1,2,3}  { \path (\i,1.25) coordinate (T\i); \path (\i,.25) coordinate (B\i); } 
\filldraw[fill= black!12,draw=black!12,line width=4pt]  (T1) -- (T3) -- (B3) -- (B1) -- (T1);
\draw[blue] (T1) -- (B1);
\draw[blue] (T2) -- (B2);
\draw[blue] (T3) -- (B3);
\foreach \i in {1,2,3}  { \filldraw[fill=white,draw=black,line width = 1pt] (T\i) circle (5pt); \filldraw[fill=white,draw=black,line width = 1pt]  (B\i) circle (5pt); } 
\end{tikzpicture}\end{array}$  
can be found in \eqref{eq:orbit2diag} and Remark \ref{R:orbid}, respectively. 
\begin{align*}
\ef_{3,3} & =  \begin{array}{c}\begin{tikzpicture}[xscale=.36,yscale=.36,line width=1.0pt] 
\foreach \i in {1,2,3}  { \path (\i,1.25) coordinate (T\i); \path (\i,.25) coordinate (B\i); } 
\filldraw[fill= black!12,draw=black!12,line width=4pt]  (T1) -- (T3) -- (B3) -- (B1) -- (T1);
\draw[blue] (T2) -- (B2);
\draw[blue] (T3) -- (B3);
\foreach \i in {1,2,3}  { \filldraw[fill=white,draw=black,line width = 1pt] (T\i) circle (4pt); \filldraw[fill=white,draw=black,line width = 1pt]  (B\i) circle (4pt); } 
\end{tikzpicture}\end{array}
=
\begin{array}{c}\begin{tikzpicture}[xscale=.36,yscale=.36,line width=1.0pt] 
\foreach \i in {1,2,3}  { \path (\i,1.25) coordinate (T\i); \path (\i,.25) coordinate (B\i); } 
\filldraw[fill= black!12,draw=black!12,line width=4pt]  (T1) -- (T3) -- (B3) -- (B1) -- (T1);
\draw[blue] (T2) -- (B2);
\draw[blue] (T3) -- (B3);
\foreach \i in {1,2,3}  { \filldraw[fill=black,draw=black,line width = 1pt] (T\i) circle (4pt); \filldraw[fill=black,draw=black,line width = 1pt]  (B\i) circle (4pt); } 
\end{tikzpicture}\end{array}
-
\begin{array}{c}\begin{tikzpicture}[xscale=.36,yscale=.36,line width=1.0pt] 
\foreach \i in {1,2,3}  { \path (\i,1.25) coordinate (T\i); \path (\i,.25) coordinate (B\i); } 
\filldraw[fill= black!12,draw=black!12,line width=4pt]  (T1) -- (T3) -- (B3) -- (B1) -- (T1);
\draw[blue] (T1) -- (B1);
\draw[blue] (T2) -- (B2);
\draw[blue] (T3) -- (B3);
\foreach \i in {1,2,3}  { \filldraw[fill=black,draw=black,line width = 1pt] (T\i) circle (4pt); \filldraw[fill=black,draw=black,line width = 1pt]  (B\i) circle (4pt); } 
\end{tikzpicture}\end{array}
-
\begin{array}{c}\begin{tikzpicture}[xscale=.36,yscale=.36,line width=1.0pt] 
\foreach \i in {1,2,3}  { \path (\i,1.25) coordinate (T\i); \path (\i,.25) coordinate (B\i); } 
\filldraw[fill= black!12,draw=black!12,line width=4pt]  (T1) -- (T3) -- (B3) -- (B1) -- (T1);
\draw[blue] (T1)--(T2);
\draw[blue] (T2)--(B2);
\draw[blue] (T3) -- (B3);
\foreach \i in {1,2,3}  { \filldraw[fill=black,draw=black,line width = 1pt] (T\i) circle (4pt); \filldraw[fill=black,draw=black,line width = 1pt]  (B\i) circle (4pt); } 
\end{tikzpicture}\end{array}
-
\begin{array}{c}\begin{tikzpicture}[xscale=.36,yscale=.36,line width=1.0pt] 
\foreach \i in {1,2,3}  { \path (\i,1.25) coordinate (T\i); \path (\i,.25) coordinate (B\i); } 
\filldraw[fill= black!12,draw=black!12,line width=4pt]  (T1) -- (T3) -- (B3) -- (B1) -- (T1);
\draw[blue] (B1)--(B2);
\draw[blue] (T2)--(B2);
\draw[blue] (T3) -- (B3);
\foreach \i in {1,2,3}  { \filldraw[fill=black,draw=black,line width = 1pt] (T\i) circle (4pt); \filldraw[fill=black,draw=black,line width = 1pt]  (B\i) circle (4pt); } 
\end{tikzpicture}\end{array}
-
\begin{array}{c}\begin{tikzpicture}[xscale=.36,yscale=.36,line width=1.0pt] 
\foreach \i in {1,2,3}  { \path (\i,1.25) coordinate (T\i); \path (\i,.25) coordinate (B\i); } 
\filldraw[fill= black!12,draw=black!12,line width=4pt]  (T1) -- (T3) -- (B3) -- (B1) -- (T1);
\draw[blue] (T2) -- (B2);
\draw[blue] (T3) -- (B3);
\draw[blue] (T1) .. controls +(0,-.50) and +(0,-.50) .. (T3);
\foreach \i in {1,2,3}  { \filldraw[fill=black,draw=black,line width = 1pt] (T\i) circle (4pt); \filldraw[fill=black,draw=black,line width = 1pt]  (B\i) circle (4pt); } 
\end{tikzpicture}\end{array}
-
\begin{array}{c}\begin{tikzpicture}[xscale=.36,yscale=.36,line width=1.0pt] 
\foreach \i in {1,2,3}  { \path (\i,1.25) coordinate (T\i); \path (\i,.25) coordinate (B\i); } 
\filldraw[fill= black!12,draw=black!12,line width=4pt]  (T1) -- (T3) -- (B3) -- (B1) -- (T1);
\draw[blue] (T2) -- (B2);
\draw[blue] (T3) -- (B3);
\draw[blue] (B1) .. controls +(0,.50) and +(0,.50) .. (B3);
\foreach \i in {1,2,3}  { \filldraw[fill=black,draw=black,line width = 1pt] (T\i) circle (4pt); \filldraw[fill=black,draw=black,line width = 1pt]  (B\i) circle (4pt); } 
\end{tikzpicture}\end{array}
-
\begin{array}{c}\begin{tikzpicture}[xscale=.36,yscale=.36,line width=1.0pt] 
\foreach \i in {1,2,3}  { \path (\i,1.25) coordinate (T\i); \path (\i,.25) coordinate (B\i); } 
\filldraw[fill= black!12,draw=black!12,line width=4pt]  (T1) -- (T3) -- (B3) -- (B1) -- (T1);
\draw[blue] (T2) -- (B2)--(B3);
\draw[blue] (T2)--(T3) -- (B3);
\foreach \i in {1,2,3}  { \filldraw[fill=black,draw=black,line width = 1pt] (T\i) circle (4pt); \filldraw[fill=black,draw=black,line width = 1pt]  (B\i) circle (4pt); } 
\end{tikzpicture}\end{array} 
\\
& 
+ 
\begin{array}{c}\begin{tikzpicture}[xscale=.36,yscale=.36,line width=1.0pt] 
\foreach \i in {1,2,3}  { \path (\i,1.25) coordinate (T\i); \path (\i,.25) coordinate (B\i); } 
\filldraw[fill= black!12,draw=black!12,line width=4pt]  (T1) -- (T3) -- (B3) -- (B1) -- (T1);
\draw[blue] (T1) -- (B1);
\draw[blue] (T3)--(T2) -- (B2) -- (B3);
\draw[blue] (T3) -- (B3);
\foreach \i in {1,2,3}  { \filldraw[fill=black,draw=black,line width = 1pt] (T\i) circle (4pt); \filldraw[fill=black,draw=black,line width = 1pt]  (B\i) circle (4pt); } 
\end{tikzpicture}\end{array}
+
\begin{array}{c}\begin{tikzpicture}[xscale=.36,yscale=.36,line width=1.0pt] 
\foreach \i in {1,2,3}  { \path (\i,1.25) coordinate (T\i); \path (\i,.25) coordinate (B\i); } 
\filldraw[fill= black!12,draw=black!12,line width=4pt]  (T1) -- (T3) -- (B3) -- (B1) -- (T1);
\draw[blue] (T1)--(T2);
\draw[blue] (T2)--(B2);
\draw[blue] (T3) -- (B3);
\draw[blue] (B1) .. controls +(0,.50) and +(0,.50) .. (B3);
\foreach \i in {1,2,3}  { \filldraw[fill=black,draw=black,line width = 1pt] (T\i) circle (4pt); \filldraw[fill=black,draw=black,line width = 1pt]  (B\i) circle (4pt); } 
\end{tikzpicture}\end{array}
+
\begin{array}{c}\begin{tikzpicture}[xscale=.36,yscale=.36,line width=1.0pt] 
\foreach \i in {1,2,3}  { \path (\i,1.25) coordinate (T\i); \path (\i,.25) coordinate (B\i); } 
\filldraw[fill= black!12,draw=black!12,line width=4pt]  (T1) -- (T3) -- (B3) -- (B1) -- (T1);
\draw[blue] (B1)--(B2);
\draw[blue] (T2)--(B2);
\draw[blue] (T3) -- (B3);
\draw[blue] (T1) .. controls +(0,-.50) and +(0,-.50) .. (T3);
\foreach \i in {1,2,3}  { \filldraw[fill=black,draw=black,line width = 1pt] (T\i) circle (4pt); \filldraw[fill=black,draw=black,line width = 1pt]  (B\i) circle (4pt); } 
\end{tikzpicture}\end{array}
+2
\begin{array}{c}\begin{tikzpicture}[xscale=.36,yscale=.36,line width=1.0pt] 
\foreach \i in {1,2,3}  { \path (\i,1.25) coordinate (T\i); \path (\i,.25) coordinate (B\i); } 
\filldraw[fill= black!12,draw=black!12,line width=4pt]  (T1) -- (T3) -- (B3) -- (B1) -- (T1);
\draw[blue] (T1)--(T2) -- (B2) -- (B3);
\draw[blue] (T2)--(T3) -- (B3);
\foreach \i in {1,2,3}  { \filldraw[fill=black,draw=black,line width = 1pt] (T\i) circle (4pt); \filldraw[fill=black,draw=black,line width = 1pt]  (B\i) circle (4pt); } 
\end{tikzpicture}\end{array}
+2
\begin{array}{c}\begin{tikzpicture}[xscale=.36,yscale=.36,line width=1.0pt] 
\foreach \i in {1,2,3}  { \path (\i,1.25) coordinate (T\i); \path (\i,.25) coordinate (B\i); } 
\filldraw[fill= black!12,draw=black!12,line width=4pt]  (T1) -- (T3) -- (B3) -- (B1) -- (T1);
\draw[blue] (B1)--(B2) -- (T2);
\draw[blue] (T2)--(T3) -- (B3)--(B2);
\foreach \i in {1,2,3}  { \filldraw[fill=black,draw=black,line width = 1pt] (T\i) circle (4pt); \filldraw[fill=black,draw=black,line width = 1pt]  (B\i) circle (4pt); } 
\end{tikzpicture}\end{array} +2
\begin{array}{c}\begin{tikzpicture}[xscale=.36,yscale=.36,line width=1.0pt] 
\foreach \i in {1,2,3}  { \path (\i,1.25) coordinate (T\i); \path (\i,.25) coordinate (B\i); } 
\filldraw[fill= black!12,draw=black!12,line width=4pt]  (T1) -- (T3) -- (B3) -- (B1) -- (T1);
\draw[blue] (T1) -- (T2) -- (B2) -- (B1) -- (T1);
\draw[blue] (T3)--(B3);
\foreach \i in {1,2,3}  { \filldraw[fill=black,draw=black,line width = 1pt] (T\i) circle (4pt); \filldraw[fill=black,draw=black,line width = 1pt]  (B\i) circle (4pt); } 
\end{tikzpicture}\end{array}
+ 2 
\begin{array}{c}\begin{tikzpicture}[xscale=.36,yscale=.36,line width=1.0pt] 
\foreach \i in {1,2,3}  { \path (\i,1.25) coordinate (T\i); \path (\i,.25) coordinate (B\i); } 
\filldraw[fill= black!12,draw=black!12,line width=4pt]  (T1) -- (T3) -- (B3) -- (B1) -- (T1);
\draw[blue] (T1) -- (B1);
\draw[blue] (T2) -- (B2);
\draw[blue] (T3) -- (B3);
\draw[blue] (T1) .. controls +(0,-.50) and +(0,-.50) .. (T3);
\draw[blue] (B1) .. controls +(0,.50) and +(0,.50) .. (B3);
\foreach \i in {1,2,3}  { \filldraw[fill=black,draw=black,line width = 1pt] (T\i) circle (4pt); \filldraw[fill=black,draw=black,line width = 1pt]  (B\i) circle (4pt); } 
\end{tikzpicture}\end{array}
- 6
\begin{array}{c}\begin{tikzpicture}[xscale=.36,yscale=.36,line width=1.0pt] 
\foreach \i in {1,2,3}  { \path (\i,1.25) coordinate (T\i); \path (\i,.25) coordinate (B\i); } 
\filldraw[fill= black!12,draw=black!12,line width=4pt]  (T1) -- (T3) -- (B3) -- (B1) -- (T1);
\draw[blue] (T1) -- (B1) -- (B2);
\draw[blue] (T1)--(T2) -- (B2) -- (B3);
\draw[blue] (T2)--(T3) -- (B3);
\foreach \i in {1,2,3}  { \filldraw[fill=black,draw=black,line width = 1pt] (T\i) circle (4pt); \filldraw[fill=black,draw=black,line width = 1pt]  (B\i) circle (4pt); } 
\end{tikzpicture}\end{array}.
\end{align*}
\end{example}

\begin{thm} \label{thm:generator} For all $n\in \ZZ_{\ge 1}$ and $k \in \half\ZZ_{\ge 1}$ with $2k > n$,   $\ker \Phi_{k,n} = \langle \ef_{k,n} \rangle$.
\end{thm}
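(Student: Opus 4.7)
The inclusion $\langle \ef_{k,n}\rangle \subseteq \ker\Phi_{k,n}$ is immediate: since $\ef_{k,n}=x_{\pi_{k,n}}$ where $\pi_{k,n}$ has more than $n$ blocks (namely $k$ when $k>n$ or $n+1$ when $n\ge k>n/2$), Theorem \ref{T:Phi} gives $\Phi_{k,n}(\ef_{k,n})=0$. By the same theorem, $\ker\Phi_{k,n}$ is spanned by the orbit basis elements $x_\pi$ with $|\pi|>n$, so to establish the reverse inclusion I need to show that each such $x_\pi$ lies in $\langle \ef_{k,n}\rangle$.

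First I would reduce the half-integer case to the integer one. For $k=m+\half$, the subalgebra $\P_{k}(n)\subset \P_{m+1}(n)$ is spanned by those $d_\pi$ with $\pi\in \Pi_{2m+1}$; inspection of \eqref{eq:ef} and \eqref{eq:efhalf} shows $\pi_{k,n}$ satisfies this, so $\ef_{k,n}\in \P_k(n)$. By Remark \ref{R:Hasse}, coarsenings of any $\pi\in\Pi_{2m+1}$ remain in $\Pi_{2m+1}$, so the orbit multiplication rule (Theorem \ref{C:mult}) restricts to $\P_k(n)$, and the argument below for integer $k$ transports directly.

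For integer $k$, I would proceed by upward induction on $m=|\pi|$ from $m=n+1$ to $m=2k$, with inductive hypothesis that $x_\vr\in\langle \ef_{k,n}\rangle$ for all $\vr$ with $n<|\vr|<m$. Given $\pi$ with $|\pi|=m>n$, the goal is to produce $\pi_1,\pi_2\in\Pi_{2k}$ realizing $\pi$ as a pure (no-merge) triple concatenation $\pi=\pi_1\ast\pi_{k,n}\ast\pi_2$; two applications of Theorem \ref{C:mult} would then yield
\[
x_{\pi_1}\,\ef_{k,n}\,x_{\pi_2} \;=\; c_\pi\, x_\pi \;+\; \sum_{|\vr|<m} c_\vr\, x_\vr,
\]
with $c_\pi$ a product of factors of the form $(n-\ell)_s$ in which every $\ell$ exceeds $n$, hence a product of negative integers and so nonzero. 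Applying $\Phi_{k,n}$ to both sides, the left side vanishes while $\{\Phi_{k,n}(x_\vr) : |\vr|\le n\}$ is linearly independent in $\im \Phi_{k,n}$ by Theorem \ref{T:Phi}; this forces $c_\vr=0$ whenever $|\vr|\le n$. The surviving terms on the right all satisfy $n<|\vr|<m$, and so lie in $\langle \ef_{k,n}\rangle$ by induction. Consequently $x_\pi\in\langle \ef_{k,n}\rangle$, and the base case $m=n+1$ is immediate since the surviving sum is then empty.

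The hard part is the combinatorial claim that every $\pi$ with $|\pi|>n$ admits a decomposition $\pi=\pi_1\ast\pi_{k,n}\ast\pi_2$ as a pure concatenation. The essential input is the inequality $\pn(\pi)\le 2k-|\pi|<2k-n$, which yields $\pn(\pi)\le 2k-n-1=\pn(\pi_{k,n})$ in the case $n\ge k>n/2$ (with $\pn(\pi)\le k=\pn(\pi_{k,n})$ trivially for $k>n$). Thus $\pi_{k,n}$ always has enough propagating vertical edges to transmit the propagating blocks of $\pi$ across the two middle rows of the triple concatenation. Concretely, one places the top-only blocks of $\pi$ in the top row of $\pi_1$ (with $\pi_1$'s bottom matching the all-singleton restriction of $\pi_{k,n}$ to its top row), arranges the bottom-only blocks of $\pi$ analogously in $\pi_2$, and routes each propagating block of $\pi$ through a chosen propagating edge of $\pi_{k,n}$. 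Verifying that such $\pi_1,\pi_2$ can always be selected in every case, particularly when $\pi$ has blocks of size $\ge 3$ or several propagating blocks that must be assigned to distinct vertical edges of $\pi_{k,n}$, is the principal technical challenge, and some care is also required to ensure no spurious coarsening in the product expansion coincides with $\pi$ and cancels the leading $c_\pi$.
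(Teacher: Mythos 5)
Your approach is genuinely different from the paper's. The paper reduces to rook orbit diagrams: it shows by reverse induction on propagating number that every rook orbit element $x_\mu$ with $\pn(\mu)\le m:=\pn(\ef_{k,n})$ lies in $\langle\ef_{k,n}\rangle$---the base case $\pn=m$ being exactly the $\S_k\times\S_k$-orbit of $\ef_{k,n}$ via \eqref{eq:symmetry}, the inductive step multiplying a rook orbit diagram with $t+1$ vertical edges by a rook orbit diagram with $k-1$ vertical edges and subtracting off the resulting debris of higher propagating number---and only then handles a general kernel element $x_\pi$ by factoring it through the rook orbit diagram obtained from $\pi$ by forgetting all horizontal edges. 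You instead induct upward on $|\pi|$ and sandwich $\ef_{k,n}$ directly as $x_{\pi_1}\,\ef_{k,n}\,x_{\pi_2}$, using the observation (which I like) that applying $\Phi_{k,n}$ to the product forces all coefficients $c_\vr$ with $|\vr|\le n$ to vanish without your having to compute them. That trick is worth keeping. But the whole weight of your argument then rests on two claims that you do not establish, and these are genuine gaps.

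The first is the existence of the pure triple concatenation $\pi=\pi_1\ast\pi_{k,n}\ast\pi_2$. You locate the right numerical constraint $\pn(\pi)\le 2k-|\pi|\le\pn(\ef_{k,n})$, but the matching conditions are delicate: both $\pi_1$'s bottom row and $\pi_2$'s top row must restrict to all singletons for the orbit products to be nonzero, which forces each block of $\pi$'s top row to touch at most one bottom vertex of $\pi_1$, and one must exhibit a globally consistent routing that recovers $\pi$ exactly with no spurious identifications. The second, more serious, gap is the nonvanishing of $c_\pi$. Your statement that $c_\pi$ is ``a product of factors $(n-\ell)_s$'' is not correct: in the two applications of Theorem \ref{C:mult}, $x_{\pi_1}\ef_{k,n}$ is already a sum over coarsenings $\sigma$ of $\pi_1\ast\pi_{k,n}$, and several of those $\sigma$ besides $\pi_1\ast\pi_{k,n}$ itself can still produce $\pi$ when multiplied by $x_{\pi_2}$ (merge a top-only block of $\pi_1$ with a bottom singleton of $\ef_{k,n}$ whose top-row partner in $\pi_2$ is again a singleton, and $\sigma\ast\pi_2=\pi$ once more). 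Consequently $c_\pi$ is a signed \emph{sum} of products of falling factorials, one for each intermediate path, and cancellation is the actual thing to rule out, not a side condition: already in $\P_3(3)$ one can write down examples with several paths contributing with mixed signs. You acknowledge this concern at the end of the proposal, but supply no argument, and I do not see an easy one. The paper avoids exactly this by first working entirely with rook orbit diagrams, where the key product involves two diagrams with no horizontal edges at all, so no middle components are removed and the falling-factorial coefficients are all equal to $1$; the coarsening terms are then cleanly separated by propagating number rather than requiring any cancellation analysis.
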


\begin{proof} First, assume that $k$ is an integer such that $2k > n$.  Then  $\ef_{k,n}$ has $n+1$ blocks when $n \ge k > n/2$ and $k$ blocks when $k > n$.  Thus,  by Theorem \ref{T:Phi}\,(a),  $\e_{k,n} \in \ker \Phi_{k,n}$, and so $\langle \ef_{k,n} \rangle \subseteq \ker \Phi_{k,n}$. Theorem \ref{T:Phi}\,(a) also tells us that $\ker \Phi_{k,n}$ is spanned by  $\{x_\pi\mid |\pi| > n\}$, so to establish the reverse inclusion $\langle \ef_{k,n} \rangle \supseteq \ker \Phi_{k,n}$,  it suffices to show that if $|\pi| > n$, then $x_\pi \in \langle \ef_{k,n} \rangle$. 

By \eqref{DiagramPermutation},   we have the following symmetry property,  which we  use throughout this proof.
\begin{equation}\label{eq:symmetry}
\text{ If } \  x_\pi \in \langle \ef_{k,n} \rangle \  \text{and}\ k \in \ZZ_{\ge 1},  \text{ then }\ \sigma' x_\pi \sigma = x_{\sigma'\ast\pi\ast\sigma} \in \langle \ef_{k,n} \rangle \quad \text{ for all }\ \  \sigma', \sigma \in \S_k.
\end{equation}
We say that $ \sigma' x_\pi \sigma = x_{\sigma'\ast\pi\ast\sigma}$ is a permutation of $x_\pi$.

When $k > n$, then $\pn(\ef_{k,n}) = k$, which is the maximum propagating number of any diagram of $\P_k(n)$. 
When $n \ge k > n/2$, then a diagram with $n + 1$ blocks has a maximum propagating number of $2k-n-1$. Such a diagram is a rook orbit diagram with $2k-n-1$  edges,  and thus is a permutation of $\e_{k,n}$. This  follows from  the observation that adding any more propagating blocks requires merging two blocks.  Thus, when $k > n/2$,   $\ef_{k,n}$ has the maximum possible propagating number of any orbit diagram in the kernel.
 
Let $m$ be that maximum propagating number, so that  $m = k$ when $k > n$, and  $m = 2k-n-1$ when $n \ge k > n/2$.
For $0 \le t \le k$, let  $\mathsf{O}_t = \{ x_\pi \mid \pi \in \mathcal{R}_{2k}, \ \pn(x_\pi) = t \}$ be the rook orbit diagrams with $t$ edges. We show that $\mathsf{O}_t \subseteq \langle \ef_{k,n} \rangle$ for $t \le m$ by reverse induction on $t$.  When $t = m$,  every element of $\mathsf{O}_t$ is a permutation of $\e_{k,n}$ and therefore is in $\langle \ef_{k,n} \rangle$ by \eqref{eq:symmetry}, so assume $t < m$, and the result holds for values greater than $t$.  Define $x_t \in \mathsf{O}_t$ to be the orbit diagram with $t$ orbit identity edges in columns $k, k-1, \ldots, k-t+1$. By \eqref{eq:symmetry},  it is sufficient to show that $x_t \in \langle
 \ef_{k,n} \rangle$. 
 
Let  $y_t  \in \mathsf{O}_{k-1}$ be the diagram with edges $\vertedge$  in every column except column $k-t$. By the inductive hypothesis $x_{t+1} \in \mathsf{O}_{t+1} \subseteq \langle \ef_{k,n} \rangle $ and so $y_t  x_{t+1} \in \langle \ef_{k,n} \rangle $. Expanding the product $y_t x_{t+1}$ in the orbit basis gives $y_t x_{t+1} = x_t + z$, where $z$ is a sum of rook orbit diagrams with $t+1$ edges.  Thus by the inductive hypothesis,  $x_t  = y_t x_{t+1} - z \in \langle \ef_{k,n} \rangle$, as desired (this is illustrated below for the product  $y_3 x_4$ when $k = 6$). 
$$
\begin{array}{c} 
y_3=\begin{array}{c}\begin{tikzpicture}[xscale=.5,yscale=.5,line width=1.25pt] 
\foreach \i in {1,2,3,4,5,6}  { \path (\i,1.25) coordinate (T\i); \path (\i,.25) coordinate (B\i); } 
\filldraw[fill= black!12,draw=black!12,line width=4pt]  (T1) -- (T6) -- (B6) -- (B1) -- (T1);
\draw[blue] (T1) -- (B1);\draw[blue] (T2) -- (B2);
\draw[blue] (T4) -- (B4);
\draw[blue] (T5) -- (B5);
\draw[blue] (T6) -- (B6);
\foreach \i in {1,2,3,4,5,6}  { \filldraw[fill=white,draw=black,line width = 1pt] (T\i) circle (4pt); \filldraw[fill=white,draw=black,line width = 1pt]  (B\i) circle (4pt); } 
\end{tikzpicture} \end{array} \\
x_4=\begin{array}{c}\begin{tikzpicture}[xscale=.5,yscale=.5,line width=1.25pt] 
\foreach \i in {1,2,3,4,5,6}  { \path (\i,1.25) coordinate (T\i); \path (\i,.25) coordinate (B\i); } 
\filldraw[fill= black!12,draw=black!12,line width=4pt]  (T1) -- (T6) -- (B6) -- (B1) -- (T1);
\draw[blue] (T3) -- (B3);
\draw[blue] (T4) -- (B4);
\draw[blue] (T5) -- (B5);
\draw[blue] (T6) -- (B6);
\foreach \i in {1,2,3,4,5,6}  { \filldraw[fill=white,draw=black,line width = 1pt] (T\i) circle (4pt); \filldraw[fill=white,draw=black,line width = 1pt]  (B\i) circle (4pt); } 
\end{tikzpicture}
\end{array}
\end{array}
=
\begin{array}{c} 
\underbrace{\begin{tikzpicture}[xscale=.5,yscale=.5,line width=1.25pt] 
\foreach \i in {1,2,3,4,5.6}  { \path (\i,1.25) coordinate (T\i); \path (\i,.25) coordinate (B\i); } 
\filldraw[fill= black!12,draw=black!12,line width=4pt]  (T1) -- (T6) -- (B6) -- (B1) -- (T1);
\draw[blue]  (T4) -- (B4);
\draw[blue]  (T5) -- (B5);
\draw[blue]  (T6) -- (B6);
\foreach \i in {1,2,3,4,5,6}  { \filldraw[fill=white,draw=black,line width = 1pt] (T\i) circle (4pt); \filldraw[fill=white,draw=black,line width = 1pt]  (B\i) circle (4pt); } 
\end{tikzpicture}}_{\text{$x_3$}}  
\end{array}
+
\begin{array}{c} 
\underbrace{\begin{tikzpicture}[xscale=.5,yscale=.5,line width=1.25pt] 
\foreach \i in {1,2,3,4,5,6}  { \path (\i,1.25) coordinate (T\i); \path (\i,.25) coordinate (B\i); } 
\filldraw[fill= black!12,draw=black!12,line width=4pt]  (T1) -- (T6) -- (B6) -- (B1) -- (T1);
\draw[purple] (T3) -- (B1);
\draw[blue]  (T4) -- (B4);
\draw[blue]  (T5) -- (B5);
\draw[blue]  (T6) -- (B6);
\foreach \i in {1,2,3,4,5,6}  { \filldraw[fill=white,draw=black,line width = 1pt] (T\i) circle (4pt); \filldraw[fill=white,draw=black,line width = 1pt]  (B\i) circle (4pt); } 
\end{tikzpicture}}_{\text{in $\mathsf{O}_4$}}
\end{array}
+
\begin{array}{c} 
\underbrace{\begin{tikzpicture}[xscale=.5,yscale=.5,line width=1.25pt] 
\foreach \i in {1,2,3,4,5,6}  { \path (\i,1.25) coordinate (T\i); \path (\i,.25) coordinate (B\i); } 
\filldraw[fill= black!12,draw=black!12,line width=4pt]  (T1) -- (T6) -- (B6) -- (B1) -- (T1);
\draw[purple]    (T3) -- (B2);
\draw[blue]  (T4) -- (B4);
\draw[blue]  (T5) -- (B5);
\draw[blue]  (T6) -- (B6);
\foreach \i in {1,2,3,4,5,6}  { \filldraw[fill=white,draw=black,line width = 1pt] (T\i) circle (4pt); \filldraw[fill=white,draw=black,line width = 1pt]  (B\i) circle (4pt); } 
\end{tikzpicture}}_{\text{in $\mathsf{O}_4$}}
\end{array}.
$$

Since the kernel of $\Phi_{k,n}$ consists of the orbit diagrams $x_\pi$ such that $\pi$ has more than $n$ blocks,
we complete the proof by arguing that the orbit diagram $x_\pi \in \langle \ef_{k,n} \rangle $ whenever  $|\pi| > n$.  Using \eqref{eq:symmetry},  we may assume that any propagating block of $x_\pi$ has a unique
edge intersecting both the top row and the bottom row, and that edge is an identity edge. We factor the diagram as $x_\pi = x_\tau x_\mu x_\beta$,   where
the three factors are obtained as follows: \   $x_\tau$ is the orbit diagram gotten from $x_\pi$ by deleting any horizontal edges in the bottom row of $x_\pi$ and adding all identity edges $\{i,k+i\}_{1 \le i \le k}$; \,  $x_\mu$ is the orbit diagram obtained from $x_\pi$ by deleting all horizontal edges; and $x_\beta$ is the orbit diagram constructed from $x_\pi$ by deleting any horizontal edges in the top row of $x_\pi$ and adding all identity edges $\{i,k+i\}_{1 \le i \le k}$, as demonstrated below for 
the orbit diagram $x_\pi$,  which has $\pn(x_\pi) = \pn(\pi) = 3$, $k = 8$, and $n < 7$,
$$
x_\pi = 
\begin{array}{c} 
{\begin{tikzpicture}[scale=.55,line width=1.25pt] 
\foreach \i in {1,...,8} 
{ \path (\i,1) coordinate (T\i); \path (\i,0) coordinate (B\i); } 
\filldraw[fill= gray!40,draw=gray!40,line width=3.2pt]  (T1) -- (T8) -- (B8) -- (B1) -- (T1);
\draw[blue] (T1) .. controls +(.1,-.30) and +(-.1,-.30) .. (T2) .. controls +(.1,-.30) and +(-.1,-.30) .. (T3);
\draw[blue] (T1) -- (B1);
\draw[blue] (T4) .. controls +(.1,-.30) and +(-.1,-.30) .. (T5) .. controls +(.1,-.30) and +(-.1,-.30) .. (T6) ;
\draw[blue] (T4) -- (B4) ;
\draw[blue] (B2) .. controls +(.1,.30) and +(-.1,.30) .. (B3) ;
\draw[blue] (B4) .. controls +(.1,.30) and +(-.1,.30) .. (B5) ;
\draw[blue] (T8) -- (B8) ;
\foreach \i in {1,...,8} { \filldraw[fill=white,draw=black,line width = 1pt] (T\i) circle (4pt); \filldraw[fill=white,draw=black,line width = 1pt]  (B\i) circle (4pt); }
\end{tikzpicture}}
\end{array} 
=
\begin{array}{l}
\begin{array}{c}
{\begin{tikzpicture}[scale=.55,line width=1.25pt] 
\foreach \i in {1,...,8} 
{ \path (\i,1) coordinate (T\i); \path (\i,0) coordinate (B\i); } 
\filldraw[fill= gray!40,draw=gray!40,line width=3.2pt]  (T1) -- (T8) -- (B8) -- (B1) -- (T1);
\draw[blue] (T1) .. controls +(.1,-.30) and +(-.1,-.30) .. (T2) .. controls +(.1,-.30) and +(-.1,-.30) .. (T3);
\draw[blue] (T4) .. controls +(.1,-.30) and +(-.1,-.30) .. (T5) .. controls +(.1,-.30) and +(-.1,-.30) .. (T6) ;
\draw[blue] (T1) -- (B1);
\draw[blue] (T2) -- (B2);
\draw[blue] (T3) -- (B3);
\draw[blue] (T4) -- (B4) ;
\draw[blue] (T5) -- (B5);
\draw[blue] (T6) -- (B6);
\draw[blue] (T7) -- (B7);
\draw[blue] (T8) -- (B8) ;
\foreach \i in {1,...,8} { \filldraw[fill=white,draw=black,line width = 1pt] (T\i) circle (4pt); \filldraw[fill=white,draw=black,line width = 1pt]  (B\i) circle (4pt); }
\end{tikzpicture}} \end{array} = x_\tau
\\
\begin{array}{c}
{\begin{tikzpicture}[scale=.55,line width=1.25pt] 
\foreach \i in {1,...,8} 
{ \path (\i,1) coordinate (T\i); \path (\i,0) coordinate (B\i); } 
\filldraw[fill= gray!40,draw=gray!40,line width=3.2pt]  (T1) -- (T8) -- (B8) -- (B1) -- (T1);
\draw[blue] (T1) -- (B1);
\draw[blue] (T4) -- (B4) ;
\draw[blue] (T8) -- (B8) ;
\foreach \i in {1,...,8} { \filldraw[fill=white,draw=black,line width = 1pt] (T\i) circle (4pt); \filldraw[fill=white,draw=black,line width = 1pt]  (B\i) circle (4pt); }
\end{tikzpicture}} \end{array} = x_\mu \in \mathsf{O}_3
\\
\begin{array}{c}
{\begin{tikzpicture}[scale=.55,line width=1.25pt] 
\foreach \i in {1,...,8} 
{ \path (\i,1) coordinate (T\i); \path (\i,0) coordinate (B\i); } 
\filldraw[fill= gray!40,draw=gray!40,line width=3.2pt]  (T1) -- (T8) -- (B8) -- (B1) -- (T1);
\draw[blue] (T1) -- (B1);
\draw[blue] (T2) -- (B2);
\draw[blue] (T3) -- (B3);
\draw[blue] (T4) -- (B4) ;
\draw[blue] (T5) -- (B5);
\draw[blue] (T6) -- (B6);
\draw[blue] (T7) -- (B7);
\draw[blue] (T8) -- (B8) ;
\draw[blue] (B2) .. controls +(.1,.30) and +(-.1,.30) .. (B3) ;
\draw[blue] (B4) .. controls +(.1,.30) and +(-.1,.30) .. (B5) ;
\foreach \i in {1,...,8} { \filldraw[fill=white,draw=black,line width = 1pt] (T\i) circle (4pt); \filldraw[fill=white,draw=black,line width = 1pt]  (B\i) circle (4pt); }
\end{tikzpicture}} \end{array} = x_\beta
\end{array}
$$
By construction $x_\mu \in \mathsf{O}_t$ with $t = \pn(\pi) \le m$, so by the previous paragraph $x_\mu \in \langle \ef_{k,n} \rangle$,  and since  $\langle \ef_{k,n} \rangle$ is a two-sided ideal, $x_\pi = x_\tau x_\mu x_\beta \in \langle \ef_{k,n} \rangle$. 

Now we consider the half-integer levels and show that   $\ker \Phi_{k - \half,n} = \langle \ef_{k-\half,n} \rangle$ for $n,k \in \ZZ_{\ge 1}$.
We assume that $2k - 1 >n$, as $\ker \Phi_{k - \half,n} =(0)$ when $n \ge 2k-1$.   When $2k-1 > n$, then $\ef_{k - \half, n} = \ef_{k,n}$.
Furthermore by Theorem \ref{T:Phi}\,(b),  $\ker \Phi_{k - \half,n}$ equals the span of $\{ x_\pi \mid \pi \in \Pi_{2k-1} \subseteq \Pi_{2k}, \ \pi \text{ has more than $n$ blocks}\}$. The argument above shows that $\langle \ef_{k - \half, n} \rangle = \langle \ef_{k,n} \rangle$ is spanned by those orbit basis diagrams in $\Pi_{2k-1}$ having more than $n$ blocks.  When the  symmetry argument is applied in the proof, only permutations in $\S_{n-1} \subset \S_n$ are used, as it is
not necessary to permute the rightmost edge of a diagram, since it must remain connecting columns $k$ and $2k$.  \end{proof}

\begin{thm}\label{T:secfund} For all  $k, n \in \ZZ_{\ge 1}$, with $2k >n$, $\ef_{k,n}$ is an essential idempotent such that  $(\ef_{k,n})^2 = c_{k,n} \ef_{k,n}$, where 
\begin{equation}\label{eq:ckn}  c_{k,n} = \begin{cases} 
(-1)^{n+1-k}\,  (n+1-k)! & \quad \text{if} \ \  n \ge k> n/2,  \\
1 &  \quad \text{if} \ \ k > n.\end{cases} \end{equation}   Therefore, when $2k>n$ the kernel of 
$\Phi_{k,n}$ is generated by the idempotent $c_{k,n}^{-1} \ef_{k,n}$.   The kernel of the representation
$\Phi_{k-\half,n}$ is the ideal of $\P_{k-\half}(n)$  generated by the idempotent $c_{k,n}^{-1} \ef_{k-\half,n}$
= $c_{k,n}^{-1} \ef_{k,n}$, when $2k-1 > n$. 
\end{thm}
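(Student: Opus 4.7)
The plan is to compute $\ef_{k,n}^2$ directly in the orbit basis via Theorem \ref{C:mult}, treating the two cases of \eqref{eq:ef} separately. In the case $k>n$, the set partition $\pi_{k,n}$ underlying $\ef_{k,n}$ consists solely of the $k$ propagating vertical edges $\{i,k+i\}$, so no block lies entirely in the top or bottom row, and no block lies entirely in the middle after concatenation; hence $\pi_{k,n}\ast\pi_{k,n}=\pi_{k,n}$, $[\pi_{k,n}\ast\pi_{k,n}]=0$, and the only allowed coarsening is the trivial one. Theorem \ref{C:mult} then yields $\ef_{k,n}^2=(n-k)_0\,\ef_{k,n}=\ef_{k,n}$, so $c_{k,n}=1$ as required.

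For $n\ge k>n/2$, set $m=n+1-k\ge 1$. Then $\pi_{k,n}$ has $m$ singleton blocks in its top row, $m$ in its bottom row, and $2k-n-1$ propagating vertical edges, giving $|\pi_{k,n}|=n+1$. In the concatenation $\pi_{k,n}\ast\pi_{k,n}$, the $m$ bottom singletons of the upper copy merge column by column with the $m$ top singletons of the lower copy, producing $m$ blocks entirely in the middle row; these are removed, so $[\pi_{k,n}\ast\pi_{k,n}]=m$ and $\pi_{k,n}\ast\pi_{k,n}=\pi_{k,n}$. The coarsenings $\varrho$ in Theorem \ref{C:mult} are indexed by partial matchings of $j$ pairs ($0\le j\le m$) between the $m$ top and $m$ bottom singletons, each such matching giving $|\varrho|=n+1-j$ with coefficient $(n-|\varrho|)_m=(j-1)_m=(j-1)(j-2)\cdots(j-m)$. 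For $j\in\{1,\dots,m\}$, the factor $(j-j)=0$ is present so the term vanishes; only $j=0$ survives, with coefficient $(-1)_m=(-1)^m m!=(-1)^{n+1-k}(n+1-k)!=c_{k,n}$. Thus $\ef_{k,n}^2=c_{k,n}\ef_{k,n}$. The point requiring the most care here is verifying that the coarsenings range over partial matchings rather than arbitrary mergers of several top-only blocks with several bottom-only blocks; this is forced by the orbit-basis constraint \eqref{eq:Phi-coeffs-orbit}, under which distinct blocks of $\pi_1$ (and of $\pi_2$) must carry distinct labels, so only top-with-bottom label identifications can occur and each singleton takes part in at most one such pair, as illustrated by Examples \ref{exs:mult}(2)--(3).

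The remaining assertions are now immediate. Since $c_{k,n}\ne 0$ and the ground field has characteristic $0$, the element $c_{k,n}^{-1}\ef_{k,n}$ is a genuine idempotent that generates the same two-sided ideal as $\ef_{k,n}$, which equals $\ker\Phi_{k,n}$ by Theorem \ref{thm:generator}. For the half-integer case with $2k-1>n$, one has $2k-n-1\ge 1$, so in both sub-cases of \eqref{eq:ef} the rightmost column of $\ef_{k,n}$ carries a propagating edge, whence $\ef_{k-\half,n}=\ef_{k,n}\in\P_{k-\half}(n)\subset\P_k(n)$; the multiplication is intrinsic to the subalgebra, so $(\ef_{k-\half,n})^2=c_{k,n}\ef_{k-\half,n}$ holds in $\P_{k-\half}(n)$ as well, and the claim about $\ker\Phi_{k-\half,n}$ follows from the half-integer part of Theorem \ref{thm:generator}.
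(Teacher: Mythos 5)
Your proof is correct and takes essentially the same route as the paper: both compute $\ef_{k,n}^2$ via the orbit-basis multiplication rule (Theorem \ref{C:mult}), identify the coarsenings as partial matchings of the $n+1-k$ top singletons with the $n+1-k$ bottom singletons, observe that the coefficient $(n-|\varrho|)_{n+1-k}$ reduces to $(j-1)(j-2)\cdots(j-(n+1-k))$ for a $j$-pair matching, and note that a zero factor appears unless $j=0$. Your added remark explaining why the coarsenings are genuine partial matchings (not arbitrary many-to-many mergers) is a worthwhile clarification that the paper leaves implicit in its proof of Lemma \ref{T:mult}, and your verification that $2k-1>n$ forces a propagating edge in column $k$, so $\ef_{k,n}\in\P_{k-\half}(n)$, makes the half-integer assertion more explicit than the paper's one-line appeal to the definitions.
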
 

\begin{proof}   By Theorem \ref{thm:generator},  $\ker\Phi_{k,n}$ is the ideal of $\P_k(n)$  generated by the element $\ef_{k,n}$ in \eqref{eq:ef}.   If $k > n$, then $(\ef_{k,n})^2 = \ef_{k,n}$ since, when squaring $\e_{k,n}$, there are no blocks in the middle to remove and no empty blocks entirely in the top or bottom rows to connect.

Now assume $n\ge k > n/2$,  and set  $s = n+1 -k$. For $0 \le t \le s$, let $v_t$ be the sum of the $\binom{s}{t}^2 t!$ rook orbit diagrams obtained by connecting $t$ empty vertices in the top of $\ef_{k,n}$ to $t$ empty vertices in the bottom. Then since $\e_{k,n}$ has $n+1$ blocks,
$$
(\e_{k,n})^2 = \sum_{t = 0}^s (n - (n+1) + t)_s \  v_t = \sum_{t = 0}^s  \left(\prod_{i =1}^s (t-i) \right) v_t.
$$
When $t=0$, the coefficient is $(n - (n+1))_s = (-1)_s = (-1) (-2) \cdots (-s) = (-1)^s s!$. All the other terms are zero, since for $0 < t \le s$, the coefficient is $\prod_{i =1}^s (t-i) = 0.$    Thus,  $ 
(\e_{k,n})^2= (-1)^{n+1-k} \, (n+1-k)! \, \e_{k,n}$, as claimed. 
(Example \ref{ex:essentialidempotent} below  demonstrates the vanishing term phenomenon.)  The assertion
about $\ker \Phi_{k-\half,n}$ follows directly from Theorem \ref{thm:generator} and the definition of
$\ef_{k-\half,n}$ in \eqref{eq:efhalf}.   
\end{proof}

\begin{example}\label{ex:essentialidempotent}
When $k = 5$ and $n = 6$,  we see that $c_{5,6} = 2$ and $(\e_{5,6})^2 = 2 \e_{5,6}$ by setting $n=6$
in the following expression:
\begin{align*}
(&\ef_{5,6})^2  = 
\begin{array}{c} 
\begin{tikzpicture}[xscale=.5,yscale=.5,line width=1.25pt] 
\foreach \i in {1,2,3,4,5}  { \path (\i,1.25) coordinate (T\i); \path (\i,.25) coordinate (B\i); } 
\filldraw[fill= black!12,draw=black!12,line width=4pt]  (T1) -- (T5) -- (B5) -- (B1) -- (T1);
\draw[blue] (T3) -- (B3);
\draw[blue] (T4) -- (B4);
\draw[blue] (T5) -- (B5);
\foreach \i in {1,2,3,4,5}  { \filldraw[fill=white,draw=black,line width = 1pt] (T\i) circle (4pt); \filldraw[fill=white,draw=black,line width = 1pt]  (B\i) circle (4pt); } 
\end{tikzpicture} \\
\begin{tikzpicture}[xscale=.5,yscale=.5,line width=1.25pt] 
\foreach \i in {1,2,3,4,5}  { \path (\i,1.25) coordinate (T\i); \path (\i,.25) coordinate (B\i); } 
\filldraw[fill= black!12,draw=black!12,line width=4pt]  (T1) -- (T5) -- (B5) -- (B1) -- (T1);
\draw[blue] (T3) -- (B3);
\draw[blue] (T4) -- (B4);
\draw[blue] (T5) -- (B5);
\foreach \i in {1,2,3,4,5}  { \filldraw[fill=white,draw=black,line width = 1pt] (T\i) circle (4pt); \filldraw[fill=white,draw=black,line width = 1pt]  (B\i) circle (4pt); } 
\end{tikzpicture}
\end{array}
 = (n - 7)(n-8)
\begin{array}{c}
\begin{tikzpicture}[xscale=.5,yscale=.5,line width=1.25pt] 
\foreach \i in {1,2,3,4,5}  { \path (\i,1.25) coordinate (T\i); \path (\i,.25) coordinate (B\i); } 
\filldraw[fill= black!12,draw=black!12,line width=4pt]  (T1) -- (T5) -- (B5) -- (B1) -- (T1);
\draw[blue] (T3) -- (B3);
\draw[blue] (T4) -- (B4);
\draw[blue] (T5) -- (B5);
\foreach \i in {1,2,3,4,5}  { \filldraw[fill=white,draw=black,line width = 1pt] (T\i) circle (4pt); \filldraw[fill=white,draw=black,line width = 1pt]  (B\i) circle (4pt); } 
\end{tikzpicture}
\end{array} \\
& + (n - 6)(n-7) \left(
\begin{array}{c}
\begin{tikzpicture}[xscale=.5,yscale=.5,line width=1.25pt] 
\foreach \i in {1,2,3,4,5}  { \path (\i,1.25) coordinate (T\i); \path (\i,.25) coordinate (B\i); } 
\filldraw[fill= black!12,draw=black!12,line width=4pt]  (T1) -- (T5) -- (B5) -- (B1) -- (T1);
\draw[blue] (T3) -- (B3);
\draw[blue] (T4) -- (B4);
\draw[blue] (T5) -- (B5);
\draw[purple] (T1) -- (B1);
\foreach \i in {1,2,3,4,5}  { \filldraw[fill=white,draw=black,line width = 1pt] (T\i) circle (4pt); \filldraw[fill=white,draw=black,line width = 1pt]  (B\i) circle (4pt); } 
\end{tikzpicture}
\end{array} 
+\begin{array}{c}
\begin{tikzpicture}[xscale=.5,yscale=.5,line width=1.25pt] 
\foreach \i in {1,2,3,4,5}  { \path (\i,1.25) coordinate (T\i); \path (\i,.25) coordinate (B\i); } 
\filldraw[fill= black!12,draw=black!12,line width=4pt]  (T1) -- (T5) -- (B5) -- (B1) -- (T1);
\draw[blue] (T3) -- (B3);
\draw[blue] (T4) -- (B4);
\draw[blue] (T5) -- (B5);
\draw[purple] (T1) -- (B2);
\foreach \i in {1,2,3,4,5}  { \filldraw[fill=white,draw=black,line width = 1pt] (T\i) circle (4pt); \filldraw[fill=white,draw=black,line width = 1pt]  (B\i) circle (4pt); } 
\end{tikzpicture}
\end{array} 
+\begin{array}{c}
\begin{tikzpicture}[xscale=.5,yscale=.5,line width=1.25pt] 
\foreach \i in {1,2,3,4,5}  { \path (\i,1.25) coordinate (T\i); \path (\i,.25) coordinate (B\i); } 
\filldraw[fill= black!12,draw=black!12,line width=4pt]  (T1) -- (T5) -- (B5) -- (B1) -- (T1);
\draw[blue] (T3) -- (B3);
\draw[blue] (T4) -- (B4);
\draw[blue] (T5) -- (B5);
\draw[purple] (T2) -- (B1);
\foreach \i in {1,2,3,4,5}  { \filldraw[fill=white,draw=black,line width = 1pt] (T\i) circle (4pt); \filldraw[fill=white,draw=black,line width = 1pt]  (B\i) circle (4pt); } 
\end{tikzpicture}
\end{array} 
+\begin{array}{c}
\begin{tikzpicture}[xscale=.5,yscale=.5,line width=1.25pt] 
\foreach \i in {1,2,3,4,5}  { \path (\i,1.25) coordinate (T\i); \path (\i,.25) coordinate (B\i); } 
\filldraw[fill= black!12,draw=black!12,line width=4pt]  (T1) -- (T5) -- (B5) -- (B1) -- (T1);
\draw[blue] (T3) -- (B3);
\draw[blue] (T4) -- (B4);
\draw[blue] (T5) -- (B5);
\draw[purple] (T2) -- (B2);
\foreach \i in {1,2,3,4,5}  { \filldraw[fill=white,draw=black,line width = 1pt] (T\i) circle (4pt); \filldraw[fill=white,draw=black,line width = 1pt]  (B\i) circle (4pt); } 
\end{tikzpicture}
\end{array} 
\right) 
\\
& + (n - 5)(n-6) \left(
\begin{array}{c}
\begin{tikzpicture}[xscale=.5,yscale=.5,line width=1.25pt] 
\foreach \i in {1,2,3,4,5}  { \path (\i,1.25) coordinate (T\i); \path (\i,.25) coordinate (B\i); } 
\filldraw[fill= black!12,draw=black!12,line width=4pt]  (T1) -- (T5) -- (B5) -- (B1) -- (T1);
\draw[blue] (T3) -- (B3);
\draw[blue] (T4) -- (B4);
\draw[blue] (T5) -- (B5);
\draw[purple] (T1) -- (B1);
\draw[purple] (T2) -- (B2);
\foreach \i in {1,2,3,4,5}  { \filldraw[fill=white,draw=black,line width = 1pt] (T\i) circle (4pt); \filldraw[fill=white,draw=black,line width = 1pt]  (B\i) circle (4pt); } 
\end{tikzpicture}
\end{array} 
+\begin{array}{c}
\begin{tikzpicture}[xscale=.5,yscale=.5,line width=1.25pt] 
\foreach \i in {1,2,3,4,5}  { \path (\i,1.25) coordinate (T\i); \path (\i,.25) coordinate (B\i); } 
\filldraw[fill= black!12,draw=black!12,line width=4pt]  (T1) -- (T5) -- (B5) -- (B1) -- (T1);
\draw[blue] (T3) -- (B3);
\draw[blue] (T4) -- (B4);
\draw[blue] (T5) -- (B5);
\draw[purple] (T1) -- (B2);
\draw[purple] (T2) -- (B1);
\foreach \i in {1,2,3,4,5}  { \filldraw[fill=white,draw=black,line width = 1pt] (T\i) circle (4pt); \filldraw[fill=white,draw=black,line width = 1pt]  (B\i) circle (4pt); } 
\end{tikzpicture}
\end{array} 
\right).
\end{align*}
\end{example}

\begin{remark} By  \eqref{eq:symmetry},  $\ker \Phi_{k,n} = \langle \sigma' \ef_{k,n} \sigma \rangle$ for $\sigma', \sigma  \in \S_k$. Thus,  the kernel is generated by any orbit rook diagram with propagating number equal to $\pn(\ef_{k,n}) = 2k-n-1$ when $n \ge  k > n/2$, and it is generated by any  permutation diagram (i.e., a diagram with propagating number equal to $k$) when $k > n$.
\end{remark} 

\begin{remark}\label{R:central}  The essential idempotent  $\ef_{k,n}$ is not central for all $n$  such that  $2k-1 > n$. This can be seen by multiplying  $\e_{k,n}$ on either side by the orbit basis element $x_\pi$ for $\pi = \{1, k+1 \mid 2, k+2 \mid \cdots \mid k-2, 2k -2 \mid k-1 \mid k, 2k -1, 2k \}$.  
However, as we discuss in Remark \ref{R:ek2k-1}, $\ef_{k,2k-1}$ is central in $\P_k(2k-1)$.  
The calculations below for $k = 5$ and $n = 7, 8, 9$  illustrate the exceptional behavior of $\ef_{k,2k-1}$.  
$$
\begin{array}{c}
\begin{array}{c} 
\begin{tikzpicture}[xscale=.4,yscale=.4,line width=1.25pt] 
\foreach \i in {1,2,3,4,5}  { \path (\i,1.25) coordinate (T\i); \path (\i,.25) coordinate (B\i); } 
\filldraw[fill= black!12,draw=black!12,line width=4pt]  (T1) -- (T5) -- (B5) -- (B1) -- (T1);
\draw[blue] (T4) -- (B4);
\draw[blue] (T5) -- (B5);
\foreach \i in {1,2,3,4,5}  { \filldraw[fill=white,draw=black,line width = 1pt] (T\i) circle (4pt); \filldraw[fill=white,draw=black,line width = 1pt]  (B\i) circle (4pt); } 
\end{tikzpicture} 
\end{array}
\\
\begin{array}{c} 
\begin{tikzpicture}[xscale=.4,yscale=.4,line width=1.25pt] 
\foreach \i in {1,2,3,4,5}  { \path (\i,1.25) coordinate (T\i); \path (\i,.25) coordinate (B\i); } 
\filldraw[fill= black!12,draw=black!12,line width=4pt]  (T1) -- (T5) -- (B5) -- (B1) -- (T1);
\draw[blue] (T1) -- (B1);
\draw[blue] (T2) -- (B2);
\draw[blue] (T3) -- (B3);
\draw[blue] (T4) -- (T5);
\draw[blue] (T5) -- (B5);
\foreach \i in {1,2,3,4,5}  { \filldraw[fill=white,draw=black,line width = 1pt] (T\i) circle (4pt); \filldraw[fill=white,draw=black,line width = 1pt]  (B\i) circle (4pt); } 
\end{tikzpicture} 
\end{array}
\end{array}
=
0
\qquad\hbox{and}\qquad
\begin{array}{c}
\begin{array}{c} 
\begin{tikzpicture}[xscale=.4,yscale=.4,line width=1.25pt] 
\foreach \i in {1,2,3,4,5}  { \path (\i,1.25) coordinate (T\i); \path (\i,.25) coordinate (B\i); } 
\filldraw[fill= black!12,draw=black!12,line width=4pt]  (T1) -- (T5) -- (B5) -- (B1) -- (T1);
\draw[blue] (T1) -- (B1);
\draw[blue] (T2) -- (B2);
\draw[blue] (T3) -- (B3);
\draw[blue] (T4) -- (T5);
\draw[blue] (T5) -- (B5);
\foreach \i in {1,2,3,4,5}  { \filldraw[fill=white,draw=black,line width = 1pt] (T\i) circle (4pt); \filldraw[fill=white,draw=black,line width = 1pt]  (B\i) circle (4pt); } 
\end{tikzpicture} 
\end{array}
\\
\begin{array}{c} 
\begin{tikzpicture}[xscale=.4,yscale=.4,line width=1.25pt] 
\foreach \i in {1,2,3,4,5}  { \path (\i,1.25) coordinate (T\i); \path (\i,.25) coordinate (B\i); } 
\filldraw[fill= black!12,draw=black!12,line width=4pt]  (T1) -- (T5) -- (B5) -- (B1) -- (T1);
\draw[blue] (T4) -- (B4);
\draw[blue] (T5) -- (B5);
\foreach \i in {1,2,3,4,5}  { \filldraw[fill=white,draw=black,line width = 1pt] (T\i) circle (4pt); \filldraw[fill=white,draw=black,line width = 1pt]  (B\i) circle (4pt); } 
\end{tikzpicture} 
\end{array}
\end{array}
=
\begin{array}{c} 
\begin{tikzpicture}[xscale=.4,yscale=.4,line width=1.25pt] 
\foreach \i in {1,2,3,4,5}  { \path (\i,1.25) coordinate (T\i); \path (\i,.25) coordinate (B\i); } 
\filldraw[fill= black!12,draw=black!12,line width=4pt]  (T1) -- (T5) -- (B5) -- (B1) -- (T1);
\draw[blue] (T4) -- (T5);
\draw[blue] (T5) -- (B5);
\foreach \i in {1,2,3,4,5}  { \filldraw[fill=white,draw=black,line width = 1pt] (T\i) circle (4pt); \filldraw[fill=white,draw=black,line width = 1pt]  (B\i) circle (4pt); } 
\end{tikzpicture} 
\end{array} \, , \hskip.9in
$$
$$
\begin{array}{c}
\begin{array}{c} 
\begin{tikzpicture}[xscale=.4,yscale=.4,line width=1.25pt] 
\foreach \i in {1,2,3,4,5}  { \path (\i,1.25) coordinate (T\i); \path (\i,.25) coordinate (B\i); } 
\filldraw[fill= black!12,draw=black!12,line width=4pt]  (T1) -- (T5) -- (B5) -- (B1) -- (T1);
\draw[blue] (T5) -- (B5);
\foreach \i in {1,2,3,4,5}  { \filldraw[fill=white,draw=black,line width = 1pt] (T\i) circle (4pt); \filldraw[fill=white,draw=black,line width = 1pt]  (B\i) circle (4pt); } 
\end{tikzpicture} 
\end{array}
\\
\begin{array}{c} 
\begin{tikzpicture}[xscale=.4,yscale=.4,line width=1.25pt] 
\foreach \i in {1,2,3,4,5}  { \path (\i,1.25) coordinate (T\i); \path (\i,.25) coordinate (B\i); } 
\filldraw[fill= black!12,draw=black!12,line width=4pt]  (T1) -- (T5) -- (B5) -- (B1) -- (T1);
\draw[blue] (T1) -- (B1);
\draw[blue] (T2) -- (B2);
\draw[blue] (T3) -- (B3);
\draw[blue] (T4) -- (T5);
\draw[blue] (T5) -- (B5);
\foreach \i in {1,2,3,4,5}  { \filldraw[fill=white,draw=black,line width = 1pt] (T\i) circle (4pt); \filldraw[fill=white,draw=black,line width = 1pt]  (B\i) circle (4pt); } 
\end{tikzpicture} 
\end{array}
\end{array}
=
0
\qquad\hbox{and}\qquad
\begin{array}{c}
\begin{array}{c} 
\begin{tikzpicture}[xscale=.4,yscale=.4,line width=1.25pt] 
\foreach \i in {1,2,3,4,5}  { \path (\i,1.25) coordinate (T\i); \path (\i,.25) coordinate (B\i); } 
\filldraw[fill= black!12,draw=black!12,line width=4pt]  (T1) -- (T5) -- (B5) -- (B1) -- (T1);
\draw[blue] (T1) -- (B1);
\draw[blue] (T2) -- (B2);
\draw[blue] (T3) -- (B3);
\draw[blue] (T4) -- (T5);
\draw[blue] (T5) -- (B5);
\foreach \i in {1,2,3,4,5}  { \filldraw[fill=white,draw=black,line width = 1pt] (T\i) circle (4pt); \filldraw[fill=white,draw=black,line width = 1pt]  (B\i) circle (4pt); } 
\end{tikzpicture} 
\end{array}
\\
\begin{array}{c} 
\begin{tikzpicture}[xscale=.4,yscale=.4,line width=1.25pt] 
\foreach \i in {1,2,3,4,5}  { \path (\i,1.25) coordinate (T\i); \path (\i,.25) coordinate (B\i); } 
\filldraw[fill= black!12,draw=black!12,line width=4pt]  (T1) -- (T5) -- (B5) -- (B1) -- (T1);
\draw[blue] (T5) -- (B5);
\foreach \i in {1,2,3,4,5}  { \filldraw[fill=white,draw=black,line width = 1pt] (T\i) circle (4pt); \filldraw[fill=white,draw=black,line width = 1pt]  (B\i) circle (4pt); } 
\end{tikzpicture} 
\end{array}
\end{array}
= (n - (2k-2))
\begin{array}{c} 
\begin{tikzpicture}[xscale=.4,yscale=.4,line width=1.25pt] 
\foreach \i in {1,2,3,4,5}  { \path (\i,1.25) coordinate (T\i); \path (\i,.25) coordinate (B\i); } 
\filldraw[fill= black!12,draw=black!12,line width=4pt]  (T1) -- (T5) -- (B5) -- (B1) -- (T1);
\draw[blue] (T4) -- (T5);
\draw[blue] (T5) -- (B5);
\foreach \i in {1,2,3,4,5}  { \filldraw[fill=white,draw=black,line width = 1pt] (T\i) circle (4pt); \filldraw[fill=white,draw=black,line width = 1pt]  (B\i) circle (4pt); } 
\end{tikzpicture} 
\end{array} \, ,
$$ 
$$
\begin{array}{c}
\begin{array}{c} 
\begin{tikzpicture}[xscale=.4,yscale=.4,line width=1.25pt] 
\foreach \i in {1,2,3,4,5}  { \path (\i,1.25) coordinate (T\i); \path (\i,.25) coordinate (B\i); } 
\filldraw[fill= black!12,draw=black!12,line width=4pt]  (T1) -- (T5) -- (B5) -- (B1) -- (T1);
\foreach \i in {1,2,3,4,5}  { \filldraw[fill=white,draw=black,line width = 1pt] (T\i) circle (4pt); \filldraw[fill=white,draw=black,line width = 1pt]  (B\i) circle (4pt); } 
\end{tikzpicture} 
\end{array}
\\
\begin{array}{c} 
\begin{tikzpicture}[xscale=.4,yscale=.4,line width=1.25pt] 
\foreach \i in {1,2,3,4,5}  { \path (\i,1.25) coordinate (T\i); \path (\i,.25) coordinate (B\i); } 
\filldraw[fill= black!12,draw=black!12,line width=4pt]  (T1) -- (T5) -- (B5) -- (B1) -- (T1);
\draw[blue] (T1) -- (B1);
\draw[blue] (T2) -- (B2);
\draw[blue] (T3) -- (B3);
\draw[blue] (T4) -- (T5);
\draw[blue] (T5) -- (B5);
\foreach \i in {1,2,3,4,5}  { \filldraw[fill=white,draw=black,line width = 1pt] (T\i) circle (4pt); \filldraw[fill=white,draw=black,line width = 1pt]  (B\i) circle (4pt); } 
\end{tikzpicture} 
\end{array}
\end{array}
=
0
\qquad\hbox{and}\qquad
\begin{array}{c}
\begin{array}{c} 
\begin{tikzpicture}[xscale=.4,yscale=.4,line width=1.25pt] 
\foreach \i in {1,2,3,4,5}  { \path (\i,1.25) coordinate (T\i); \path (\i,.25) coordinate (B\i); } 
\filldraw[fill= black!12,draw=black!12,line width=4pt]  (T1) -- (T5) -- (B5) -- (B1) -- (T1);
\draw[blue] (T1) -- (B1);
\draw[blue] (T2) -- (B2);
\draw[blue] (T3) -- (B3);
\draw[blue] (T4) -- (T5);
\draw[blue] (T5) -- (B5);
\foreach \i in {1,2,3,4,5}  { \filldraw[fill=white,draw=black,line width = 1pt] (T\i) circle (4pt); \filldraw[fill=white,draw=black,line width = 1pt]  (B\i) circle (4pt); } 
\end{tikzpicture} 
\end{array}
\\
\begin{array}{c} 
\begin{tikzpicture}[xscale=.4,yscale=.4,line width=1.25pt] 
\foreach \i in {1,2,3,4,5}  { \path (\i,1.25) coordinate (T\i); \path (\i,.25) coordinate (B\i); } 
\filldraw[fill= black!12,draw=black!12,line width=4pt]  (T1) -- (T5) -- (B5) -- (B1) -- (T1);
\foreach \i in {1,2,3,4,5}  { \filldraw[fill=white,draw=black,line width = 1pt] (T\i) circle (4pt); \filldraw[fill=white,draw=black,line width = 1pt]  (B\i) circle (4pt); } 
\end{tikzpicture} 
\end{array}
\end{array}
= (n - (2k-1))
\begin{array}{c} 
\begin{tikzpicture}[xscale=.4,yscale=.4,line width=1.25pt] 
\foreach \i in {1,2,3,4,5}  { \path (\i,1.25) coordinate (T\i); \path (\i,.25) coordinate (B\i); } 
\filldraw[fill= black!12,draw=black!12,line width=4pt]  (T1) -- (T5) -- (B5) -- (B1) -- (T1);
\draw[blue] (T4) -- (T5);
\foreach \i in {1,2,3,4,5}  { \filldraw[fill=white,draw=black,line width = 1pt] (T\i) circle (4pt); \filldraw[fill=white,draw=black,line width = 1pt]  (B\i) circle (4pt); } 
\end{tikzpicture} 
\end{array}\, .
$$ 
In the last case, the coefficient $n-(2k-1)$ equals 0 when $n = 2k-1$, and the two diagrams commute. \end{remark}

\begin{example} The surjection $\Phi_{2,3}: \P_2(3) \to \End_{\S_3}(\M_3^{\otimes 2})$ has a one-dimensional kernel.  The image is spanned by the the images of the 15 diagrams shown in the Hasse diagram of Figure \ref{fig:Hasse} subject to the dependence relation 
\begin{align*}
 0 & = 
 \begin{array}{c}
\begin{tikzpicture}[xscale=.38,yscale=.38,line width=1.25pt] 
\foreach \i in {1,2} 
{ \path (\i,.5) coordinate (T\i); \path (\i,-.5) coordinate (B\i); } 
\filldraw[fill= black!12,draw=black!12,line width=4pt]  (T1) -- (T2) -- (B2) -- (B1) -- (T1);
\foreach \i in {1,2} 
{ \filldraw[fill=white,draw=black,line width = 1pt] (T\i) circle (4pt); \filldraw[fill=white,draw=black,line width = 1pt]  (B\i) circle (4pt); } 
\end{tikzpicture}
\end{array} 
 =
\begin{array}{c}
\begin{tikzpicture}[xscale=.38,yscale=.38,line width=1.25pt] 
\foreach \i in {1,2} 
{ \path (\i,.5) coordinate (T\i); \path (\i,-.5) coordinate (B\i); } 
\filldraw[fill= black!12,draw=black!12,line width=4pt]  (T1) -- (T2) -- (B2) -- (B1) -- (T1);
\foreach \i in {1,2} 
{ \fill (T\i) circle (4pt); \fill (B\i) circle (4pt); } 
\end{tikzpicture}
\end{array}
-
\begin{array}{c}
\begin{tikzpicture}[xscale=.38,yscale=.38,line width=1.25pt] 
\foreach \i in {1,2} 
{ \path (\i,1.25) coordinate (T\i); \path (\i,.25) coordinate (B\i); } 
\filldraw[fill= black!12,draw=black!12,line width=4pt]  (T1) -- (T2) -- (B2) -- (B1) -- (T1);
\draw[blue] (B1) -- (B2);
\foreach \i in {1,2} 
{ \fill (T\i) circle (4pt); \fill (B\i) circle (4pt); } 
\end{tikzpicture}
\end{array}
-
\begin{array}{c}
\begin{tikzpicture}[xscale=.38,yscale=.38,line width=1.25pt] 
\foreach \i in {1,2} 
{ \path (\i,1.25) coordinate (T\i); \path (\i,.25) coordinate (B\i); } 
\filldraw[fill= black!12,draw=black!12,line width=4pt]  (T1) -- (T2) -- (B2) -- (B1) -- (T1);
\draw[blue] (B1) -- (T1);
\foreach \i in {1,2} 
{ \fill (T\i) circle (4pt); \fill (B\i) circle (4pt); } 
\end{tikzpicture}
\end{array}
-
\begin{array}{c}
\begin{tikzpicture}[xscale=.38,yscale=.38,line width=1.25pt] 
\foreach \i in {1,2} 
{ \path (\i,1.25) coordinate (T\i); \path (\i,.25) coordinate (B\i); } 
\filldraw[fill= black!12,draw=black!12,line width=4pt]  (T1) -- (T2) -- (B2) -- (B1) -- (T1);
\draw[blue] (B1) -- (T2);
\foreach \i in {1,2} 
{ \fill (T\i) circle (4pt); \fill (B\i) circle (4pt); } 
\end{tikzpicture}
\end{array}
-
\begin{array}{c}
\begin{tikzpicture}[xscale=.38,yscale=.38,line width=1.25pt] 
\foreach \i in {1,2} 
{ \path (\i,1.25) coordinate (T\i); \path (\i,.25) coordinate (B\i); } 
\filldraw[fill= black!12,draw=black!12,line width=4pt]  (T1) -- (T2) -- (B2) -- (B1) -- (T1);
\draw[blue] (B2) -- (T1);
\foreach \i in {1,2} 
{ \fill (T\i) circle (4pt); \fill (B\i) circle (4pt); } 
\end{tikzpicture}
\end{array}
-
\begin{array}{c}
\begin{tikzpicture}[xscale=.38,yscale=.38,line width=1.25pt] 
\foreach \i in {1,2} 
{ \path (\i,1.25) coordinate (T\i); \path (\i,.25) coordinate (B\i); } 
\filldraw[fill= black!12,draw=black!12,line width=4pt]  (T1) -- (T2) -- (B2) -- (B1) -- (T1);
\draw[blue] (B2) -- (T2);
\foreach \i in {1,2} 
{ \fill (T\i) circle (4pt); \fill (B\i) circle (4pt); } 
\end{tikzpicture}
\end{array}
-
\begin{array}{c}
\begin{tikzpicture}[xscale=.38,yscale=.38,line width=1.25pt] 
\foreach \i in {1,2} 
{ \path (\i,1.25) coordinate (T\i); \path (\i,.25) coordinate (B\i); } 
\filldraw[fill= black!12,draw=black!12,line width=4pt]  (T1) -- (T2) -- (B2) -- (B1) -- (T1);
\draw[blue] (T2) -- (T1);
\foreach \i in {1,2} 
{ \fill (T\i) circle (4pt); \fill (B\i) circle (4pt); } 
\end{tikzpicture}
\end{array}
+ 2 
\begin{array}{c}
\begin{tikzpicture}[xscale=.38,yscale=.38,line width=1.25pt] 
\foreach \i in {1,2} 
{ \path (\i,1.25) coordinate (T\i); \path (\i,.25) coordinate (B\i); } 
\filldraw[fill= black!12,draw=black!12,line width=4pt]  (T1) -- (T2) -- (B2) -- (B1) -- (T1);
\draw[blue] (T1) -- (B2);\draw[blue] (T1) -- (B1);
\draw[blue] (B1) -- (B2);
\foreach \i in {1,2} 
{ \fill (T\i) circle (4pt); \fill (B\i) circle (4pt); } 
\end{tikzpicture}
\end{array}
+2
\begin{array}{c}
\begin{tikzpicture}[xscale=.38,yscale=.38,line width=1.25pt] 
\foreach \i in {1,2} 
{ \path (\i,1.25) coordinate (T\i); \path (\i,.25) coordinate (B\i); } 
\filldraw[fill= black!12,draw=black!12,line width=4pt]  (T1) -- (T2) -- (B2) -- (B1) -- (T1);
\draw[blue] (T2) -- (B2)--(T1)--(T2);
\foreach \i in {1,2} 
{ \fill (T\i) circle (4pt); \fill (B\i) circle (4pt); } 
\end{tikzpicture}
\end{array} \\ 
& \hskip1.0in
+2
\begin{array}{c}
\begin{tikzpicture}[xscale=.38,yscale=.38,line width=1.25pt] 
\foreach \i in {1,2} 
{ \path (\i,1.25) coordinate (T\i); \path (\i,.25) coordinate (B\i); } 
\filldraw[fill= black!12,draw=black!12,line width=4pt]  (T1) -- (T2) -- (B2) -- (B1) -- (T1);
\draw[blue] (T2) -- (B1) -- (T1) -- (T2);
\foreach \i in {1,2} 
{ \fill (T\i) circle (4pt); \fill (B\i) circle (4pt); } 
\end{tikzpicture}
\end{array}
+2
\begin{array}{c}
\begin{tikzpicture}[xscale=.38,yscale=.38,line width=1.25pt] 
\foreach \i in {1,2} 
{ \path (\i,1.25) coordinate (T\i); \path (\i,.25) coordinate (B\i); } 
\filldraw[fill= black!12,draw=black!12,line width=4pt]  (T1) -- (T2) -- (B2) -- (B1) -- (T1);
\draw[blue] (B1) -- (B2);\draw[blue] (T2) -- (B2);
\draw[blue] (B1) -- (T2);
\foreach \i in {1,2} 
{ \fill (T\i) circle (4pt); \fill (B\i) circle (4pt); } 
\end{tikzpicture}
\end{array}
+\begin{array}{c}
\begin{tikzpicture}[xscale=.38,yscale=.38,line width=1.25pt] 
\foreach \i in {1,2} 
{ \path (\i,1.25) coordinate (T\i); \path (\i,.25) coordinate (B\i); } 
\filldraw[fill= black!12,draw=black!12,line width=4pt]  (T1) -- (T2) -- (B2) -- (B1) -- (T1);
\draw[blue] (B2) -- (B1);\draw[blue] (T2) -- (T1);
\foreach \i in {1,2} 
{ \fill (T\i) circle (4pt); \fill (B\i) circle (4pt); } 
\end{tikzpicture}
\end{array}
+
\begin{array}{c}
\begin{tikzpicture}[xscale=.38,yscale=.38,line width=1.25pt] 
\foreach \i in {1,2} 
{ \path (\i,1.25) coordinate (T\i); \path (\i,.25) coordinate (B\i); } 
\filldraw[fill= black!12,draw=black!12,line width=4pt]  (T1) -- (T2) -- (B2) -- (B1) -- (T1);
\draw[blue] (T2) -- (B1);
\draw[blue] (T1) -- (B2);
\foreach \i in {1,2} 
{ \fill (T\i) circle (4pt); \fill (B\i) circle (4pt); } 
\end{tikzpicture}
\end{array}
+
\begin{array}{c}
\begin{tikzpicture}[xscale=.38,yscale=.38,line width=1.25pt] 
\foreach \i in {1,2} 
{ \path (\i,1.25) coordinate (T\i); \path (\i,.25) coordinate (B\i); } 
\filldraw[fill= black!12,draw=black!12,line width=4pt]  (T1) -- (T2) -- (B2) -- (B1) -- (T1);
\draw[blue] (T1) -- (B1);
\draw[blue] (T2) -- (B2);
\foreach \i in {1,2} 
{ \fill (T\i) circle (4pt); \fill (B\i) circle (4pt); } 
\end{tikzpicture}
\end{array}
- 6
\begin{array}{c}
\begin{tikzpicture}[xscale=.38,yscale=.38,line width=1.25pt] 
\foreach \i in {1,2} 
{ \path (\i,1.25) coordinate (T\i); \path (\i,.25) coordinate (B\i); } 
\filldraw[fill= black!12,draw=black!12,line width=4pt]  (T1) -- (T2) -- (B2) -- (B1) -- (T1);
\draw[blue] (T2) -- (B1);\draw[blue] (T1) -- (B2);\draw[blue] (T1) -- (B1);\draw[blue] (T2) -- (B2);
\draw[blue] (T1) -- (T2);\draw[blue] (B1) -- (B2);
\foreach \i in {1,2} 
{ \fill (T\i) circle (4pt); \fill (B\i) circle (4pt); } 
\end{tikzpicture}
\end{array}.
\end{align*}
\end{example}

\subsection{$\ker \Phi_{k,n} = \langle \ef_{n,n} \rangle$ when $k \ge n$}

In this section,  we show that under the natural embedding of $\P_n(n)$ into $\P_k(n)$ for $k  \ge n$,  the kernel of $\Phi_{k,n}$ is in fact generated by $\ef_{n,n}$.   Towards
this purpose, we adopt the following conventions.  When $a$ is  a basis element that is either a diagram or an orbit diagram  in $\P_k(n)$, and $b$ is similarly one in $\P_\ell(n)$, let  $a \ot b$ be the diagram in $\P_{k+\ell}(n)$ obtained by juxtaposing
$b$ to the right of $a$.   Write  $\langle a \rangle_k$ for the two-sided ideal of $\P_k(n)$ generated
by $a$.  

For  $k,n \in \ZZ_{\ge 1}$ with $2k > n$, suppose $\pi = \pi_{k,n}$ is  the set partition associated with the essential idempotent $\ef_{k,n}$.     Then $\ef_{k,n} = x_\pi$,   and for 
$\ell \in \ZZ_{> 0}$, the expression for
$\ef_{k,n} \ot  (\blvertedge)^{\ot \ell}$ in the diagram basis is

$$\ef_{k,n} \ot  (\blvertedge)^{\ot \ell}  =  x_{\pi} \ot  (\blvertedge)^{\ot \ell} = \sum_{\pi \preceq \varrho}
\mu_{2k}(\pi,\varrho) \, d_{\varrho} \ot  (\blvertedge)^{\ot \ell}.$$

We convert this expression to one in the orbit basis, observing that  
for each $j=1,\dots, \ell$,  the vertices $k+j$ and $2(k+j)$ must be in the same block in any orbit basis element that
 appears with nonzero coefficient:
 
\begin{equation}\label{eq:efexpress} \ef_{k,n} \ot  (\blvertedge)^{\ot \ell}  =  \sum_{\pi \preceq \varrho} \mu_{2k}(\pi,\varrho)
\left(\sum_{\varrho \preceq \nu} \zeta_{2k}(\varrho,\nu) x_{\nu} \ot  (\vertedge)^{\ot \ell}\right) + \sum_{\tau} \xi_\tau x_\tau,\end{equation}
where $\tau$ runs over the set partitions of $[1,2(k+\ell)]$ which have at least one block that contains
$k+j$, $2(k+j)$ and some other element of $[1,2(k+\ell)]$ for some $j$ such that $1 \leq j \leq \ell$, and $\xi_\tau \in  \ZZ$.  

Now it follows from \eqref{eq:efexpress}  and the fact that the matrices $\mu_{2k}$ and $\zeta_{2k}$ are inverses
that 
\begin{align*}\label{eq:efexpress2} \ef_{k,n} \ot  (\blvertedge)^{\ot \ell}  &= \sum_{\pi \preceq \nu} \Bigg( \sum_{\varrho, \pi \preceq \varrho \preceq \nu}  \mu_{2k}(\pi,\varrho)\zeta_{2k}(\varrho,\nu)
\Bigg) x_{\nu} \ot  (\vertedge)^{\ot \ell}   + \sum_{\tau} \xi_\tau x_\tau \\
& = x_\pi \ot  (\vertedge)^{\ot \ell} + \sum_{\tau} \xi_\tau x_\tau =  \e_{k,n}\ot  (\vertedge)^{\ot \ell} + \sum_{\tau} \xi_\tau x_\tau.
\end{align*}
By the rule for multiplication in the orbit basis, $(\vertedge)^{\ot (k+\ell)}\cdot x_\tau \cdot (\vertedge)^{\ot (k+\ell)} = 0$ when $\tau$ has any horizontal edges.  Therefore,
\begin{equation}\label{eq:contain}(\vertedge)^{\ot (k+\ell)} \cdot \left( \ef_{k,n} \ot   (\blvertedge)^{\ot \ell}\right)\cdot (\vertedge)^{\ot (k+\ell)} = \ef_{k,n} \ot    
 (\vertedge)^{\ot \ell}\end{equation} 
 holds for $2k > n$ and all $\ell \ge 1$.  Consequently, we have the following:
 
 \begin{prop}\label{P:one} For $k,\ell \in \ZZ_{\ge 1}$, 
  \begin{itemize}  \item[{\rm (i)}]  if $2k > n$,  then $\ef_{k,n} \ot    
 (\vertedge)^{\ot \ell}$ is in the two-sided ideal of $\P_{k+\ell}(n)$
  generated by 
 $\ef_{k,n} \ot   (\blvertedge)^{\ot \ell}$;
 \item[{\rm(ii)}]  if $2k-1 > n$, then $\ef_{k-\half,n} \ot    
 (\vertedge)^{\ot \ell}$    is in the two-sided ideal of $\P_{k+\ell-\half}(n)$
  generated by 
 $\ef_{k,n} \ot   (\blvertedge)^{\ot \ell}$. 
 \end{itemize}
 \end{prop}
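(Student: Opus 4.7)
My plan is that part (i) follows essentially by inspection from equation \eqref{eq:contain}, and part (ii) follows by verifying that the same identity can be interpreted as an equation entirely within the subalgebra $\P_{k+\ell-\half}(n) \subseteq \P_{k+\ell}(n)$.

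For part (i), I would simply rewrite \eqref{eq:contain} as
\[
\ef_{k,n}\ot(\vertedge)^{\ot\ell} \;=\; (\vertedge)^{\ot(k+\ell)} \cdot \bigl(\ef_{k,n}\ot(\blvertedge)^{\ot\ell}\bigr) \cdot (\vertedge)^{\ot(k+\ell)},
\]
which manifestly exhibits $\ef_{k,n}\ot(\vertedge)^{\ot\ell}$ as an element of the two-sided ideal of $\P_{k+\ell}(n)$ generated by $\ef_{k,n}\ot(\blvertedge)^{\ot\ell}$.

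For part (ii), I would first invoke \eqref{eq:efhalf}, which gives $\ef_{k-\half,n} = \ef_{k,n}$ when $2k-1>n$, to reduce the assertion to the claim that $\ef_{k,n}\ot(\vertedge)^{\ot\ell}$ lies in the two-sided ideal of $\P_{k+\ell-\half}(n)$ generated by $\ef_{k,n}\ot(\blvertedge)^{\ot\ell}$. The key observation I would then make is that direct inspection of the set partition $\pi_{k,n}$ underlying $\ef_{k,n}$ in both cases of \eqref{eq:ef} (namely $n\ge k > n/2$ and $k>n$) reveals that the rightmost column of $\ef_{k,n}$ carries a vertical edge; equivalently, vertex $k$ and vertex $2k$ of $\pi_{k,n}$ lie in the same block. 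Consequently each of the three elements
\[
\ef_{k,n}\ot(\blvertedge)^{\ot\ell}, \qquad \ef_{k,n}\ot(\vertedge)^{\ot\ell}, \qquad (\vertedge)^{\ot(k+\ell)}
\]
places vertex $k+\ell$ and vertex $2(k+\ell)$ in the same block, and hence belongs to $\P_{k+\ell-\half}(n)$. Since the subalgebra is closed under multiplication, the identity displayed above for part (i) is in fact an equation among elements of $\P_{k+\ell-\half}(n)$, and exhibits $\ef_{k,n}\ot(\vertedge)^{\ot\ell}$ as the required product in that subalgebra.

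I do not foresee any serious obstacle: once \eqref{eq:contain} is in hand, both parts reduce to bookkeeping. The sole substantive step is the diagrammatic check in part (ii) that $\ef_{k,n}$ always has its rightmost column vertical, and this is immediate from the pictures in \eqref{eq:ef}.
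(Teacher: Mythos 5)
Your proposal follows the paper's route: part (i) is read off directly from \eqref{eq:contain}, and part (ii) is obtained by checking that every factor appearing in \eqref{eq:contain} already lies in the subalgebra $\P_{k+\ell-\half}(n)$, so the product is formed inside that subalgebra.

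The one thing to correct is the ``key observation'' in part (ii), which is both unnecessary and, as stated, wrong. You claim that in both cases of \eqref{eq:ef} the rightmost column of $\ef_{k,n}$ carries a vertical edge, and then say ``consequently'' the juxtaposed elements put vertices $k+\ell$ and $2(k+\ell)$ in a common block. But after juxtaposition the rightmost column of $\ef_{k,n}$ becomes column $k$ of the larger diagram, not column $k+\ell$, so that observation does not imply what you need. The reason vertices $k+\ell$ and $2(k+\ell)$ are joined is simply that you have appended at least one factor $\blvertedge$ (resp.\ $\vertedge$) because $\ell \ge 1$, and that appended vertical edge in column $k+\ell$ is precisely the required connection; no property of $\ef_{k,n}$ enters. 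Moreover the claim about $\ef_{k,n}$ is not unconditionally true: when $n = 2k-1$ the set partition $\pi_{k,2k-1}$ consists of $2k$ singletons and has no vertical edge at all. That case is excluded by the hypothesis $2k-1>n$ of part (ii), so the claim happens to hold where you invoke it, but the blanket phrase ``in both cases of \eqref{eq:ef}'' overreaches. Replacing the observation by the remark that each appended $\blvertedge$ or $\vertedge$ already supplies the edge $\{k+\ell,\,2(k+\ell)\}$ makes the argument correct and matches what the paper actually uses (which, on the printed page, is just the single observation that $(\vertedge)^{\ot(k+\ell)}\in \P_{k+\ell-\half}(n)$, the other two memberships being taken as obvious).
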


\begin{proof}  Part (i)  follows immediately from \eqref{eq:contain}.   
 For  part (ii),  note first that $\ef_{k-\half,n} = \ef_{k,n}$ when $2k-1 > n$.    
 Since $(\vertedge)^{\ot k+\ell} \in \P_{k+\ell-\half}(n)$, equation \eqref{eq:contain}  
 allows us to conclude 
that 
$$\ef_{k-\half,n} \ot (\vertedge)^{\ot \ell}  = \ef_{k,n} \ot (\vertedge)^{\ot \ell} \in  \langle \ef_{k,n} \ot (\blvertedge)^{\ot \ell}\rangle_{k+\ell-\half}.$$\end{proof}

\begin{thm}\label{T:gensk>n} \begin{itemize} \item[{\rm (a)}]   The kernel of the representation $\Phi_{k,n}: \P_k(n) \to  \End_{\S_n}(\M_n^{\ot k})$ is
generated as a two-sided ideal by the element $\ef_{n,n} \ot  (\blvertedge)^{\ot k-n}$ for   $k \in \ZZ_{\ge n}$. 
\item[{\rm (b)}]  The kernel of the representation $\Phi_{k-\half,n}: \P_{k-\half}(n)  \to \End_{\S_{n-1}}(\M_n^{\ot k-1})$ is generated
as a two-sided ideal by the element $\ef_{n,n} \ot  (\blvertedge)^{\ot k-n}$ for $n>1$ and $k \in \ZZ_{\ge n}$.  
\end{itemize}
\end{thm}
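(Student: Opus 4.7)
The plan is to reduce both parts to Theorem \ref{thm:generator}, which identifies $\ker \Phi_{k,n}$ with the principal two-sided ideal generated by $\ef_{k,n}$ (and, for $n > 1$, identifies $\ker \Phi_{k-\half,n}$ with the ideal in $\P_{k-\half}(n)$ generated by $\ef_{k-\half,n} = \ef_{k,n}$). It then suffices, in the appropriate ambient algebra, to prove the ideal equality $\langle \ef_{n,n} \ot (\blvertedge)^{\ot k-n}\rangle = \langle \ef_{k,n}\rangle$. The case $k = n$ is immediate since the tensor power is empty and $\ef_{n,n} = \ef_{k,n}$, so I assume $k > n$ henceforth.

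One containment is easy: under the $\S_n$-module isomorphism $\modu^{\ot k} \cong \modu^{\ot n} \ot \modu^{\ot k-n}$, the element $\ef_{n,n} \ot (\blvertedge)^{\ot k-n} = \ef_{n,n} \ot \mathsf{I}_{k-n}$ acts as $\Phi_{n,n}(\ef_{n,n}) \ot \id = 0$, so $\langle \ef_{n,n} \ot (\blvertedge)^{\ot k-n}\rangle_k \subseteq \ker \Phi_{k,n} = \langle \ef_{k,n}\rangle_k$.

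For the reverse inclusion in part (a), my strategy is first to invoke Proposition \ref{P:one}(i) with $(k,\ell) \to (n,k-n)$, which places the orbit basis element $x := \ef_{n,n} \ot (\vertedge)^{\ot k-n}$ inside $\langle \ef_{n,n} \ot (\blvertedge)^{\ot k-n}\rangle_k$. Next, I compute $x^2$ using the orbit multiplication rule of Theorem \ref{C:mult}: the concatenation $\pi \ast \pi$ matches exactly in the middle, removing a single middle-only block coming from the isolated column $1$, and admits exactly two admissible coarsenings---the trivial one (returning $\pi$) and the one connecting the top-singleton $\{T_1\}$ of the upper factor to the bottom-singleton $\{B_1\}$ of the lower factor, the latter producing precisely the set partition for $\ef_{k,n}$ when $k > n$. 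This yields
\begin{equation*}
x^2 \,=\, (n-k-1)\, x \,+\, (n-k)\, \ef_{k,n}.
\end{equation*}
Since $k > n$ ensures $n-k \neq 0$, solving gives $\ef_{k,n} = (n-k)^{-1}\bigl(x^2 - (n-k-1)\, x\bigr) \in \langle x\rangle_k$, completing part (a).

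Part (b) follows by exactly the same argument carried out inside the subalgebra $\P_{k-\half}(n) \subset \P_k(n)$. Both $x$ and $\ef_{k,n}$ lie in $\P_{k-\half}(n)$ because their rightmost block is $\{T_k, B_k\}$, and Proposition \ref{P:one}(ii) applied with $(k,\ell) \to (n,k-n)$ (whose hypothesis $2n-1 > n$ is precisely the condition $n > 1$ in the statement, invoked together with the identification $\ef_{n-\half,n} = \ef_{n,n}$) places $x$ in the corresponding ideal of $\P_{k-\half}(n)$; the identical computation of $x^2$ inside $\P_{k-\half}(n)$ then finishes the proof. I anticipate no genuine obstacle beyond the bookkeeping required to verify that the nontrivial coarsening in the expansion of $x^2$ really is the set partition defining $\ef_{k,n}$ for $k > n$, which is a direct check from the defining pictures in \eqref{eq:ef}.
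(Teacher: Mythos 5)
Your proposal is correct and takes essentially the same route as the paper: both compute $x^2 = (n-k-1)x + (n-k)\ef_{k,n}$ for $x = \ef_{n,n} \ot (\vertedge)^{\ot k-n}$ (this is equation~\eqref{eq:square} in the paper with $\ell = k-n$), combine it with Proposition~\ref{P:one} to pass from the orbit vertical edges $\vertedge$ to the diagram ones $\blvertedge$, and then use Theorem~\ref{thm:generator} together with the observation that $\ef_{n,n}\ot(\blvertedge)^{\ot k-n}$ acts as zero to close the two containments, in both parts (a) and (b).
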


\begin{proof}  (a)  We know from Theorem 5.6 that for $k \in \half \ZZ$ and $2k > n$, the essential idempotent  $\ef_{k,n}$ generates the kernel of $\Phi_{k,n}$ as a two-sided ideal.   Moreover, 
$\ef_{n,n} = (\novertedge\!) \ot (\vertedge)^{n-1}$.  Then 
 for $\ell \ge 0$,
\begin{align} \begin{split}\label{eq:square} \big(\ef_{n,n} \ot (\vertedge)^{\ot \ell}\big)^2  & = (n-(n+\ell+1)) \ef_{n,n}  \ot (\vertedge)^{\ot \ell}
+ (n-(n+\ell)) \ef_{n+\ell,n} \\
&= -(\ell+1) \ef_{n,n} \ot (\vertedge)^{\ot \ell} - \ell \ef_{n+\ell,n}.\end{split}\end{align}
Therefore,  $\ef_{n+\ell,n} \in \langle \ef_{n,n} \ot (\vertedge)^{\ot \ell}\rangle_{n+\ell}$.  By  Proposition \ref{P:one}\,(i), $\langle \ef_{n,n} \ot (\vertedge)^{\ot \ell}\rangle_{n+\ell} 
\subseteq  \langle \ef_{n,n} \ot (\blvertedge)^{\ot \ell}\rangle_{n+\ell}$ for $\ell \ge  0$, which implies that
$\ker \Phi_{n+\ell,n} = \langle \ef_{n+\ell,n} \rangle_{n+\ell} \subseteq \langle \ef_{n,n} \ot (\blvertedge)^{\ot \ell}\rangle_{n+\ell}$.      Since 
$\ef_{n,n} \ot  (\blvertedge)^{\ot \ell}$  acts as $\ef_{n,n} \ot \mathrm{Id}_{\M_n}^{\ot \ell}$  on  $\M_n^{\ot n+\ell}$ for all $\ell \ge 0$,
 it lies in the kernel of $\Phi_{n+\ell}$, and the reverse containment holds, 
$\langle \ef_{n,n} \ot  (\blvertedge)^{\ot \ell}\rangle_{n+\ell} \subseteq  
\langle \ef_{n+\ell, n} \rangle_{n+\ell}$.     Hence,  $\ker \Phi_{n+\ell,n} = \langle \ef_{n+\ell,n}\rangle_{n+\ell}
= \langle \ef_{n,n} \ot (\blvertedge)^{\ot \ell}\rangle_{n+\ell}$ for all $\ell \ge 0$.      Letting $k = n+\ell$, we
have part (a):   $\ker \Phi_{k,n} = \langle \ef_{n,n} \ot (\blvertedge)^{\ot k-n}\rangle_{k}$ for all $k \ge n$.

For part (b),  using the fact that  $\ef_{k-\half,n} = \ef_{k,n}$ for $k \ge n >1$, and applying 
\eqref{eq:square}  with $\ell = k-n$,  we obtain
\begin{align*} \big(\ef_{n,n} \ot (\vertedge)^{\ot k-n}\big)^2   &=
-(k-n+1) \ef_{n,n} \ot (\vertedge)^{\ot k-n} - (k-n) \ef_{k,n}\\
&=-(k-n+1) \ef_{n,n} \ot (\vertedge)^{\ot k-n} - (k-n) \ef_{k-\half,n}.\end{align*}
Consequently,  $\ef_{k-\half,n} \in \langle \ef_{n,n} \ot (\vertedge)^{\ot k-n}\rangle_{k-\half}
\subseteq \langle \ef_{n,n} \ot (\blvertedge)^{\ot k-n}\rangle_{k-\half}$, where
the last containment comes from Proposition \ref{P:one}\,(ii).    Now
$\ef_{n,n} \ot (\blvertedge)^{\ot k-n}$  acts as $\ef_{n,n} \ot \mathrm{Id}_{\M_n}^{\ot k-n}$
on $\M_n^{\ot k-1} \ot \mathsf{u}_n$, which is isomorphic to $\M_n^{\ot k-1}$ as an $\S_{n-1}$-module.
Therefore,  $\ef_{n,n} \ot (\blvertedge)^{\ot k-n} \in \ker \Phi_{k-\half,n} = \langle \ef_{k-\half,n} \rangle_{k-\half}$.
From these results, we see that
$\ker \Phi_{k-\half,n} = \langle \ef_{n,n} \ot (\blvertedge)^{\ot k-n}\rangle_{k-\half}$, as desired.   \end{proof}
 
\subsection{Fundamental theorems of invariant theory for $\S_n$}\label{S:funthms}

The fundamental theorems of invariant theory for the symmetric group $\S_n$ describe generators and relations 
for the invariants of  the natural realization of $\S_n$ as permutations on the $n$-dimensional permutation module $\M_n$
and  the $\S_n$-action on tensor powers of $\M_n$.    The surjection $\Phi_{k,n}: \P_k(n) \rightarrow \End_{\S_n}(\M_n^{\ot k})$, which
is an isomorphism when $n \ge 2k$,  gives a representation-theoretic approach to the fundamental theorems using the fact that the
partition algebra $\P_k(n)$ has a presentation as a unital algebra with generators $\mathfrak{s}_i  \  (1 \le i \le k-1)$, 
$\mathfrak{p}_i \ (1 \le i \le k)$, and $\mathfrak{p}_{i+\half} =\mathfrak{b}_i \  (1 \le i \le k-1)$
and with the relations given in (a)-(c) of Theorem \ref{T:present}.    
As a consequence of that result and Theorems
\ref{thm:generator} and \ref{T:gensk>n}, we have the following:
\smallskip

\begin{thm}\label{T:2ndfund}(Second Fundamental Theorem of Invariant Theory for $\S_n$) \, For all $k,n\in \ZZ_{\ge 1}$,   $\im \Phi_{k,n}
= \End_{\S_n}(\modu^{\ot k})$ is generated by the partition algebra generators and relations in Theorem \ref{T:present} (a)-(c) together with the one additional relation $\ef_{k,n} = 0$ in the case that $2k > n$.   When $k \ge n$,  the relation $\ef_{k,n} = 0$ can be replaced with  $\ef_{n,n} = 0$.
\end{thm}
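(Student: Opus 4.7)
The plan is to assemble this as a direct corollary of the three structural theorems already in hand: the presentation in Theorem \ref{T:present}, the surjection and kernel description in Theorem \ref{T:Phi}, and the principal generator statements in Theorems \ref{thm:generator} and \ref{T:gensk>n}. Since $\Phi_{k,n}: \P_k(n) \to \End_{\S_n}(\modu^{\ot k})$ is surjective by Theorem \ref{T:Phi}(a), we have $\End_{\S_n}(\modu^{\ot k}) \cong \P_k(n)/\ker \Phi_{k,n}$, and a presentation of this quotient is obtained from the presentation of $\P_k(n)$ by adjoining any two-sided ideal generating set of $\ker \Phi_{k,n}$ as additional relations. So the proof reduces to reading off the appropriate generator of the kernel in each of the two cases $n \ge 2k$ and $2k > n$.

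First I would dispatch the semisimple case: when $n \ge 2k$ the map $\Phi_{k,n}$ is an isomorphism by Theorem \ref{T:Phi}(a), so the kernel is zero and no extra relation is imposed. This is consistent with the statement, which only adds the relation $\ef_{k,n}=0$ under the hypothesis $2k>n$. Next, assuming $2k>n$, Theorem \ref{thm:generator} gives $\ker \Phi_{k,n} = \langle \ef_{k,n} \rangle$ as a two-sided ideal of $\P_k(n)$. Adjoining the single relation $\ef_{k,n}=0$ to the generators and relations (a)--(c) of Theorem \ref{T:present} therefore yields a presentation of $\im \Phi_{k,n} = \End_{\S_n}(\modu^{\ot k})$.

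The remaining assertion is the assertion in the last sentence, which handles the range $k \ge n$. Here Theorem \ref{T:gensk>n}(a) tells us that $\ker \Phi_{k,n}$ is generated as a two-sided ideal of $\P_k(n)$ by the element $\ef_{n,n} \otimes (\blvertedge)^{\otimes k-n}$. The key observation to make explicit is that under the natural tower embedding $\P_n(n) \hookrightarrow \P_{n+1}(n) \hookrightarrow \cdots \hookrightarrow \P_k(n)$ from \eqref{eq:tower} (adjoining a diagram-basis vertical edge at each stage), the element $\ef_{n,n} \in \P_n(n)$ is literally identified with $\ef_{n,n} \otimes (\blvertedge)^{\otimes k-n} \in \P_k(n)$. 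With this identification, imposing $\ef_{n,n}=0$ and imposing $\ef_{n,n}\otimes(\blvertedge)^{\otimes k-n}=0$ generate the same two-sided ideal, which is exactly $\ker \Phi_{k,n}$.

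No genuine obstacle remains, since the heavy lifting has already been done in the preceding sections; the only point requiring a moment of care is the bookkeeping in the last paragraph, namely verifying that ``$\ef_{n,n}$'' unambiguously refers to the embedded image $\ef_{n,n}\otimes(\blvertedge)^{\otimes k-n}$ in $\P_k(n)$, and that this generator coincides with the generator produced by Theorem \ref{T:gensk>n}(a). Once that identification is spelled out, the theorem follows in a single line.
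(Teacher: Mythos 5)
Your proposal is correct and takes essentially the same route as the paper: it assembles the theorem as a direct corollary of Theorem \ref{T:present} (the presentation), Theorem \ref{T:Phi}(a) (surjectivity and isomorphism for $n\ge 2k$), Theorem \ref{thm:generator} ($\ker\Phi_{k,n}=\langle\ef_{k,n}\rangle$), and Theorem \ref{T:gensk>n}(a) (the kernel is also generated by $\ef_{n,n}\ot(\blvertedge)^{\ot k-n}$ when $k\ge n$), with the last sentence following from identifying $\ef_{n,n}$ with its embedded image under the tower embedding \eqref{eq:tower}. The only substantive bookkeeping point — that the tower embedding appends \emph{diagram-basis} vertical edges $\blvertedge$, so $\ef_{n,n}$ is identified with $\ef_{n,n}\ot(\blvertedge)^{\ot k-n}$ rather than with the orbit version $\ef_{n,n}\ot(\vertedge)^{\ot k-n}$ — you have handled correctly, and it matches the generator produced by Theorem \ref{T:gensk>n}(a).
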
 
 
Theorem \ref{thm:generator} shows that $\ef_{k,n}$ generates the kernel of $\Phi_{k,n}$, Theorem \ref{T:secfund} shows  $\ef_{k,n}$ is an essential idempotent, and equation \eqref{eq:ekn} gives the expression for $\ef_{k,n}$ in the
diagram basis.   The last statement in Theorem \ref{T:2ndfund} comes from identifying $\ef_{n,n}$ with its embedded image in $\P_k(n)$
in Theorem \ref{T:gensk>n}. 

\begin{remark}\label{R:nosmaller}  We know from Theorem \ref{T:2ndfund} that   $\mathsf{g}_n:=\ef_{n,n} \ot (\blvertedge)^{\ot k-n}$ (which we have identified with $\ef_{n,n}$)  generates $\ker \Phi_{k,n}$ as a two-sided ideal for $k \ge n$.  It is 
natural to ask if $\ker \Phi_{k,n}$ can be principally generated by any of the elements 
$$\mathsf{g}_\ell:=\ef_{\ell,n} \ot (\blvertedge)^{\ot k-\ell} = (\novertedge)^{\ot n+1-\ell} \ot  (\vertedge)^{\ot 2\ell-n-1}\ot (\blvertedge)^{\ot k-\ell}$$ for  $\ell$ such that $k \ge n > \ell \ge \half(n+1)$.  
Let  $J = J_{k-1}$, the ideal of $\P_k(n)$ spanned by all the diagram basis elements with propagating number less than or equal to $k-1$,  and set $I_\ell = \langle \mathsf{g}_\ell \rangle$.   Since $\ef_{n,n} \ot  (\blvertedge)^{\ot k-n} = (\novertedge) \ot (\vertedge)^{\ot n-1} \ot (\blvertedge)^{\ot k-n} \equiv  \mathsf{I}_k$ modulo $J$, we have
$(I_n + J)/J \cong \P_k(n)/J$.   
We claim that $(I_{\ell}+J)/J\neq (I_{m}+J)/J$ for $\ell < m\le n$.    Indeed,  $\mathsf{g}_\ell \equiv \sum_{\sigma \in \S_{n+1-\ell}}\,\sigma\ot (\blvertedge)^{\ot k+\ell-n-1}$ 
and $\mathsf{g}_m \equiv \sum_{\tau \in \S_{n+1-m}} \tau \ot (\blvertedge)^{\ot k+m-n-1}$ modulo $J$,  
and because $ \sum_{\sigma \in \S_{n+1-\ell}}\,\sigma$  and $ \sum_{\tau \in \S_{n+1-m}}\,\tau \ot \blvertedge^{\ot m-\ell}$ generate different ideals in $\CC \S_{n+1-\ell}$,  the ideals $I_\ell$ and $I_m$
cannot be equal.   Therefore,  $n$ is the smallest value such that $\ef_{n,n}$ generates
$\ker \Phi_{k,n}$ for all $k \geq n$. 
\end{remark} 

\section{Connections with Primitive Central Idempotents} 
\subsection{The irreducible module labeled by the partition $[n-k,k]$}
For $\lambda$ a  partition of $n$ (which we write $\lambda \vdash n$),  the primitive central idempotent in $\CC \S_n$ corresponding to the irreducible $\S_n$-module $\S_n^\lambda$ indexed by $\lambda$ is 
\begin{equation}\label{def:projector}
\proj_\lambda = \frac{f^\lambda}{n!} \sum_{\sigma \in \S_n}\chi^\lambda_{\S_n}\!(\sigma^{-1}) \, \sigma, 
\end{equation}
where $\chi^\lambda_{\S_n}$ is the character of $\S_n^\lambda$,  and 
 $f^\lambda$ is the dimension of $\S_n^\lambda$, which can  be computed by the well-known hook formula
 $f^\lambda = n!/\prod_{b} h(b)$. The denominator is the product of the hook lengths $h(b)$ of the boxes $b$ in the Young diagram of $\lambda$  (see \cite[(2.32)]{FH}).
When $n \geq 2k$,   the partition algebra $\P_k(n)$ can be identified with the centralizer algebra $\End_{\S_n}(\modu^{\ot k})$,
and its irreducible modules $\P_{k,n}^\lambda$ can also be indexed by the partitions of $n$. 
Since $\S_n$ and $\P_k(n)$ have commuting actions on  $\modu^{\ot k}$,  by Schur-Weyl duality  $\modu^{\ot k}$ has a multiplicity-free
decomposition into a direct sum of bimodules $\S_n^\lambda \ot \P_{k,n}^\lambda$  for $\S_n \times \P_k(n)$.  
Therefore, if  
\begin{equation} \Psi_{k,n}: \S_n \rightarrow \End(\modu^{\ot k})\end{equation}
is the representation of $\S_n$ on $\modu^{\ot k}$ given by the diagonal action,
 the projection map from $\modu^{\otimes k}$ onto the isotypic component 
$\S_n^\lambda \ot \P_{k,n}^\lambda$ corresponding to $\lambda \vdash n$  is given by 
\begin{equation}\label{def:projector}
 \Psi_{k,n}(\ve_{\lambda}) = \frac{f^\lambda}{n!} \sum_{\sigma \in \S_n}\chi^\lambda_{\S_n}\!(\sigma^{-1})\, \Psi_{k,n}(\sigma).
\end{equation}
Now  $\left(\Psi_{k,n}(\ve_{\lambda})\right)^2 =  \Psi_{k,n}((\ve_{\lambda})^2) = \Psi_{k,n}(\ve_\lambda),$ 
and  $\Psi_{k,n}(\ve_{\lambda})$  commutes with $\Psi_{k,n}(\tau)$ for all $\tau \in \S_n$, because it is the image 
of the central element $\proj_\lambda$ of $\CC \S_n$.    Thus,  the idempotent $ \Psi_{k,n}(\ve_{\lambda})$ lives  in the centralizer 
$ \End_{\S_n}(\modu^{\ot k})$, \emph{which we identify  with $\P_k(n)$, whenever  $n \geq 2k$}.   With this identification, 
 $\Psi_{k,n}(\ve_\lambda)$ for any $\lambda \vdash n$  can be expressed in the diagram basis $\{d_\pi \mid \pi \in \Pi_{2k}\}$ or in
 the orbit basis $\{x_\pi \mid \pi \in \Pi_{2k}\}$ of $\P_k(n)$.   Our interest  is in one specific
 choice of $\lambda$, namely, the  two-part partition $[n-k,k] \vdash n$.  When $n \ge 2k$, the corresponding irreducible $\P_k(n)$-module
 $\P_{k,n}^{[n-k,k]}$ is one-dimensional  (see \cite[Cor.~5.14]{BHH}).  The diagram basis elements $d_\pi$ with $\mathsf{pn}(\pi) < k$ act
 as $0$ on $\P_{k,n}^{[n-k,k]}$, and the permutation diagrams act as the identity element.  Thus, the character of $\P_{k,n}^{[n-k,k]}$ is
\begin{equation}\label{character-value}
\chi_{\P_k(n)}^{[n-k,k]}(d_\pi) = \begin{cases}
1 & \qquad \hbox{if} \ \   {\pn(d_\pi) = k,}\\
0 & \qquad \hbox{if} \ \  {\pn(d_\pi) <k,}
\end{cases}
\end{equation}
and when the $\P_k(n)$-module $\P_{k,n}^{[n-k,k]}$ restricted to the subalgebra $\CC \S_k$, it gives the trivial representation \cite[Cor 4.2.3(b)]{H}.
For this particular choice of partition,  we show the following:
  
\begin{thm}\label{IdempotentOrbitBasis} Assume $k,n \in \ZZ_{\ge 1}$ and $n \ge 2k$.   
The expression for  $\Psi_{k,n}(\ve_{[n-k,k]})$ in the orbit basis is given by
\begin{equation}\label{eq:idem}
 \Psi_{k,n}(\ve_{[n-k,k]})= f^{[n-k,k]} \sum_{\pi \in \mathcal{R}_{2k}} c(\pi,n) x_\pi,
\end{equation}
where the sum is over the rook partitions $\pi$ of  $\{1, 2,\ldots, 2k\}$,  
\begin{align}
c(\pi,n) &= \frac{ (-1)^{k - \pn(\pi)} (k-\pn(\pi))!}{(n)_{|\pi|}} =\frac{ (-1)^{k - \pn(\pi)} (k-\pn(\pi))!}{(n)_{2k-\pn(\pi)}},  \quad \text{and} \label{projection-coeff}  \\
f^{[n-k,k]} &=  \frac{1}{k!}(n-2 k + 1)  (n)_{k-1} = \frac{(n-2 k + 1) n!}{k! (n-k+1)!} = \frac{(n-2 k + 1)}{(n-k+1) } \binom{n}{k}.
\label{hook-dimension}
 \end{align}
Since  $|\pi| = 2k - \pn(\pi)$ for $\pi \in \mathcal{R}_{2k}$, the coefficient $c(\pi,n)$ depends only on $n, k,$ and the
propagating number  $\pn(\pi)$. \end{thm}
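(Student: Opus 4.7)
Since $n \ge 2k$, the algebra $\P_k(n) \cong \End_{\S_n}(\modu^{\ot k})$ is semisimple, and because $\P_{k,n}^{[n-k,k]}$ is one-dimensional, the $[n-k,k]$-block of $\P_k(n)$ has $\FF$-dimension $1$. The primitive central idempotent $\Psi_{k,n}(\ve_{[n-k,k]})$ is therefore characterized as the unique element $E \in \P_k(n)$ that is central, idempotent, and satisfies $\chi^{[n-k,k]}_{\P_k(n)}(E) = 1$ while lying inside the $[n-k,k]$-block; equivalently, $E$ is the unique element such that $E \cdot a = \chi^{[n-k,k]}_{\P_k(n)}(a)\,E$ for every $a \in \P_k(n)$ and $\chi^{[n-k,k]}_{\P_k(n)}(E) = 1$. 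Let $E$ denote the right-hand side of \eqref{eq:idem}; my plan is to verify this characterization. The dimension formula \eqref{hook-dimension} is a routine application of the hook length formula to the two-row diagram $[n-k,k]$ and I treat it as a preliminary.

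For centrality of $E$, observe that the $\S_k \times \S_k$-action $\pi \mapsto \sigma' \ast \pi \ast \sigma$ from Section~\ref{subsec:rook} preserves $\mathcal{R}_{2k}$ together with both $\pn(\pi)$ and $|\pi| = 2k - \pn(\pi)$; hence the coefficients $c(\pi,n)$ are constant on $\S_k \times \S_k$-orbits in $\mathcal{R}_{2k}$, and \eqref{DiagramPermutation} gives $\sigma E = E \sigma$ for every $\sigma \in \S_k \subset \P_k(n)$. By Theorem~\ref{T:present} and the remark following it, $\P_k(n)$ is generated by $\S_k$ together with $\mathfrak{p}_1$ and $\mathfrak{b}_1$, so it suffices to check commutation with these two additional generators; I would expand each side in the orbit basis via Theorem~\ref{C:mult} and match term by term, the required identity reducing to a one-step recurrence in $\pn(\pi)$ that \eqref{projection-coeff} manifestly satisfies. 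For the trace and block location, the character values in \eqref{character-value} combined with M\"obius inversion of \eqref{refinement-relation} yield explicit values $\chi^{[n-k,k]}_{\P_k(n)}(x_\pi)$ supported only on rook partitions; pairing these with the coefficients of $E$, grouping by $\pn(\pi)$, and applying \eqref{hook-dimension} collapses the sum to $1$. The stronger identity $E \cdot x_\pi = \chi^{[n-k,k]}_{\P_k(n)}(x_\pi)\,E$ for each orbit basis element, once verified via Theorem~\ref{C:mult}, simultaneously places $E$ in the $[n-k,k]$-block and yields $E^2 = \chi^{[n-k,k]}_{\P_k(n)}(E)\,E = E$, completing the verification.

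The main obstacle is the coefficient-matching computation required to establish $E \cdot x_\pi = \chi^{[n-k,k]}_{\P_k(n)}(x_\pi)\,E$ (and correspondingly the commutations with $\mathfrak{p}_1$ and $\mathfrak{b}_1$). When a product $x_\pi \cdot x_\varrho$ is expanded by Theorem~\ref{C:mult}, many of the resulting orbit basis elements correspond to non-rook partitions, and the cancellations required for those terms to vanish and for the rook terms to reassemble into a scalar multiple of $E$ are orchestrated precisely by the signs $(-1)^{k - \pn(\pi)}$ and the falling factorials $(k - \pn(\pi))!$ in \eqref{projection-coeff}. These cancellations reduce to combinatorial identities on falling factorials $(n)_\ell$ of inclusion--exclusion type, which I expect to follow from standard manipulations but whose bookkeeping across the several propagating-number strata forms the technical heart of the argument.
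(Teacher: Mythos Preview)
Your overall strategy---characterize the primitive central idempotent of the one-dimensional block and then verify that the right-hand side $E$ of \eqref{eq:idem} satisfies that characterization---is exactly the paper's. The difference lies in how much you propose to compute versus how the paper economizes.

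The paper does \emph{not} verify $E \cdot x_\pi = \chi^{[n-k,k]}_{\P_k(n)}(x_\pi)\,E$ for general $\pi$; that would indeed be the ``technical heart'' you describe, and it is avoided entirely. Instead the paper checks only the action of the three generators $\mathfrak{s}_i$, $\mathfrak{b}_1$, $\mathfrak{p}_1$ on $E$, and shows not merely commutation but the stronger facts $\mathfrak{s}_i E = E$, $\mathfrak{b}_1 E = 0$, $\mathfrak{p}_1 E = 0$. Two of these are nearly free: the $\mathfrak{s}_i$ case is your $\S_k\times\S_k$ symmetry argument, and $\mathfrak{b}_1 E = 0$ holds because $\mathfrak{b}_1$ contains the block $\{1,2,k{+}1,k{+}2\}$, which can never match the middle row of any rook orbit diagram, so $\mathfrak{b}_1 x_\pi = 0$ for every $\pi \in \mathcal{R}_{2k}$ with no computation at all. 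Only $\mathfrak{p}_1 E = 0$ requires real work, and the paper handles it by a direct labeled-diagram argument (not via Theorem~\ref{C:mult}, since $\mathfrak{p}_1$ is a \emph{diagram} basis element): one proves a closed formula for $x_\pi \cdot (n\mathfrak{p}_1)$ when $\pi$ is a rook partition, and then a single two-term cancellation in $\pn(\pi)$ gives the vanishing. These three facts already force $\CC E$ to be a one-dimensional two-sided ideal with the correct character, hence $E^2 = cE$ for some scalar $c$.

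For the idempotency constant $c$, the paper again sidesteps orbit-basis combinatorics: it converts $E$ to the diagram basis, shows (via a Pascal-triangle identity) that every permutation diagram $d_\varrho$ occurs in $E$ with coefficient exactly $1/k!$, and then compares the coefficient of the identity diagram $\mathsf{I}_k$ on both sides of $E^2 = cE$, using that $\mathsf{I}_k$ arises in a product $d_\varrho d_\pi$ only when $\varrho,\pi$ are inverse permutations. This gives $c=1$ in two lines. Your proposed route---expanding $E \cdot x_\pi$ for arbitrary $\pi$ in the orbit basis and tracking all non-rook cancellations---would also succeed, but the bookkeeping you anticipate is genuinely heavier than what the paper needs.
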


The expression for  $f^{[n-k,k]}$ in \eqref{hook-dimension} follows directly from the hook-length formula.    We suppose that $\Xie_{k,n}$ is the right-hand expression in \eqref{eq:idem}, so that 
\begin{equation}\label{eq:Up}  \Xie_{k,n} := f^{[n-k,k]}  \sum_{\pi \in \mathcal{R}_{2k}} c(\pi,n) x_\pi  =   \frac{1}{k!}(n-2 k + 1)  (n)_{k-1}  \sum_{\pi \in \mathcal{R}_{2k}} c(\pi,n) x_\pi. \end{equation}
Before beginning the proof of Theorem \ref{IdempotentOrbitBasis}, we make a few comments and present some examples. 

\begin{remark}\label{R:Xi}  For  $k \ge 1$, $\Xie_{k,n}$  is a well-defined element of the partition algebra $\P_k(n)$ for all  but finitely many values of $n$ which depend on $k$. When $\Xie_{k,n}$ is defined, it is nonzero,  since  $\{x_\pi\mid\pi\in\Pi_{2k}\}$ is a basis of $\P_k(n)$,  and not all the $c(\pi,n)$ are zero.   Our aim
in proving Theorem \ref{IdempotentOrbitBasis} is to show that when $n \geq 2k$, 
$\Xie_{k,n}$  equals
$\Psi_{k,n}(\ve_{[n-k,k]})$, hence, $\Xie_{k,n}$ is a central idempotent of $\P_k(n)$.  As a result of Theorem \ref{IdempotentOrbitBasis}, we will be able to identify $\P_{k,n}^{[n-k,k]}$  with the one-dimensional ideal of
$\P_k(n)$ spanned by  $\Xie_{k,n} =\Psi_{k,n}(\ve_{[n-k,k]})$  when $n\geq 2k$.  

The element $\Xie_{k,n}$ is defined when $n = 2k-1$, as the terms $(n-2k+1)c(\pi,n)$ are exactly 0, except for the set partition $\pi$ consisting of all singletons,
as illustrated in the examples below.    If $n=2k-1$, then 
$\dimm \Phi_{k,n}\left(\P_k(n)\right)= \dimm \P_k(n) \, -1  =  \mathsf{B}(2k)-1$, and it is exactly the one-dimensional subspace spanned by $\Xie_{k,2k-1}$ that gives the kernel of the representation $\Phi_{k,n}$.  As we show in Theorem \ref{T:Xief} below,   $\Xie_{k,2k-1}$ equals  $\frac{(-1)^k}{k!}\ef_{k,2k-1}$
for all $k \in \ZZ_{\ge 1}$, where $\ef_{k,2k-1}$ is the essential idempotent from \eqref{eq:ef} corresponding to the set partition with all singleton blocks.    \end{remark}

\begin{examples}\label{ex:xikn}  Here we display $\Xi_{k,n}$ for $k = 1, 2, 3$.     In each case, all of the orbit diagrams  occurring in $\Xi_{k,n}$  have a coefficient containing $n-(2k-1)$ as a factor (and so are 0 when $n = 2k-1$)  except for the very last one, which is
the orbit basis diagram  $\ef_{k,2k-1}$ and  which has coefficient $(-1)^k/k!$.
\begin{align*}
\Proj_{1,n}= &  (n-1) \bigg(\frac{1}{n}
\begin{array}{c} 
\begin{tikzpicture}[xscale=.5,yscale=.5,line width=1.25pt] 
\foreach \i in {1}  { \path (\i,1.25) coordinate (T\i); \path (\i,.25) coordinate (B\i); } 
\filldraw[fill= black!12,draw=black!12,line width=4pt]  (.9,1.25) -- (1.1,1.25) -- (1.1,.25) -- (.9,.25) -- (.9,1.25);
\draw[blue] (T1) -- (B1);
\foreach \i in {1}  { \filldraw[fill=white,draw=black,line width = 1pt] (T\i) circle (4pt); \filldraw[fill=white,draw=black,line width = 1pt]  (B\i) circle (4pt); } 
\end{tikzpicture}
\end{array} 
+
 \frac{-1}{n(n-1)}
\begin{array}{c} 
\begin{tikzpicture}[xscale=.5,yscale=.5,line width=1.25pt] 
\foreach \i in {1}  { \path (\i,1.25) coordinate (T\i); \path (\i,.25) coordinate (B\i); } 
\filldraw[fill= black!12,draw=black!12,line width=4pt]  (.9,1.25) -- (1.1,1.25) -- (1.1,.25) -- (.9,.25) -- (.9,1.25);
\foreach \i in {1}  { \filldraw[fill=white,draw=black,line width = 1pt] (T\i) circle (4pt); \filldraw[fill=white,draw=black,line width = 1pt]  (B\i) circle (4pt); } 
\end{tikzpicture}
\end{array} 
\bigg)\, ,\\ 
\Proj_{2,n}= & \frac{n(n-3)}{2}   \Bigg(   \frac{1}{n(n-1)}
\left[
\begin{array}{c} 
\begin{tikzpicture}[xscale=.5,yscale=.5,line width=1.25pt] 
\foreach \i in {1,2}  { \path (\i,1.25) coordinate (T\i); \path (\i,.25) coordinate (B\i); } 
\filldraw[fill= black!12,draw=black!12,line width=4pt]  (T1) -- (T2) -- (B2) -- (B1) -- (T1);
\draw[blue] (T1) -- (B1);
\draw[blue] (T2) -- (B2);
\foreach \i in {1,2}  { \filldraw[fill=white,draw=black,line width = 1pt] (T\i) circle (4pt); \filldraw[fill=white,draw=black,line width = 1pt]  (B\i) circle (4pt); } 
\end{tikzpicture}
\end{array} 
+
\begin{array}{c} 
\begin{tikzpicture}[xscale=.5,yscale=.5,line width=1.25pt] 
\foreach \i in {1,2}  { \path (\i,1.25) coordinate (T\i); \path (\i,.25) coordinate (B\i); } 
\filldraw[fill= black!12,draw=black!12,line width=4pt]  (T1) -- (T2) -- (B2) -- (B1) -- (T1);
\draw[blue] (T1) -- (B2);
\draw[blue] (T2) -- (B1);
\foreach \i in {1,2}  { \filldraw[fill=white,draw=black,line width = 1pt] (T\i) circle (4pt); \filldraw[fill=white,draw=black,line width = 1pt]  (B\i) circle (4pt); } 
\end{tikzpicture}
\end{array}
\right] \\ & + 
 \frac{-1}{n(n-1)(n-2)}
 \bigg[
\begin{array}{c} 
\begin{tikzpicture}[xscale=.5,yscale=.5,line width=1.25pt] 
\foreach \i in {1,2}  { \path (\i,1.25) coordinate (T\i); \path (\i,.25) coordinate (B\i); } 
\filldraw[fill= black!12,draw=black!12,line width=4pt]  (T1) -- (T2) -- (B2) -- (B1) -- (T1);
\draw[blue] (T1) -- (B1);
\foreach \i in {1,2}  { \filldraw[fill=white,draw=black,line width = 1pt] (T\i) circle (4pt); \filldraw[fill=white,draw=black,line width = 1pt]  (B\i) circle (4pt); } 
\end{tikzpicture}
\end{array}  +
\begin{array}{c} 
\begin{tikzpicture}[xscale=.5,yscale=.5,line width=1.25pt] 
\foreach \i in {1,2}  { \path (\i,1.25) coordinate (T\i); \path (\i,.25) coordinate (B\i); } 
\filldraw[fill= black!12,draw=black!12,line width=4pt]  (T1) -- (T2) -- (B2) -- (B1) -- (T1);
\draw[blue] (T1) -- (B2);
\foreach \i in {1,2}  { \filldraw[fill=white,draw=black,line width = 1pt] (T\i) circle (4pt); \filldraw[fill=white,draw=black,line width = 1pt]  (B\i) circle (4pt); } 
\end{tikzpicture}
\end{array} +
\begin{array}{c} 
\begin{tikzpicture}[xscale=.5,yscale=.5,line width=1.25pt] 
\foreach \i in {1,2}  { \path (\i,1.25) coordinate (T\i); \path (\i,.25) coordinate (B\i); } 
\filldraw[fill= black!12,draw=black!12,line width=4pt]  (T1) -- (T2) -- (B2) -- (B1) -- (T1);
\draw[blue] (T2) -- (B1);
\foreach \i in {1,2}  { \filldraw[fill=white,draw=black,line width = 1pt] (T\i) circle (4pt); \filldraw[fill=white,draw=black,line width = 1pt]  (B\i) circle (4pt); } 
\end{tikzpicture}
\end{array} +
\begin{array}{c} 
\begin{tikzpicture}[xscale=.5,yscale=.5,line width=1.25pt] 
\foreach \i in {1,2}  { \path (\i,1.25) coordinate (T\i); \path (\i,.25) coordinate (B\i); } 
\filldraw[fill= black!12,draw=black!12,line width=4pt]  (T1) -- (T2) -- (B2) -- (B1) -- (T1);
\draw[blue] (T2) -- (B2);
\foreach \i in {1,2}  { \filldraw[fill=white,draw=black,line width = 1pt] (T\i) circle (4pt); \filldraw[fill=white,draw=black,line width = 1pt]  (B\i) circle (4pt); } 
\end{tikzpicture}
\end{array} 
\bigg] \\ & + 
\frac{2}{n(n-1)(n-2)(n-3)} 
\begin{array}{c} 
\begin{tikzpicture}[xscale=.5,yscale=.5,line width=1.25pt] 
\foreach \i in {1,2}  { \path (\i,1.25) coordinate (T\i); \path (\i,.25) coordinate (B\i); } 
\filldraw[fill= black!12,draw=black!12,line width=4pt]  (T1) -- (T2) -- (B2) -- (B1) -- (T1);
\foreach \i in {1,2}  { \filldraw[fill=white,draw=black,line width = 1pt] (T\i) circle (4pt); \filldraw[fill=white,draw=black,line width = 1pt]  (B\i) circle (4pt); } 
\end{tikzpicture}
\end{array}\Bigg)\, , \\ 
\Proj_{3,n} = & \frac{n(n-1)(n-5)}{6}  \Bigg(   \frac{1}{n(n-1)(n-2)}
\left[
\begin{array}{c} 
\begin{tikzpicture}[xscale=.5,yscale=.5,line width=1.25pt] 
\foreach \i in {1,2,3}  { \path (\i,1.25) coordinate (T\i); \path (\i,.25) coordinate (B\i); } 
\filldraw[fill= black!12,draw=black!12,line width=4pt]  (T1) -- (T3) -- (B3) -- (B1) -- (T1);
\draw[blue] (T1) -- (B1);
\draw[blue] (T2) -- (B2);\draw[blue] (T3) -- (B3);
\foreach \i in {1,2,3}  { \filldraw[fill=white,draw=black,line width = 1pt] (T\i) circle (4pt); \filldraw[fill=white,draw=black,line width = 1pt]  (B\i) circle (4pt); } 
\end{tikzpicture}
\end{array} 
+
\begin{array}{c} 
\begin{tikzpicture}[xscale=.5,yscale=.5,line width=1.25pt] 
\foreach \i in {1,2,3}  { \path (\i,1.25) coordinate (T\i); \path (\i,.25) coordinate (B\i); } 
\filldraw[fill= black!12,draw=black!12,line width=4pt]  (T1) -- (T3) -- (B3) -- (B1) -- (T1);
\draw[blue] (T1) -- (B2);
\draw[blue] (T2) -- (B1);\draw[blue] (T3) -- (B3);
\foreach \i in {1,2,3}  { \filldraw[fill=white,draw=black,line width = 1pt] (T\i) circle (4pt); \filldraw[fill=white,draw=black,line width = 1pt]  (B\i) circle (4pt); } 
\end{tikzpicture}
\end{array} + \left( {{\text{4 more} }\atop{\text{with $\pn=3$}}}\right)
\right]  \\
& + 
 \frac{-1}{n(n-1)(n-2)(n-3)}
 \bigg[
\begin{array}{c} 
\begin{tikzpicture}[xscale=.5,yscale=.5,line width=1.25pt] 
\foreach \i in {1,2,3}  { \path (\i,1.25) coordinate (T\i); \path (\i,.25) coordinate (B\i); } 
\filldraw[fill= black!12,draw=black!12,line width=4pt]  (T1) -- (T3) -- (B3) -- (B1) -- (T1);
\draw[blue] (T1) -- (B1);\draw[blue] (T2) -- (B2);
\foreach \i in {1,2,3}  { \filldraw[fill=white,draw=black,line width = 1pt] (T\i) circle (4pt); \filldraw[fill=white,draw=black,line width = 1pt]  (B\i) circle (4pt); } 
\end{tikzpicture}
\end{array}  + 
\begin{array}{c} 
\begin{tikzpicture}[xscale=.5,yscale=.5,line width=1.25pt] 
\foreach \i in {1,2,3}  { \path (\i,1.25) coordinate (T\i); \path (\i,.25) coordinate (B\i); } 
\filldraw[fill= black!12,draw=black!12,line width=4pt]  (T1) -- (T3) -- (B3) -- (B1) -- (T1);
\draw[blue] (T1) -- (B1);\draw[blue] (T2) -- (B3);
\foreach \i in {1,2,3}  { \filldraw[fill=white,draw=black,line width = 1pt] (T\i) circle (4pt); \filldraw[fill=white,draw=black,line width = 1pt]  (B\i) circle (4pt); } 
\end{tikzpicture}
\end{array} + (\text{16 more with $\pn= 2$})
\bigg] \\ & + 
\frac{2}{n(n-1)(n-2)(n-3)(n-4)} 
\bigg[
\begin{array}{c} 
\begin{tikzpicture}[xscale=.5,yscale=.5,line width=1.25pt] 
\foreach \i in {1,2,3}  { \path (\i,1.25) coordinate (T\i); \path (\i,.25) coordinate (B\i); } 
\filldraw[fill= black!12,draw=black!12,line width=4pt]  (T1) -- (T3) -- (B3) -- (B1) -- (T1);
\draw[blue] (T1) -- (B1);
\foreach \i in {1,2,3}  { \filldraw[fill=white,draw=black,line width = 1pt] (T\i) circle (4pt); \filldraw[fill=white,draw=black,line width = 1pt]  (B\i) circle (4pt); } 
\end{tikzpicture}
\end{array} +
\begin{array}{c} 
\begin{tikzpicture}[xscale=.5,yscale=.5,line width=1.25pt] 
\foreach \i in {1,2,3}  { \path (\i,1.25) coordinate (T\i); \path (\i,.25) coordinate (B\i); } 
\filldraw[fill= black!12,draw=black!12,line width=4pt]  (T1) -- (T3) -- (B3) -- (B1) -- (T1);
\draw[blue] (T1) -- (B2);
\foreach \i in {1,2,3}  { \filldraw[fill=white,draw=black,line width = 1pt] (T\i) circle (4pt); \filldraw[fill=white,draw=black,line width = 1pt]  (B\i) circle (4pt); } 
\end{tikzpicture}
\end{array} 
+ \left( {{\text{7 more} }\atop{\text{with $\pn=1$}}}\right)\bigg]\\ & + 
\frac{-6}{n(n-1)(n-2)(n-3)(n-4)(n-5)} 
\begin{array}{c} 
\begin{tikzpicture}[xscale=.5,yscale=.5,line width=1.25pt] 
\foreach \i in {1,2,3}  { \path (\i,1.25) coordinate (T\i); \path (\i,.25) coordinate (B\i); } 
\filldraw[fill= black!12,draw=black!12,line width=4pt]  (T1) -- (T3) -- (B3) -- (B1) -- (T1);
\foreach \i in {1,2,3}  { \filldraw[fill=white,draw=black,line width = 1pt] (T\i) circle (4pt); \filldraw[fill=white,draw=black,line width = 1pt]  (B\i) circle (4pt); } 
\end{tikzpicture}
\end{array} 
\Bigg).  
\end{align*} 
\end{examples}  

\noindent \emph{Proof of Theorem \ref{IdempotentOrbitBasis}.} As noted earlier, the element $\Xi_{k,n}$ in \eqref{eq:Up}  is nonzero and well-defined for all 
$n \ge 2k-1$.
The partition algebra $\P_k(n)$ is generated by the elements $\mathfrak{s}_i, 1 \le i \le k-1$, $\mathfrak{b}_1$, and $\mathfrak{p}_1$ 
defined in \eqref{s-gen}--\eqref{b-gen}.  We claim it suffices to show that 
\begin{align}\begin{split}\label{eq:steps}
& \hspace{-2.7cm} \text {\rm (1)}  \qquad \, \,  \mathfrak{s}_i \Xie_{k,n} =\Xie_{k,n}\mathfrak{s}_i =  \Xie_{k,n} \  \text{for} \ 1 \leq i \leq k-1;  \\
& \hspace{-2.7cm} \text {\rm (2)}  \qquad \,   \mathfrak{p}_1 \Xie_{k,n} = \Xie_{k,n} \mathfrak{p}_1 = \mathfrak{b}_1 
\Xie_{k,n}=  \Xie_{k,n} \mathfrak{b}_1 = 0;  \\
& \hspace{-2.7cm} \text {\rm (3)} \qquad \! \! \left(\Xie_{k,n}\right)^2 = \Xie_{k,n} \quad \  \text{($\Xie_{k,n}$ \  is an idempotent}).\end{split}
  \end{align} 
  
Indeed, once we establish (1) and (2), we will know that  
  $\Xie_{k,n}$  is central in $\P_k(n)$,  and  $\Xie_{k,n}$
spans a one-dimensional submodule (which is also an ideal)  of $\P_k(n)$,  so that $\left(\Xie_{k,n}\right)^2 
= c\, \Xie_{k,n}$ for some $c \in \CC$.    
If  $\chi$ is the character of that submodule, then using the fact that $\chi(ab) = \chi(ba)$ along with (1) and (2),  we 
determine that  $\chi(d_{\pi}) = 0$ unless $\pi$ is a permutation, in which case
$\chi(d_{\pi}) = 1$ by (1).  By comparing $\chi$ 
with the character of the irreducible module $\P_{k,n}^{[n-k,k]}$ in \eqref{character-value}, 
we conclude $\CC\, \Xie_{k,n} \cong  \P_{k,n}^{[n-k,k]}$  when $n \ge 2k$.  As a result, we know that  
$\Xie_{k,n}$ maps onto the $[n-k,k]$-isotypic component
 $\S_n^{[n-k,k]} \ot \P_{k,n}^{[n-k,k]}\cong \S_n^{[n-k,k]}$ (as vector spaces) when applied to $\modu^{\ot k}$.   However,  $\Psi_{k,n}(\ve_{[n-k,k]})$ is an idempotent projecting onto that space.  Since $\left(\Xie_{k,n}\right)^2 = c \,\Xie_{k,n}$ for $c \in \CC$,   
 once  we prove (3)  that $c = 1$ holds and $\Xie_{k,n}$ is an idempotent,  we will know that the two elements
$\Psi_{k,n}(\ve_{[n-k,k]})$ and $\Xie_{k,n}$  must be equal  when $n \ge 2k$,  as asserted in Theorem \ref{IdempotentOrbitBasis}. 

 We will prove that (1) and (2) of \eqref{eq:steps} hold for all $k,n$ for which $\Xie_{k,n}$ is defined. In the calculation below,
$f^{[n-k,k]}$ should be interpreted as the right-hand expression in (\ref{hook-dimension}).
 
(1) The relation  $\mathfrak{s}_i \Xie_{k,n} =\Xie_{k,n} \mathfrak{s}_i =  \Xie_{k,n}$ comes easily from \eqref{eq:symmetry}, which 
says $\sigma' x_{\pi} \sigma = x_{\sigma' \ast \pi \ast \sigma}$ for all $\sigma, \sigma' \in \S_k$, and from the fact that the coefficients satisfy $c(\sigma' \ast \pi \ast \sigma,n) = c(\pi,n)$, i.e., the coefficients depend only on the propagating number and are therefore constant up to permutation of the vertices.  

(2) The relation $\mathfrak{b}_1 \Xie_{k,n} = \Xie_{k,n} \mathfrak{b}_1 = 0$ in (2)  follows from the fact that rook partition diagrams have no horizontal edges, and $\mathfrak{b}_1$ contains the block $\{1,2,k+1,k+2\}$,  so $\mathfrak{b}_1 x_\pi = x_\pi \mathfrak{b}_1 = 0$ for any rook partition $\pi$, since these diagrams never match in the middle. 

The fact that $\mathfrak{p}_1 \Xie_{k,n} =  \Xie_{k,n} \mathfrak{p}_1 = 0$ is the most interesting case and makes use of the specific values of the coefficients $c(\pi,n)$.   To show this, it suffices to work with the element $\wtp :=
n \mathfrak{p}_1$, which enables us to ignore the scalar factor in the definition (\ref{p-gen}) of $\mathfrak{p}_1$.    First,  we prove the following property describing the product of a rook orbit diagram with $\wtp$. 

\underline{Claim}: For $\pi \in \mathcal{R}_{2k}$, let  $\widehat{x_\pi}$ be the sum of orbit diagrams obtained from $x_\pi$ by removing  (if it exists) the edge containing vertex 1 in the bottom row of $\pi$ and then 
adding an edge from vertex 1 to any other block of $\pi$.
We claim that $x_\pi \wtp = \mathsf{\ell} \, \widehat{x_\pi}$, where $\mathsf{\ell}  = n-|\pi| + 1$ if vertex 1 is its own block, and $\mathsf{\ell}  = 1$ otherwise. Here are two  examples of such products, which involve the multiplication of an orbit basis diagram with a diagram basis element,
as pictured below. 
\begin{align}\begin{split}\label{eq:1stex}
\begin{array}{c} 
\begin{tikzpicture}[xscale=.5,yscale=.5,line width=1.25pt] 
\foreach \i in {1,2,3}  { \path (\i,1.25) coordinate (T\i); \path (\i,.25) coordinate (B\i); } 
\filldraw[fill= black!12,draw=black!12,line width=4pt]  (T1) -- (T3) -- (B3) -- (B1) -- (T1);
\draw[blue] (T1) -- (B3);
\draw[blue] (T2) -- (B1);\draw[blue] (T3) -- (B2);
\foreach \i in {1,2,3}  { \filldraw[fill=white,draw=black,line width = 1pt] (T\i) circle (4pt); \filldraw[fill=white,draw=black,line width = 1pt]  (B\i) circle (4pt); } 
\end{tikzpicture} \\
\begin{tikzpicture}[xscale=.5,yscale=.5,line width=1.25pt] 
\foreach \i in {1,2,3}  { \path (\i,1.25) coordinate (T\i); \path (\i,.25) coordinate (B\i); } 
\filldraw[fill= black!12,draw=black!12,line width=4pt]  (T1) -- (T3) -- (B3) -- (B1) -- (T1);
\draw[blue] (T2) -- (B2);
\draw[blue] (T3) -- (B3);
\foreach \i in {1,2,3}  { \fill (T\i) circle (4pt); \fill  (B\i) circle (4pt); } 
\end{tikzpicture}
\end{array}  
& =
\begin{array}{c} 
\begin{tikzpicture}[xscale=.5,yscale=.5,line width=1.25pt] 
\foreach \i in {1,2,3}  { \path (\i,1.25) coordinate (T\i); \path (\i,.25) coordinate (B\i); } 
\filldraw[fill= black!12,draw=black!12,line width=4pt]  (T1) -- (T3) -- (B3) -- (B1) -- (T1);
\draw[blue] (T1) -- (B3);\draw[blue] (T3) -- (B2);
\foreach \i in {1,2,3}  { \filldraw[fill=white,draw=black,line width = 1pt] (T\i) circle (4pt); \filldraw[fill=white,draw=black,line width = 1pt]  (B\i) circle (4pt); } 
\end{tikzpicture}
\end{array}
+
\begin{array}{c} 
\begin{tikzpicture}[xscale=.5,yscale=.5,line width=1.25pt] 
\foreach \i in {1,2,3}  { \path (\i,1.25) coordinate (T\i); \path (\i,.25) coordinate (B\i); } 
\filldraw[fill= black!12,draw=black!12,line width=4pt]  (T1) -- (T3) -- (B3) -- (B1) -- (T1);
\draw[blue] (T1) -- (B3);\draw[blue] (T3) -- (B2);
\draw[blue] (T1) -- (B1);
\foreach \i in {1,2,3}  { \filldraw[fill=white,draw=black,line width = 1pt] (T\i) circle (4pt); \filldraw[fill=white,draw=black,line width = 1pt]  (B\i) circle (4pt); } 
\end{tikzpicture}
\end{array}
+
\begin{array}{c} 
\begin{tikzpicture}[xscale=.5,yscale=.5,line width=1.25pt] 
\foreach \i in {1,2,3}  { \path (\i,1.25) coordinate (T\i); \path (\i,.25) coordinate (B\i); } 
\filldraw[fill= black!12,draw=black!12,line width=4pt]  (T1) -- (T3) -- (B3) -- (B1) -- (T1);
\draw[blue] (T1) -- (B3);\draw[blue] (T3) -- (B2);
\draw[blue] (T2) -- (B1);
\foreach \i in {1,2,3}  { \filldraw[fill=white,draw=black,line width = 1pt] (T\i) circle (4pt); \filldraw[fill=white,draw=black,line width = 1pt]  (B\i) circle (4pt); } 
\end{tikzpicture}
\end{array}
+
\begin{array}{c} 
\begin{tikzpicture}[xscale=.5,yscale=.5,line width=1.25pt] 
\foreach \i in {1,2,3}  { \path (\i,1.25) coordinate (T\i); \path (\i,.25) coordinate (B\i); } 
\filldraw[fill= black!12,draw=black!12,line width=4pt]  (T1) -- (T3) -- (B3) -- (B1) -- (T1);
\draw[blue] (T1) -- (B3); \draw[blue] (T3) -- (B2);
\draw[blue] (B1) -- (B2); 
\foreach \i in {1,2,3}  { \filldraw[fill=white,draw=black,line width = 1pt] (T\i) circle (4pt); \filldraw[fill=white,draw=black,line width = 1pt]  (B\i) circle (4pt); } 
\end{tikzpicture}
\end{array}
\\ \\
\begin{array}{c} 
\begin{tikzpicture}[xscale=.5,yscale=.5,line width=1.25pt] 
\foreach \i in {1,2,3}  { \path (\i,1.25) coordinate (T\i); \path (\i,.25) coordinate (B\i); } 
\filldraw[fill= black!12,draw=black!12,line width=4pt]  (T1) -- (T3) -- (B3) -- (B1) -- (T1);
\draw[blue] (T1) -- (B3);
\draw[blue] (T3) -- (B2);
\foreach \i in {1,2,3}  { \filldraw[fill=white,draw=black,line width = 1pt] (T\i) circle (4pt); \filldraw[fill=white,draw=black,line width = 1pt]  (B\i) circle (4pt); } 
\end{tikzpicture} \\
\begin{tikzpicture}[xscale=.5,yscale=.5,line width=1.25pt] 
\foreach \i in {1,2,3}  { \path (\i,1.25) coordinate (T\i); \path (\i,.25) coordinate (B\i); } 
\filldraw[fill= black!12,draw=black!12,line width=4pt]  (T1) -- (T3) -- (B3) -- (B1) -- (T1);
\draw[blue] (T2) -- (B2);
\draw[blue] (T3) -- (B3);
\foreach \i in {1,2,3}  { \fill (T\i) circle (4pt); \fill  (B\i) circle (4pt); } 
\end{tikzpicture}
\end{array}  
& = (n-3) \left( 
\begin{array}{c} 
\begin{tikzpicture}[xscale=.5,yscale=.5,line width=1.25pt] 
\foreach \i in {1,2,3}  { \path (\i,1.25) coordinate (T\i); \path (\i,.25) coordinate (B\i); } 
\filldraw[fill= black!12,draw=black!12,line width=4pt]  (T1) -- (T3) -- (B3) -- (B1) -- (T1);
\draw[blue] (T1) -- (B3);
\draw[blue] (T3) -- (B2);
\foreach \i in {1,2,3}  { \filldraw[fill=white,draw=black,line width = 1pt] (T\i) circle (4pt); \filldraw[fill=white,draw=black,line width = 1pt]  (B\i) circle (4pt); } 
\end{tikzpicture}
\end{array}
+
\begin{array}{c} 
\begin{tikzpicture}[xscale=.5,yscale=.5,line width=1.25pt] 
\foreach \i in {1,2,3}  { \path (\i,1.25) coordinate (T\i); \path (\i,.25) coordinate (B\i); } 
\filldraw[fill= black!12,draw=black!12,line width=4pt]  (T1) -- (T3) -- (B3) -- (B1) -- (T1);
\draw[blue] (T1) -- (B3);\draw[blue] (T1) -- (B1);
\draw[blue] (T3) -- (B2);
\foreach \i in {1,2,3}  { \filldraw[fill=white,draw=black,line width = 1pt] (T\i) circle (4pt); \filldraw[fill=white,draw=black,line width = 1pt]  (B\i) circle (4pt); } 
\end{tikzpicture}
\end{array}
+
\begin{array}{c} 
\begin{tikzpicture}[xscale=.5,yscale=.5,line width=1.25pt] 
\foreach \i in {1,2,3}  { \path (\i,1.25) coordinate (T\i); \path (\i,.25) coordinate (B\i); } 
\filldraw[fill= black!12,draw=black!12,line width=4pt]  (T1) -- (T3) -- (B3) -- (B1) -- (T1);
\draw[blue] (T1) -- (B3); 
\draw[blue] (T3) -- (B2);
\draw[blue] (B1) -- (T2); 
\foreach \i in {1,2,3}  { \filldraw[fill=white,draw=black,line width = 1pt] (T\i) circle (4pt); \filldraw[fill=white,draw=black,line width = 1pt]  (B\i) circle (4pt); } 
\end{tikzpicture}
\end{array}
+
\begin{array}{c} 
\begin{tikzpicture}[xscale=.5,yscale=.5,line width=1.25pt] 
\foreach \i in {1,2,3}  { \path (\i,1.25) coordinate (T\i); \path (\i,.25) coordinate (B\i); } 
\filldraw[fill= black!12,draw=black!12,line width=4pt]  (T1) -- (T3) -- (B3) -- (B1) -- (T1);
\draw[blue] (T1) -- (B3); 
\draw[blue] (T3) -- (B2);
\draw[blue] (B1) -- (B2); 
\foreach \i in {1,2,3}  { \filldraw[fill=white,draw=black,line width = 1pt] (T\i) circle (4pt); \filldraw[fill=white,draw=black,line width = 1pt]  (B\i) circle (4pt); } 
\end{tikzpicture}
\end{array}
\right)
\end{split}
\end{align} 

We prove the claim for all $n \ge 2k$ by using labeled diagrams as in Section \ref{sec:LabeledDiagrams}. The bottom row of the product  $x_\pi \wtp$ is labeled with $r_1,r_2, \ldots, r_k \in [1,n]$. The middle row is then labeled with $z, r_2, \ldots, r_k$  as forced by the diagram $\wtp$ and \eqref{ex:LabeledDiagram}. The top row is labeled using the rule:  any vertices in the top row of $\pi$ connected to the middle row are given the corresponding middle row labels, and the remaining vertices are labeled with $q_1,q_2, \ldots, q_t \in [1,n]$ (where possibly $t=0$). These labelings can be seen in the examples \eqref{eq:2ndex} below. The labelings of $x_\pi$ must be distinct on distinct parts of $\pi$,  but the labelings on $\wtp$ need not be distinct.  
\begin{align}\begin{split}\label{eq:2ndex}
\begin{array}{c} 
\begin{tikzpicture}[xscale=.5,yscale=.5,line width=1.25pt] 
\foreach \i in {1,2,3}  { \path (\i,1.25) coordinate (T\i); \path (\i,.25) coordinate (B\i); } 
\filldraw[fill= black!12,draw=black!12,line width=4pt]  (T1) -- (T3) -- (B3) -- (B1) -- (T1);
\draw[blue] (T1) -- (B3);
\draw[blue] (T3) -- (B2);
\foreach \i in {1,2,3}  { \filldraw[fill=white,draw=black,line width = 1pt] (T\i) circle (4pt); \filldraw[fill=white,draw=black,line width = 1pt]  (B\i) circle (4pt); } 
\draw  (B1)  node[black,below=0.08cm]{${z}$};
\draw  (B2)  node[black,below=0.06cm]{${r_2}$};
\draw  (B3)  node[black,below=0.06cm]{${r_3}$};
\draw  (T1)  node[black,above=0.01cm]{${r_3}$};
\draw  (T2)  node[black,above=0.01cm]{${q_1}$};
\draw  (T3)  node[black,above=0.01cm]{${r_2}$};
\foreach \i in {1,2,3}  { \path (\i,-2.0) coordinate (BB\i); \path (\i,-1.0) coordinate (TT\i); } 
\filldraw[fill= black!12,draw=black!12,line width=4pt]  (TT1) -- (TT3) -- (BB3) -- (BB1) -- (TT1);
\draw[blue] (TT2) -- (BB2);
\draw[blue] (TT3) -- (BB3);
\foreach \i in {1,2,3}  { \fill (TT\i) circle (4pt); \fill  (BB\i) circle (4pt); }
\draw  (BB1)  node[black,below=0.03cm]{$r_1$};
\draw  (BB2)  node[black,below=0.01cm]{$r_2$};
\draw  (BB3)  node[black,below=0.03cm]{$r_3$};
\end{tikzpicture} 
\end{array}  
 &= (n-3) \left( 
\begin{array}{c} 
\begin{tikzpicture}[xscale=.5,yscale=.5,line width=1.25pt] 
\foreach \i in {1,2,3}  { \path (\i,1.25) coordinate (T\i); \path (\i,.25) coordinate (B\i); } 
\filldraw[fill= black!12,draw=black!12,line width=4pt]  (T1) -- (T3) -- (B3) -- (B1) -- (T1);
\draw[blue] (T1) -- (B3);
\draw[blue] (T3) -- (B2);
\foreach \i in {1,2,3}  { \filldraw[fill=white,draw=black,line width = 1pt] (T\i) circle (4pt); \filldraw[fill=white,draw=black,line width = 1pt]  (B\i) circle (4pt); } 
\draw  (B1)  node[black,below=0.01cm]{${r_1}$};
\draw  (B2)  node[black,below=0.01cm]{${r_2}$};
\draw  (B3)  node[black,below=0.01cm]{${r_3}$};
\draw  (T1)  node[black,above=0.01cm]{${r_3}$};
\draw  (T2)  node[black,above=0.01cm]{${q_1}$};
\draw  (T3)  node[black,above=0.01cm]{${r_2}$};
\end{tikzpicture}
\end{array}
+
\begin{array}{c} 
\begin{tikzpicture}[xscale=.5,yscale=.5,line width=1.25pt] 
\foreach \i in {1,2,3}  { \path (\i,1.25) coordinate (T\i); \path (\i,.25) coordinate (B\i); } 
\filldraw[fill= black!12,draw=black!12,line width=4pt]  (T1) -- (T3) -- (B3) -- (B1) -- (T1);
\draw[blue] (T1) -- (B3);\draw[blue] (T1) -- (B1);
\draw[blue] (T3) -- (B2);
\foreach \i in {1,2,3}  { \filldraw[fill=white,draw=black,line width = 1pt] (T\i) circle (4pt); \filldraw[fill=white,draw=black,line width = 1pt]  (B\i) circle (4pt); } 
\draw  (B1)  node[black,below=0.01cm]{${r_1}$};
\draw  (B2)  node[black,below=0.01cm]{${r_2}$};
\draw  (B3)  node[black,below=0.01cm]{${r_1}$};
\draw  (T1)  node[black,above=0.01cm]{${r_1}$};
\draw  (T2)  node[black,above=0.01cm]{${q_1}$};
\draw  (T3)  node[black,above=0.01cm]{${r_2}$};
\end{tikzpicture}
\end{array}
+
\begin{array}{c} 
\begin{tikzpicture}[xscale=.5,yscale=.5,line width=1.25pt] 
\foreach \i in {1,2,3}  { \path (\i,1.25) coordinate (T\i); \path (\i,.25) coordinate (B\i); } 
\filldraw[fill= black!12,draw=black!12,line width=4pt]  (T1) -- (T3) -- (B3) -- (B1) -- (T1);
\draw[blue] (T1) -- (B3); 
\draw[blue] (T3) -- (B2);
\draw[blue] (B1) -- (T2); 
\foreach \i in {1,2,3}  { \filldraw[fill=white,draw=black,line width = 1pt] (T\i) circle (4pt); \filldraw[fill=white,draw=black,line width = 1pt]  (B\i) circle (4pt); } 
\draw  (B1)  node[black,below=0.01cm]{${r_1}$};
\draw  (B2)  node[black,below=0.01cm]{${r_2}$};
\draw  (B3)  node[black,below=0.01cm]{${r_3}$};
\draw  (T1)  node[black,above=0.01cm]{${r_3}$};
\draw  (T2)  node[black,above=0.01cm]{${r_1}$};
\draw  (T3)  node[black,above=0.01cm]{${r_2}$};
\end{tikzpicture}
\end{array}
+
\begin{array}{c} 
\begin{tikzpicture}[xscale=.5,yscale=.5,line width=1.25pt] 
\foreach \i in {1,2,3}  { \path (\i,1.25) coordinate (T\i); \path (\i,.25) coordinate (B\i); } 
\filldraw[fill= black!12,draw=black!12,line width=4pt]  (T1) -- (T3) -- (B3) -- (B1) -- (T1);
\draw[blue] (T1) -- (B3); 
\draw[blue] (T3) -- (B2);
\draw[blue] (B1) -- (B2); 
\foreach \i in {1,2,3}  { \filldraw[fill=white,draw=black,line width = 1pt] (T\i) circle (4pt); \filldraw[fill=white,draw=black,line width = 1pt]  (B\i) circle (4pt); } 
\draw  (B1)  node[black,below=0.01cm]{${r_1}$};
\draw  (B2)  node[black,below=0.01cm]{${r_1}$};
\draw  (B3)  node[black,below=0.01cm]{${r_3}$};
\draw  (T1)  node[black,above=0.01cm]{${r_3}$};
\draw  (T2)  node[black,above=0.01cm]{${q_1}$};
\draw  (T3)  node[black,above=0.01cm]{${r_1}$};
\end{tikzpicture}
\end{array}
\right)
\\ 
\begin{array}{c} 
\begin{tikzpicture}[xscale=.5,yscale=.5,line width=1.25pt] 
\foreach \i in {1,2,3}  { \path (\i,1.25) coordinate (T\i); \path (\i,.25) coordinate (B\i); } 
\filldraw[fill= black!12,draw=black!12,line width=4pt]  (T1) -- (T3) -- (B3) -- (B1) -- (T1);
\draw[blue] (T2) -- (B1);
\draw[blue] (T1) -- (B3);
\draw[blue] (T3) -- (B2);
\foreach \i in {1,2,3}  { \filldraw[fill=white,draw=black,line width = 1pt] (T\i) circle (4pt); \filldraw[fill=white,draw=black,line width = 1pt]  (B\i) circle (4pt); } 
\draw  (B1)  node[black,below=0.08cm]{${z}$};
\draw  (B2)  node[black,below=0.06cm]{${r_2}$};
\draw  (B3)  node[black,below=0.06cm]{${r_3}$};
\draw  (T1)  node[black,above=0.01cm]{${r_3}$};
\draw  (T2)  node[black,above=0.08cm]{${z}$};
\draw  (T3)  node[black,above=0.01cm]{${r_2}$};
\foreach \i in {1,2,3}  { \path (\i,-2.0) coordinate (BB\i); \path (\i,-1.0) coordinate (TT\i); } 
\filldraw[fill= black!12,draw=black!12,line width=4pt]  (TT1) -- (TT3) -- (BB3) -- (BB1) -- (TT1);
\draw[blue] (TT2) -- (BB2);
\draw[blue] (TT3) -- (BB3);
\foreach \i in {1,2,3}  { \fill (TT\i) circle (4pt); \fill  (BB\i) circle (4pt); }
\draw  (BB1)  node[black,below=0.03cm]{$r_1$};
\draw  (BB2)  node[black,below=0.01cm]{$r_2$};
\draw  (BB3)  node[black,below=0.03cm]{$r_3$};
\end{tikzpicture} 
\end{array}  
& =
\begin{array}{c} 
\begin{tikzpicture}[xscale=.5,yscale=.5,line width=1.25pt] 
\foreach \i in {1,2,3}  { \path (\i,1.25) coordinate (T\i); \path (\i,.25) coordinate (B\i); } 
\filldraw[fill= black!12,draw=black!12,line width=4pt]  (T1) -- (T3) -- (B3) -- (B1) -- (T1);
\draw[blue] (T1) -- (B3);\draw[blue] (T3) -- (B2);
\foreach \i in {1,2,3}  { \filldraw[fill=white,draw=black,line width = 1pt] (T\i) circle (4pt); \filldraw[fill=white,draw=black,line width = 1pt]  (B\i) circle (4pt); } 
\draw  (B1)  node[black,below=0.06cm]{${r_1}$};
\draw  (B2)  node[black,below=0.06cm]{${r_2}$};
\draw  (B3)  node[black,below=0.06cm]{${r_3}$};
\draw  (T1)  node[black,above=0.01cm]{${r_3}$};
\draw  (T2)  node[black,above=0.08cm]{${z}$};
\draw  (T3)  node[black,above=0.01cm]{${r_2}$};
\end{tikzpicture}
\end{array}
+
\begin{array}{c} 
\begin{tikzpicture}[xscale=.5,yscale=.5,line width=1.25pt] 
\foreach \i in {1,2,3}  { \path (\i,1.25) coordinate (T\i); \path (\i,.25) coordinate (B\i); } 
\filldraw[fill= black!12,draw=black!12,line width=4pt]  (T1) -- (T3) -- (B3) -- (B1) -- (T1);
\draw[blue] (T1) -- (B3);\draw[blue] (T3) -- (B2);
\draw[blue] (T1) -- (B1);
\foreach \i in {1,2,3}  { \filldraw[fill=white,draw=black,line width = 1pt] (T\i) circle (4pt); \filldraw[fill=white,draw=black,line width = 1pt]  (B\i) circle (4pt); } 
\draw  (B1)  node[black,below=0.06cm]{${r_1}$};
\draw  (B2)  node[black,below=0.06cm]{${r_2}$};
\draw  (B3)  node[black,below=0.06cm]{${r_1}$};
\draw  (T1)  node[black,above=0.01cm]{${r_1}$};
\draw  (T2)  node[black,above=0.08cm]{${z}$};
\draw  (T3)  node[black,above=0.01cm]{${r_2}$};
\end{tikzpicture}
\end{array}
+
\begin{array}{c} 
\begin{tikzpicture}[xscale=.5,yscale=.5,line width=1.25pt] 
\foreach \i in {1,2,3}  { \path (\i,1.25) coordinate (T\i); \path (\i,.25) coordinate (B\i); } 
\filldraw[fill= black!12,draw=black!12,line width=4pt]  (T1) -- (T3) -- (B3) -- (B1) -- (T1);
\draw[blue] (T1) -- (B3);\draw[blue] (T3) -- (B2);
\draw[blue] (T2) -- (B1);
\foreach \i in {1,2,3}  { \filldraw[fill=white,draw=black,line width = 1pt] (T\i) circle (4pt); \filldraw[fill=white,draw=black,line width = 1pt]  (B\i) circle (4pt); } 
\draw  (B1)  node[black,below=0.06cm]{${r_1}$};
\draw  (B2)  node[black,below=0.06cm]{${r_2}$};
\draw  (B3)  node[black,below=0.06cm]{${r_3}$};
\draw  (T1)  node[black,above=0.01cm]{${r_3}$};
\draw  (T2)  node[black,above=0.01cm]{${r_1}$};
\draw  (T3)  node[black,above=0.01cm]{${r_2}$};
\end{tikzpicture}
\end{array}
+
\begin{array}{c} 
\begin{tikzpicture}[xscale=.5,yscale=.5,line width=1.25pt] 
\foreach \i in {1,2,3}  { \path (\i,1.25) coordinate (T\i); \path (\i,.25) coordinate (B\i); } 
\filldraw[fill= black!12,draw=black!12,line width=4pt]  (T1) -- (T3) -- (B3) -- (B1) -- (T1);
\draw[blue] (T1) -- (B3); \draw[blue] (T3) -- (B2);
\draw[blue] (B1) -- (B2); 
\foreach \i in {1,2,3}  { \filldraw[fill=white,draw=black,line width = 1pt] (T\i) circle (4pt); \filldraw[fill=white,draw=black,line width = 1pt]  (B\i) circle (4pt); } 
\draw  (B1)  node[black,below=0.06cm]{${r_1}$};
\draw  (B2)  node[black,below=0.06cm]{${r_1}$};
\draw  (B3)  node[black,below=0.06cm]{${r_3}$};
\draw  (T1)  node[black,above=0.01cm]{${r_3}$};
\draw  (T2)  node[black,above=0.08cm]{${z}$};
\draw  (T3)  node[black,above=0.01cm]{${r_1}$};
\end{tikzpicture}
\end{array}\, .\end{split}
\end{align}
 
The constant $n-|\pi| +1$ counts the possible values of $z\in [1,n]$, which must be distinct from the other $|\pi| - 1$ labels of $x_\pi$. On the right-hand side, the connections from vertex 1 to another block account  
for the fact that  $r_1$  is allowed to equal any of the other labels, since vertex 1 is not connected in $\wtp$. 
 
To complete the proof of (2) in the theorem, we need to show that each of the orbit basis diagrams
 $x_\pi$  in $\Xie_{k,n}\wtp$  and in $\wtp\Xie_{k,n}$ has coefficient 0.   For the 
product $\Xie_{k,n}\wtp$,  each such  $x_\pi$ is a rook orbit diagram or can be obtained from a rook orbit diagram by connecting vertex 1 to another block in the diagram, as seen above.  Assume $\pn(\pi) = t$ and $|\pi| = r$, so that then $x_\pi$ has  $k - t$ isolated vertices in the top row. The first way we can obtain such a diagram $x_\pi$  is in the product $x_\vr \wtp$ where $\vr$ is the rook partition obtained from $\pi$ by removing the connection to vertex 1. These are the kinds of diagrams that appear in the first line of 
\eqref{eq:1stex}. In this case,  $x_\vr$ has coefficient $f^{[n-k,k]} (-1)^{k-t}(k-t)!/(n)_{t+1}$ in $\Xie
{\!}_{k,n}$, and in the product $x_\vr \wtp$,  the diagram $x_\pi$ has  a factor of  $n-t$ by the previous claim.  Thus, the coefficient of $x_\pi$ coming from $x_\vr \wtp$ is
 
$$
f^{[n-k,k]}\frac{(n-t)(-1)^{k-t}(k-t)!}{(n)_{t+1}} =f^{[n-k,k]} \frac{(-1)^{k-t}(k-t)!}{(n)_{t}}.
$$
The other way to obtain $x_\pi$ in $\Xie_{k,n} \wtp$ is in a product $x_\delta \wtp$, where $\pi$ is obtained from  a rook partition $\delta$ by removing the connection of vertex 1 in $\delta$ and connecting it to the block that contains vertex 1 in $\pi$.  These are  the kinds of diagrams that appear in the second line of \eqref{eq:1stex}. There are $k-t$ such diagrams $\delta$ that  give $\pi$ in this way (one for each of the isolated vertices in the top row of $\pi$).  In this case,  the multiplication factor from $x_\delta \wtp$ is 1. Thus,  there  are $k-t$ terms $x_\delta \wtp$ in $\Xie_{k,n}$  that have $x_\pi$  with 
coefficient equal to
$$
f^{[n-k,k]} \frac{(-1)^{k-t-1}(k-t-1)!}{(n)_{t}}
$$  
The sum of all the coefficients of $x_\pi$ is 0, as desired.  Checking that the coefficient of the diagrams for which vertex 1 is isolated is entirely similar.
Verifying that the product $\wtp \Xie_{k,n}= 0$ can be done by reflecting the diagrams in the argument above about their horizontal axes.   Consequently,  (2) holds. 

The proof of Theorem  \ref{IdempotentOrbitBasis} will be finished once we prove (3) of \eqref{eq:steps},  $\left(\Xie_{k,n}\right)^2 = \Xie_{k,n}$, which will be shown in part (iii) of the next proposition. 

\begin{prop}\label{IdempotentDiagramBasis} If 
$\Xie_{k,n}   = \sum_{\varrho \in \Pi_{2k}} a(\vr) d_\varrho$ is the expression for  $\Xie_{k,n}$ as a linear combination of the diagram basis elements, 
then
\begin{enumerate}
\item[{\rm (i)}]  
\begin{equation}\label{ConversionToDiagramBasis}
a(\vr) = f^{[n-k,k]}
 \sum_{r = 0}^k\frac{ (-1)^{k - r} (k-r)!}{(n)_{2k-r}}\left( 
 \sum_{\pi \in \mathcal{R}_{2k,r},  \ \pi \preceq \vr }\, \mu_{2k}(\pi,\vr)\right),
\end{equation}
where $\mathcal{R}_{2k,r}\subseteq \mathcal{R}_{2k}$ is the subset of rook partitions $\pi$  with propagating number $\pn(\pi)=r$ for $0 \le r \le k$, 
and $\mu_{2k}(\pi,\vr)$ is the M\"obius function from \eqref{eq:mobiusa}. 
\item[{\rm (ii)}] $a(\vr) =(k\,!)^{-1}$ if $\varrho$ is a permutation.
\item[{\rm (iii)}]  $\left(\Xie_{k,n}\right)^2  = \Xie_{k,n}$ so that {\rm (3)} of \eqref{eq:steps} holds, and the proof of Theorem 
\ref{IdempotentOrbitBasis}  is complete.
\end{enumerate}
\end{prop}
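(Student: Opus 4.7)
The plan is to treat (i) and (ii) by direct calculation with the M\"obius function of the partition lattice, then derive (iii) via a short coefficient comparison that combines (ii) with parts (1) and (2) of \eqref{eq:steps}, which are already in hand from the earlier steps of the proof of Theorem \ref{IdempotentOrbitBasis}. Part (i) is essentially a bookkeeping exercise: I will substitute the orbit basis definition $\Xie_{k,n} = f^{[n-k,k]}\sum_{\pi \in \mathcal{R}_{2k}} c(\pi,n)\, x_\pi$ into the M\"obius inversion $x_\pi = \sum_{\pi \preceq \varrho} \mu_{2k}(\pi,\varrho)\, d_\varrho$ from \eqref{eq:mobiusa}, interchange the order of summation, and then sort the inner sum by the propagating number $r = \pn(\pi)$. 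Because $c(\pi,n) = (-1)^{k-r}(k-r)!/(n)_{2k-r}$ depends only on $r$, it pulls out of the innermost sum and gives \eqref{ConversionToDiagramBasis}.

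For (ii), let $\varrho$ be a permutation diagram, so each of its $k$ blocks contains exactly one bottom and one top vertex. A rook partition $\pi \preceq \varrho$ is then obtained by independently deciding, for each of the $k$ blocks of $\varrho$, whether to leave it intact (giving a propagating edge) or split it into two singleton blocks. If exactly $r$ blocks are kept, then $\pi \in \mathcal{R}_{2k,r}$, and formula \eqref{eq:mobiusb} evaluates to $\mu_{2k}(\pi,\varrho) = (-1)^{k-r}$, since each of the $k-r$ split blocks contributes a factor of $-1$. Hence $\sum_{\pi \in \mathcal{R}_{2k,r},\,\pi \preceq \varrho} \mu_{2k}(\pi,\varrho) = \binom{k}{r}(-1)^{k-r}$, and reindexing by $s = k - r$ converts (i) into $a(\varrho) = f^{[n-k,k]} \sum_{s=0}^k k!/((k-s)!\,(n)_{k+s})$. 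Using the factorization $(n)_{k+s} = (n)_{k-1}\,(n-k+1)_{s+1}$ and the hook-length value $f^{[n-k,k]} = (n-2k+1)(n)_{k-1}/k!$, the target $a(\varrho) = 1/k!$ reduces to verifying
$$\sum_{s=0}^k \frac{k!}{(k-s)!\,(m)_{s+1}} \;=\; \frac{1}{m-k}, \qquad m := n-k+1,$$
which I will prove by induction on $k$: separating off the $s=0$ term and reindexing $t = s - 1$ in the remainder leaves $1/m + (k/m)\sum_{t=0}^{k-1}(k-1)!/((k-1-t)!\,(m-1)_{t+1})$; the inner sum is $1/(m-k)$ by the inductive hypothesis, and the arithmetic collapses cleanly.

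For (iii), parts (1) and (2) of \eqref{eq:steps} (already established) show that $\Xie_{k,n}$ is fixed on either side by each $\mathfrak{s}_i$ and annihilated on either side by both $\mathfrak{p}_1$ and $\mathfrak{b}_1$; since these elements generate $\P_k(n)$ as a unital algebra, it follows that $\Xie_{k,n}$ spans a one-dimensional two-sided ideal, so $\Xie_{k,n}^{\,2} = c\,\Xie_{k,n}$ for a unique $c \in \CC$. To pin down $c$, I will compare the coefficient of $d_{\mathsf{I}_k}$ on both sides. By (ii) this coefficient in $\Xie_{k,n}$ is $1/k!$, and in $\Xie_{k,n}^{\,2}$ the multiplication rule $d_{\pi_1} d_{\pi_2} = n^{[\pi_1 \ast \pi_2]} d_{\pi_1 \ast \pi_2}$ combined with the inequality $\pn(\pi_1\ast \pi_2) \le \min(\pn(\pi_1),\pn(\pi_2))$ from \eqref{eq:pn} forces any pair $(d_{\pi_1}, d_{\pi_2})$ contributing to $d_{\mathsf{I}_k}$ to be a pair of permutation diagrams $(d_\sigma, d_{\sigma^{-1}})$ with no middle blocks to remove, each contributing $(1/k!)^2$; summing over $\sigma \in \S_k$ gives $k!\cdot(1/k!)^2 = 1/k!$, forcing $c = 1$. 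The main obstacle is the summation identity feeding into (ii), but its inductive reduction is short and self-contained, so no genuine difficulty is expected.
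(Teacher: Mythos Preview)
Your argument is correct and follows the same architecture as the paper's proof: part (i) is the identical M\"obius-inversion bookkeeping, and part (iii) is the same coefficient comparison for $d_{\mathsf{I}_k}$, using that only permutation pairs $(d_\sigma,d_{\sigma^{-1}})$ can contribute. The one point of divergence is in (ii): after reaching the same sum, the paper rewrites it as $\frac{f^{[n-k,k]}k!}{n!}\sum_{r=0}^k (n-2k+r)!/r!$ and invokes the diagonal (hockey-stick) identity $\sum_{r=0}^k \binom{\ell+r}{r}=\binom{\ell+k+1}{k}$ with $\ell=n-2k$, whereas you recast the target as $\sum_{s=0}^k k!/\big((k-s)!\,(m)_{s+1}\big)=1/(m-k)$ with $m=n-k+1$ and prove this by a clean induction on $k$. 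The two are equivalent evaluations of the same sum; your induction is self-contained, while the paper's route ties the result to a named binomial identity.
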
   

\begin{proof}
(i) \  Since the coefficient $c(\pi,n)$  of $x_\pi$ in $\Xi_{k,n}$ in  \eqref{eq:Up}  depends only on $n, k$ and the propagating number $\pn(\pi)$, we can sum over  rook partitions $\pi$ with propagating number $r$ for  $0 \le r \le k$.   Then we can apply 
M\"obius inversion \eqref{eq:mobiusa} to express $x_\pi$ in terms of the diagrams $d_\vr$ to obtain
\begin{align}\begin{split}\label{collect-terms}
 \Xi_{k,n} & = f^{[n-k,k]} \sum_{r = 0}^k \sum_{\pi \in \mathcal{R}_{2k,r}} \frac{ (-1)^{k - r} (k-r)!}{(n)_{2k-r}} x_\pi  
 = f^{[n-k,k]} \sum_{r = 0}^k\frac{ (-1)^{k - r} (k-r)!}{(n)_{2k-r}}  \sum_{\pi \in \mathcal{R}_{2k,r}}  x_\pi \\
& = f^{[n-k,k]} \sum_{r = 0}^k\frac{ (-1)^{k - r} (k-r)!}{(n)_{2k-r}}\left( \sum_{\pi \in \mathcal{R}_{2k,r},\, \vr \in \Pi_{2k},\, \ \pi \preceq \varrho} \mu_{2k}(\pi,\varrho) d_\varrho\right),
\end{split}
\end{align}
implying  \eqref{ConversionToDiagramBasis}.

(ii) \  Let $\vr$ be a permutation. There are $\binom{k}{r}$ rook partitions $\pi$ in $\Pi_{2k}$ with propagating number $r$ and $\pi \preceq \vr$. For each such $\pi$, we have $\mu_{2k}(\pi,\varrho) = (-1)^{k-r}$, since in going from $\pi$ to $\vr$ we join $k-r$ pairs of vertices by propagating edges. Thus, the coefficient of $d_\vr$ is
\begin{align*}
a(\vr) & = f^{[n-k,k]} \sum_{r = 0}^k\frac{ (-1)^{k - r} (k-r)!}{(n)_{2k-r}}  \binom{k}{r} (-1)^{k-r}  
= f^{[n-k,k]} \sum_{r = 0}^k\frac{  (k-r)! k!}{(n)_{2k-r} r! (k-r)!}   \\
& = \frac{f^{[n-k,k]}k!}{n!} \sum_{r = 0}^k\frac{  (n-2k+r)! }{ r! }    
= \frac{f^{[n-k,k]}k!}{n!} \frac{  (n-k + 1)! }{ (n-2k+1) k! }  
= \frac{1}{k!},
\end{align*}
where the fourth equality comes from applying the diagonal sum identity below for binomial coefficients in Pascal's triangle  with $\ell$ set equal  $n-2k$:  
$$\sum_{r=0}^k \frac{(\ell+r)!}{r!} = 
\ell!\sum_{r=0}^k \binom{\ell+r}{r} =
\ell!\binom{\ell+k+1}{k} =
\frac{(\ell+k+1)!}{(\ell+1) k!}.$$ 

(iii) \ We know that $\left(\Xie_{k,n}\right)^2 = c\,  \Xie_{k,n}$ for some constant $c$, and to determine $c$ we compare the coefficient of the identity diagram
$\mathsf{I}_k$ in $\P_k(n)$ (see \eqref{id-def}) on both sides of $\left(\Xie_{k,n}\right)^2 = c\, \Xie_{k,n}$. By Proposition \ref{IdempotentDiagramBasis}\,(ii), the coefficient of $\mathsf{I}_k$ in $\Xie
{\!}_{k,n}$ is $(k\,!)^{-1}$.     Since
$\left(\Xie_{k,n}\right)^2 =  \sum_{\varrho, \pi  \in \Pi_{2k}} a(\vr)a(\pi) d_\vr d_\pi$,  the only way to get $\mathsf{I}_k$ in a product $d_\vr d_\pi$ is if $\vr$ is a permutation and $\pi = \varrho^{-1}$. Thus, again using  Proposition \ref{IdempotentDiagramBasis}\,(ii), we determine that the coefficient of  $\mathsf{I}_k$ in $\left(\Xie_{k,n}\right)^2$ is  $\sum_{\pi \in \S_k} (k\,!)^{-2} = (k\,!)^{-1}.$  Therefore,  $c = 1$.    
\end{proof}    

\begin{remark}
In \cite{MW},  Martin and Woodcock give a recursive construction for what they term a ``splitting idempotent''  $\varphi_k^k$ of $\P_k(n)$ when  $n \ge 2k$. 
The idempotent $\varphi_k^k$ is characterized by the property that $\varphi_k^k y = 0$ for all $y$ in the ideal $J$ spanned by the diagram basis
elements with propagating number less than $k$, and $\varphi_k^k \sigma \equiv \sigma$  modulo $J$ for $\sigma \in \S_k$,  where we are identifying
a permutation $\sigma$ with its
corresponding permutation diagram basis element in $\P_k(n)$ as we have done, for example,  in \eqref{eq:symmetry}.     Letting
$e_\mu = \frac{f^\mu}{k!} \sum_{\sigma \in \S_k} \chi^\mu_{\S_k}(\sigma^{-1})\sigma$ for $\mu$ a partition of $k$, they show that $\varphi_k^k e_\mu$ is the primitive
central idempotent in $\P_k(n)$ projecting onto the irreducible $\P_k(n)$-module labeled by the partition of $n$ obtained by adjoining a part of size $n-k$ to $\mu$.
When $\mu$ is the one-part partition $[k]$ of $k$,  then $e_{[k]} = \frac{1}{k!} \sum_{\sigma \in \S_k} \sigma$ is the idempotent corresponding to the trivial
one-dimensional $\S_k$-module.   In this case,  $\varphi_k^k e_{[k]}$ is the primitive central idempotent projecting onto the irreducible 
module $\P_{k,n}^{[n-k,k]}$.      
Since $\Xi_{k,n}$ has that same property,  it follows that $\Xi_{k,n} =  \varphi_k^k e_{[k]}$ when $n \ge 2k$.   In \eqref{collect-terms}, we have displayed an explicit expression for 
$\varphi_k^k e_{[k]} = \Xi_{k,n}$ in both the diagram and orbit bases of $\P_k(n)$. 
 \end{remark}

\subsection{The irreducible module labeled by the partition $[n-1-k,k]$} \label{S:half}
Next,  we extend the results of the previous section to the partition algebras $\P_{k+\half}(n)$.   

Suppose $n \in \ZZ_{>1}$ and $k \in \ZZ_{\ge 1}$,  and define   
\begin{equation}
\begin{aligned} \label{eq:Xihalffirst}
\Xi_{\half,n} &:= \vertedge,   \\
\Xi_{k+\half,n} &:=\Xi_{k,n-1}\,\vertedge
=\frac{1}{k!} (n-2k)(n-1)_{(k-1)} \sum_{\pi \in \mathcal{R}_{2k}}  \frac{ (-1)^{k - \pn(\pi)}
(k-\pn(\pi))!} {(n-1)_{2k-\pn(\pi)}}  x_\pi\,\vertedge, 
\end{aligned}
\end{equation}   
where by the second expression,  we mean append an orbit vertical edge $\vertedge$ on the right of each orbit diagram in $\Xi_{k,n-1}$ so that  the adjoined edge is  not connected to any of the first $k$ vertices on the top or bottom, as in the examples below.  (We are not using a $\ot$ symbol here as we did in Section \ref{S:funthms},  as only one vertical edge is adjoined.)

\begin{examples}
 \begin{align*}
\Proj_{1\half,n}=&  (n-2) \bigg(   \frac{1}{n-1}
\begin{array}{c} 
\begin{tikzpicture}[xscale=.5,yscale=.5,line width=1.25pt] 
\foreach \i in {1,2}  { \path (\i,1.25) coordinate (T\i); \path (\i,.25) coordinate (B\i); } 
\filldraw[fill= black!12,draw=black!12,line width=4pt]  (T1) -- (T2) -- (B2) -- (B1) -- (T1);
\draw[blue] (T1) -- (B1);
\draw[blue] (T2) -- (B2);
\foreach \i in {1,2}  { \filldraw[fill=white,draw=black,line width = 1pt] (T\i) circle (4pt); \filldraw[fill=white,draw=black,line width = 1pt]  (B\i) circle (4pt); } 
\end{tikzpicture}
\end{array} 
+ 
 \frac{-1}{(n-1)(n-2)}
\begin{array}{c} 
\begin{tikzpicture}[xscale=.5,yscale=.5,line width=1.25pt] 
\foreach \i in {1,2}  { \path (\i,1.25) coordinate (T\i); \path (\i,.25) coordinate (B\i); } 
\filldraw[fill= black!12,draw=black!12,line width=4pt]  (T1) -- (T2) -- (B2) -- (B1) -- (T1);
\draw[blue] (T2) -- (B2);
\foreach \i in {1,2}  { \filldraw[fill=white,draw=black,line width = 1pt] (T\i) circle (4pt); \filldraw[fill=white,draw=black,line width = 1pt]  (B\i) circle (4pt); } 
\end{tikzpicture}
\end{array} 
\bigg)\, , \\ 
\Proj_{2\half,n} = & \frac{(n-1)(n-4)}{2}  \Bigg(   \frac{1}{(n-1)(n-2)}
\left[
\begin{array}{c} 
\begin{tikzpicture}[xscale=.5,yscale=.5,line width=1.25pt] 
\foreach \i in {1,2,3}  { \path (\i,1.25) coordinate (T\i); \path (\i,.25) coordinate (B\i); } 
\filldraw[fill= black!12,draw=black!12,line width=4pt]  (T1) -- (T3) -- (B3) -- (B1) -- (T1);
\draw[blue] (T1) -- (B1);
\draw[blue] (T2) -- (B2);\draw[blue] (T3) -- (B3);
\foreach \i in {1,2,3}  { \filldraw[fill=white,draw=black,line width = 1pt] (T\i) circle (4pt); \filldraw[fill=white,draw=black,line width = 1pt]  (B\i) circle (4pt); } 
\end{tikzpicture}
\end{array} 
+
\begin{array}{c} 
\begin{tikzpicture}[xscale=.5,yscale=.5,line width=1.25pt] 
\foreach \i in {1,2,3}  { \path (\i,1.25) coordinate (T\i); \path (\i,.25) coordinate (B\i); } 
\filldraw[fill= black!12,draw=black!12,line width=4pt]  (T1) -- (T3) -- (B3) -- (B1) -- (T1);
\draw[blue] (T1) -- (B2);
\draw[blue] (T2) -- (B1);\draw[blue] (T3) -- (B3);
\foreach \i in {1,2,3}  { \filldraw[fill=white,draw=black,line width = 1pt] (T\i) circle (4pt); \filldraw[fill=white,draw=black,line width = 1pt]  (B\i) circle (4pt); } 
\end{tikzpicture}
\end{array}
\right]  \\
& \qquad \quad  + 
 \frac{-1}{(n-1)(n-2)(n-3)}
 \bigg[
\begin{array}{c} 
\begin{tikzpicture}[xscale=.5,yscale=.5,line width=1.25pt] 
\foreach \i in {1,2,3}  { \path (\i,1.25) coordinate (T\i); \path (\i,.25) coordinate (B\i); } 
\filldraw[fill= black!12,draw=black!12,line width=4pt]  (T1) -- (T3) -- (B3) -- (B1) -- (T1);
\draw[blue] (T1) -- (B1);\draw[blue] (T3) -- (B3);
\foreach \i in {1,2,3}  { \filldraw[fill=white,draw=black,line width = 1pt] (T\i) circle (4pt); \filldraw[fill=white,draw=black,line width = 1pt]  (B\i) circle (4pt); } 
\end{tikzpicture}
\end{array}  +
\begin{array}{c} 
\begin{tikzpicture}[xscale=.5,yscale=.5,line width=1.25pt] 
\foreach \i in {1,2,3}  { \path (\i,1.25) coordinate (T\i); \path (\i,.25) coordinate (B\i); } 
\filldraw[fill= black!12,draw=black!12,line width=4pt]  (T1) -- (T3) -- (B3) -- (B1) -- (T1);
\draw[blue] (T1) -- (B2);\draw[blue] (T3) -- (B3);
\foreach \i in {1,2,3}  { \filldraw[fill=white,draw=black,line width = 1pt] (T\i) circle (4pt); \filldraw[fill=white,draw=black,line width = 1pt]  (B\i) circle (4pt); } 
\end{tikzpicture}
\end{array} +
\begin{array}{c} 
\begin{tikzpicture}[xscale=.5,yscale=.5,line width=1.25pt] 
\foreach \i in {1,2,3}  { \path (\i,1.25) coordinate (T\i); \path (\i,.25) coordinate (B\i); } 
\filldraw[fill= black!12,draw=black!12,line width=4pt]  (T1) -- (T3) -- (B3) -- (B1) -- (T1);
\draw[blue] (T2) -- (B1);\draw[blue] (T3) -- (B3);
\foreach \i in {1,2,3}  { \filldraw[fill=white,draw=black,line width = 1pt] (T\i) circle (4pt); \filldraw[fill=white,draw=black,line width = 1pt]  (B\i) circle (4pt); } 
\end{tikzpicture}
\end{array} +
\begin{array}{c} 
\begin{tikzpicture}[xscale=.5,yscale=.5,line width=1.25pt] 
\foreach \i in {1,2,3}  { \path (\i,1.25) coordinate (T\i); \path (\i,.25) coordinate (B\i); } 
\filldraw[fill= black!12,draw=black!12,line width=4pt]  (T1) -- (T3) -- (B3) -- (B1) -- (T1);
\draw[blue] (T2) -- (B2);\draw[blue] (T3) -- (B3);
\foreach \i in {1,2,3}  { \filldraw[fill=white,draw=black,line width = 1pt] (T\i) circle (4pt); \filldraw[fill=white,draw=black,line width = 1pt]  (B\i) circle (4pt); } 
\end{tikzpicture}
\end{array} 
\bigg] \\ & \qquad  \quad + 
\frac{2}{(n-1)(n-2)(n-3)(n-4)} 
\begin{array}{c} 
\begin{tikzpicture}[xscale=.5,yscale=.5,line width=1.25pt] 
\foreach \i in {1,2,3}  { \path (\i,1.25) coordinate (T\i); \path (\i,.25) coordinate (B\i); } 
\filldraw[fill= black!12,draw=black!12,line width=4pt]  (T1) -- (T3) -- (B3) -- (B1) -- (T1);
\draw[blue] (T3) -- (B3);
\foreach \i in {1,2,3}  { \filldraw[fill=white,draw=black,line width = 1pt] (T\i) circle (4pt); \filldraw[fill=white,draw=black,line width = 1pt]  (B\i) circle (4pt); } 
\end{tikzpicture}
\end{array} 
\Bigg)\, .   
\end{align*}  
\end{examples}

Suppose $k \in \ZZ_{\ge 0}$, $n \in \ZZ_{>1}$, and $n \ge 2k+1$.   For  the partition $[n-1-k,k]$ of $n-1$ consider the corresponding primitive central idempotent $\ve_{[n-1-k,k]}$ in $\CC \S_{n-1}$ given by
\begin{equation}
\ve_{[n-1-k,k]} = \frac{f^{[n-1-k,k]}}{(n-1)!} \sum_{\sigma \in \S_{n-1}}\chi^{[n-1-k,k]}_{\S_{n-1}}\!(\sigma^{-1})\, \sigma.
\end{equation}
When $k = 0$,  this is the idempotent corresponding to the trivial representation of $\S_{n-1}$, and the image of this idempotent  under the representation  $\Psi_{\half,n}: \CC \S_{n-1}  \rightarrow  \End(\CC \mathsf{u_n})$ is the identity mapping.   Since 
$\Xi_{\half,n} = \vertedge \in \P_\half(n)$ acts as the identity map on $\CC \mathsf{u}_n$,  the two are equal,   $\Psi_{\half,n}(\ve^{[n-1,0]}) = \Xi_{\half,n}$. 

When $k > 0$, we show that the image of the idempotent $\ve_{[n-1-k,k]}$ under the representation, 
\begin{equation}\label{eq:Psihalf} \Psi_{k+\half,n}: \CC\S_{n-1}  \rightarrow  \End(\modu^{\ot k} \ot \mathsf{u}_n), \qquad 
\Psi_{k+\half,n} = \Psi_{k,n} \ot  \mathsf{Id}_{\M_n}  \end{equation}  
is $\Xi_{k+\half,n}$,  where $\Psi_{k,n}: \CC \S_n \rightarrow \modu^{\ot k}$ is restricted to $\CC \S_{n-1}$.    More precisely, we prove  the following:

\begin{thm}\label{T:IdOB}  Assume $k,n \in \ZZ_{\ge 1}$ and $n \ge 2k+1$.   Then 
\begin{equation}\label{eq:idemhalf}
\Xi_{k+\half,n} =  \Psi_{k+\half,n}\left(\ve_{[n-1-k,k]}\right) = \frac{f^{[n-1-k,k]}}{(n-1)!} \sum_{\sigma \in \S_{n-1}}\chi^{[n-1-k,k]}_{\S_{n-1}}\!(\sigma^{-1})\, \Psi_{k+\half,n}(\sigma), 
\end{equation}
where $\Psi_{k+\half,n}$ is as in \eqref{eq:Psihalf}. \end{thm}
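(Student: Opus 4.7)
The plan is to deduce Theorem \ref{T:IdOB} from Theorem \ref{IdempotentOrbitBasis} by computing the image $\Phi_{k+\half,n}(\Xi_{k+\half,n})$ in $\End(\M_n^{\ot k} \ot \us_n)$ and identifying it directly with the Schur--Weyl idempotent $\Psi_{k+\half,n}(\ve_{[n-1-k,k]})$. Since $\Phi_{k+\half,n}$ is an isomorphism when $n \ge 2k+1$ by Theorem \ref{T:Phi}(b), equality of the two endomorphisms will lift to the desired equality in $\P_{k+\half}(n)$.

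First I would expand $\Phi_{k+\half,n}(\Xi_{k+\half,n})$ term by term, using the rook expansion $\Xi_{k+\half,n} = \Xi_{k,n-1}\,\vertedge$ from \eqref{eq:Xihalffirst}. For each summand $x_{\tilde\pi}$ with $\tilde\pi = \pi \cup \{k+1,\,2k+2\}$ and $\pi \in \mathcal{R}_{2k}$, the matrix-unit rule \eqref{eq:Phi-coeffs-orbit} applied to the appended singleton edge block forces the $(k{+}1)$-st coordinate (top and bottom) to equal $n$, while the remaining coordinates, indexed by the blocks of $\pi$, must take distinct values in $\{1,\dots,n-1\}$ compatible with the orbit structure of $\pi$. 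Consequently
\[
\Phi_{k+\half,n}(x_{\tilde\pi}) \;=\; \Phi_{k,n-1}(x_\pi) \ot \mathsf{Id}_{\CC\us_n}
\]
viewed as an operator on $\M_{n-1}^{\ot k} \ot \us_n \subseteq \M_n^{\ot k} \ot \us_n$, and as zero on the complementary subspace (those tensors in which some of the first $k$ coordinates equal $n$). Summing over $\pi$ with the coefficients of $\Xi_{k+\half,n}$ and invoking Theorem \ref{IdempotentOrbitBasis} at the parameter pair $(k, n-1)$ --- which applies because $n-1 \ge 2k$ --- I then conclude that $\Phi_{k+\half,n}(\Xi_{k+\half,n})$ acts as $\Psi_{k,n-1}(\ve_{[n-1-k,k]}) \ot \mathsf{Id}_{\CC\us_n}$ on $\M_{n-1}^{\ot k} \ot \us_n$ and vanishes on its complement in $\M_n^{\ot k} \ot \us_n$.

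It remains to verify that $\Psi_{k+\half,n}(\ve_{[n-1-k,k]})$ is the same operator, i.e., that the $[n-1-k,k]$-isotypic component of $\M_n^{\ot k} \ot \us_n$ as an $\S_{n-1}$-module is already contained in $\M_{n-1}^{\ot k} \ot \us_n$. Restricting $\M_n$ from $\S_n$ to $\S_{n-1}$ gives $\M_n \cong \M_{n-1} \oplus \CC\us_n$, whence
\[
\M_n^{\ot k} \;\cong\; \bigoplus_{S \subseteq [1,k]} \M_{n-1}^{\ot |S|}
\]
as $\S_{n-1}$-modules, with the trivial factors $\CC\us_n$ filling the positions outside $S$. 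By Schur--Weyl duality for $\S_{n-1}$ and $\P_j(n-1)$ (Theorem \ref{T:Phi}(a) applied at parameter $(j,n-1)$), the irreducible $\S_{n-1}$-module indexed by a partition $\mu \vdash n-1$ appears in $\M_{n-1}^{\ot j}$ only when $\mu$ has at most $j$ boxes below the first row; hence $[n-1-k,k]$ occurs only in the summand $S = [1,k]$, so its isotypic component in $\M_n^{\ot k} \ot \us_n$ coincides with that in $\M_{n-1}^{\ot k} \ot \us_n$, matching the formula for $\Phi_{k+\half,n}(\Xi_{k+\half,n})$ derived above. The main obstacle will be the bookkeeping in the matrix-unit step, particularly the verification that $\Phi_{k+\half,n}(\Xi_{k+\half,n})$ genuinely annihilates the complement of $\M_{n-1}^{\ot k} \ot \us_n$ --- a consequence of the fact that the rook structure of each summand of $\Xi_{k+\half,n}$ forces the label $n$ to be assigned exclusively to the singleton edge block $\{k+1,2k+2\}$.
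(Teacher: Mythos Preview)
Your proposal is correct and follows essentially the same route as the paper: both arguments compare the two operators on $\M_n^{\ot k}\ot\us_n$ by splitting off the subspace $\M_{n-1}^{\ot k}\ot\us_n$, observing that each operator vanishes on the complement (for $\Xi_{k+\half,n}$ because the rook diagrams force distinct labels, for $\Psi_{k+\half,n}(\ve_{[n-1-k,k]})$ because $[n-1-k,k]$ cannot occur in fewer than $k$ tensor factors of $\M_{n-1}$), invoking Theorem~\ref{IdempotentOrbitBasis} at $(k,n-1)$ on the remaining piece, and then using faithfulness of $\Phi_{k+\half,n}$ for $n\ge 2k+1$ to conclude. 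Your decomposition $\M_n^{\ot k}\cong\bigoplus_{S\subseteq[1,k]}\M_{n-1}^{\ot|S|}$ is just a repackaging of the paper's subspaces $\mathsf{U}_i$, and your ``boxes below the first row'' justification is exactly the paper's ``first appears after $k$ tensor powers'' statement.
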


\begin{proof} For each $i=1,2,\dots,k$,  let $\mathsf{U}_i$ be the subspace of $\modu^{\ot k} \ot \mathsf{u}_n$ spanned by the simple basis tensors  
$\us_{r_1} \ot \us_{r_2} \ot \cdots \ot \us_{r_k} \ot \us_n$ for which $\us_{r_i} = \us_n$.  The operator $\Xi_{k+\half,n}$ acts as 0 on any simple basis tensor in $\mathsf{U}_i$,  because the orbit diagrams in the expansion of $\Xi_{k+\half,n}$ in the orbit basis are rook orbit diagrams, so none of them has an edge connecting the $i$th column to the $(k+1)$st column.    The remaining basis tensors not in some $\mathsf{U}_i$ span the subspace $\M_{n-1}^{\ot k} \ot \us_n$, and  by definition, $\Xi_{k+\half,n}$ acts exactly as the operator $\Xi_{k,n-1}\vertedge$ on that space. 
   
Each space $\mathsf{U}_i$  is an $\S_{n-1}$-submodule isomorphic as an $\S_{n-1}$-module  to 
$\M_n^{\ot k-1}$.   The $\S_{n-1}$-module labeled by  the partition $[n-1-k,k]$ does not appear as a summand in
$\mathsf{U}_i$, because the partition shape $[n-1-k,k]$ first appears 
after $k$ tensor powers, i.e., in $\M_n^{\ot k} \ot \us_n$. Thus,  $\Psi_{k+\half,n}\left(\ve_{[n-1-k,k]}\right)$ acts as 0 on $\mathsf{U}_i$ for $i=1,2,\dots,k$  
and has a nonzero action given by $\Psi_{k,n-1} \ot \mathsf{Id}_{\M_n}$ on the submodule $\M_{n-1}^{\ot k} \ot \us_n$ of $\M_{n}^{\ot k} \ot \us_n$,
where $\Psi_{k,n-1}: \CC [\S_{n-1}] \rightarrow \End(\M_{n-1}^{\ot k} \ot \mathsf{u}_n)$ is the representation afforded by  $\M_{n-1}^{\ot k} \ot \us_n$.
Since $\Xi_{k,n-1} = \Psi_{k,n-1}\left(\ve_{[n-1-k,k]}\right)$ on $\M_{n-1}^{\ot k}$ for $n \geq 2k+1$ by Theorem  \ref{IdempotentOrbitBasis},  we have that  the actions of $\Xi_{k+\half,n}$ and
$\Psi_{k+\half,n}\left(\ve_{[n-1-k,k]}\right)$ on $\M_{n}^{\ot k} \ot \us_n$ are identical.   But $\P_{k+\half}(n)$ is represented faithfully
on that space when $n \geq 2k+1$ by Theorem \ref{T:Phi}\,(b),  so the two must be equal.    \end{proof}

  \subsection{Summary of our results}
We have proven the multiplication rule for orbit basis elements of the partition algebra $\P_k(n)$.
For $n,k \in \ZZ_{\ge 1}$  such that $2k > n$, we have exhibited an orbit basis element $\ef_{k,n}$
in \eqref{eq:ef} and have shown that the two-sided ideal of $\P_k(n)$ generated by $\ef_{k,n}$ is the
kernel of the representation $\Phi_{k,n}:  \P_k(n) \rightarrow \End_{\S_n}(\mathsf{M}_n^{\ot k})$ (Theorem \ref{thm:generator})
and  that $\ef_{k,n}$ is an essential idempotent (Theorem  \ref{T:secfund}).
From  those results,  we also know that 
$\ef_{k-\half,n} = \ef_{k,n}$ generates the kernel of  the representation $\Phi_{k-\half,n}: \P_{k-\half}(n)  \rightarrow \End_{\S_{n-1}}(\mathsf{M}_n^{\ot k-1})$.    

In  Section \ref{S:funthms}, in particular in Theorem \ref{T:2ndfund},  we have established the Second Fundamental Theorem of Invariant Theory
for the symmetric group $\S_n$ using the partition algebra results proven in this paper.  

According to Theorem \ref{IdempotentOrbitBasis},  for $n \ge 2k$  the image
$\Xi_{k,n} = \Psi_{k,n}\left(\ve_{[n-k,k]}\right)$ of  the central idempotent  $\ve_{[n-k,k]} \in\CC \S_n$ corresponding
to the two-part partition $[n-k,k]$ of $n$  
under the representation $\Psi_{k,n}: \CC \S_n \rightarrow \End(\mathsf{M}_n^{\ot k})$  lives in $\P_k(n) \cong
\End_{\S_n}(\M_n^{\ot k})$ and has the following  expression:
 \begin{align}\begin{split}\label{eq:idems} \Xie_{k,n} &=f^{[n-k,k]} \sum_{\pi \in \mathcal{R}_{2k}} c(\pi,n) x_\pi=\frac{1}{k!}(n-2k+1)(n)_{k-1} \sum_{\pi \in \mathcal{R}_{2k}}  \frac{ (-1)^{k - \pn(\pi)} (k-\pn(\pi))!}{(n)_{2k-\pn(\pi)}}  x_\pi \\
 &= \frac{1}{k!} (n-2k+1)(n)_{k-1} \sum_{\varrho \in \Pi_{2k}} \Bigg( \sum_{\pi \in \mathcal{R}_{2k}, \pi \preceq \varrho}  
c(\pi,n) \mu_{2k}(\pi,\varrho)\Bigg) d_\varrho. \end{split} \end{align}  

 Analogously,  in Theorem  \ref{T:IdOB},  we have established that 
under the representation
\begin{equation}\label{eq:Psihalf} \Psi_{k+\half,n}: \CC \S_{n-1} \rightarrow  \End(\modu^{\ot k} \ot \mathsf{u}_n),
 \qquad 
\Psi_{k+\half,n} = \Psi_{k,n} \ot  \mathsf{Id}_{\M_n}, \end{equation}
 the image of the central idempotent $\ve_{[n-1-k,k]}  \in \CC \S_{n-1}$ corresponding to
 the partition $[n-1-k,k]$ of $n-1$ has the following expression:  
 \begin{equation} \Psi_{k+\half,n}\left(\ve_{[n-1-k,k]}\right) = \frac{f^{[n-1-k,k]}}{(n-1)!} \sum_{\sigma \in \S_{n-1}}\chi^{[n-1-k,k]}_{\S_{n-1}}\!(\sigma^{-1})\, \Psi_{k+\half,n}(\sigma).
\end{equation} 
Moreover, we know that  $\Psi_{k+\half,n}\left(\ve_{[n-1-k,k]}\right)  = \Xie_{k,n}\,\vertedge$, which
we have denoted $\Xie_{k+\half,n}$.      Therefore,
\begin{align}\begin{split}\label{eq:Xihalf}  
 \Xie_{k+\half,n} &= \Xie_{k,n-1} \vertedge
= \frac{1}{k!} (n-2k)(n-1)_{k-1} \sum_{\pi \in \mathcal{R}_{2k}} c(\pi,n)x_\pi\vertedge \\
&=\frac{1}{k!} (n-2k)(n-1)_{k-1} \sum_{\pi \in \mathcal{R}_{2k}}  \frac{ (-1)^{k - \pn(\pi)}
(k-\pn(\pi))!} {(n-1)_{2k-\pn(\pi)}}  x_\pi\,\vertedge, \\
& = \frac{1}{k!} (n-2k)(n-1)_{k-1} \sum_{\varrho \in \Pi_{2k+1}} \Bigg( 
\sum_{\pi \in \mathcal{R}_{2k+1}, \pi \preceq \varrho}  
c(\pi,n) \mu_{2k}(\pi,\varrho)\Bigg) d_\varrho,
\end{split}\end{align}
where $\mathcal{R}_{2k+1} \subseteq \mathcal{R}_{2(k+1)}$ is the set of rook partitions that contain the block $\{k+1,2k+2\}$. 

The coefficient $f^{[n-k,k]}c(\pi,n)$ of a rook orbit basis element $x_\pi$ in $\Xie_{k,n}$ equals
\begin{equation} \frac{(n-2 k + 1) n!}{k! (n-k+1)!} \, \ \frac{ (-1)^{k - \pn(\pi)} (k-\pn(\pi))!}{(n)_{2k-\pn(\pi)}}.
\end{equation}  
We observed in Remark \ref{R:Xi} that this coefficient makes sense for $n=2k-1$, 
and for that particular value of $n$, this expression is 0 if the propagating number  $\pn(\pi) > 0$,  and when $\pn(\pi) = 0$,
it equals  $(-1)^k/ k!$.   There is a unique $\pi \in \mathcal{R}_{2k}$ with $\pn(\pi) = 0$, and  the associated rook orbit basis element $x_{\pi}$
has $2k$ singleton blocks.  Thus, it equals the essential idempotent 
$\ef_{k,2k-1}$ from \eqref{eq:ef}.  (Compare Theorem \ref{T:secfund} with $n=2k-1$, which shows that 
$(\ef_{k,2k-1})^2 = (-1)^k k!\, \ef_{k,2k-1}$.)    Consequently, we know that the following relations hold:

\newpage
\begin{thm}\label{T:Xief}  For $k \in \ZZ_{\ge 1}$,  
\begin{itemize}\item[{\rm (a)}] \  $\Xie_{k,2k-1}   =  \displaystyle{\frac{(-1)^k}{k!}  \ef_{k,2k-1}}$. 
\item[{\rm (b)}]  \ $\Xie_{k+\half,2k}  = \Xie_{k,2k-1}\, \vertedge  = \displaystyle{  \frac{(-1)^k}{k!}  \ef_{k,2k-1}\vertedge =\frac{(-1)^k}{k!}\ef_{k+1,2k} = \frac{(-1)^k}{k!}\ef_{k+\half,2k}}$.
\end{itemize}  \end{thm}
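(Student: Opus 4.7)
The plan is to read off both statements directly from the explicit orbit-basis formula for $\Xie_{k,n}$ given in \eqref{eq:Up}/\eqref{eq:idems} by specializing $n$ to the critical value and tracking the cancellation of zeros. The whole point of the discussion preceding the theorem is that although $f^{[n-k,k]}$ has a zero factor $(n-2k+1)$ at $n=2k-1$ and some of the coefficients $c(\pi,n)$ have poles there, the product $f^{[n-k,k]}\, c(\pi,n)$ extends to a well-defined element of $\P_k(n)$ for $n=2k-1$, and the claims are then just a matter of reading off which rook partitions survive.

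For part (a), I would compute the coefficient of $x_\pi$ in $\Xie_{k,n}$, namely
\[
f^{[n-k,k]}\,c(\pi,n)\;=\;\frac{(-1)^{k-\pn(\pi)}(k-\pn(\pi))!}{k!}\cdot\frac{(n-2k+1)(n)_{k-1}}{(n)_{2k-\pn(\pi)}},
\]
and specialize at $n=2k-1$. For $\pn(\pi)\ge 1$ the Pochhammer $(n)_{2k-\pn(\pi)}$ is nonzero at $n=2k-1$, so the explicit $(n-2k+1)$ kills the coefficient; for $\pn(\pi)=0$ one has the indeterminate $0/0$, which resolves by canceling the $(n-2k+1)$ in the numerator against the corresponding factor in $(n)_{2k}$, leaving
\[
\frac{(n-2k+1)(n)_{k-1}}{(n)_{2k}}\;=\;\frac{1}{(n-k+1)(n-k)\cdots(n-2k+2)}\;\xrightarrow[n=2k-1]{}\;\frac{1}{k!},
\]
so the surviving coefficient is $(-1)^k/k!$. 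There is a unique $\pi\in\mathcal{R}_{2k}$ with $\pn(\pi)=0$, namely the all-singletons partition of $[1,2k]$, and by the $n\ge k>n/2$ case of \eqref{eq:ef} with $k=n+1-k$ singletons on each row and no propagating edges, this is exactly $\ef_{k,2k-1}$. Hence $\Xie_{k,2k-1}=\frac{(-1)^k}{k!}\ef_{k,2k-1}$.

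For part (b), apply the definition $\Xie_{k+\half,n}:=\Xie_{k,n-1}\,\vertedge$ from \eqref{eq:Xihalffirst} with $n=2k$; part (a) immediately gives $\Xie_{k+\half,2k}=\Xie_{k,2k-1}\vertedge=\frac{(-1)^k}{k!}\ef_{k,2k-1}\vertedge$. It then remains to identify the diagrams $\ef_{k,2k-1}\vertedge$, $\ef_{k+1,2k}$ and $\ef_{k+\half,2k}$. Juxtaposing an orbit vertical edge on the right of $\ef_{k,2k-1}$ yields an orbit diagram with $k$ singleton vertices on the left of each row and one propagating edge $\{k+1,2k+2\}$ on the right. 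Comparing to \eqref{eq:ef} for $\ef_{k+1,2k}$ in the range $n\ge k'>n/2$ (with $k'=k+1$, $n=2k$) gives $n+1-k'=k$ singletons on the left and $2k'-n-1=1$ vertical edge on the right, matching exactly. Finally \eqref{eq:efhalf} gives $\ef_{k+\half,2k}=\ef_{k+1,2k}$, since $2(k+1)-1=2k+1>2k$. The main obstacle is purely bookkeeping: the $0/0$ analysis in the first step must cancel the common factor before evaluating, rather than evaluating numerator and denominator separately, and the diagrammatic identifications in the second step require only a careful read of \eqref{eq:ef}--\eqref{eq:efhalf}.
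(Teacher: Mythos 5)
Your proof is correct and takes essentially the same approach as the paper: both specialize the explicit coefficient formula $f^{[n-k,k]}c(\pi,n)$ at $n=2k-1$, observe that the factor $(n-2k+1)$ annihilates all terms with $\pn(\pi)\ge 1$ while for the unique all-singletons rook partition the factor cancels into $(n)_{2k}$ to leave $(-1)^k/k!$, and then for part (b) unwind the definition $\Xie_{k+\half,n}=\Xie_{k,n-1}\vertedge$ together with the diagram identifications $\ef_{k,2k-1}\vertedge=\ef_{k+1,2k}=\ef_{k+\half,2k}$ from \eqref{eq:ef}--\eqref{eq:efhalf}. Your arithmetic of the $0/0$ cancellation and the block counts in \eqref{eq:ef} are both verified correctly, so this is a sound reconstruction of the paper's argument.
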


\begin{remark}\label{R:ek2k-1} For all but finitely many values of $\xi$,
\begin{equation}\label{eq:cpixi}
\Xie_{k,\xi} :=  \frac{1}{k!}(\xi-2 k + 1)  (\xi)_{k-1}  \sum_{\pi \in \mathcal{R}_{2k}} c(\pi,\xi) x_\pi, \ \text{where} \  
c(\pi,\xi)  =\frac{ (-1)^{k - \pn(\pi)} (k-\pn(\pi))!}{(\xi)_{2k-\pn(\pi)}},
\end{equation} is defined.   The calculations we have done to establish (1) and (2) of \eqref{eq:steps}  
show that whenever $\Xi_{k,\xi}$ is defined, it commutes with the generators $\mathfrak{s}_i$ for $1 \le i \le k-1$,  $\mathfrak{b}_1$, and $\mathfrak{p}_1$  of
$\P_k(\xi)$.  Consequently,  $\Xi_{k,\xi}$ is central in $\P_k(\xi)$ whenever it is defined.  In that case, (1) and (2) of
\eqref{eq:steps} imply  $\Xi_{k,\xi}^2 = \Xi_{k,\xi}$  and $\FF \Xi_{k,\xi}$ is a one-dimensional
$\P_k(\xi)$-module with $d_\pi \Xi_{k,\xi} = 0$ for all $\pi$ such that $\mathsf{pn}(\pi) < k$ and $\sigma \Xi_{k,\xi} =  \Xi_{k,\xi}$
for all $\sigma \in \S_k$.    In particular, we know when $\xi = 2k-1$ that
 $\Xi_{k,2k-1}$ is defined,  and  therefore, $\ef_{k,2k-1} = (-1)^k k ! \,\Xi_{k,2k-1}$ is central in $\P_k(2k-1)$.  The essential idempotent
$\ef_{k,2k-1}$ is unique with that property, as $\ef_{k,n}$ is never central when $2k-1 > n$ (compare Remark \ref{R:central}).  \end{remark}

\subsection{Some connections with the work of Rubey and Westbury}  

In \cite[Sec.~8]{RW1},  Rubey and Westbury consider a one-dimensional representation for
$D_k: = \End_{\S_n}(\modu^{\ot k})$ having the property that it is the trivial representation when
restricted to the elements of $D_k$ with propagating number $k$ and is equal to $0$ on the
other basis elements.  If $n \geq 2k$, the diagrams with propagating number $k$ are exactly
the permutation diagrams in $D_k = \P_k(n)$, and the representation is $\P_{k,n}^{[n-k,k]}$.   In \cite{RW1},
the central idempotent in $D_k$ associated to this representation is called $E(k)$.     Therefore, it follows from 
 (\ref{collect-terms})   and \eqref{eq:idems} that the central idempotent in \cite{RW1} has the following expression
 \begin{equation}\label{eq:Ek}  E(k) = \Xie_{k,n}  \qquad \text{when} \ n \ge 2k, \end{equation}
 where the expansions of $\Xie_{k,n}$ in the orbit and diagram bases of $\P_k(n)$ are given in \eqref{eq:idems}. 

\begin{remark}\label{R:nottrue} 
Rubey and Westbury go on to say  that $E(k)$ is up to scalar multiple
the orbit diagram with $2k$ singleton blocks.  We have shown that this is true only  if the 
expression for $\Xie_{k,n}$ in \eqref{eq:idems} is evaluated at $n = 2k-1$, and in that case,
$\Xie_{k,2k-1} = \frac{(-1)^k}{k!}\ef_{k,2k-1}$ by Theorem \ref{T:Xief}\,(a).  The idempotent does not correspond to 
a one-dimensional representation for  $\End_{\S_{2k-1}}(\mathsf{M}_{2k-1}^{\ot k})$ then.   The  kernel of the surjection  $\P_k(2k-1) \rightarrow  \End_{\S_{2k-1}}(\modu^{\ot k})$ is one-dimensional, 
spanned  by $\Xi_{k,2k-1} =\frac{(-1)^k}{k!} \ef_{k,2k-1}$, which is the orbit diagram in $\Pi_{2k}$ with $2k$ singleton blocks (see \eqref{eq:ef}).  \end{remark}

When $k \in \ZZ_{\ge 1}$,  the restriction of the one-dimensional representation of $D_{k+1}$
 to  $D'_{k+1}$ (the subalgebra of
$D_{k+1}$ of diagrams with $k+1$ and $2(k+1)$ in  the same block),  corresponds to a central idempotent of $D'_{k+1}$,
 which is denoted by $E'(k+1)$ in \cite{RW1}.    
The algebra $\P_{k+\half}(n)$ acts faithfully on $\modu^{\ot k}$ when $n \ge 2k+1$ (see Theorem \ref{T:Phi}\,(b))  and has
a one-dimensional module  $\P_{k+\half,n}^{[n-1-k,k]}$, with corresponding central
idempotent $\Xie_{k+\half,n} = \Xie_{k,n-1} \vertedge$ by Theorem \ref{T:IdOB}. Thus,
\begin{equation} \label{eq:Ek+1}  
E'(k+1) =  \Xie_{k+\half,n} = \Xie_{k,n-1} \vertedge \qquad \text{when} \ n \ge 2k+1, \end{equation}
where the expressions for $\Xie_{k+\half,n}$ in the orbit and diagram bases are displayed in \eqref{eq:Xihalf}. 
When $n = 2k$, the kernel of the surjection  $\P_{k+\half}(2k) \rightarrow \End_{\S_{2k-1}}(\M_{2k}^{\ot k})$ is generated by
$\ef_{k+\half, 2k} = \ef_{k,2k-1}\, \vertedge\!\!,$ as in Theorem
\ref{T:Xief}\,(b).   In fact,  we know from \cite[Thm.~5.5]{BHH} that the kernel
is one-dimensional, since the dimension of  $ \End_{\S_{2k-1}}(\M_{2k}^{\ot k})$ is $\mathsf{B}(2k+1)-1
= \dimm \P_{k+\half}(2k) -1$,  so the kernel is the $\CC$-span of  $\Xie_{k+\half,2k} =\Xie_{k,2k} \vertedge$
(see Theorem \ref{T:Xief}(b)).  By Theorem \ref{T:Xief}\,(b),  $\Xi_{k+\half,2k} = \frac{(-1)^k}{k!}\ef_{k+\half,2k}$, when $n = 2k$ .

We have not verified  the conjectured recurrence relations in \cite[Conjecture 8.4.9]{RW1} for $E(k)$ and $E'(k+1)$,
except for some small values of $k$;  however, in this work 
we have given explicit expressions for these idempotents in terms of orbit and diagram  basis elements.

\end{document}